\newtheorem{thm}{Theorem}[section]
\newtheorem{lem}[thm]{Lemma}
\newtheorem{prop}[thm]{Proposition}
\newtheorem{cor}[thm]{Corollary}
\newtheorem{res}[thm]{Result}
\theoremstyle{definition}
\newtheorem{defn}[thm]{Definition}
\theoremstyle{definition}
\newtheorem{rem}[thm]{Remark}	
\theoremstyle{definition}
\newtheorem{note}[thm]{Notation}
\theoremstyle{definition}
\newtheorem{exa}[thm]{Example}
\tikzstyle arrowstyle=[scale=1]
\begin{document}
\title{Real algebraic curves on real del Pezzo surfaces}
\date{ }
\author{Matilde Manzaroli}

\maketitle
\begin{abstract}
The study of the topology of real algebraic varieties dates back to the work of Harnack, Klein and Hilbert in the 19th century; in particular, the isotopy type classification of real algebraic curves in real toric surfaces is a classical subject that has undergone considerable evolution. On the other hand, not much is known for more general ambient surfaces. We take a step forward in the study of topological type classification of real algebraic curves on non-toric surfaces focusing on real del Pezzo surfaces of degree $1$ and $2$ with multi-components real part. We use degeneration methods and real enumerative geometry in combination with variations of classical methods to give obstructions to the existence of topological type classes realised by real algebraic curves and to give constructions of real algebraic curves with prescribed topology.
\end{abstract}
\tableofcontents
\section{Introduction}
\label{sec: intro}
The study of topology of real algebraic varieties dates back to the work of Harnack, Klein and Hilbert in the 19-th century (\cite{Harn76}, \cite{Hilb02}, \cite{Klei73}). A \textit{real algebraic variety} $(X,\sigma)$ is a compact complex algebraic variety equipped with an anti-holomorphic involution $\sigma:X \rightarrow X$, called real structure. The real part $\mathbb{R}X$ of $(X, \sigma)$ is the set of points fixed by the involution $\sigma$. 
Hilbert proposed in the first part of his 16-th problem to classify the isotopy types of real algebraic curves of degree $6$ in $\mathbb{R}P^2$, respectively of real algebraic surfaces of degree $4$ in $\mathbb{R}P^3$. The classification on $\mathbb{R}P^2$ had been achieved by Gudkov (\cite{Gudk69}) at the end of the 60's and the other one by Kharlamov (\cite{Khar76}, \cite{Khar78}) around ten years later. At the moment, the classification of real algebraic surfaces in the real projective space is known only up to degree $4$, respectively that of real algebraic plane curves up to degree $7$ (\cite{Viro84a}, \cite{Viro84b}). \\
There are two main directions in the study of the topology of real algebraic varieties. The first is to give obstructions to the existence of topological type classes realised by real algebraic varieties. The second direction is to provide constructions of real algebraic varieties with prescribed topology. From the 70's, especially thanks to the work of Arnold and Rokhlin (\cite{Arno71}, \cite{Rokh72}, \cite{Rokh74}, \cite{Rokh78}, \cite{Rokh80}), many general obstructions had been discovered. On the other hand, the construction techniques had remained relatively elementary for a long time. In 1979, Viro provided a breakthrough in the construction direction by inventing the \textit{patchworking method} (\cite{Viro84a}, \cite{Viro84b}). Such method and its generalizations still remain the most powerful tools to construct real algebraic hypersurfaces with prescribed topology in real algebraic toric varieties with the standard real structure. Exploiting the patchworking technique, several construction results have been achieved on real algebraic toric varieties. On the other hand, only few works have been devoted to classification problems on real non-toric varieties (\cite{Mikh98}) or on toric varieties with the non-standard real structure (for example, \cite{GudShu80}, \cite{Mikh94}, \cite{DegZvo99}, \cite{Manz20}). 
\begin{defn}
\label{defn: ksphere} 
Let $(X,\sigma)$ be a real del Pezzo surface. We say that $X$ is a $k$-\textit{sphere real del Pezzo surface} if:
\begin{enumerate}[label=(\roman*)]
\item $X$ has degree $2$ and $\mathbb{R}X$ is homeomorphic to $\bigsqcup_{j=1}^{k} S^2 $, with $1 \leq k \leq 4$ ;
\item $X$ has degree $1$ and $\mathbb{R}X$ is homeomorphic to $\mathbb{R}P^{2} \sqcup \bigsqcup_{j=1}^{k}  S^2 $, with $0 \leq k \leq 4$.
\end{enumerate}
\end{defn}
We take a step forward in the classification of embedded topology of real algebraic curves on real non-toric surfaces studying the  topological type classifications of real algebraic curves on $k$-sphere real del Pezzo surfaces of degree $1$ and $2$.\\
In 1998, Mikhalkin (\cite{Mikh98}) was the first to face the issue of classifying real algebraic curves on real algebraic surfaces with non-connected real part. In particular, he studied the topology of the real part of transverse intersections of real quadric surfaces with real cubic surfaces in $\mathbb{C}P^3$ equipped with the standard real structure. To our knowledge, there are no other classifications on real algebraic surfaces with non-connected real part. 
\subsection{Classification and generalities}
We look at real curves in real $k$-sphere del Pezzo surfaces whose homology class is determined by an integer, called \textit{class}. 
\begin{defn}
\label{defn: curve_d}
Let $(X, \sigma)$ be a real del Pezzo surface and let $A\subset X$ be a real algebraic curve. Then, we say that $A$ has \textit{class} $d$ on $X$ if $A$ realizes $dc_1(X)$ in $H_2(X; \mathbb{Z})$, where $c_1(X)$ is the anti-canonical class of $X$.
\end{defn} 
\begin{defn}
\label{defn: homeo}
Let $\bigsqcup_{i=1}^{l} B_{i}$ and $\bigsqcup_{i=1}^{l} B_{i}'$ be two collection of $l$ circles embedded in the disjoint union $V$ of $k$ spheres (and a real projective plane). 
\begin{itemize}
\item We say that the pairs $(V , \bigsqcup_{i=1}^{l} B_{i})$ and $(V, \bigsqcup_{i=1}^{l} B_{i}')$ are \textit{homeomorphic} if there exists a homeomorphism $f:V \rightarrow V$ such that $f(\bigsqcup_{i=1}^{l} B_{i})=\bigsqcup_{i=1}^{l} B_{i}'$.
\item We call \textit{topological type} any arrangement realized by a pair $(V , \bigsqcup_{i=1}^{l} B_{i})$.
\item Fix a non-negative integer $d$. We say that a topological type is a \textit{topological type in class $d$} if
\begin{enumerate}[label=(\roman*)]
\item $V$ is the disjoint union of $k$ spheres and $l$ is bounded by $d(d-1)+2$;
% $\displaystyle{l \leq d(d-1)+2}$
\item $V$ is the disjoint union of $k$ spheres and a real projective plane, and $l$ is bounded by $\frac{d(d-1)}{2}+2$.
\end{enumerate}
\end{itemize}
\end{defn}

Since $k$-sphere real del Pezzo surfaces of degree $1$ or $2$ have moduli, the classification of topological types in class $d$ on one surface, up to homeomorphism, may differ from that on another surface in the same deformation family. 
In this paper, a topological type $\mathcal{S}$ is said to be \textit{realizable in class} $d$ if there exist a $k$-sphere real del Pezzo surface $X^{k}$ of degree $1$ (resp. $2$) and a non-singular real algebraic curve $A\subset X^{k}$ of class $d$ such that the pair $(\mathbb{R}X^{k}, \mathbb{R}A)$ realizes $\mathcal{S}$.\\
%; see Results \ref{res: prop: d=1,d=2_DP2}, \ref{res: main_DP2}, \ref{res: main_prop_DP2}, \ref{res: non-symm}, \ref{res: delpezzo_1} and \ref{res: theorem_delpezzo_1}. \\
\par Let us present some general definitions and known results about real algebraic curves. Let $(X,\sigma)$ be any real algebraic non-singular compact curve. A very useful tool is Harnack-Klein's inequality (\cite{Harn76}, \cite{Klei73}), which bounds the number $l$ of connected components of $\mathbb{R}X$ by the genus $g$ of $X$ plus one. We say that $(X, \sigma)$ is a \textit{$M$-curve} or a \textit{maximal} curve, if $l=g+1$. If $X \setminus \mathbb{R}X$ is connected, we say that $X$ is of \textit{type II} or \textit{non-separating}, otherwise of \textit{type I} or \textit{separating} (\cite{Klei73}). Looking at the real part of the curve and its position with respect to its complexification gives us information about $l$ and viceversa. For example, we know that if $X$ is maximal, then $X$ is of type I. Or, if $X$ is of type I then $l$ has the parity of $g+1$. Moreover, if $X$ is of type I, the two  halves of $X\setminus \mathbb{R}X$ induce two opposite orientations on $\mathbb{R}X$ called \textit{complex orientations} of the curve (\cite{Rokh72}).\\
\par Harnack-Klein's inequality, in combination with the adjunction formula, gives obstructions on the number of connected components of any embedded real algebraic curve. For any non-negative integer $d$, such obstruction has been taken into account in Definition \ref{defn: homeo} when we define topological types in class $d$ in the disjoint union of $k$ spheres (and a real projective plane).\\

\par The anti-(bi)canonical system of a $k$-sphere real del Pezzo surface of degree $2$, respectively degree $1$ is a double cover of $\mathbb{C}P^2$ ramified along non-singular real quartic, respectively a double cover of a quadratic cone in $\mathbb{C}P^3$ ramified along a non-singular real cubic section and the vertex. Conversely, any such double cover yields a del Pezzo surface of degree $2$, respectively $1$. We mainly focus on the  topological type classification of real algebraic curves in $4$-sphere real del Pezzo surfaces of degree $2$ and of degree $1$. Later, we apply the classification tools developed for $4$-sphere del Pezzo surfaces to $k$-sphere del Pezzo surfaces, with $k < 4$. 
In combination with variations of classical classification methods, the main tools of construction of real curves rely on degeneration methods and those of obstruction of topological types rely on real enumerative geometry. The classifications in $k$-sphere real del Pezzo surfaces of degree $2$ are in Section \ref{sec: DP2} and the classifications in those of degree $1$ in Section \ref{sec: DP1}.
\subsection{$k$-sphere real degree $2$ del Pezzo surfaces}
\label{subsec: DP2} 
 Let us denote by $X^k$ a $k$-sphere del Pezzo surface of degree $2$, with $1 \leq k \leq 4$. \\
 In Sections \ref{subsec: def_DP2} - \ref{subsec: real_schemes_DP2} we give definitions, notations and state the main results about $X^k$. The proofs of the main statements are in Sections \ref{subsec: obstr_DP2}, \ref{subsec: class 1 and 2_DP2}, \ref{subsec: class_3_DP2}, \ref{subsec: symplectic} and \ref{subsec: final_constr_DP2}. Moreover, Section \ref{subsec: difficult_constr_DP2} is devoted to the construction tools used in Section \ref{subsec: final_constr_DP2}.
\par We say that a real algebraic curve in $X^k$ is \textit{symmetric} if it can be realized as lifting of a degree $d$ real plane curve via the anti-canonical map of $X^k$. Moreover, we call \textit{symmetric} a topological type in class $d$ in $\mathbb{R}X^k$ if it is realizable by a symmetric real algebraic curve of class $d$ in $X^k$; Definition \ref{defn: non-symm}. \\
We have a complete classification for class $1$ and $2$. 
\begin{res}[Proposition \ref{prop: d=1,d=2_DP2}]
\label{res: prop: d=1,d=2_DP2}
For any topological type $\mathcal{S}$ in class $d=1,2$, there exist some $X^k$ and a symmetric real algebraic curve of class $d$ in $X^k$ realizing $\mathcal{S}$.
\end{res}
Fixed a topological type $S$ in class $d=1,2$. One can construct a plane degree $d$ real curve $C$ and a plane real quartic $\tilde{Q}$ arranged in $\mathbb{R}P^{2}$ so that $\mathcal{S}$ is realized in $X^{k}$ by a real curve $A \subset X^{k}$ of class $d$, where $X^{k}$ is the double cover of $\mathbb{C}P^{2}$ ramified along $\tilde{Q}$ and $A$ is the lifting of $C$. \\
Later, we focus on real algebraic curves of class $d\geq 3$. First of all, Harnack-Klein's inequality does not give a complete set of restrictions anymore. Besides an application of Comessatti-Petrovsky inequality (\cite{Come28}, \cite{Petro33}, \cite{Petro38}) for real curves of even class $d$ (Proposition \ref{prop: petro}), most of all classical obstructions do not seem to apply; for example we did not find applications of the congruence results of \cite{Rokh72} and \cite{GuiMar77} to our setting.
\par In Section \ref{subsec: obstr_DP2}, in order to give new obstructions for topological types for every integer $d$, we use a variation of Bézout's type restrictions exploiting Welschinger invariants (\cite{Wels05}, \cite{Shus15}, \cite{IteKhaShu15}), from real enumerative geometry.
\par For $d=3$, dealing with real maximal curves only, we are able to obtain a partial classification on $4$-sphere real del Pezzo surfaces of degree $2$.
\begin{res}[Theorem \ref{thm: principal_DP2}]
\label{res: main_DP2}  
There are $74$ topological types with $8$ connected components in class $3$ which are not prohibited by Bézout's type restrictions. Moreover $48$ among the $74$ topological types are such that: for each of them there exist some $X^4$ and a real maximal curve of class $3$ in $X^4$ realizing it. In addition $19$ out of $48$ are realized by real symmetric curves.
\end{res}
%$19$ topological types are realized combining the action of the anti-canonical map (therefore symmetric) with classical construction methods on the real projective plane (Proposition \ref{prop: circ_label_DP2}). The remaining $29$ topological types (Proposition \ref{prop: star_label_DP2}) are realized using some degeneration methods.\\
There is one topological type in class $3$ that we can realize by a non-singular real symplectic curve (Proposition \ref{prop: symplectic_curve_surface}) in a real \textit{symplectic} degree $2$ del Pezzo surface with real part composed by $4$ spheres. We do not know yet whether this topological type is realizable algebraically. \\
\par Using the obstructions in Section \ref{subsec: obstr_DP2} and combining the construction methods previously adopted to the case of $X^4$, we obtain the following result on $X^k$ with $k < 4$.
\begin{res}[Proposition \ref{prop: principal_DP2}]
\label{res: main_prop_DP2} 
There are respectively $79$, $61$ and $28$ topological types with $8$ connected components in class $3$ which are not prohibited by Bézout's type restrictions. Moreover $49$, $38$ and $17$ respectively among the $79$, $61$ and $28$ topological types are such that: for each of them there exist some $X^k$ and real maximal curve of class $3$ in $X^k$ realizing it, respectively with $1 \leq k \leq 3$. In addition $6$ out of respectively $49$, $38$ and $2$ out of $17$ are realized by real symmetric curves.
%, respectively for $k=3,2$ and $k=1$.
\end{res}
Finally, we deal with non-symmetric topological types in class $d$ on $X^{4}$. We do not know yet if there are non-symmetric topological types realizable in class $3$ and $4$ on $X^{4}$. But, we have the following result.
\begin{res}[Proposition \ref{prop: non-symm_real_scheme}]
\label{res: non-symm}  
For any integer $d\geq 5$, there exists a topological type $\mathcal{S}$ in class $d$ (consisting of $2d+1$ connected components) such that there exists some $X^4$ and a non-symmetric real algebraic curve of class $d$ in $X^4$ realizing $\mathcal{S}$. Furthermore $\mathcal{S}$ is not realizable by any symmetric real algebraic curve of class $d$ in any $X^4$. 
\end{res} 
%$19$ topological types are realized combining the action of the anti-canonical map (therefore symmetric) with classical construction methods on the real projective plane (Proposition \ref{prop: circ_label_DP2}). The remaining $29$ topological types (Proposition \ref{prop: star_label_DP2}) are realized using some degeneration methods.\\
All symmetric topological types in class $3$ 
%????correggi
%(Results \ref{res: main_DP2} and \ref{res: main_prop_DP2}) 
are realized using construction techniques similar to those used for class $1$ and $2$ (Result \ref{res: prop: d=1,d=2_DP2}). Such construction methods do not seem to be enough to realize all topological types in class $d \geq 3$. Indeed, we use degeneration methods to realize some topological types in class $3$ and some (non-symmetric) topological types in class $d \geq 5$; see Proposition \ref{prop: star_label_DP2} and the end of Section \ref{subsec: final_constr_DP2}. Let us give a rough idea of the construction technique. 
%; see Proposition \ref{prop: star_label_DP2}
\par We start with a real degree $2$ del Pezzo surface $X_0$ with a real non-degenerate double point as only singularity and real part composed by $k$ two-dimensional connected components, with $1 \leq k \leq 3$. 
%; as example, see on the left of Fig. \ref{fig: degeneration_intro_DP2_1}. 
Then, we degenerate $X_0$ to the union of a real ruled surface $T$ and a $k$-sphere real del Pezzo surface $S$ intersecting transversely along a curve $E$ (Proposition \ref{prop: existence_DP2_family}).
We construct real algebraic curves $C_T, C_S$ of given topology and homology class separately on $T$ and on $S$; Section \ref{subsubsec: intermediate_construction_DP2}. Moreover $C_{T}$ and $C_{S}$ have to intersect $E$ in the same collection of points. Then, to end the construction, we use the version of patchworking developed by Shustin and Tyomkin (\cite{ShuTyo06I}, \cite{ShuTyo06II}) which allows us, under some transversality conditions, to "glue" such surfaces and curves to realize real algebraic curves on some $X^k$, respectively on some $X^{k+1}$, 
% (Fig. \ref{fig: degeneration_intro_DP2_1} on the right), 
with topology prescribed by the topological type realized by the triplet $(\mathbb{R}T \cup \mathbb{R} S, \mathbb{R}E, \mathbb{R}C_T \cup \mathbb{R}C_S)$; see Theorem \ref{thm: weak_patch_DP2}. The patchworking technique presented in \cite{ShuTyo06I}, \cite{ShuTyo06II} has been recently exploited in \cite{BDIM18} to construct real algebraic curves whose real part consists of a finite number of points in $\mathbb{C}P^2$ and in the quadric ellipsoid.
\subsection{$k$-sphere real degree $1$ del Pezzo surfaces}
\label{subsec: DP1}
Let us denote by $Y^k$ a $k$-sphere del Pezzo surfaces of degree $1$, with $0 \leq k \leq 4$. \\
In Sections \ref{subsec: intro_DP1}, \ref{subsec: real_schemes_DP1}, \ref{subsec: positive_negative_DP1}, \ref{subsec: main_DP1} we give definitions, notations and state the main results about $k$-sphere real del Pezzo surfaces of degree $1$. The proofs are in Sections \ref{subsec: obstructions_DP1} and \ref{subsec: construction_DP1}.
%\ref{subsubsec: small_pert_DP1}, \ref{subsubsec: harnack_DP1} and
%\ref{subsubsec: particular_bitchy_contruction_DP1}. \\
%Let us denote by $Y^k$ a $k$-sphere del Pezzo surfaces of degree $1$. 
The obstruction presented in Proposition \ref{prop: pseudolines}, called $\mathcal{J}$-\textit{obstruction},  gives a complete set of restrictions for topological types up to class $3$.
\begin{res}[Proposition \ref{prop: delpezzo_1}]
\label{res: delpezzo_1} 
For any topological type $\mathcal{S}$ in class $d=1,2,3$, which is not prohibited by the $\mathcal{J}$-obstruction, there exist some $Y^k$ and a real algebraic curve of class $d$ in $Y^k$ realizing $\mathcal{S}$.
\end{res}
Moreover, in the case of $Y^4$, one can define a notion of positivity of the spheres inducing a refined classification (Section \ref{subsec: positive_negative_DP1}).
Up to class $3$, we show that the refined and non-refined classifications are the same. Let us consider the disjoint union of a real projective plane and $4$ spheres. Label two spheres as positive $S^2_+$, the others as negative $S^2_-$ and define 
$$V^{+}:= S^2_+ \sqcup S^2_+ \text{ and } V^{-}:= S^2_- \sqcup S^2_-.$$ 
\begin{defn}
\label{defn: homeorefined}
Let $\bigsqcup_{i=1}^{l} B_{i}$ and $\bigsqcup_{i=1}^{l} B_{i}'$ be two collection of $l$ circles embedded in $V:= \mathbb{R}P^{2}\sqcup V^{+} \sqcup V^{-}$.
\begin{enumerate}
\item We say that the pairs $(V, \bigsqcup_{i=1}^{l}  B_{i})$ and $(V , \bigsqcup_{i=1}^{l} B_{i}')$ are \textit{refined homeomorphic} if they are homeomorphic via a homeomorphism $f:V \rightarrow V$ such that $f(V^{\pm})=V^{\pm}$.
\item A topological type in $V$ up to refined homeomorphism, is called a \textit{refined topological type}.
\end{enumerate}
\end{defn}

\begin{res}[Theorem \ref{thm: delpezzo_1}]
\label{res: theorem_delpezzo_1} 
For any refined topological type $\mathcal{S}$ in class $d=1,2,3$, which is not prohibited by the $\mathcal{J}$-obstruction, there exist some $Y^4$ and a real algebraic curve of class $d$ in $Y^4$ realizing $\mathcal{S}$.
\end{res}
Furthermore, Proposition \ref{prop: bezout_del_pezzo_deg_1} gives Bézout-type restrictions for class $d\geq 4$.
\section{Preliminaries}
\subsection{Encoding topological types and real schemes}
\label{subsec: cha2_codage_isotopie}
Let $X$ be a real algebraic surface equipped with a real structure $\sigma: X \rightarrow X$. Let $\sigma_* :H_2(X;\mathbb{Z})\rightarrow H_2(X;\mathbb{Z})$ be the group homomorphism induced by $\sigma$ and let $H_2^-(X;\mathbb{Z})$ be the $(-1)$-eigenspace of $\sigma_*$. In the following, for a fixed homology class $\alpha \in H_2^-(X; \mathbb{Z})$, we are interested in the classification of the topological types of the pair $(\mathbb{R}X, \mathbb{R}A)$ up to homeomorphism, where $A \subset X$ is a non-singular real algebraic curve realizing $\alpha$ in $H_2(X; \mathbb{Z})$. The real part of $A$ is homeomorphic to a union of circles 
%and points 
embedded in $\mathbb{R}X$, and can be embedded in $\mathbb{R}X$ in different ways. For the purpose of this paper, we only need to explain how to encode the embedding of a given collection $\bigsqcup_{i=1,..,l}B_i$ of $l$ disjoint circles in $\mathbb{R}P^2$ and in $S^2$. An embedded circle realizing the trivial-class in $H_1(\mathbb{R}P^2; \mathbb{Z}/2\mathbb{Z})$ or $H_1(S^2; \mathbb{Z})$ is called \textit{oval}, otherwise is called \textit{pseudo-line}. \\
\par An oval in $\mathbb{R}P^{2}$ separates two disjoint non-homeomorphic connected components: 
%of $(S^2 \setminus \{p\}) \setminus B_i$
the connected component homeomorphic to a disk is called \textit{interior} of the oval; the other one is called \textit{exterior} of the oval. For each pair of ovals, if one is in the interior of the other we speak about an \textit{injective pair}, otherwise a \textit{non-injective pair}. \\
On the other hand, an oval in $S^{2}$ bounds two disks; therefore, on $S^{2}$ interior and exterior of an oval are not well defined. It follows that the encoding on $S^{2}$ is not well defined either and it depends on the choice of a point $p$ on $S^{2} \setminus\bigsqcup_{i=1,..,l}B_i$. Let us take  $S^2$ deprived of $p$, which is homeomorphic to $\mathbb{R}^{2}$ and  let us call \textit{oval} any circle embedded in $\mathbb{R}^2$. Analogously to the case of $\mathbb{R}P^{2}$, in $\mathbb{R}^{2}$ one define interior and exterior of an oval and (non-)injective pairs for each pair of ovals. \\
\par We shall adopt the following notation to encode a given topological type realized by a pair $(V,\bigsqcup_{i=1,..,l}B_i)$, where $V$ is $\mathbb{R}^2$ or $\mathbb{R}P^2$. 
\begin{note}
An empty union of ovals is denoted by $0$. We say that a union of $l$ ovals realizes $l$ if there are no injective pairs. The symbol $\langle \mathcal{S}\rangle$ denotes the disjoint union of a non-empty collection of ovals realizing $\mathcal{S}$, and an oval forming an injective pair with each oval of the collection. \\
We use the following notation only in Proposition \ref{prop: non-symm_real_scheme}, Proposition \ref{prop: S_pencil_quartic_DP2} and in the proof of the Proposition \ref{prop: non-symm_real_scheme} in Section \ref{subsec: final_constr_DP2}. Let $h$ be a non-negative integer. The symbol $N(h,\mathcal{S})$ denotes 
\begin{itemize}[label=\textbullet]
\item $\mathcal{S}$, if $h=0$;
\item $\langle   N(h-1,\mathcal{S})  \rangle $, 
%$\langle \quad \displaystyle{\stackrel{k-1}{\langle}} \mathcal{S} \displaystyle{\stackrel{k-1}{\rangle}} \quad \rangle $, 
otherwise.
\end{itemize}
Finally, the disjoint union of any two collections of ovals, realizing respectively $\mathcal{S}'$ and $\mathcal{S}''$ in $V$, is denoted by $\mathcal{S}'  \sqcup  \mathcal{S}''$ if none of the ovals of one collection forms an injective pair with the ovals of the other one and they are both non-empty collections. A disjoint union of the form $\mathcal{S}' \sqcup 0$ is still denoted $\mathcal{S}'$. Moreover, a pseudo-line in $\mathbb{R}P^2$ is denoted by $\mathcal{J}$.
\end{note}
%\par Since $\mathbb{R}^2$ is homeomorphic to $S^2$ deprived of a point, w
\begin{defn}
\label{defn: realizing_real_scheme}
\begin{itemize}
\item[]
\item We say that the pair $(S^2, \bigsqcup_{i=1,..,l} B_i)$ realizes $\mathcal{S}$ if there exists a point $p\in S^2 \setminus \bigsqcup_{i=1,..,l}B_i$ such that $(S^2 \setminus \{p\},  \bigsqcup_{i=1,..,l}B_i)$ realizes $\mathcal{S}$.
\item Let $(\mathbb{R}P^2, \bigsqcup_{i=1,..,l} B_i)$ and $(V, \bigsqcup_{i=1,..,l'} B_i')$ be pairs respectively realizing $\mathcal{S}$ and $\mathcal{S}'$, where $V$ is a disjoint union of $2$-spheres. We say that the pair $(\mathbb{R}P^2 \sqcup V, \bigsqcup_{i=1,..,l} B_i\sqcup \bigsqcup_{i=1,..,l'} B_i')$ realizes $\mathcal{S}   |   \mathcal{S}'$.
\item Let $(S^2, \bigsqcup_{i=1,..,l} B_i)$ and $(S^2, \bigsqcup_{i=1,..,l'} B_i')$ be pairs respectively realizing $\mathcal{S}$ and $\mathcal{S}'$. We say that the pair $(S^2 \sqcup S^2, \bigsqcup_{i=1,..,l} B_i\sqcup \bigsqcup_{i=1,..,l'} B_i')$ realizes $\mathcal{S}   :   \mathcal{S}'$.
\end{itemize}
\end{defn}
\begin{defn}
\label{defn: real_scheme}
Let $(X,\sigma)$ be a real algebraic surface. A topological type $\mathcal{S}$ in $\mathbb{R}X$, up to homeomorphism, is called \textit{real scheme}. Let $A \subset X$ be a real curve. We say that $A$ has real scheme $\mathcal{S}$ if the pair $(\mathbb{R}X, \mathbb{R}A)$ realizes $\mathcal{S}$, up to homeomorphism.
\end{defn}
Finally, we need some more definitions for particular collections of ovals.
\begin{defn}
\label{defn: nest_projective_plane}
A collection of $h$ ovals in $\mathbb{R}P^2$ is called a \textit{nest} of depth $h$ if any two ovals of the collection form an injective pair. Let $N_{1}$ and $N_{2}$ be two nests of depth $i_1$ and $i_2$ in $\mathbb{R}P^2$. We say that the nests are disjoint if each pair of ovals, composed by an oval of $N_1$ and an oval of $N_2$, is non-injective.
\end{defn}
\begin{defn}
\label{defn: nest_sphere}
A collection $N_h$ of $h$ ovals in $S^2$ is a \textit{nest} if each connected component of $S^2 \setminus N_h$ is either a disk or an annulus.\\
Let $N_{i_k}$ be $k$ nests of depth $i_k$ in $S^2$, with $k\geq 3$. We say that the nests are \textit{disjoint} if a disk of $S^2 \setminus N_{i_j}$ contains all other $k-1$ nests, for all $j\in \{1,..,k\}$.
\end{defn}
\subsection{Hirzebruch surfaces}
\label{subsec: cha2_Hirzebruch_surf}
A Hirzebruch surface is a compact complex surface which admits a holomorphic fibration over $\mathbb{C}P^1$ with fiber $\mathbb{C}P^1$ (\cite{Beau83}). Every Hirzebruch surface is biholomorphic to exactly one of the surfaces $\Sigma_n=\mathbb{P}(\mathcal{O}_{\mathbb{C}P^1}(n)\oplus \mathbb{C})$ for $n \geq 0$. The surface $\Sigma_n$ admits a natural fibration $$\pi_n: \Sigma_n \rightarrow \mathbb{C}P^1$$ with fiber $\mathbb{C}P^1=:F_n$. Denote by $B_n$, resp. $E_n$, the section $\mathbb{P}(\mathcal{O}_{\mathbb{C}P^1}(n)\oplus \{0\})$, resp. $\mathbb{P}(\{0\}\oplus \mathbb{C})$. The self-intersection of $B_n$ (resp. $E_n$ and $F_n$) is $n$ (resp. $-n$ and $0$). When $n \geq 1$, the exceptional divisor $E_n$ determines uniquely the Hirzebruch surface since it is the only irreducible and reduced algebraic curve in $\Sigma_n$ with negative self-intersection.\\
\par For example $\Sigma_0=\mathbb{C}P^1\times \mathbb{C}P^1$. The Hirzebruch surface $\Sigma_1$ is the complex projective plane blown-up at a point, and $\Sigma_2$ is the quadratic cone with equation $Q: X^2+ Y^2-Z^2=0$ blown-up at the node in $\mathbb{C}P^3$. The fibration of $\Sigma_2$ (resp. of $\Sigma_1$) is the extension of the projection from the blown-up point to a hyperplane section (resp. to a line) which does not pass through the blown-up point.\\
\par The group $H_2(\Sigma_n; \mathbb{Z})$ is isomorphic to $\mathbb{Z} \oplus \mathbb{Z}$ and is generated by the classes $[ B_n ]$ and $[ F_n ]$. An algebraic curve C in $\Sigma_n$ is said to be of bidegree $(a,b)$ if it realizes the homology class $a[ B_n ] + b[ F_n ]$ in $H_2(\Sigma_n; \mathbb{Z})$. Note that $[E_n]=[B_n]-n[F_n]$ in $H_2(\Sigma_n; \mathbb{Z})$. An algebraic curve of bidegree $(3,0)$ on $\Sigma_n$ is called a \textit{trigonal curve}.\\
\par We can obtain $\Sigma_{n+1}$ from $\Sigma_n$ via a birational transformation $\beta^p_n: \Sigma_n \dashrightarrow \Sigma_{n+1}$ which is the composition of a blow-up at a point $p\in E_n \subset \Sigma_n$ and a blow-down of the strict transform of the fiber $\pi^{-1}_n(\pi_n(p))$.\\
\par The surface $\Sigma_n$ is also the projective toric surface which corresponds to the polygon of vertices $(0,0), (0,1),$ $(1,1), (n+1,0)$. The Newton polygon of an algebraic curve $C$ of bidegree $(a,b)$ on $\Sigma_n$, lies inside the trapeze with vertices $(0,0), (0,a), (b,a), (an+b,0)$. The surface $\Sigma_n$ is canonically endowed by a real structure induced by the standard complex conjugation in $(\mathbb{C^*})^2$. 
For this real structure $\mathbb{R}\Sigma_n$ is a torus if $n$ is even and a Klein bottle if $n$ is odd. We will depict $\mathbb{R}\Sigma_n$ as a quadrangle whose opposite sides are identified in a suitable way. Moreover, the horizontal sides will represent $\mathbb{R}E_n$. Furthermore, let $C$ be any type I real algebraic curve in $\Sigma_n$, the depicted orientation on $\mathbb{R}C$ will denote a complex orientation of the curve.\\
The restriction of $\pi_n$ to $\mathbb{R}\Sigma_n$ defines an $S^1$-bundle over $S^1$ that we denote by $\mathcal{L}$. We are interested in the isotopy types with respect to $\mathcal{L}$ of real algebraic curves in $\mathbb{R}\Sigma_n$. 
\begin{defn}
\begin{itemize}
\item[]
\item Let $\eta$ be an arrangement of circles and points immersed in $\mathbb{R}\Sigma_n$ such that for any immersed point there exists a line of $\mathcal{L}$ intersecting the point with multiplicity $2$ and the multiplicity of intersection at each point of an immersed circle with the lines of $\mathcal{L}$ is at most $2$. Such an arrangement, up to homeomorphism, is called real scheme. We say that a real algebraic curve $C \in \mathbb{R}\Sigma_n$ has real scheme $\eta$ if the pair $(\mathbb{R}\Sigma_n, \mathbb{R}C)$ realizes $\eta$, up to homeomorphism.
\item Two real schemes in $\mathbb{R}\Sigma_n$ are \textit{$\mathcal{L}$-isotopic} if there exists an isotopy of $\mathbb{R}\Sigma_n$ which brings one arrangement to the other, each line of $\mathcal{L}$ to another line of $\mathcal{L}$ and whose restriction to $\mathbb{R}E_n$ is an isotopy of $\mathbb{R}E_n$. 
\item A real scheme in $\mathbb{R}\Sigma_n$ up to $\mathcal{L}$-isotopy of $\mathbb{R}\Sigma_n$ is called an \textit{$\mathcal{L}$-scheme}.
\item An $\mathcal{L}$-scheme $\eta$ is \textit{realizable} by a real algebraic curve of bidegree $(a,b)$ in $\Sigma_n$ if there exists such a curve 
%with at most stable singularities (\cite[Section 3.1.2]{Degt12}) and 
whose real part is $\mathcal{L}$-isotopic to $\eta$. 
\item A \textit{trigonal} $\mathcal{L}$-scheme is an $\mathcal{L}$-scheme in $\mathbb{R}\Sigma_n$ which intersects each fiber in 1 or 3 real points counted with multiplicities and which does not intersect $\mathbb{R}E_n$.
\item A trigonal $\mathcal{L}$-scheme $\eta$ in $\mathbb{R}\Sigma_n$ is \textit{hyperbolic} if it intersects each fiber in $3$ real points counted with multiplicities.
\end{itemize}
\end{defn}
\subsection{Dessins d'enfants}
\label{subsec: cha2_dess_enf}
Orevkov in \cite{Orev03} has formulated the existence of real algebraic trigonal curves realizing a given trigonal $\mathcal{L}$-scheme in $\mathbb{R}\Sigma_n$ in terms of the existence of a graph on $\mathbb{C}P^1$. In the proof of Lemma \ref{lem: curves_sigma1_DP2} and of Proposition \ref{prop: (2,2)}, we use this construction technique.
\begin{defn}
\label{defn: complete_graph}
Let $n$ be a fixed positive integer. We say that a graph $\Gamma$ is a \textit{real trigonal graph of degree} $n$ if 
\begin{itemize}
\item it is a finite oriented connected graph embedded in $\mathbb{C}P^1$, invariant under the standard complex conjugation of $\mathbb{C}P^1$;
\item it is decorated with the following additional structure:
\begin{itemize}
\item every edge of $\Gamma$ is colored solid, bold or dotted;
\item every vertex of $\Gamma$ is $\bullet$, $\circ$, $\times$ (said \textit{essential vertices}) or monochrome
\end{itemize}
and satisfying the following conditions:
\begin{enumerate}
\item $\mathbb{P}_{\mathbb{R}}^{1}$ is a union of vertices and edges of $\Gamma$;
\item any vertex is incident to an even number of edges; moreover, any $\circ$-vertex (resp. $\bullet$-vertex) to a multiple of $4$ (resp. $6$) number of edges;
\item  for each type of essential vertices, the total sum of edges incident to the vertices of a same type is $12n$;
\item there are no monochrome cycles;
\item the orientations of the edges of $\Gamma$ form an orientation of $\partial (\mathbb{C}P^1\setminus \Gamma)$ which is compatible with an orientation of $\mathbb{C}P^1\setminus \Gamma$ (see Fig. \ref{fig: vertices_gamma});
\item all edges incidents to a monochrome vertex have the same color;
\item $\times$-vertices are incident to incoming solid edges and outgoing dotted edges;
\item $\circ$-vertices are incident to incoming dotted edges and outgoing bold edges;
\item $\bullet$-vertices are incident to incoming bold edges and outgoing solid  edges.
\end{enumerate}
\end{itemize}
\end{defn}
Let $n$ be a positive integer and let $c(x,y)=y^3+b_2(x)y+b_3(x)$ be a real polynomial, where $b_i(x)$ has degree $in$ in $x$. By a suitable change of coordinates in $\Sigma_n$, any trigonal curve $C$ in $\Sigma_n$ can be put into this form. Denote by $\Delta=-4b_2^3+27b_3^2$ the discriminant of $c(x,y)$ with respect to the variable $y$. The knowledge of the arrangement of the real roots of the real polynomials $\Delta=-4b_2^3+27b_3^2$, $27b_3^2$ and $-4b_2^3$ in $\mathbb{R}\Sigma_n$ allows to recover the trigonal $\mathcal{L}$-scheme realized by $C$ in $\mathbb{R}\Sigma_n$. Let $f: \mathbb{C}P^1 \rightarrow \mathbb{C}P^1$ be the homogenized discriminant, i.e. the rational function defined by $f:=\frac{\Delta}{27b_3^2}$. Orevkov's method allows to construct real polynomials $c(x,y)$ which have prescribed arrangements of the real roots and the construction is based on a consideration of the arrangement of the graph given by $f^{-1}(\mathbb{R}P^1)$ with the coloring and orientation induced by those of $\mathbb{R}P^1$ as depicted in Fig.\ref{fig: coloring_proj_line_locally}.1. In this section, we only give an algorithmic way to encode any trigonal $\mathcal{L}$-scheme in $\mathbb{R}\Sigma_n$ into a colored oriented graph on $\mathbb{R}P^1 \subset \mathbb{C}P^1$ just looking at the intersections of the fibers of $\mathcal{L}$ with $\eta$; for details see \cite{Bruga07},  \cite{Degt12}, \cite{Orev03}.
\begin{figure}[h!]
\begin{center}
\begin{picture}(100,40)
\put(-92,-13){Fig. \ref{fig: coloring_proj_line_locally}.1: Colored oriented $\mathbb{R}P^1$}
\put(-28,3){\small{$0$}}
\put(-65,3){\small{$\infty$}}
\put(3,3){\small{$1$}}
\put(82,-13){Fig. \ref{fig: coloring_proj_line_locally}.2: $\mathcal{L}_{I}$ in $\mathbb{R}\Sigma_n$}
\put(109,37){$\overbrace{\quad \quad \quad}$}
\put(118,47){$\mathcal{L}_{I}$}
\put(-138,-8){\includegraphics[width=1\textwidth]{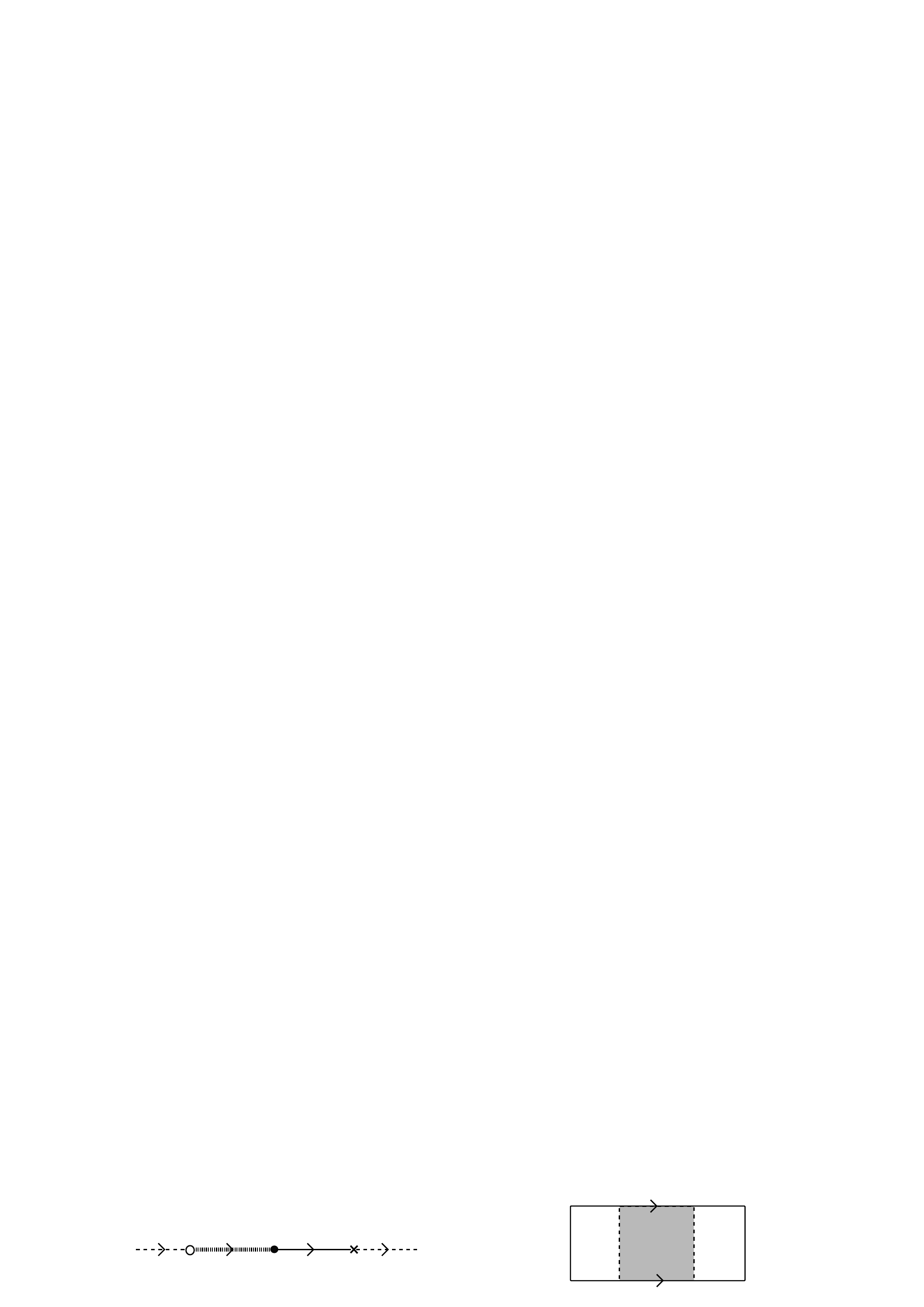}}
\end{picture}
\end{center}
\caption{ }
\label{fig: coloring_proj_line_locally}
\end{figure} 
%dopo disegni
\begin{defn}
\label{defn: completable_graph}
\begin{itemize}
Let $\eta$ be a trigonal $\mathcal{L}$-scheme 
in $\mathbb{R}\Sigma_n$.   
For any fixed interval of points $I:=\{(x,\overline{y}): \overline{y} \in \mathbb{R}, \quad x \in [a, a+b] \subset \mathbb{R}\} \subset \mathbb{R}\Sigma_n$, we denote with $\mathcal{L}_{I}$ the fibers of $\mathcal{L}$ containing the points of $I$; see Fig. \ref{fig: coloring_proj_line_locally}.2. Thanks to $\pi_{n|_{\mathbb{R}\Sigma_n}}$ we can encode $\eta$ into a colored oriented graph $\overline{\Gamma}$ on $\mathbb{R}P^1 \subset \mathbb{C}P^1$ as follows (in Fig. \ref{fig: real graph_translate} the dashed lines denote fibers of $\mathcal{L}$):
\begin{enumerate}
\item To each fiber of $\mathcal{L}$ intersecting $\eta$ in two points, one of which with multiplicity $2$, we associate a $\times$-vertex on $\mathbb{R}P^1$. 
\item Fixed an interval $I$, let $F_1,F_2$ be two fibers of $\mathcal{L}_{I}$ intersecting $\eta$ in two points, one of which with multiplicity $2$, such that $\eta$, up to $\mathcal{L}$-isotopy, is locally as depicted in Fig. \ref{fig: real graph_translate}.2 or \ref{fig: real graph_translate}.3. Let $F_3$ be another fiber between $F_1, F_2$. Then, we associate to $F_3$ a $\circ$-vertex on $\mathbb{R}P^1$. Moreover, if between $F_1$ and $F_2$ each other fiber intersects $\eta$ in only one real point (as in Fig. \ref{fig: real graph_translate}.2), then we associate to a fiber between $F_1$ and $F_3$ (resp. $F_3$ and $F_2$) a $\bullet$-vertex on $\mathbb{R}P^1$. Between $\bullet$ and $\circ$-vertices we put bold edges.
\item For all intervals $I$, except for the fibers of $\mathcal{L}_{I}$ to which we associate essential vertices and bold edges, we associate dotted (resp. solid) edges on $\mathbb{R}P^1$ to the fibers of $\mathcal{L}_{I}$ which intersect $\eta$ in three distinct real points (resp. only one real point).
\item The orientations of the edges incident to a vertex are in an alternating order. In particular, the orientations of the edges incident to an essential vertex are respectively as described in $5-7$ of Definition \ref{defn: complete_graph}.
\end{enumerate}
The graph $\overline{\Gamma}$, called \textit{real graph}, is considered up to isotopy of $\mathbb{R}P^1$, namely it is determined by the order of its colored vertices since the edges are determined by the color of their adjacent vertices.\\
We say that $\overline{\Gamma}$ is \textit{completable in degree} $n$ if there exists a complete real trigonal graph $\Gamma$ of degree $n$ such that $\Gamma \cap \mathbb{R}P^1=\overline{\Gamma}$. 
\end{itemize}
\end{defn}

\begin{figure}[h!]
\begin{center}
\begin{picture}(100,90)
\put(-87,-13){Fig. \ref{fig: real graph_translate}.1}
\put(-35,-13){Fig. \ref{fig: real graph_translate}.2}
\put(39,-13){Fig. \ref{fig: real graph_translate}.3}
\put(-43,90){\small{$F_1$}}
\put(-20,90){\small{$F_3$}}
\put(0,90){\small{$F_2$}}
\put(101,-13){Fig. \ref{fig: real graph_translate}.4}
\put(40,90){\small{$F_1$}}
\put(50,90){\small{$F_3$}}
\put(60,90){\small{$F_2$}}
\put(169,-13){Fig. \ref{fig: real graph_translate}.5}
\put(-120,-50){\includegraphics[width=1\textwidth]{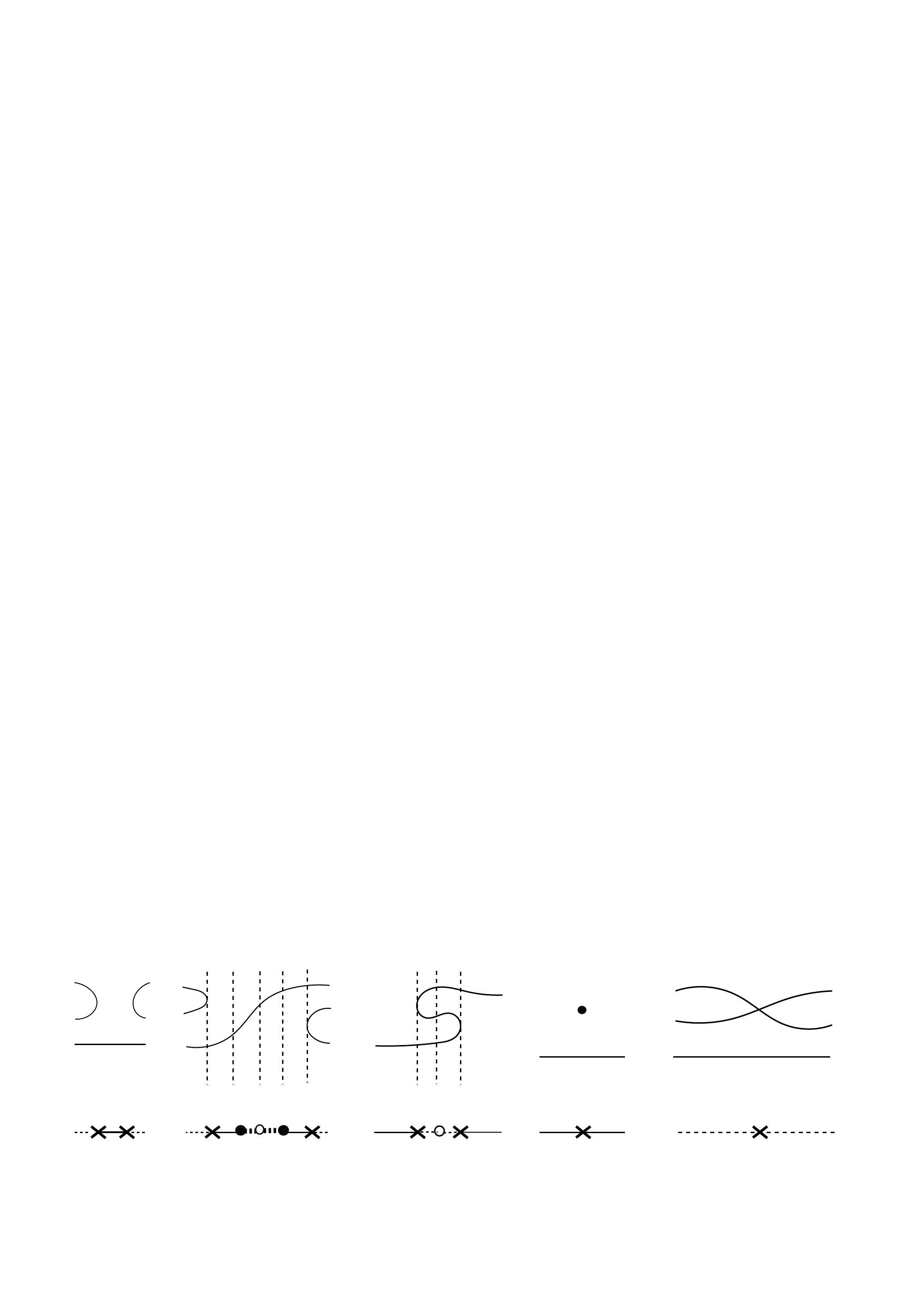}}
\end{picture}
\end{center}
\caption{Local topology of trigonal $\mathcal{L}$-schemes and their corresponding real graphs.}
\label{fig: real graph_translate}
\end{figure} 
%dopo disegni
\begin{thm}[\cite{Orev03}, \cite{Degt12}]
\label{thm: existence_trigonal}
A trigonal $\mathcal{L}$-scheme on $\mathbb{R}\Sigma_n$ is realizable by a real algebraic trigonal curve if and only if its real graph is completable in degree $n$. 
\end{thm}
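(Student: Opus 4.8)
The plan is to build a dictionary between real trigonal curves in $\Sigma_n$ and real trigonal graphs of degree $n$, using the rational function $f=\Delta/(27b_3^2)$ as the bridge, and then to read off both implications of the equivalence from this dictionary. Throughout I use that, after the change of coordinates, a trigonal curve is given in Weierstrass form $c(x,y)=y^3+b_2(x)y+b_3(x)$ with $\deg b_2=2n$ and $\deg b_3=3n$, so that $f=1-4b_2^3/(27b_3^2)$ is a real rational function of degree $6n$ whose only critical values are $0,1,\infty$.

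First I would treat the direction ``realizable $\Rightarrow$ completable''. Given a real trigonal curve $C$ realizing the $\mathcal{L}$-scheme $\eta$, I form $f$ and set $\Gamma:=f^{-1}(\mathbb{R}P^1)$, with the coloring and orientation pulled back from Fig.~\ref{fig: coloring_proj_line_locally}. I then check conditions (1)--(9) of the definition of a real trigonal graph. Conjugation-invariance comes from $f$ being real. The divisibility of valencies is the heart: over $f=\infty$ the pole order of $f$ equals $2\,\mathrm{ord}(b_3)$, hence is even, giving $\circ$-vertices a number of edges divisible by $4$; over $f=1$ the order of $1-f$ equals $3\,\mathrm{ord}(b_2)$, giving $\bullet$-vertices a number of edges divisible by $6$. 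The count $12n$ in (3) is exactly $2\deg f=12n$ edges over each of $f^{-1}(0),f^{-1}(1),f^{-1}(\infty)$. Compatibility of the orientations (5) and absence of monochrome cycles (4) are automatic for a graph arising from a genuine proper map. Finally, comparing the encoding algorithm for the real graph with the local pictures of Fig.~\ref{fig: real graph_translate} shows $\Gamma\cap\mathbb{R}P^1=\overline{\Gamma}$, so $\overline{\Gamma}$ is completable in degree $n$.

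For the converse ``completable $\Rightarrow$ realizable'' I reverse the construction. Starting from a complete real trigonal graph $\Gamma$ of degree $n$, the Grothendieck correspondence together with the Riemann existence theorem produces a rational function $f\colon\mathbb{C}P^1\to\mathbb{C}P^1$, branched only over $\{0,1,\infty\}$, with $f^{-1}(\mathbb{R}P^1)=\Gamma$: the faces of $\Gamma$ are sent to the two half-planes according to the orientation data (5), the monochrome vertices become the critical points over arc interiors, and the absence of monochrome cycles (4) guarantees properness of degree $6n$. Conjugation-invariance of $\Gamma$ lets me take $f$ to commute with complex conjugation, hence real. From $f$ I recover the Weierstrass data: since the $\bullet$-valencies are divisible by $6$, the zeros of $1-f$ have order divisible by $3$, which lets me extract a polynomial $b_2$ of degree $2n$ as a cube root; since the $\circ$-valencies are divisible by $4$, the poles of $f$ have even order, which lets me extract $b_3$ of degree $3n$ as a square root. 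The degree bookkeeping of (3) forces precisely these degrees and ensures that $y^3+b_2y+b_3=0$ closes up to a trigonal curve of bidegree $(3,0)$ in $\Sigma_n$ disjoint from $\mathbb{R}E_n$. Reading $\overline{\Gamma}=\Gamma\cap\mathbb{R}P^1$ through the same encoding algorithm shows its real scheme is $\mathcal{L}$-isotopic to $\eta$.

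The main obstacle is this converse reconstruction. Two points require care: upgrading the a priori complex function produced by Riemann existence to a real one, which I handle through the conjugation symmetry of $\Gamma$ and the uniqueness in the dessins correspondence; and extracting \emph{genuine} polynomials $b_2,b_3$ of the exact degrees $2n,3n$ via the cube and square roots dictated by the valency divisibilities, while controlling the behaviour at $x=\infty$ so that the resulting curve is a smooth trigonal curve in $\Sigma_n$ avoiding $E_n$. Both steps are exactly the content packaged in Orevkov's and Degtyarev's treatment (\cite{Orev03}, \cite{Degt12}); the remaining work consists of the routine local verifications matching Fig.~\ref{fig: coloring_proj_line_locally} and Fig.~\ref{fig: real graph_translate} against the graph axioms.
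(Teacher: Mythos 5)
First, a point of comparison: the paper contains no proof of this statement at all --- Theorem \ref{thm: existence_trigonal} is imported from \cite{Orev03} and \cite{Degt12} --- so your proposal can only be measured against the arguments of those references. Your overall dictionary (the function $f=\Delta/(27b_3^2)$ of degree $6n$, the graph $\Gamma=f^{-1}(\mathbb{R}P^1)$ with colors and orientations pulled back from $\mathbb{R}P^1$, the valency counts $2\deg f=12n$ over each of $0,1,\infty$, and equivariant Riemann existence for the converse) is indeed the correct skeleton of their proof.

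However, the forward direction as you wrote it has a genuine gap. Your claim that $f$ has ``only critical values $0,1,\infty$'' is false, and it is contradicted by the very definition you are verifying: monochrome vertices appear in Definition \ref{defn: complete_graph} precisely because the discriminant function of an arbitrary trigonal curve has real critical values inside the open arcs, and it may also have non-real critical values. Consequently, while condition $(5)$ does hold for $f^{-1}(\mathbb{R}P^1)$ (each face maps onto a half-plane, possibly with branching), condition $(4)$ is \emph{not} automatic: the preimage of an open arc can contain monochrome cycles, and destroying them requires deforming the curve while preserving its $\mathcal{L}$-scheme (the ``monochrome modifications''/perturbation lemmas that form a substantive part of \cite{Orev03} and \cite{Degt12}). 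The divisibility check in $(2)$ also needs a preliminary reduction to an almost generic curve, i.e.\ one with $b_2$ and $b_3$ sharing no zeros: a smooth trigonal curve can have $b_2(x_0)=b_3(x_0)=0$ with $b_3'(x_0)\neq 0$, and then $x_0$ is a $\bullet$-vertex of valency $2(3\,\mathrm{ord}\,b_2(x_0)-2)$, in general not divisible by $6$. Finally, for the converse you concede that the two delicate steps (making $f$ real, and extracting genuine polynomials $b_2,b_3$ of degrees $2n,3n$ with the right behaviour at infinity) are ``exactly the content packaged'' in the cited works --- that is, at the decisive points you are citing the theorem you set out to prove. As a roadmap to the literature the proposal is accurate; as a self-contained proof it is incomplete exactly where Orevkov's and Degtyarev's proofs do real work.
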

Given a real graph $\overline{\Gamma}$, we  depict only the completion to a real trigonal graph $\Gamma$ on a hemisphere of $\mathbb{C}P^1$ since $\Gamma$ is symmetric with respect to the standard complex conjugation. Moreover, we can omit orientations in figures representing real trigonal graphs because each vertex is adjacent to an even number of edges oriented in an alternating order as, for example, depicted in Fig. \ref{fig: vertices_gamma} and such orientations are compatible with each others.\\
\begin{figure}[h!]
\begin{center}
\begin{tikzpicture} [scale=0.4]
%\node at (0,-3) {$a)$};
\node at (0:0) {$\bullet$};
%%\node at (0:1) {\textcolor{magenta}{\tiny{1}}};
\draw  [densely dotted, line width=0.07cm] (0:0) -- (30:2);
\draw [decoration={markings,mark=at position 1 with {\arrow[scale=2.0]{<}}},
    postaction={decorate},
    shorten >=0.1pt] (30:0.8) -- (30:1);
%%\node at (45:1) {\textcolor{magenta}{\tiny{2}}};
\draw (0:0) -- (90:2);
\draw [decoration={markings,mark=at position 1 with {\arrow[scale=2.0]{>}}},
    postaction={decorate},
    shorten >=0.1pt] (90:0.8) -- (90:1);
%%\node at (105:1) {\textcolor{magenta}{\tiny{3}}};
\draw  [densely dotted, line width=0.07cm] (0:0) -- (150:2);
\draw [decoration={markings,mark=at position 1 with {\arrow[scale=2.0]{<}}},
    postaction={decorate},
    shorten >=0.1pt] (150:0.8) -- (150:1);
%%\node at (165:1) {\textcolor{magenta}{\tiny{3}}};
\draw (0:0) -- (210:2);
\draw [decoration={markings,mark=at position 1 with {\arrow[scale=2.0]{>}}},
    postaction={decorate},
    shorten >=0.1pt] (210:0.8) -- (210:1);
%%\node at (225:1) {\textcolor{magenta}{\tiny{2}}};
\draw  [densely dotted, line width=0.07cm] (0:0) -- (270:2);
%%\node at (285:1) {\textcolor{magenta}{\tiny{1}}};
\draw [decoration={markings,mark=at position 1 with {\arrow[scale=2.0]{<}}},
    postaction={decorate},
    shorten >=0.1pt] (270:0.8) -- (270:1);
\draw (0:0) -- (-30:2);
\draw [decoration={markings,mark=at position 1 with {\arrow[scale=2.0]{>}}},
    postaction={decorate},
    shorten >=0.1pt] (-30:0.8) -- (-30:1);
\node at (0:0) {$\bullet$};
\end{tikzpicture}
\begin{tikzpicture}[scale=0.4]
%\node at (0,-3) {$b)$};
%\node at (0:0) {$\circ$};
\draw  [densely dotted, line width=0.07cm] (0:0) -- (0:2);
\draw [decoration={markings,mark=at position 1 with {\arrow[scale=2.0]{>}}},
    postaction={decorate},
    shorten >=0.1pt] (0:1);
%%\node at (0:1) {$>$};
%%\node at (45:1) {\textcolor{magenta}{\tiny{1}}};
\draw [dashed] (0:0) -- (90:2);
\draw [decoration={markings,mark=at position 1 with {\arrow[scale=2.0]{<}}},
    postaction={decorate},
    shorten >=0.1pt] (90:1) -- (90:1.1);
\draw [densely dotted, line width=0.07cm] (0:0) -- (180:2);
%\draw  [densely dotted, line width=0.07cm] (0:0) -- (180:2);
\draw [decoration={markings,mark=at position 1 with {\arrow[scale=2.0]{<}}},
    postaction={decorate},
    shorten >=0.1pt] (180:1);
%%\node at (180:1) {$<$};
%%\node at (215:1) {\textcolor{magenta}{\tiny{2}}};
\draw [dashed] (0:0) -- (-90:2);
\draw [decoration={markings,mark=at position 1 with {\arrow[scale=2.0]{<}}},
    postaction={decorate},
    shorten >=0.1pt] (-90:1) -- (-90:1.1);
\draw [fill=white] (0,0) circle (1.5mm);
\end{tikzpicture}
\begin{tikzpicture}[scale=0.4]
%\node at (0,-3) {$c)$};
\node at (0:0) {$\times$};
\draw (0:0) -- (0:2);
\draw [decoration={markings,mark=at position 1 with {\arrow[scale=2.0]{<}}},
    postaction={decorate},
    shorten >=0.1pt] (0:1);
\draw [dashed] (0,0) -- (0:-2);
\draw [decoration={markings,mark=at position 1 with {\arrow[scale=2.0]{<}}},
    postaction={decorate},
    shorten >=0.1pt] (0:-1);
%%\node at (0:-1) {$<$};
\end{tikzpicture}
\end{center}
\caption{Colored vertices of a real trigonal graph.}
\label{fig: vertices_gamma}
\end{figure}
%%dopo disegni

\section{Real curves on $k$-sphere real del Pezzo surfaces of degree 2}
\label{sec: DP2}

\subsection{Definitions}
\label{subsec: def_DP2}
Let $X$ be $\mathbb{C}P^2$ blown up at seven points in generic position; then, the surface $X$ is a del Pezzo surface of degree $2$ (see \cite{Russ02}, \cite[Chapter 8]{Dolg12}). The anti-canonical system $\phi$ of $X$ is a double ramified cover of $\mathbb{C}P^2$ and the branch locus of $\phi$ consists of an irreducible non-singular quartic $\overline{Q}$ defined by a homogeneous polynomial $f(x,y,z)$. By construction, the anti-canonical class $c_1(X)$ is the pull back via $\phi$ of the class of a line in $\mathbb{C}P^2$ (\cite{DegIteKha00}). Moreover, the surface $X$ is isomorphic to the real hypersurface in $\mathbb{C}P(1,1,1,2)$ defined by the weighted polynomial equation $f(x,y,z)=w^2$, with coordinates $x$, $y$, $z$ and $w$ respectively of weights $1$ and $2$. Conversely, any double cover of $\mathbb{C}P^2$ ramified along a non-singular algebraic quartic yields a del Pezzo surface of degree $2$. 
\par If one equips $X$ with a real structure $\sigma$, the quartic $\overline{Q}$ is real and $f(x,y,z)$ can be chosen with real coefficients and so that the real surface $(X, \sigma)$ is isomorphic to the real hypersurface in $\mathbb{C}P(1,1,1,2)$ of equation $f(x,y,z)=w^2$. 
It follows that the double cover $\phi$ projects $\mathbb{R}X$ into the region
$$\Pi_+:=\{[x:y:z]\in \mathbb{R}P^2: f(x,y,z)\geq 0\}.$$ 
Conversely, the double cover of $\mathbb{C}P^2$ ramified along a non-singular real quartic $\overline{Q} \subset \mathbb{C}P^2$ and a choice of a real polynomial equation $f(x,y,z)$ of $\overline{Q}$ yields a real del Pezzo surface $X$. \\
\begin{figure}[!h]
\begin{picture}(100,55)
\put(190,53){\textcolor{black}{$\phi$}}
\put(60,-6){\includegraphics[width=0.70\textwidth]{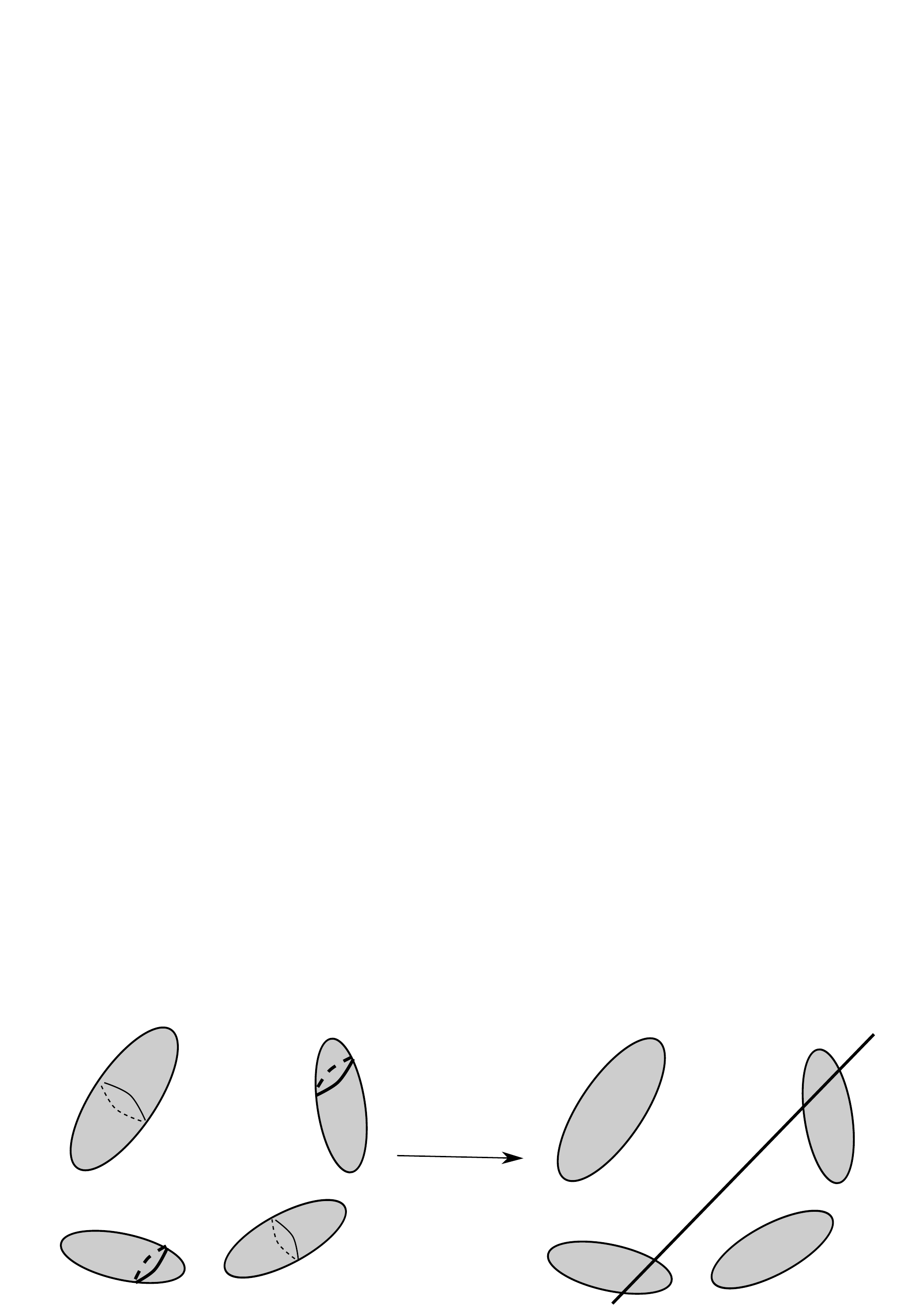}} 
\end{picture}
\caption{Example: $\phi: \mathbb{R}X \mapsto \Pi_{+}$.}
\label{fig: double_cover_DP2}
\end{figure}
%dopo disegni 
\par The surface $X$ is a $\mathbb{R}$-minimal\footnote{\label{myfootnote}We say that a real algebraic variety $(X,\sigma)$ is $\mathbb{R}$-minimal, if every real degree $1$ holomorphic function  $f:X \rightarrow Y$ to a real algebraic surface $(Y, \tau)$ is a biholomorphism.} if and only if $X$ is a $4$-sphere or a $3$-sphere real del Pezzo surface (Definition \ref{defn: ksphere}). 
%$\mathbb{R}X$ is homeomorphic either to $\bigsqcup_{i=1}^4 S^2$ or to $\bigsqcup_{i=1}^3 S^2$. 
Moreover $X$ is a $k$-sphere real del Pezzo surface,
%has $\mathbb{R}X$ homeomorphic to $\bigsqcup_{j=1}^k S^2 $, 
with $1 \leq k \leq 4$ if and only if $\overline{Q}$ is a non-singular real quartic realizing the real scheme $k$ in $\mathbb{R}P^2$ and $\Pi_+$ is orientable; see \cite{DegKha02} and, as example for $k=4$, look at Fig. \ref{fig: double_cover_DP2}, where $\Pi_+$ is in gray on the right.
\begin{note}
Let $X$ be a $k$-sphere real del Pezzo surface of degree $2$. We denote the connected components of $\mathbb{R}X$ with $ X_1,\dots,X_k$.
%, where each $X_j$ is homeomorphic to $S^2$.
\end{note}
The lifting of a non-singular real algebraic curve $C\subset\mathbb{C}P^2$ of degree $d$ via $\phi$ is a real algebraic curve $A$ of class $d$ in $X$ (Definition \ref{defn: curve_d}). Moreover, from the topological type realized by the triplet $(\mathbb{R}P^2, \mathbb{R}\overline{Q}, \mathbb{R}C)$, one recovers the real scheme of the pair $(\mathbb{R}X, \mathbb{R}A)$. As example, assume that $d=1$, the quartic $\overline{Q}$ is maximal, $\Pi_+$ is orientable and $\mathbb{R}\overline{Q} \cup \mathbb{R}C$ is arranged in $\mathbb{R}P^2$ as depicted in Fig. \ref{fig: double_cover_DP2} on the right; then the pair $(\mathbb{R}X, \mathbb{R}A)$ has real scheme $1:1:0:0$; Fig. \ref{fig: double_cover_DP2} on the left.
\par If $X$ is a $4$-sphere real del Pezzo surface of degree $2$, any real algebraic curve has class $d$, where $d$ is some non-negative integer (\cite{Russ02}).\\
\par Combining Harnack-Klein's inequality and the adjunction formula, one obtains the following immediate result. 
\begin{prop}
\label{prop: H-K_DP2}
Let $A$ be a real algebraic curve of class $d$ in a $k$-sphere real del Pezzo surface $X$ of degree $2$, with $1 \leq k \leq 4$. Then, the number $l$ of ovals of $\mathbb{R}A$ is bounded as follows:
$$l \leq d(d-1)+2.$$
\end{prop}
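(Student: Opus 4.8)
The plan is to reduce the statement to Harnack--Klein's inequality applied to the curve $A$, the only substantial input being the computation, via the adjunction formula, of the genus $g$ of a non-singular curve realizing $d\,c_1(X)$.

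First I would record the intersection data of a degree $2$ del Pezzo surface. By definition of the degree one has $c_1(X)^2 = 2$ (equivalently $K_X^2 = 2$ for the canonical class $K_X = -c_1(X)$). A non-singular real algebraic curve $A$ of class $d$ realizes $\alpha := d\,c_1(X)$ in $H_2(X;\mathbb{Z})$, so that $\alpha^2 = 2d^2$ and $\alpha\cdot c_1(X) = 2d$. Next I would apply the adjunction formula $2g-2 = \alpha^2 + \alpha\cdot K_X = \alpha^2 - \alpha\cdot c_1(X)$, which yields
$$g = 1 + \tfrac{1}{2}\bigl(2d^2 - 2d\bigr) = d(d-1)+1.$$
Harnack--Klein's inequality, stated in the excerpt, then bounds the number of connected components of $\mathbb{R}A$ by $g+1 = d(d-1)+2$.

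Finally I would observe that, since $\mathbb{R}X$ is homeomorphic to a disjoint union of $k$ spheres (Definition \ref{defn: ksphere}), every circle embedded in $\mathbb{R}X$ is null-homologous, hence is an oval; therefore the number $l$ of ovals of $\mathbb{R}A$ coincides with the number of connected components of $\mathbb{R}A$, and the desired bound $l \leq d(d-1)+2$ follows. I do not expect any genuine obstacle here, which is consistent with the statement being labelled an \emph{immediate} consequence: the genus computation is routine once $c_1(X)^2 = 2$ is used, and the identification of connected components with ovals is forced by the topology of $\mathbb{R}X$. The only point that warrants care is the sign convention relating the anti-canonical class $c_1(X)$ to the canonical class $K_X$, so that adjunction is invoked with the correct normalisation and the factor of $2$ coming from the degree is tracked correctly.
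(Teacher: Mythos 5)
Your proposal is correct and follows exactly the route the paper intends: the paper dismisses this as an ``immediate result'' obtained by ``combining Harnack-Klein's inequality and the adjunction formula,'' which is precisely your computation $g = d(d-1)+1$ from $c_1(X)^2 = 2$ followed by the bound $l \leq g+1$. Your closing remark that every embedded circle in a disjoint union of spheres is null-homologous, so connected components and ovals coincide, is a point the paper leaves implicit but is correctly handled.
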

\subsection{Real schemes}
\label{subsec: real_schemesDP2}
\begin{defn}
\label{defn: real_schemeDP2}
Let $X$ be a $k$-sphere real del Pezzo surface $X$ of degree $2$, with $0 \leq k \leq 4$. Let us denote $\mathcal{S}_{DP2}(X,k)$ the set of all real schemes in $X$. \\
Notice that $\mathcal{S}_{DP2}(X,k)$ does not depend on the choice of $X$. Therefore, from now on, we omit $X$ and write $\mathcal{S}_{DP2}(k).$
\end{defn}

Let us enrich the notion of real scheme with some extra conditions deriving from Proposition \ref{prop: H-K_DP2}.

\begin{defn}
Let $\mathcal{S}$ be in $\mathcal{S}_{DP2}(k)$. We say that $\mathcal{S}$ is \textit{in class} $d$ or we write $\mathcal{S} \in \mathcal{S}_{DP2}(k,d)$ if the number of ovals of $\mathcal{S}$ does not exceed $d(d-1)+2$.
\end{defn}

\begin{defn}
We say that $\mathcal{S} \in \mathcal{S}_{DP2}(k,d)$ is \textit{realizable in} $X^{k}$ \textit{and in class} $d$, if there exist a $k$-sphere real del Pezzo surface $X^{k}$ of degree $2$ and a real algebraic curve $A \subset X^{k}$ of class $d$, such that the pair $(\mathbb{R}X^{k}, \mathbb{R}A)$ realizes $\mathcal{S}$. 
\end{defn}
To refine the classifications of real schemes in $\mathcal{S}_{DP2}(k,d)$, one can additionally ask about the realizability of a given real scheme by symmetric real curves.
\begin{defn}
\label{defn: non-symm}
\begin{itemize}
\item[]
\item Let $A$ be a class $d$ algebraic curve in a $k$-sphere real degree $2$ del Pezzo surface $X$, with $1 \leq k \leq 4$. Let $\phi: X \rightarrow \mathbb{C}P^2$ be the anti-canonical map of $X$. 
We say that $A$ is \textit{symmetric} if it is the lifting via $\phi$ of a plane algebraic curve of degree $d$. Otherwise, we say that $A$ is \textit{non-symmetric}.
\item We say that $\mathcal{S} \in \mathcal{S}_{DP2}(k,d)$ is \textit{symmetric in class} $d$ if it is realizable in $X^k$ and in class $d$ by a symmetric real algebraic curve of class $d$. Otherwise, we say that $\mathcal{S}$ is \textit{non-symmetric in class} $d$.
\end{itemize} 
\end{defn}
Let us lighten the real scheme notation introduced in Section \ref{subsec: cha2_codage_isotopie}.
\begin{note}
\label{note: simplify_notation_DP2}
Let $\mathcal{S}:=\mathcal{S}_1:\dots: \mathcal{S}_4$ be a topological type in the disjoint union of $4$ spheres. Let $X^{k}$ be a $k$-sphere real del Pezzo surface of degree $2$, with $1 \leq k \leq 4$.  If $\mathcal{S}$ has at least $4-k$ trivial entries, we say that $\mathcal{S}$ is a real scheme in $\mathbb{R}X^{k}$.
\end{note}
\subsection{Main results}
\label{subsec: real_schemes_DP2} 
The real scheme classification is complete in class $1$ and $2$. The proof of the following statement is in Section \ref{subsec: class 1 and 2_DP2}.
\begin{prop}[Class 1 and 2]
\label{prop: d=1,d=2_DP2}
Any real scheme $\mathcal{S} \in \mathcal{S}_{DP2}(k,d)$ where $d=1,2$ and $1 \leq k \leq 4$, is realizable in $X^{k}$ and in class $d$. Moreover $\mathcal{S}$ is symmetric in class $d$.
\end{prop}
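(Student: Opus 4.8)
The plan is to realize every scheme by a \emph{symmetric} curve, so that both assertions are proved at once: since a symmetric curve of class $d$ is by Definition~\ref{defn: non-symm} the $\phi$-lifting of a plane curve $C$ of degree $d$, it suffices to produce, for each $\mathcal{S}\in\mathcal{S}_{DP2}(k,d)$, a non-singular real quartic $\overline{Q}$ realizing the scheme $k$ with $\Pi_+$ orientable, together with a real line ($d=1$) or conic ($d=2$) $C$ meeting $\overline{Q}$ transversally, such that the triplet $(\mathbb{R}P^2,\mathbb{R}\overline{Q},\mathbb{R}C)$ lifts to $\mathcal{S}$. The double cover then furnishes $X^k$, and the lift $A$ is automatically symmetric and non-singular. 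The reduction rests on the dictionary recorded in Section~\ref{subsec: def_DP2}: since $\Pi_+$ is orientable it must be the union of the $k$ disks $D_1,\dots,D_k$ bounded by the ovals of $\overline{Q}$ (each such disk double-covers to a sphere $X_j$), the part of $\mathbb{R}C$ lying outside $\bigcup_j D_j$ projects into the region not covered by $\mathbb{R}X$ and is therefore irrelevant, while each arc of $\mathbb{R}C\cap D_j$ with endpoints on $\partial D_j$ lifts to one oval of $\mathbb{R}A$ inside $X_j$ (and a closed component of $\mathbb{R}C$ interior to $D_j$ lifts to two). Thus the scheme induced on $X_j$ is read off from the arrangement of the arcs of $\mathbb{R}C$ inside $D_j$.

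First I would list the finitely many schemes to be realized. By Proposition~\ref{prop: H-K_DP2} a scheme in class $d=1$ (resp. $d=2$) carries at most $2$ (resp. $4$) ovals, distributed among at most $k$ of the spheres; recall moreover that on $S^2$ there is a single topological type of two disjoint circles, so the nested and unnested codes agree there. For each such scheme I would then choose $\overline{Q}$ with $k$ unnested ovals, shaped (convex or suitably wavy) so that a line, resp. conic, can be made to meet a prescribed subset of the ovals in a prescribed \emph{even} number of points, hence to cut the corresponding disks into the chords required to produce the wanted number and nesting of ovals sphere by sphere. The easy schemes are immediate: a line missing every disk gives $0$, a line crossing one disk in a single chord gives one oval in that sphere, a line crossing two disks (as in Figure~\ref{fig: double_cover_DP2}) gives $1:1:0:0$, and for $d=2$ a conic lying entirely inside one disk already yields the two-oval type in a single sphere, while conics passing through several disks produce the analogous distributions.

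The hard part will be the schemes that concentrate many ovals in a single sphere $X_j$, since these force $\mathbb{R}C$ to cut the one disk $D_j$ into several chords, i.e. to meet the single oval $\partial D_j$ in the maximal number of points allowed by B\'ezout: four points for a line (two chords) and up to eight points for a conic (four chords), the latter arising for instance when $\mathcal{S}$ places all four ovals on one sphere with $k=1$. To obtain such configurations I would construct an explicitly non-convex oval of $\overline{Q}$ through which the auxiliary curve weaves the required number of times---controlling this either by writing down explicit equations, by a small perturbation of a degenerate reducible model, or, where convenient, through the trigonal-graph technique of Section~\ref{subsec: cha2_dess_enf}---and then verify, region by region, that the chords lift to ovals with exactly the prescribed mutual position, using the freedom in the choice of basepoint on each sphere afforded by Definition~\ref{defn: realizing_real_scheme}. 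Once every scheme in the list is matched with such a plane arrangement, the lift realizes it by a symmetric curve in the corresponding $X^k$, which proves both the realizability and the symmetry claims.
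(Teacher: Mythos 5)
Your proposal follows essentially the same route as the paper: the paper also realizes every scheme in class $1$ and $2$ symmetrically by exhibiting a real quartic $\overline{Q}$ with scheme $k$ together with a line or conic suitably arranged in $\mathbb{R}P^2$, producing the harder arrangements exactly as you suggest---by Hilbert-type small perturbations of reducible models (two conics into a quartic, two lines into a conic) and by weaving the auxiliary curve through a non-convex oval of the quartic. Your lifting dictionary (chords in a disk $D_j$ lift to single ovals, interior ovals lift to pairs, arcs outside $\Pi_+$ are invisible) is a correct elaboration of what the paper leaves implicit in its figures.
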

For real schemes in class $d \geq 3$, Proposition \ref{prop: H-K_DP2} does not provide a complete system of restrictions anymore. In Section \ref{subsec: obstr_DP2}, we show how to use Welschinger-type invariants to obtain Bézout-type obstructions for any class $d$ and, for $d$ even, we present an application of Comessatti-Petrovsky inequality.
\par Sections \ref{subsec: class 1 and 2_DP2} - \ref{subsec: final_constr_DP2}
are devoted to construction of real algebraic curves; several construction techniques are combined, including \textit{dessins d'enfants} and recent developments of Viros's patchworking method. 
%After realizing all real schemes in class $1$ and $2$ on $X^{k}$, with $1 \leq k \leq 4$, 
\par We mainly focus on classifications on $4$-sphere del Pezzo surfaces of degree $2$: we give a partial classification of real schemes in class $3$ with $8$ ovals (Theorem \ref{thm: principal_DP2}), and we realize some non-symmetric real schemes for each class $d \geq 5$ with $2d+1$ ovals (Proposition \ref{prop: non-symm_real_scheme}). In addition, we use the classification tools adopted on $4$-sphere del Pezzo surfaces of degree $2$ in the case of $k$-sphere real degree $2$ del Pezzo surfaces, with $k < 4$ (Proposition \ref{prop: principal_DP2}). 
\begin{thm}[$k=4$ and Class 3]
\label{thm: principal_DP2}
There are 74 real schemes in $\mathcal{S}_{DP2}(4,3)$ with $8$ ovals, which are non-prohibited by Proposition \ref{prop: bezout_inv_welschinger} and Lemmas \ref{lem: caso_particolare_4_sfere_k=2}, \ref{lem: caso_particolare_DP2_obstr} . Among those $48$ are realizable in $X^{4}$ and in class $3$. Moreover $19$ out the $48$ are symmetric in class $3$.
\end{thm}
%The following statement gives a partial classification of real schemes in class $3$ in $X^{k}$, $k \leq 3$. 
\begin{prop}[$k=3,2,1$ and Class 3]
\label{prop: principal_DP2}
There are respectively $79$, $61$ and $28$ real schemes in $\mathcal{S}_{DP2}(k,3)$ with 8 ovals, which are non-prohibited by Proposition \ref{prop: bezout_inv_welschinger} for $k=3,2$ and Lemma \ref{lem: welschinger_1_sfere_class_2} for $k=1$. Among those respectively $49$, $38$ and $17$ are realizable in $X^{k}$ and in class $3$. Moreover $6$ and $2$ are symmetric in class $3$, respectively for $k=3,2$ and $k=1$. 
\end{prop}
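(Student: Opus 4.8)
The plan is to mirror the three-stage argument used for $X^4$ (Theorem \ref{thm: principal_DP2}) — enumeration of candidates, construction, and identification of the symmetric subclass — now carried out on $k$-sphere surfaces for $k=3,2,1$. By Proposition \ref{prop: H-K_DP2} a class-$3$ curve carries at most $d(d-1)+2=8$ ovals, so the $8$-oval case is the maximal one and the real schemes to be classified are those of $M$-curves. First I would list combinatorially every real scheme with $8$ ovals whose support lies on $k$ spheres; by Notation \ref{note: simplify_notation_DP2} these are exactly the schemes $\mathcal{S}_1 : \cdots : \mathcal{S}_4$ in $\mathcal{S}_{DP2}(4,3)$ having at least $4-k$ trivial entries, so the enumeration reduces to distributing $8$ ovals together with their nesting data among $k$ spheres. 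I would then apply the stated obstructions — Proposition \ref{prop: bezout_inv_welschinger} for $k=3,2$ and Lemma \ref{lem: welschinger_1_sfere_class_2} for $k=1$ — to discard the prohibited arrangements; matching the survivors against the combinatorial list yields the counts $79$, $61$, $28$. Since $d=3$ is odd, the Comessatti--Petrovsky inequality does not enter here.

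For the realizability assertion ($49$, $38$, $17$ schemes) I would reuse the full construction toolbox assembled for $X^4$, adapted to the smaller real part. The symmetric schemes are produced directly: one arranges a real plane cubic $C$ together with a real quartic $\overline{Q}$ realizing the real scheme $k$ in $\mathbb{R}P^2$ so that the lift of $C$ through the double cover $\phi: X^k \to \mathbb{C}P^2$ branched along $\overline{Q}$ realizes the target scheme, exactly as in the proof of Proposition \ref{prop: d=1,d=2_DP2}. The remaining, non-symmetric schemes I would obtain by the degeneration method: degenerate a nodal degree-$2$ del Pezzo surface $X_0$ to a transverse union $T \cup S$ of a real ruled surface and a smaller $k$-sphere del Pezzo surface meeting along $E$, build curves $C_T$ and $C_S$ of prescribed topology with matching intersection with $\mathbb{R}E$ (using \emph{dessins d'enfants} and Theorem \ref{thm: existence_trigonal} for the Hirzebruch-surface component), and glue them by the Shustin--Tyomkin patchworking of Theorem \ref{thm: weak_patch_DP2}. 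This furnishes a real curve of class $3$ on some $X^k$ (or $X^{k+1}$) whose real scheme is read off from the triplet $(\mathbb{R}T \cup \mathbb{R}S, \mathbb{R}E, \mathbb{R}C_T \cup \mathbb{R}C_S)$; carrying this out scheme-by-scheme accounts for the realizable counts.

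The symmetric counts ($6$ for $k=3,2$ and $2$ for $k=1$) I would extract as a by-product of the previous step: among the realizable schemes, I record precisely those admitting a symmetric realization, i.e.\ arising as a lift of a plane cubic against a quartic with $k$ ovals. The shrinking of this count as $k$ decreases reflects the smaller number of quartic ovals available to separate and nest the lifted cubic, constrained by Bézout between the cubic and the quartic.

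The main obstacle I anticipate is the exhaustiveness of the construction step: closing the gap between the obstruction-based upper bound and the constructive lower bound requires exhibiting an explicit curve for each of the $49$, $38$, $17$ schemes, and in the patchworking constructions one must verify the transversality hypotheses of Theorem \ref{thm: weak_patch_DP2} so that the glued curve is non-singular and realizes the intended scheme on the correct $X^k$. As in the $X^4$ case, I do not expect to settle every non-prohibited scheme — the residual $30$, $23$, $11$ arrangements that are neither obstructed nor constructed remain open — so the proposition is stated as a partial classification.
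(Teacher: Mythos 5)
Your proposal follows essentially the same route as the paper: the counts $79$, $61$, $28$ come from exactly the obstructions you cite, the symmetric schemes (the $\circ^{*}$-labeled ones, giving $6$, $6$, $2$) are realized by lifting cubic--quartic arrangements through the double cover as in Proposition \ref{prop: circ_label_DP2}, and the remaining realizable schemes are obtained via the degeneration and Shustin--Tyomkin patchworking of Theorem \ref{thm: weak_patch_DP2} in Proposition \ref{prop: star_label_DP2}, leaving the same residual open schemes you identify. The only small inaccuracy is the placement of the \emph{dessins d'enfants}: in the paper, Theorem \ref{thm: existence_trigonal} is used on the del Pezzo-pair side $S$ (Lemma \ref{lem: curves_sigma1_DP2} builds curves in $\Sigma_5$ and $\Sigma_1$ yielding the nodal quartic--cubic pairs of Proposition \ref{prop: S_pencil_quartic_DP2}), whereas the curves $C_T$ on the quadric $T$ are constructed by Harnack-type small perturbations (Propositions \ref{prop: constr_quadric_DP2} and \ref{prop: constr_quadric_DP2_3}), not by dessins.
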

The realization of the real schemes in $\mathcal{S}_{DP2}(k,3)$ of Theorem \ref{thm: principal_DP2} and Proposition \ref{prop: principal_DP2} are in the proof of Proposition \ref{prop: circ_label_DP2} (symmetric ones) and of Proposition \ref{prop: star_label_DP2}.\\
On the last page of the paper, it is presented:
\begin{itemize}
\item a summary of the realized (symmetric) real schemes in $X^{k}$ and in class $3$, with $1 \leq k \leq 4$ (Table \ref{tabella=realized3}).
\item a list of real schemes in class $3$ which are still unrealized on $X^{k}$, with $1 \leq k \leq 3$; but, which can not be realized by class $3$ real curves on $4$-sphere real del Pezzo surface of degree $2$ (Table \ref{tabellaknot4}).
\end{itemize}
The real scheme $2    \sqcup  \langle1\rangle    \sqcup  \langle1\rangle    \sqcup  \langle1\rangle:0:0:0$ 
%of Table \ref{tabellak=4,3,2,1} 
is symplectically realizable in $X^{4}$ and in class $3$ (proof in Section \ref{subsec: symplectic}), but we do not know yet whether it is realizable algebraically. 
\begin{prop}
\label{prop: symplectic_curve_surface}
There exist a $4$-sphere real symplectic degree $2$ del Pezzo surface $X$ and a non-singular real symplectic curve of class $3$ in $X$ realizing the real scheme $2    \sqcup  \langle1\rangle    \sqcup  \langle1\rangle    \sqcup  \langle1\rangle:0:0:0$.  
\end{prop}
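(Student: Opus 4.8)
The goal is to build a $4$-sphere real symplectic degree $2$ del Pezzo surface $X$ together with a non-singular real symplectic curve $A$ of class $3$ whose real scheme is $2 \sqcup \langle 1\rangle \sqcup \langle 1\rangle \sqcup \langle 1\rangle : 0:0:0$. Since $X$ is a double cover of $\mathbb{C}P^2$ branched along a real quartic $\overline{Q}$, the natural plan is to work downstairs in $\mathbb{R}P^2$ and to lift. Let me first decode what the target real scheme requires upstairs. The notation tells us that $\mathbb{R}A$ lives entirely over one of the four spheres $X_1$ (the other three entries are $0$), and on that sphere it consists of two ``free'' ovals together with three ovals, each of which is enclosed by an oval of its own nest (the three $\langle 1\rangle$ blocks). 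I would first translate this into the picture of the triplet $(\mathbb{R}P^2, \mathbb{R}\overline{Q}, \mathbb{R}C)$, where $C$ is a degree $3$ plane curve (a cubic) and $A$ is its lift, exploiting the correspondence explained in Section~\ref{subsec: def_DP2}.

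\textbf{Reducing to a plane arrangement.} The cleanest approach is to try to realize the scheme \emph{symmetrically} first, i.e. to find a real quartic $\overline{Q}$ realizing the real scheme $4$ (four ovals, with $\Pi_+$ orientable so that $X$ is a $4$-sphere surface by \cite{DegKha02}) and a real cubic $C$ positioned so that, over the sphere $X_1$ lying above the chosen region of $\Pi_+$, the lift produces exactly $2 \sqcup \langle 1\rangle \sqcup \langle 1\rangle \sqcup \langle 1\rangle$. Concretely I would arrange the three $\langle 1\rangle$-nests to come from three arcs of $\mathbb{R}C$ that cross $\mathbb{R}\overline{Q}$ and get ``doubled'' by the lift into injective pairs, while the $2$ comes from arcs of $\mathbb{R}C$ contained in $\Pi_+$ that lift to a pair of disjoint ovals. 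Here is where the difficulty announced in the text appears: the paper explicitly says it does \emph{not} know whether this scheme is realizable \emph{algebraically}, which signals that a genuine algebraic cubic-plus-quartic arrangement producing this configuration cannot be found by the classical patchworking/dessins d'enfants toolkit. So the symmetric-algebraic route is expected to fail, and one must pass to the symplectic category.

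\textbf{The symplectic construction.} The main idea, then, is to relax from algebraic to symplectic objects, where one has far more flexibility. I would invoke a symplectic analogue of the double-cover construction: a real symplectic del Pezzo surface of degree $2$ can be built as a symplectic double cover of $\mathbb{C}P^2$ branched over a real \emph{symplectic} (pseudoholomorphic) quartic, and a real symplectic curve of class $3$ is obtained by lifting a real symplectic cubic, provided the pair is in suitably generic position so that the branched cover is smooth and the lift is embedded. The plan is: (1) construct a pseudoholomorphic real quartic $\widetilde{Q}$ realizing the scheme $4$ with $\Pi_+$ orientable, so that its symplectic double cover is a $4$-sphere real symplectic degree $2$ del Pezzo surface; (2) construct a real pseudoholomorphic cubic $C$ whose arcs relative to $\widetilde{Q}$ are arranged, inside the region over $X_1$, to yield the blocks $2$ and three copies of $\langle 1\rangle$; (3) check that the lift $A$ is a non-singular real symplectic curve of class $3$ with the desired real scheme. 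Because symplectic curves need only satisfy the topological constraints (Harnack--Klein via the adjunction-type bound of Proposition~\ref{prop: H-K_DP2}, which gives $l\le 8$ here, matched exactly by the $8$ ovals of the target), one can prescribe the isotopy type of the $\mathbb{R}C \cup \mathbb{R}\widetilde{Q}$ arrangement directly by hand and then smooth/realize it symplectically.

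\textbf{The expected obstacle.} The hard part will not be the combinatorial design of the arrangement of $\mathbb{R}C$ and $\mathbb{R}\widetilde{Q}$ in $\mathbb{R}P^2$ — that is a finite planar drawing, constrained by Bézout (a line meets $C$ in $3$ points, a conic in $6$, etc.) and by the requirement that $\widetilde{Q}$ realizes scheme $4$. The genuine difficulty is ensuring that the prescribed \emph{topological} configuration is actually \emph{realizable by a pseudoholomorphic pair}, i.e. that there exists a real symplectic structure and almost-complex structure $J$ for which both $\widetilde{Q}$ and $C$ are $J$-holomorphic and meet transversally in the required real points with the required local branches. I would handle this by appealing to the flexibility of pseudoholomorphic curves in dimension $4$ (Gromov's theory), realizing the branch quartic and the cubic as $J$-holomorphic for a common tamed $J$, and verifying transversality of their real intersection so that the double cover is smooth and the lift $A$ is embedded and non-singular. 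Finally I would confirm that the symplectic surface obtained is deformation-equivalent to a $4$-sphere real del Pezzo surface of degree $2$, so that it legitimately carries the ``$X^4$'' structure, completing the realization of $2 \sqcup \langle 1\rangle \sqcup \langle 1\rangle \sqcup \langle 1\rangle : 0:0:0$.
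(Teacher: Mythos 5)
Your overall architecture matches the paper's: pass to the symplectic category, realize the arrangement downstairs in $\mathbb{R}P^2$ by a real pseudoholomorphic quartic and cubic, take the symplectic double cover branched along the quartic, lift the real structure and the cubic. The double-cover part of your plan is essentially what the paper does (it endows the cover with the pulled-back symplectic form, checks $\sigma^*\omega=-\omega$ for a lift $\sigma$ of $conj$, and lifts the cubic). However, there is a genuine gap at the crux of the argument: the existence of the pseudoholomorphic pair itself. You assert that ``symplectic curves need only satisfy the topological constraints (Harnack--Klein \dots)'' and that one can therefore ``prescribe the isotopy type of the $\mathbb{R}C\cup\mathbb{R}\widetilde{Q}$ arrangement directly by hand and then smooth/realize it symplectically,'' appealing to Gromov-style flexibility. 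This is false: pseudoholomorphic curves in $\mathbb{C}P^2$ satisfy positivity of intersections, hence Bézout-type restrictions against auxiliary $J$-holomorphic lines and conics, and in fact most known prohibitions for real plane algebraic curves of small degree carry over verbatim to the pseudoholomorphic setting. Deciding which mutual arrangements of a real quartic and cubic are realizable by curves holomorphic with respect to a \emph{common} tamed $J$ is a hard problem, not a soft transversality statement; your step (2) is exactly the difficult content and cannot be obtained by genericity arguments.

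The paper closes this gap by citing a specific theorem of Orevkov \cite{Orev02}: there exist a real symplectic maximal quartic and a real symplectic maximal cubic, mutually arranged in $\mathbb{R}P^2$ in a prescribed way that is provably \emph{not} realizable algebraically. That result is the engine of the whole proposition — it is why the statement lives in the symplectic category and why the algebraic realizability of this real scheme remains open. Without invoking Orevkov's construction (or reproducing an equivalent one, e.g.\ via braid-theoretic methods), your proposal establishes the reduction and the lifting mechanism but not the existence of the input pair, so the proof is incomplete precisely where it matters. A secondary, smaller imprecision: your adjunction count (``$l\le 8$ here, matched exactly by the $8$ ovals of the target'') is consistent, but maximality of the downstairs cubic and quartic, and the orientability of $\Pi_+$, must be part of the cited arrangement rather than arranged ``by hand,'' again pointing back to the missing existence theorem.
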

For each class $d \geq 5$, we realize some (non-symmetric) real schemes in class $d$. The proof of the following statement is at the end of Section \ref{subsec: final_constr_DP2}.
\begin{table}[ht]
 \centering
\begin{threeparttable}
 \caption{ \label{tabella=non-symm} \textbf{(Non-symmetric) real schemes $\mathcal{S}_{1}:\mathcal{S}_{2}:N(h_3,0): N(h_4,0)$ realized in $X^{4}$ and in class $d \geq 5$}}
\begin{tabular}{ | l | l | l |}
\hline
&\textbf{$\mathcal{S}_{1}:\mathcal{S}_{2}$} & \textbf{Extra conditions} \\
\hline
%& & \\
%\hline
&&$k_{1}=0$, $h_{2}\not = 1$\\
%$h_{1}> 0$
$(1)$ &$1    \sqcup   N(h_1,1) : 1    \sqcup   N(k_2,1    \sqcup  \langle 1 \rangle)    \sqcup   N(h_2,0)$ & $h_{1} \equiv 1 \mod 2$\\
& & If $h_{2}=2$, $k_{2}\equiv 1 \mod 2.$\\
& & If $h_{2}=0$, $k_{2} \not = 0,1.$ \\
 %\hline
% & & \\
\hline
&&$k_{1}=0$, $h_{1}> 1$\\
$(2)$&$1    \sqcup  \langle 1 \rangle   \sqcup   N(h_1,1): 1    \sqcup   N(k_2,1)   \sqcup   N(h_2,0)$&$h_{2}\not = 1$, $h_{2} \not = k_{2}+1$\\
%$h_{1} \not = 1$
% && \\
 && If $h_{2}=0$, $k_{2}\equiv 1 \mod 2.$\\
 %\hline
 % &&\\
 \hline
&& $h_{1}\not = 1$,  $k_{1} \not = 0$\\
&& $h_{1} \not = k_{1}+1$ \\
$(3)$& $1   \sqcup   N(k_1,1)   \sqcup N(h_1,0): N(k_2,1   \sqcup   \langle 1 \rangle)   \sqcup   N(h_2,1)$ & If $h_{1}=0$, $k_{1} \equiv 1 \mod 2$. \\
&& If $h_{2}=0,1$, $k_{2}\not = 0.$\\
 && If $h_{2}=0$, $k_{2}\not = 1$.\\
%\hline
%  &&\\
 \hline
&& $h_{2} \not = 1$, $k_{2} \not = 0$ \\
$(4)$& $1   \sqcup   N(k_2, 1   \sqcup   \langle 1 \rangle)   \sqcup   N(h_2,0): N(k_1,1)   \sqcup   N(h_1,1)$&$h_{1}+ k_{1} \equiv 1 \mod 2$ \\
& & If $h_{2}=0,2$, $k_{2} \not = 1$. \\
&& If $h_{2}=2$, $k_{2} \equiv 1 \mod 2$.\\
 %\hline 
  %&& \\
 \hline
 && $k_1=0$, $h_1 \not = 1$\\
 $(5)$&$ 1   \sqcup   N(h_1,0)   \sqcup   N(h_2,1) :  N(k_2+1, 1   \sqcup   \langle 1 \rangle)$& $h_{2} \not = 0$, $h_{1} \not = h_{2}+1$ \\
 & & $ k_{2} \not = 1$  ($ \implies d \geq 6$) \\ 
 % \hline
 %&&\\
  \hline
&&  $k_1=0$, $h_1 \not = 1$\\
&& $h_{2} \not = 0$ ,$h_{1} \not = h_{2}+1$\\
 $(6)$&$ 1   \sqcup   \langle 1 \rangle   \sqcup   N(h_1,0)   \sqcup   N(h_2,1) : N(k_2+1,1) $  &  $k_{2}\equiv 1 \mod 2$\\
&& $h_{1}\not = 2$\\
 & & $h_{1}\not = 1$\\
 %\hline
 %&& \\
 \hline
 & &  $h_{1} \not = h_{2}+1$ \\
  $(7)$& $ N(k_2,1  \sqcup   \langle 1 \rangle)   \sqcup    N(h_2,1)    \sqcup   N(h_1,0): N(k_1+1,1)$& $k_{1}\equiv 1 \mod 2$ \\
% & & $h_{1}+h_{2}\equiv 0 \mod 2$. \\
&&  If $(h_{1}, h_{2})=(1,1), (2,0)$,  \\
 & & $ k_{2}\equiv 1 \mod 2$.\\
% \hline
%&&\\
 \hline
&&$h_{1} \not = h_{2}+1$, $k_{2} \not = 1,2$ \\
% & & $h_{1} \not = h_{2}+1$ \\
  $(8)$& $ N(k_1+1,1)   \sqcup    N(h_2,1)   \sqcup   N(h_1,0):  N(k_2,1   \sqcup   \langle 1 \rangle) $&$h_{1} \not = k_{1}+ 2$ \\
 &&$h_{2} \not = k_{1} + 1$\\
 \hline
 \end{tabular}
       \begin{tablenotes}
   %  \item \textbf{Legend}:\\
      \item[(i)] Central column: the symbol $N(h_{3},0):N(h_{4},0)$ is omitted because common to every real scheme.
   \item[(ii)] Right column: sufficient conditions on the parameters $k_i, h_{j},d$ to have non-symmetric real schemes.
         \end{tablenotes}
       \end{threeparttable}
\end{table}
\begin{prop}[Non-symmetric and Class $d \geq 5$]
\label{prop: non-symm_real_scheme}
Let $d,$ $k_1,k_2,$ and $h_1,h_2,h_3,h_4$ be non-negative integers such that
\begin{itemize}
\item $d\geq 5$;
\item $k_1 + k_2=d-4$;
\item $\sum\limits_{i=1}^{4}h_i=d-1$;
\item $h_3 \equiv 1 \mod  2$, if $h_{3} \not = 0$;
\item $h_4 \equiv 1 \mod  2$, if $h_{4} \not = 0$.
\end{itemize}
Then each real scheme $\mathcal{S}_{1}:\mathcal{S}_{2}:N(h_3,0): N(h_4,0)$ in $\mathcal{S}_{DP2}(4,d)$ in Table \ref{tabella=non-symm}, is realizable in $X^{4}$ and in class $d$. Moreover,  the real schemes whose parameters respect the extra conditions in Table \ref{tabella=non-symm}, are non-symmetric in class $d$.
\end{prop}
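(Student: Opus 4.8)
The plan is to treat the two assertions separately: the \emph{realizability} of every real scheme in Table~\ref{tabella=non-symm} by a class-$d$ curve in some $X^4$, and the \emph{non-symmetry} of those whose parameters satisfy the extra conditions of the right-hand column.

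For realizability I would use the degeneration–patchworking machinery outlined in Section~\ref{subsec: DP2}. Starting from a real degree-$2$ del Pezzo surface $X_0$ with a single real non-degenerate node, Proposition~\ref{prop: existence_DP2_family} degenerates $X_0$ into $T\cup S$ with $T$ a real ruled surface and $S$ a $3$-sphere real del Pezzo surface meeting transversally along $E$; the weak patchworking of Theorem~\ref{thm: weak_patch_DP2} then glues curves $C_T\subset T$ and $C_S\subset S$ into a class-$d$ curve on $X^4=X^{3+1}$ whose real scheme is read off from the triplet $(\mathbb{R}T\cup\mathbb{R}S,\mathbb{R}E,\mathbb{R}C_T\cup\mathbb{R}C_S)$. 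The recursive shape of the symbol $N(h,\mathcal{S})=\langle N(h-1,\mathcal{S})\rangle$ suggests building the nested pieces inductively: on $T\cong\Sigma_n$ I would realize the nests $N(h_i,\cdot)$ as trigonal $\mathcal{L}$-schemes, whose existence is governed by the dessins d'enfants criterion of Theorem~\ref{thm: existence_trigonal}, placing the remaining lower-complexity blocks on $S$. The bookkeeping constraints $k_1+k_2=d-4$ and $\sum_i h_i=d-1$, together with the parities imposed on $h_3,h_4$, are exactly what is needed so that $C_T$ and $C_S$ meet $E$ in matching real configurations and so that the glued curve has class $d$; an induction on $d$ (peeling off the outermost oval of a nest via the identity above) reduces each of the eight families to a base case already handled for small class.

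For non-symmetry I would exploit that, by Definition~\ref{defn: non-symm}, a symmetric curve is the pullback $A=\phi^{-1}(C)$ of a plane curve $C$ of degree $d$ under the anti-canonical double cover $\phi\colon X^4\to\mathbb{C}P^2$, and is therefore invariant under the deck transformation $\tau$ of $\phi$. On each sphere $X_i=\phi^{-1}(\overline{D_i})$, where $D_i$ is the disk bounded by the quartic oval $Q_i$, the map $\tau$ is the reflection across the equator $\widetilde{Q}_i:=\phi^{-1}(Q_i)\cap X_i$. Hence $\mathbb{R}A\cap X_i$ is $\tau$-invariant, and each of its ovals is either $\tau$-invariant (crossing $\widetilde{Q}_i$ in an even, nonzero number of points) or paired with a disjoint $\tau$-image. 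Passing to $X_i/\tau=D_i$, a $\tau$-pair descends to a single interior oval of $\mathbb{R}C$, while a $\tau$-invariant oval descends to arcs of $\mathbb{R}C$ with endpoints on $Q_i$. Analyzing which $\tau$-actions are compatible with the nesting order of the prescribed nest $N(h_i,\cdot)$, I would show that the parity of the number of $\tau$-invariant ovals on $X_i$—equivalently that of $\tfrac12|\mathbb{R}C\cap Q_i|$—is forced by $h_i$, and that the way the $\mathcal{S}_1$- and $\mathcal{S}_2$-blocks on $X_1,X_2$ must interleave imposes relations such as $h_1\neq h_2+1$. Each tabulated extra condition is precisely the negation of one such forced relation, so no symmetric $C$ of degree $d$ can produce the scheme; the few residual configurations would be eliminated by the Bézout relation $C\cdot\overline{Q}=4d$ bounding $\sum_i|\mathbb{R}C\cap Q_i|$.

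The main obstacle, in both directions, is the compatibility bookkeeping. On the construction side it is guaranteeing that the trigonal building blocks on $T$ and the pieces on $S$ meet $E$ in matching arrangements so that Theorem~\ref{thm: weak_patch_DP2} applies with its transversality hypotheses satisfied, uniformly across all eight families and all admissible $(k_i,h_j)$. On the obstruction side it is the equivariant case analysis: since inside/outside on $S^2$ is only defined after choosing a puncture, a single nest carries several inequivalent $\tau$-actions (differing in how many ovals cross $\widetilde{Q}_i$), and one must verify that \emph{every} one of them violates the tabulated condition. I expect this $\tau$-equivariant enumeration, rather than the patchworking, to be the delicate heart of the argument.
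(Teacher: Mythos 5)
Your overall frame (degenerate, build $C_S$ and $C_T$ separately, glue via Theorem \ref{thm: weak_patch_DP2}) matches the paper, but your allocation of the topology to the two pieces would fail structurally. In the degeneration of Proposition \ref{prop: existence_DP2_family}, $T$ is a \emph{quadric} (here the ellipsoid, a single sphere glued to part of $\mathbb{R}S$ along $\mathbb{R}E$), and $C_T$ must realize $k[E]$, i.e.\ have bidegree $(d,d)$ on $T$; so the nests $N(h_3,0)$, $N(h_4,0)$, which sit on spheres of $\mathbb{R}X^4$ \emph{disjoint} from the gluing locus, cannot come from $C_T$ at all --- they must live on the two untouched spheres of the $3$-sphere pair $(S,E_S)$. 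The paper accordingly puts all four $h$-nests on $S$, built in Proposition \ref{prop: S_pencil_quartic_DP2}$(ii)$ from a $1$-nodal quartic and a degree-$d$ union of lines in the pencil through the node (Fig. \ref{fig: non-symm_3spheres_DP2}) --- this is also where the parities $h_3,h_4\equiv 1 \bmod 2$ come from, not from matching along $E$ --- while $T$ carries the $k_1,k_2$ data via a Harnack-style inductive perturbation producing a bidegree-$(d,d)$ curve through $2d$ points of $E_T$, exactly $2$ real (Proposition \ref{prop: constr_quadric_DP2_2}, Fig. \ref{fig: non-symm_ellipsoid}). Your trigonal/dessins d'enfants proposal is also the wrong tool for $C_T$: Theorem \ref{thm: existence_trigonal} governs bidegree-$(3,0)$ curves on $\Sigma_n$, not $(d,d)$-curves on a quadric, and in this paper dessins are used only in Lemma \ref{lem: curves_sigma1_DP2} and Proposition \ref{prop: (2,2)}.

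On non-symmetry you correctly identify the deck transformation $\tau$ and that ovals are either $\tau$-invariant or come in mirror pairs, but the $\tau$-equivariant parity-and-interleaving enumeration you defer to (''the delicate heart'') is precisely what the paper avoids, and I see no reason your claimed forced relations (e.g.\ parity of $\tau$-invariant ovals determined by $h_i$) actually hold. The paper's argument is a single uniform count: the extra conditions of Table \ref{tabella=non-symm} force $\mathcal{S}$ to have \emph{no mirrors} (Definition \ref{defn: trivial_oval}), so no oval of a hypothetical plane curve $B$ with $\phi^{-1}(B)$ realizing $\mathcal{S}$ may lift to a disjoint mirror pair; hence each of the $2d+1$ ovals of $\mathcal{S}$ must arise from an arc of $B$ crossing the quartic, giving at least $2(2d+1)=4d+2$ real points of $B\cap\overline{Q}$ and contradicting Bézout's bound $4d$. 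You invoke Bézout only as a residual cleanup; in the paper it is the whole obstruction, and the case analysis you anticipate never occurs. To repair your proposal, replace the equivariant enumeration by the mirror dichotomy plus this intersection count.
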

\subsection{Obstructions and Welschinger-type invariants}
\label{subsec: obstr_DP2}
Welschinger invariants can be regarded as real analogues of genus zero Gromov-Witten invariants. They were introduced in \cite{Wels05} and count, with appropriate signs, the real rational curves which pass through a given real collection of points in a given real rational algebraic surface. In the case of $k$-sphere real del Pezzo surfaces of degree $2$, the Welschinger invariants, as well as their generalizations to higher genus (\cite{Shus15}), can be used to prove the existence of interpolating real curves of genus $0 \leq g\leq k-1$; see \cite{IteKhaShu15} and \cite{Shus15}. 
\begin{prop}\cite[Propositions 4 and 5]{Shus15}
\label{prop: shustin_welschinger}
Let $s$ be an integer greater than $1$ and $r_1$, $r_2$ be two non-negative odd integers such that $r_1+r_2=2s$. Let $\mathcal{P}$ be a generic configuration of $2s+j$ real points, with $j=2,1,0$, on a $k$-sphere real del Pezzo surface $X$ of degree $2$, where $k=2+j$, such that 
\begin{itemize}
\item $X_i$ contains $r_i$ points of $\mathcal{P}$, with $i=1,2$; 
\item $X_i$ contains one point of $\mathcal{P}$, if $i \not = 1,2$. 
\end{itemize}
Then, there exists a real algebraic curve $T$ of class $s$ and genus $j+1$  in $X$ passing through $\mathcal{P}$. Furthermore, the points of $\mathcal{P}$ belong to the one-dimensional connected components of $\mathbb{R}T$.
\end{prop}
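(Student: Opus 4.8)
The plan is to deduce the statement from the theory of higher-genus Welschinger invariants of real del Pezzo surfaces developed in \cite{Shus15}, by exhibiting the desired curve as a real solution forced by a non-vanishing enumerative count. I would begin by recording the dimension count that makes such a count meaningful. Put $\beta := s\,c_1(X)$, so that $c_1(X) = -K_X$ and $c_1(X)^2 = 2$ (degree $2$). Then $-K_X\cdot\beta = c_1(X)\cdot\beta = 2s$, and the adjunction formula gives arithmetic genus $p_a(\beta) = s(s-1) + 1$. For immersed real curves of geometric genus $g$ in class $\beta$, the Severi variety has dimension $-K_X\cdot\beta + g - 1 = 2s + g - 1$; setting $g = j+1$ yields exactly $2s + j = |\mathcal{P}|$. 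Thus $\mathcal{P}$ imposes the correct number of conditions, and through a generic such configuration one expects finitely many real genus-$(j+1)$ curves of class $s$.

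I would then invoke the invariant itself. By \cite[Proposition 4]{Shus15} the signed count $W_{j+1}(X, \beta, \mathcal{P})$ of real curves of class $\beta$ and genus $j+1$ through $\mathcal{P}$ is well-defined and independent of the generic configuration $\mathcal{P}$, provided the prescribed distribution of points among the connected components $X_1,\dots,X_k$ is respected. The distribution in the hypothesis---namely $r_1$ points on $X_1$ and $r_2$ points on $X_2$ with $r_1, r_2$ odd and $r_1+r_2 = 2s$, together with one point on each of the remaining $j$ spheres---is precisely the combinatorial input for which \cite[Proposition 5]{Shus15} yields a nonzero value of $W_{j+1}$. A nonzero invariant forces at least one real curve in the count, which is the desired $T$.

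It remains to locate the points of $\mathcal{P}$ on the one-dimensional part of $\mathbb{R}T$. As $\mathcal{P}\subset\mathbb{R}X$ and $T$ interpolates $\mathcal{P}$, every point of $\mathcal{P}$ lies on $\mathbb{R}T$; the only way such a point could fail to lie on a circle of $\mathbb{R}T$ is to be a solitary real point, i.e.\ an isolated real node with complex-conjugate branches. Prescribing a solitary point at a marked location is a codimension-one condition, incompatible with the genericity of $\mathcal{P}$ and with the transversality underlying the count defining $W_{j+1}$; hence for generic $\mathcal{P}$ the marked points avoid the nodes of the contributing curves and sit on genuine arcs of $\mathbb{R}T$.

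The main obstacle is confined to the second step: proving that $W_{j+1}(X, \beta, \mathcal{P})\neq 0$ for the odd distribution $(r_1, r_2)$. This non-vanishing is the heart of \cite{Shus15} and relies on a tropical computation of the invariant together with a careful analysis of the contributing signs, in which the oddness of $r_1$ and $r_2$ is what prevents the surviving contributions from cancelling. Granting that computation, the dimension count and the genericity argument for the one-dimensional components are routine.
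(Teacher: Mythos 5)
Your proposal is correct and takes essentially the same route as the paper, which offers no independent proof of this proposition but imports it directly from \cite[Propositions 4 and 5]{Shus15} --- precisely the invariance and non-vanishing statements you invoke, with the essential non-vanishing for the odd distribution $(r_1,r_2)$ deferred to that source in both cases. Your supplementary dimension count ($2s+g-1=2s+j$ conditions for $g=j+1$) and the genericity argument placing the points of $\mathcal{P}$ away from solitary nodes and hence on the one-dimensional components of $\mathbb{R}T$ are accurate and consistent with the cited result.
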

We use the result of Proposition \ref{prop: shustin_welschinger} to prove the following proposition.
\begin{prop}
\label{prop: bezout_inv_welschinger}
Let $s$ be an integer strictly greater than $1$ and $r_1$, $r_2$ be two non-negative odd integers such that $r_1+r_2=2s$. Moreover, let $A$ be a non-singular real algebraic curve of class $d$ in a $k$-sphere real del Pezzo surface $X$ of degree $2$, with $k=4,3,2$. Let $t$ denote the number of connected components of $\mathbb{R}X$ which intersect $\mathbb{R}A$. Assume that $\mathbb{R}A$ has $r_i$ disjoint nests $N_{h}$ of depth $j_h$ on $X_i$, respectively with $1\leq h \leq r_1$ for $i=1$, and with $r_1+1\leq h \leq 2s$ for $i=2$.
\begin{enumerate}[label=(\arabic*)]
\item If $r_1,r_2>1$, then $\sum\limits_{h=1}^{2s}j_h \leq ds-(t-2)$;
\item If $r_1 =2s-1$ and $r_2=1$, then $\sum\limits_{h=1}^{2s-1}j_h \leq ds-(t-1)$. 
\end{enumerate}
\end{prop}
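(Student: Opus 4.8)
The plan is to run a Bézout-type argument against an auxiliary interpolating curve supplied by Proposition \ref{prop: shustin_welschinger}. First I would fix a point $p_h$ in the innermost disk of each nest $N_h$ (for $1\le h\le 2s$), together with one point $q_i$ on each of the remaining $k-2$ components of $\mathbb{R}X$, chosen inside a region of $X_i\setminus\mathbb{R}A$ bounded by an oval of $\mathbb{R}A$ whenever $X_i$ meets $\mathbb{R}A$. This yields a generic configuration $\mathcal{P}$ with $r_1$ points on $X_1$, $r_2$ on $X_2$ and one on each other component, i.e. $2s+j$ points with $j=k-2\in\{0,1,2\}$. Proposition \ref{prop: shustin_welschinger} then produces a real curve $T$ of class $s$ and genus $j+1$ through $\mathcal{P}$, with every point of $\mathcal{P}$ lying on a one-dimensional component of $\mathbb{R}T$.

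The first key observation is that $T$ is forced to be maximal with exactly one oval on each sphere. Indeed, each of the $k=j+2$ spheres carries at least one point of $\mathcal{P}$, hence at least one oval of $\mathbb{R}T$; so $\mathbb{R}T$ has at least $j+2$ ovals, while Harnack--Klein's inequality bounds the number of ovals of a curve of genus $j+1$ by $j+2$. Consequently $\mathbb{R}T\cap X_i$ is a single oval for every $i$, and in particular the oval $\gamma_1:=\mathbb{R}T\cap X_1$ threads all $r_1$ centres $p_h$ of the nests on $X_1$, and likewise $\gamma_2:=\mathbb{R}T\cap X_2$ threads the $r_2$ centres on $X_2$.

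Next comes the intersection count. On the complex side, adjunction on a degree $2$ del Pezzo surface gives $c_1(X)^2=2$, so $T\cdot A=(s\,c_1)(d\,c_1)=2sd$; since both curves are real and transverse, the number of real intersection points satisfies $\#(\mathbb{R}T\cap\mathbb{R}A)\le 2sd$. For the lower bound, each centre $p_h$ lies inside the innermost oval of $N_h$, while every other centre on the same sphere lies outside $N_h$; as $\gamma_1$ passes through $p_h$ and through a further centre, each of the $j_h$ ovals of $N_h$ separates two points of $\gamma_1$ and is therefore crossed an even, hence $\ge 2$, number of times. Summing over the nests threaded on $X_1$ and on $X_2$ yields $\#(\mathbb{R}T\cap\mathbb{R}A)\ge 2\sum_{h} j_h$, to which each extra component $X_i$ meeting $\mathbb{R}A$ adds two further crossings through the point $q_i$. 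In case (1), where $r_1,r_2>1$, both $\gamma_1,\gamma_2$ are threaded by at least three centres, so all $2s$ nests contribute; the extra components number $t-2$, and combining the estimates gives $2\sum_{h=1}^{2s} j_h+2(t-2)\le 2sd$, that is conclusion (1). In case (2), where $r_2=1$, the single nest on $X_2$ is not threaded by two centres, so $X_2$ is counted among the generic extra components: one sums only over the $2s-1$ nests on $X_1$ and subtracts $t-1$, yielding $\sum_{h=1}^{2s-1} j_h\le sd-(t-1)$.

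The main obstacle is the lower-bound bookkeeping, specifically justifying the two extra real crossings contributed by each of the components $X_i$ meeting $\mathbb{R}A$ that carry only a single centre: since $\mathbb{R}T\cap X_i$ is a single oval on a sphere, it is \emph{a priori} possible for it to sit entirely inside an oval of $\mathbb{R}A$ and avoid $\mathbb{R}A$ altogether. Resolving this requires exploiting the placement of $q_i$ inside an oval of $\mathbb{R}A$ together with the one-oval-per-sphere property (and, where needed, the type I structure and complex orientations of the maximal curve $T$) to guarantee that $\gamma_i$ genuinely meets $\mathbb{R}A$ at least twice. Once this crossing count is secured, the case split on $(r_1,r_2)$ and the evenness of the real intersection number deliver both inequalities directly.
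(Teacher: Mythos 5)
Your strategy coincides with the paper's: choose a configuration of $2s+j$ points ($j=k-2$), invoke Proposition \ref{prop: shustin_welschinger} to obtain an interpolating real curve $T$ of class $s$ and genus $j+1$, use Harnack--Klein to force exactly one oval of $\mathbb{R}T$ on each sphere, and compare the forced real crossings with $A\circ T=2ds$. The nest part of your count is sound: when $r_i>1$, two centres on the same sphere lie on the single oval $\gamma_i$, and every oval of every nest on that sphere separates them, so each nest $N_h$ contributes at least $2j_h$ transverse crossings. The genuine gap is precisely the one you concede at the end and do not close: placing $q_i$ in an open region of $X_i\setminus\mathbb{R}A$ guarantees nothing, because the unique oval of $\mathbb{R}T$ on $X_i$ passes through a single marked point and may stay entirely inside the region containing it, meeting $\mathbb{R}A$ nowhere; appealing to the type I structure or complex orientations of $T$ does not obviously repair this. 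Worse, in your case (2) the same defect infects the component $X_2$ itself: your $p_{2s}$ sits strictly inside the innermost disk of the single nest there, and $\gamma_2$ could remain inside that disk, so the summand you count for $X_2$ inside $t-1$ is also unjustified. As written, neither inequality is established.

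The paper closes this gap by a different, simpler placement of the marked points: it takes them on $\mathbb{R}A$ itself --- on the boundary circles of the innermost disks of the nests (one point on each of the $r_1$, resp.\ $r_2$, disks of $X_i\setminus\bigsqcup N_h$), and, on each further component $X_i$ meeting $\mathbb{R}A$, directly on an oval of $\mathbb{R}A$. Since each such point lies on a one-dimensional component of $\mathbb{R}T$ by Proposition \ref{prop: shustin_welschinger}, it is already a real point of $A\cap T$, and transversality together with the vanishing of the intersection number modulo $2$ of two closed curves on a sphere forces a second real intersection point on that component; this yields the extra $2(t-2)$, resp.\ $2(t-1)$, crossings cleanly. (Constraining generic points to lie on $\mathbb{R}A$ is harmless, as genericity fails only on a positive-codimension locus of configurations.) Replacing your interior points $q_i$ --- and, in case (2), the centre $p_{2s}$ --- by points of $\mathbb{R}A$ chosen this way converts your argument into the paper's proof.
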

\begin{proof}
Assume that $r_1$ and $r_2$ are strictly greater than $1$. It follows that $\mathbb{R}A$ has at least $3$ disjoint nests on $X_i$, with $i=1,2$. In order to prove inequality $(1)$, let us choose a generic collection $\mathcal{P}$ of $2s+j$ real points, with $j=k-2$,  
%$j=2,1,0$, 
in the following way. On each boundary of the $r_1$ (resp. $r_2$) disks in $X_1 \setminus \overset{r_1}{\underset{h=1}{\sqcup}} N_{h}$ (resp. $X_2 \setminus \overset{2s}{\underset{h=r_1+1}{\sqcup}} N_{h}$), pick a point. Moreover, pick a point on every connected component $X_i$, with $i=3,4$, such that the point belongs to $\mathbb{R}A$ any time the real algebraic curve has at least one oval on $X_i$. Then, Proposition \ref{prop: shustin_welschinger} assures the existence of a real algebraic curve $T$ of class $s$ and genus $j+1$ on $X$ passing through $\mathcal{P}$. Furthermore, the points of $\mathcal{P}$ belong to the one-dimensional connected components of $\mathbb{R}T$. Thus, the number of real intersection points of $A$ with $T$ is at least $2(\sum\limits_{h=1}^{2s}j_h+(t-2))$. Inequality $(1)$ follows directly from the fact that the intersection number $A\circ T=2ds$ is greater or equal than the number of real intersection points of $A$ with $T$.\\
The proof of $(2)$ is similar to the previous one.
\end{proof}
The following statement gives more topological obstructions for real curves in $1$-sphere real del Pezzo surfaces of degree $2$.
\begin{lem}
\label{lem: welschinger_1_sfere_class_2}
Let $A$ be a real algebraic curve in class $d$ in a $1$-sphere real del Pezzo surface $X$ of degree $2$. Assume that $\mathbb{R}A$ has three nests $N_{h}$ of depth $j_h$ on $X_1$. Then, $$j_1+j_2+j_3 \leq 2d.$$
\end{lem}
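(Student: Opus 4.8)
The plan is to run the same Bézout-type argument as in Proposition \ref{prop: bezout_inv_welschinger}, but with an auxiliary curve adapted to the fact that $\mathbb{R}X$ is now connected, so that Proposition \ref{prop: shustin_welschinger} (which requires $k\geq 2$) is no longer available. Since $X$ has degree $2$, a class $d$ curve $A$ and a class $s$ curve $T$ meet in $A\circ T = ds\,c_1(X)^2 = 2ds$ points counted with multiplicity. Taking $s=2$ gives $A\circ T = 4d$, and the expected number of generic real points through which a rational curve of class $s$ passes, namely $2s-1$, equals $3$ for $s=2$; as this is exactly the number of nests, a real rational curve of class $2$ through three prescribed points is precisely the right auxiliary object.

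First I would fix one generic point $p_h$ in the innermost disk of each of the three nests $N_h$ (disjoint in the sense of Definition \ref{defn: nest_sphere}), and produce a real rational curve $T$ of class $2$ on $X$ through $p_1,p_2,p_3$ with these points lying on a one-dimensional branch of $\mathbb{R}T$. For $k\geq 2$ this is exactly what Proposition \ref{prop: shustin_welschinger} supplies; in the connected case $k=1$ I would instead invoke the non-vanishing (indeed positivity) of the genus-$0$ Welschinger invariant of the $1$-sphere real del Pezzo surface of degree $2$ in class $2$, coming from the real enumerative results of Welschinger and Itenberg--Kharlamov--Shustin (\cite{Wels05}, \cite{Shus15}, \cite{IteKhaShu15}): a nonzero count guarantees a real rational curve through any generic real configuration of $2s-1=3$ real points.

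The key geometric input is that $\mathbb{R}T$ is connected: being the real part of an irreducible rational curve with real points, it is the image of a single circle, hence a closed loop through $p_1,p_2,p_3$. As these three points sit in the innermost disks of three distinct nests, and by Definition \ref{defn: nest_sphere} each nest must be exited in order to reach the others, the loop $\mathbb{R}T$ crosses every one of the $j_h$ ovals of $N_h$ an even number of times, hence at least $2j_h$ times. Choosing the $p_h$ generically makes all these crossings transverse intersection points of $\mathbb{R}A$ with $\mathbb{R}T$, so $\#(\mathbb{R}A\cap\mathbb{R}T)\geq 2(j_1+j_2+j_3)$. Comparing with $A\circ T = 4d$, which bounds the number of real intersection points from above, yields $2(j_1+j_2+j_3)\leq 4d$, that is $j_1+j_2+j_3\leq 2d$.

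I expect the main obstacle to be the existence step: unlike the $k\geq 2$ cases handled by Proposition \ref{prop: shustin_welschinger}, the connected case cannot spread the points across several components, so the argument rests on the genus-$0$ enumerative count being nonzero for this particular surface and class, and on checking that the three points can be placed on the one-dimensional part of $\mathbb{R}T$ rather than at a solitary node. A secondary point to verify is that $T$ shares no component with $A$, which holds for generic choices, so that $A\circ T$ genuinely counts intersection points and the final inequality is valid.
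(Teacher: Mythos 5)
Your proposal is correct and matches the paper's argument: the paper likewise establishes the existence of a real rational curve $T$ of class $2$ through $3$ given real points on $X_1$ (citing precisely \cite[Table 1, Section 2.2]{IteKhaShu15}, i.e.\ the positivity of the relevant Welschinger-type count) and then runs the same Bézout comparison $2(j_1+j_2+j_3)\leq A\circ T=4d$ as in Proposition \ref{prop: bezout_inv_welschinger}. Your only deviation is cosmetic — you place the three points generically in the innermost disks rather than on the boundary ovals as in the proof of Proposition \ref{prop: bezout_inv_welschinger} — and your flagged worries (points lying on the one-dimensional part of $\mathbb{R}T$, no shared component with $A$) are handled exactly as you suggest.
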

\begin{proof}
The statement follows from 
\begin{itemize}
\item the existence of a real algebraic curve $T$ of class $2$ and genus $0$ on $X$ passing through a given real configuration of $3$ distinct points on $X_1$ (see \cite[Table $1$, Section $2.2$]{IteKhaShu15}) and
\item an argument similar to that used in the proof of Proposition \ref{prop: bezout_inv_welschinger}.
\end{itemize}
\end{proof}
We have one more Bézout-type restriction on the topology of real curves in $4$-sphere real del Pezzo surfaces of degree $2$.
\begin{lem}
\label{lem: caso_particolare_4_sfere_k=2}
Let $A$ be a real algebraic curve in class $d$ in a $4$-sphere real del Pezzo surface $X$ of degree $2$. Assume that $\mathbb{R}A$ has a nest $N_{1}$ of depth $j_1$ on $X_1$ and a nest $N_{2}$ of depth $j_2$ on $X_2$. Let $t$ denote the number of connected components of $\mathbb{R}X$ which intersect $\mathbb{R}A$. Then, $$j_1+j_2 \leq 2d- (t-2).$$
\end{lem}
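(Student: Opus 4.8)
The plan is to mimic the proof of Proposition \ref{prop: bezout_inv_welschinger}, but now using a conic (a real curve of class $s=2$) that interpolates points on only \emph{two} of the four spheres, while the two nests $N_1, N_2$ live on $X_1, X_2$ respectively. The key input is the existence of an auxiliary interpolating curve $T$ of class $2$ whose real one-dimensional part passes through a prescribed generic configuration of real points distributed on the spheres. Concretely, I would invoke Proposition \ref{prop: shustin_welschinger} (or the tabulated existence results of \cite{IteKhaShu15}) with $s=2$ and an appropriate choice of $r_1, r_2$ to obtain such a $T$, and then estimate the real intersection number $A \circ T$ from below by the number of forced real intersection points.

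First I would select a generic configuration $\mathcal{P}$ of real points adapted to the nests. Since $N_1$ is a nest of depth $j_1$ on $X_1$, a point placed on $X_1$ \emph{inside} the innermost region of $N_1$ forces any real arc of $T$ passing through it to cross each of the $j_1$ ovals of $N_1$, contributing at least $2 j_1$ real intersection points with $A$ (an arc entering the nest must exit it, so each oval is met an even number of times, at least twice). Symmetrically, a point chosen on $X_2$ inside $N_2$ forces at least $2 j_2$ real intersections with the ovals of $N_2$. In addition, for each further component $X_i$ (with $i=3,4$) that meets $\mathbb{R}A$, placing a point of $\mathcal{P}$ on $\mathbb{R}A \cap X_i$ forces at least $2$ more real intersection points, accounting for the $(t-2)$ correction term (here $t$ is the number of components meeting $\mathbb{R}A$, and the $-2$ reflects that $X_1$ and $X_2$ are already counted via the nests).

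Next I would tally the bound. The class-$2$ curve $T$ satisfies $A \circ T = d \cdot c_1(X) \cdot 2\, c_1(X) = 2 d s$ with $s = 2$, so $A \circ T = 4d$ in the complex intersection pairing — more precisely, using the relation $c_1(X)^2 = 2$ for a degree-$2$ del Pezzo surface, one gets $A \circ T = 2 d s = 4d$. Since all intersection points of two distinct real curves come in complex-conjugate pairs except the real ones, and real intersections are non-negative, the total complex intersection number bounds the number of forced real intersections from above: $2\bigl(j_1 + j_2 + (t-2)\bigr) \leq 2 d s = 4d$, which upon dividing by $2$ and using $s=2$ yields exactly $j_1 + j_2 \leq 2d - (t-2)$. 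I would need to double-check the precise value of $s$ and the corresponding existence statement to land on the stated constant; the arithmetic here depends on which interpolation result (genus and class of $T$) is available for exactly the configuration needed.

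The main obstacle I expect is \emph{guaranteeing the existence of the auxiliary curve $T$ with the correct topology of its real part}, namely that the interpolated points lie on the one-dimensional (as opposed to isolated) components of $\mathbb{R}T$, and that $T$ can be taken to interpolate points concentrated on just $X_1$ and $X_2$ (with possibly points on $X_3, X_4$) in the generic position required. Proposition \ref{prop: shustin_welschinger} is stated for $k = 2+j$ spheres and a balanced odd distribution $r_1 + r_2 = 2s$, so I would have to verify that the relevant specialization applies to the $4$-sphere case with a class-$2$ curve; if the hypotheses of Proposition \ref{prop: shustin_welschinger} do not cover this configuration directly, I would instead cite the explicit enumerative table in \cite[Table $1$, Section $2.2$]{IteKhaShu15}, exactly as is done in Lemma \ref{lem: welschinger_1_sfere_class_2}, to secure the existence of a genus-$0$, class-$2$ real curve through the three (or more) prescribed real points. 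Once $T$ is in hand, the remaining Bézout-type counting is routine.
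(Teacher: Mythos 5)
There is a genuine gap, in two related places. First, your key topological claim is false as stated: a point placed inside the innermost region of $N_1$ does not force any crossings, because the connected component of $\mathbb{R}T$ through it is a circle, not an arc with endpoints elsewhere, and such a circle can be entirely contained in the innermost disk, meeting no oval of the nest at all. Even a point placed on the innermost oval itself only forces that one oval to be crossed at least twice, contributing $2$ rather than $2j_1$. To extract $2j_i$ real intersections from a nest of depth $j_i$ you need a single one-dimensional component of $\mathbb{R}T$ to pass through two points separated by all $j_i$ ovals (one in the inner region, one in the outer region); that requires at least \emph{two} marked points on each of $X_1$ and $X_2$, combined with the genus bound (a class-$2$ curve has genus at most $3$, so $\mathbb{R}T$ has at most $4$ one-dimensional components, hence exactly one per sphere once every sphere carries marked points lying on one-dimensional components) to force the two points onto the same circle.

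Second, this $(2,2,1,1)$ configuration is precisely what Proposition \ref{prop: shustin_welschinger} cannot supply: for $s=2$ it demands $r_1,r_2$ odd with $r_1+r_2=4$, so $(r_1,r_2)\in\{(1,3),(3,1)\}$, and one of the two nest-carrying spheres receives only one point --- not enough to force the corresponding $2j_i$, so at best you would prove $j_1+j_2 \leq 2d-(t-2)$ with one of the $j_i$ replaced by $1$. Your fallback, the genus-$0$ table of \cite{IteKhaShu15} used in Lemma \ref{lem: welschinger_1_sfere_class_2}, does not cover it either: a genus-$0$ class-$2$ curve interpolates only $3$ points, too few to distribute over the two nests and the $(t-2)$ remaining spheres. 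The paper sidesteps Welschinger-type results altogether here: it notes that passing through a point imposes one linear condition on the linear system of class-$2$ curves, whose dimension allows interpolating $6$ points, so through any real configuration with $2$ points on $X_1$, $2$ on $X_2$, and one on each of $X_3$, $X_4$ there passes a real curve $T$ of class $2$ and genus $3$; it then runs the Bézout count $2\bigl(j_1+j_2+(t-2)\bigr) \leq A\circ T = 4d$ as in Proposition \ref{prop: bezout_inv_welschinger}. Your intersection arithmetic ($A\circ T = 2ds = 4d$) and the final inequality are correct; what is missing is the right existence statement for $T$ and the two-points-per-nest placement that makes the crossing count valid.
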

\begin{proof}
Let $|L|$ be a linear system of curves on $X$. Passing through a given (real) point on $X$ defines a (real) hyperplane on $|L|$. Therefore, given a (real) configuration of $h$ points on $X$ such that $h$ is less or equal to the dimension of $|L|$, there exists a (real) curve in $|L|$ passing through the $h$ points. It follows that there exists a real algebraic curve $T$ of class $2$ and genus $3$ on $X$ passing through a given real configuration $\mathcal{P}$ of $6$ distinct points such that $X_i$ contains $2$ points of $\mathcal{P}$, with $i=1,2$, and $X_i$ contains one point of $\mathcal{P}$, if $i \not = 1,2$. The existence of such a curve $T$ and an argument similar to that used in the proof of Proposition \ref{prop: bezout_inv_welschinger}, prove the statement.
%gromov
%Since the Gromov-Witten invariant of algebraic curves of class $2$ and genus $3$ is one (see \cite{ShoShu13}), the statement follows from 
%they count irreducible complex curve of a given genus g and of a given
%divisor class D passing through −KD+g −1 generic points on a surface
%\begin{itemize}ƒ
%\item the existence of a real algebraic curve $T$ of class $2$ and genus $3$ on $X$ passing through a given real configuration of $6$ distinct points and
%\item an argument similar to that used in the proof of Proposition \ref{prop: bezout_inv_welschinger}.
%\end{itemize}
\end{proof}
A variant of the technique used in proof of Lemma \ref{lem: caso_particolare_4_sfere_k=2} leads to prohibit a particular real scheme in class $3$ in $4$-sphere real del Pezzo surfaces of degree $2$.
\begin{lem}
\label{lem: caso_particolare_DP2_obstr}
There is no real algebraic curve of class $3$ in any $4$-sphere real del Pezzo surface $X$ of degree $2$ realizing the real scheme $$\mathcal{S}:=\langle1\rangle  \sqcup  \langle1\rangle   \sqcup  \langle1\rangle   \sqcup   \langle1\rangle :0:0:0.$$
\end{lem}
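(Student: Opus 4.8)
The plan is to argue by contradiction with a Bézout-type estimate against an auxiliary class-$2$ curve, exactly in the spirit of Lemma \ref{lem: caso_particolare_4_sfere_k=2}. First I decode the real scheme: each symbol $\langle 1\rangle$ is a depth-$2$ nest (an oval inside an oval), so $\mathcal{S}$ consists of four pairwise disjoint depth-$2$ nests, all lying on $X_1$, with $\mathbb{R}A\cap X_i=\emptyset$ for $i=2,3,4$; in particular $t=1$ and $A$ is an $M$-curve, since it has $8=d(d-1)+2$ ovals (cf. Proposition \ref{prop: H-K_DP2}). Suppose such an $A$ exists. Note that Proposition \ref{prop: bezout_inv_welschinger} and Lemma \ref{lem: caso_particolare_4_sfere_k=2} do not apply directly, as they require nests distributed on two distinct spheres, whereas here all four nests sit on $X_1$; this is why a variant is needed.

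As established in the proof of Lemma \ref{lem: caso_particolare_4_sfere_k=2}, through any real configuration of $6$ points of $\mathbb{R}X$ there passes a real algebraic curve $T$ of class $2$ and genus $3$, so by Harnack--Klein's inequality $\mathbb{R}T$ has at most $4$ ovals. I would place one interpolation point inside the innermost disk of each of the four nests on $X_1$, and one further point on each of $X_2$ and $X_3$. The two points on $X_2,X_3$ force an oval of $\mathbb{R}T$ on each of those spheres, so \emph{at most two} ovals of $\mathbb{R}T$ lie on $X_1$. Since $A\circ T=2\cdot 3\cdot 2=12$, the curves $A$ and $T$ meet in at most $12$ real points.

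Now I count forced intersections. Any oval of $T$ that passes through a point interior to a nest and also meets the exterior of that nest must cross each of the two ovals of the nest at least twice, contributing at least $4$ intersections; an oval carrying two of the four interior points threads two nests and contributes at least $8$. Hence if $\mathbb{R}T$ has an oval on $X_4$ as well (so at most one oval remains on $X_1$ and carries all four interior points), or if the four points split two-and-two on the two $X_1$-ovals, one already gets at least $16>12$ real intersections, a contradiction. The only surviving case is: no oval on $X_4$, exactly two ovals on $X_1$, and a three-to-one distribution of the interior points, in which the lone oval carries a single interior point and may be a small oval trapped inside that innermost disk, contributing nothing and leaving the count at the Bézout bound $12$. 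This tight configuration yields no contradiction and must be excluded; ruling it out is the main obstacle.

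The hard part will therefore be the tie-breaking. Here I would exploit that on the sphere $X_1$ a trapped oval together with the ``big'' oval threading the other three nests automatically form a nest of the class-$2$ curve $T$ itself (on $S^2$ two disjoint ovals are always nested), and then turn the threading estimate back onto $T$ --- for instance by intersecting $T$ with a suitable auxiliary class-$1$ curve through the relevant innermost disk --- or else refine the choice of the six points (moving one point to $X_4$, or working within the pencil of class-$2$ curves through five of them) so that no admissible member can realize the trapped configuration while keeping the ovals forced on the remaining spheres. I expect that excluding this three-to-one trapped distribution, so that every admissible auxiliary curve forces at least $14$ real intersections, is the only substantial difficulty, the rest being the routine Bézout bookkeeping already carried out for Lemma \ref{lem: caso_particolare_4_sfere_k=2}.
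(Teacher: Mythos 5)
Your strategy is the paper's strategy (an auxiliary class-$2$ curve through six points played against $A\circ T=12$, as in Lemma \ref{lem: caso_particolare_4_sfere_k=2}), and your bookkeeping of forced crossings per threading oval is correct, but there is a genuine gap exactly where you locate it: you place the four interpolation points in the \emph{interiors} of the innermost disks, and then the three-to-one distribution with a small oval of $T$ trapped inside one innermost disk meets the Bézout bound with equality ($12$) and is never excluded. None of the tie-breaking ideas you sketch is carried out, and at least one would not work as stated: moving a point to $X_4$ forces a fourth oval of $\mathbb{R}T$ off $X_1$, leaving a single oval on $X_1$ through all four interior points only if that oval exists at all — but the genus-$3$ Harnack bound then permits exactly one oval on $X_1$, which threads all four nests and gives $\geq 16$, yet you must first rule out that $T$ simply has \emph{no} oval through some interior point, which interpolation does not force when the point is off $\mathbb{R}A$. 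So the trapped case, or a variant of it, survives every variant you propose.

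The paper sidesteps this entirely by a small but decisive change in the choice of $\mathcal{P}$: the four points are taken \emph{on the boundaries} of the four innermost disks, i.e.\ on the inner ovals of $\mathbb{R}A$ itself (plus one point on each of $X_2$ and $X_3$). Since any two closed curves on $S^2$ have even total intersection multiplicity, every oval of $T$ passing through a marked point then meets the corresponding inner oval in at least two points — a trapped or tangent oval now contributes $2$ instead of $0$. With at most two ovals of $T$ on $X_1$, the worst distribution is three-to-one, giving at least $3\cdot 2+3\cdot 2+2=14>12$ real intersection points, and the contradiction is immediate in all cases. In short: your approach and estimates are sound, but the proof is incomplete without this (or an equivalent) device, and the fix is to interpolate on $\mathbb{R}A$ rather than near it.
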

\begin{proof}
Assume that there exists a non-singular real algebraic curve $A$ of class $3$ realizing $\mathcal{S}$ in $X$. Let us choose a configuration $\mathcal{P}$ of $6$ real points as follows. On each boundary of the $4$ disks in $X_1 \setminus \mathbb{R}A$, pick a point. Moreover, pick a point on the connected components $X_2$ and $X_3$. Then, there exists a non-singular real algebraic curve $T$ of class $2$ passing through $\mathcal{P}$ and $T$ has at most two ovals on $X_1$ and one oval on both $X_2$ and $X_3$. 
%(see \cite{ShoShu13}). 
Thus, the number of real intersection points of $A$ with $T$ is at least $14$. But the intersection number $A\circ T$ is $12$.
\end{proof}
In this article, apart form Proposition \ref{prop: non-symm_real_scheme}, we mainly focus on class $3$ real curves on $X^{k}$. To look further, in Proposition \ref{prop: petro} we present one possible application of Comessatti-Petrovsky inequality (\cite{Come28}, \cite{Petro33}, \cite{Petro38}), which gives a topological type restriction for real curves of even class $d$.
\begin{prop}
\label{prop: petro}
Let $A$ be a non-singular real algebraic maximal curve of even class $d \geq 8$ in a $k$-sphere real del Pezzo surface $X$ of degree $2$, with $1 \leq k \leq 4$. Then, the pair $(\mathbb{R}X,\mathbb{R}A)$ does not realize the real scheme $l:0:0:0$, with $l=d(d-1)+2$.
\end{prop}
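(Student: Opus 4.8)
The plan is to realize the constraint through the real double cover of $X$ branched along $A$, which is exactly the mechanism underlying the Comessatti--Petrovsky inequality (\cite{Come28}, \cite{Petro33}, \cite{Petro38}). Since $d$ is even, the class $[A]=d\,c_1(X)=2\cdot\tfrac{d}{2}\,c_1(X)$ is divisible by $2$ in $H_2(X;\mathbb{Z})$; writing $L:=-\tfrac{d}{2}K_X$ we have $A\in|2L|$, and I would introduce the real double cover $\pi\colon Y\to X$ branched along $A$, given by $w^2=s$ for a real section $s$ of $L^{\otimes 2}$ with $\{s=0\}=A$, so that $Y$ is a smooth real algebraic surface. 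Over a real point the fibre consists of two real points where $s>0$, of one point where $s=0$, and of a conjugate pair where $s<0$; hence $\mathbb{R}Y$ is the double of the closed sign region $\overline{B}:=\overline{\{s>0\}}\subset\mathbb{R}X$ glued to itself along $\mathbb{R}A$, so that $\chi(\mathbb{R}Y)=2\chi(\overline{B})$. Replacing $s$ by $-s$ produces the companion real double cover, for which $\overline{B}$ is replaced by the complementary region; I am free to choose whichever of the two is convenient.

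Next I would compute $\chi(\mathbb{R}Y)$ for the putative real scheme $l:0:0:0$ with $l=d(d-1)+2$. By adjunction a class-$d$ curve has genus $d(d-1)+1$, so $l=d(d-1)+2$ is exactly the maximal (Harnack--Klein) value of Proposition \ref{prop: H-K_DP2}; the hypothesis that $A$ is maximal is used precisely to fix this value of $l$. On $X_1\cong S^2$ the $l$ pairwise non-nested ovals bound $l$ disjoint disks, while $X_2,\dots,X_k$ meet no part of $\mathbb{R}A$ and therefore each lie entirely in one sign region. Choosing the branched cover for which the $l$ disks lie in $\overline{B}$, the disks contribute $l$ to $\chi(\overline{B})$ and the empty spheres contribute a non-negative amount, whence $\chi(\mathbb{R}Y)=2\chi(\overline{B})\geq 2l=2d^2-2d+4$.

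The third ingredient is the Hodge number $h^{1,1}(Y)$. From $\pi_*\mathcal{O}_Y=\mathcal{O}_X\oplus L^{-1}$, together with $q(X)=p_g(X)=0$, Kodaira vanishing and Riemann--Roch on the del Pezzo surface $X$ (with $K_X^2=2$), I would obtain $q(Y)=0$ and $p_g(Y)=1+\tfrac{d(d-2)}{4}$. Combining this with $\chi_{\mathrm{top}}(Y)=2\chi_{\mathrm{top}}(X)-\chi_{\mathrm{top}}(A)=20+2d(d-1)$ and the surface identity $\chi_{\mathrm{top}}=2-4q+2p_g+h^{1,1}$ yields $h^{1,1}(Y)=\tfrac{3}{2}d^2-d+16$. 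Finally I would apply the Comessatti--Petrovsky inequality $\chi(\mathbb{R}Y)\leq h^{1,1}(Y)$; together with the lower bound of the previous paragraph this gives $2d^2-2d+4\leq\tfrac{3}{2}d^2-d+16$, i.e. $(d-6)(d+4)\leq 0$ and hence $d\leq 6$, contradicting $d\geq 8$. This also clarifies the hypotheses: evenness of $d$ is needed to form the double cover, while the inequality only becomes an obstruction from $d=7$ on, so the first excluded even class is $8$.

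The main obstacle I anticipate is the bookkeeping around the real structure of $Y$: one must verify that $\mathbb{R}Y$ really is the double of a single sign region and handle the sign ambiguity (resolved here by passing to the companion cover $w^2=-s$), and one must pin down $h^{1,1}(Y)$ correctly, since the whole argument is a numerical race between $2l\sim 2d^2$ and $h^{1,1}(Y)\sim\tfrac{3}{2}d^2$. The maximality hypothesis does not enter the cohomological computation at all; it serves only to force $l$ to its extremal value, so that the bound $\chi(\mathbb{R}Y)\geq 2l$ overtakes $h^{1,1}(Y)$.
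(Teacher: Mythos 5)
Your proof is correct and follows essentially the same route as the paper: the real double cover $Y\to X$ branched along $A$, whose real part is the double of a sign region with $\chi(\mathbb{R}Y)=2\chi(B_i)$, combined with the Comessatti--Petrovsky inequality, where your bound $\chi(\mathbb{R}Y)\le h^{1,1}(Y)=\tfrac{3}{2}d^{2}-d+16$ is exactly the upper half of the paper's displayed inequality (\ref{eqn: petro}), $\chi(\mathbb{R}Y)\le 16+\tfrac{d(3d-2)}{2}$. Your observation that the empty spheres contribute non-negatively, so that only the sign choice on $X_1$ matters (available by passing to the companion cover $w^{2}=-s$), cleanly resolves the dependence issue the paper isolates in Remark \ref{rem: petro_tricky}, and your computation $(d-6)(d+4)\le 0$ makes explicit the ``direct application'' the paper leaves implicit.
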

\begin{proof}
%Let $A$ be a non-singular real algebraic curve of even class $d$ in a $k$-sphere real del Pezzo surface $X$ of degree $2$, with $1 \leq k \leq 4$.
Let $Y\stackrel{\pi}{\rightarrow} X$ be a double cover of $X$ ramified along $A$. Let $B_1$ and $B_2$ be two distinct disjoint unions of connected components of $\mathbb{R}X\setminus \mathbb{R}A$ such that each $B_i$ is bounded by $\mathbb{R}A$. There exist two lifts $\sigma_{1}, \sigma_{2}$ to $Y$ of the real structure of $X$ via the double cover $\pi$ and the real part of $Y$ is the double of one of the $B_i$'s. Thanks to Comessatti-Petrovsky inequality one has
 \begin{equation}
 \label{eqn: petro}
 -14-\frac{d(3d-2)}{2} \leq \chi (\mathbb{R}Y) \leq  16+\frac{d(3d-2)}{2},
 \end{equation}
 where $\chi (\mathbb{R}Y)= 2 \chi (B_{i})$.\\
For each possible choice of $B_{1}$ and $B_{2}$ (Remark \ref{rem: petro_tricky}), a direct application of (\ref{eqn: petro}) shows that at least one $\chi(B_{i})$ does not respect the inequalities.
\end{proof}
In order to find other Comessatti-Petrovsky restrictions for real schemes in class $d$, the following should be observed. 
\begin{rem}
\label{rem: petro_tricky}
In the proof of Proposition \ref{prop: petro}, the choices of $B_{1}$ and $B_{2}$ are not independent; in fact, the choice on $X_{i}$ of
two disjoint unions of connected components of $X_{i}\setminus \mathbb{R}A$ bounded by $\mathbb{R}A$ imposes the choice on the other spheres. However, it is not obvious to know which are these imposed halves, unless you already know the double covering $Y$ which contains in particular the information on $B_{1}$ and $B_{2}$.
\end{rem}
\subsection{Class 1 and 2}
\label{subsec: class 1 and 2_DP2} 
Let us construct some symmetric real curves of class $1$ and $2$ on $k$-sphere real del Pezzo surfaces of degree $2$.
\begin{proof}[Proof of Proposition \ref{prop: d=1,d=2_DP2}.]
Fix a non-negative integer $k \leq 4$ and any real scheme $\mathcal{S}$ in $\mathcal{S}_{DP2}(k,d)$, with $d=1,2$. It is easy to construct a non-singular real plane quartic $\overline{Q}$, with real scheme $k$, and a line (resp. conic) $C$ in $\mathbb{C}P^2$ such that, via the double cover of $\mathbb{C}P^2$ ramified along $\overline{Q}$, the lifting of $C$ is a class $1$ (resp. $2$) real algebraic curve realizing $\mathcal{S}$ in a $k$-sphere real del Pezzo surface of degree $2$. See Example \ref{exa: class_1_2}. 
\end{proof} 
\begin{exa}
\label{exa: class_1_2}
Let us fix $k=4$. From the quartics and lines arranged in $\mathbb{R}P^2$ as in Fig. \ref{fig: d=1}, one construct $4$-sphere real del Pezzo surfaces $X$ of degree $2$ and class $1$ real algebraic curves in $X$ realizing all real schemes in class $1$. \\
Analogously, to realize all real schemes in $X^{4}$ and class $2$ with a maximal number of ovals, it is enough to construct real plane conics and quartics mutually arranged in $\mathbb{R}P^{2}$ as in Fig. \ref{fig: d=2}. 
The construction of such a pair of real plane curves, realizing the first three real schemes of Fig. \ref{fig: d=2}, follows from Hilbert's construction method (\cite{Hilb02}) which allows to construct a real quartic perturbing the union of two real conics; the remaining arrangements in Fig. \ref{fig: d=2} are realized fixing a real quartic which has a non-convex oval, taking a pair of lines which intersect the quartic transversally, and perturbing the lines to a conic; see \cite{Brus21} for details on perturbations.
\end{exa}
\begin{figure}[h!]
\begin{center}
\begin{picture}(100,50)
\put(-95,-15){\includegraphics[width=0.8\textwidth]{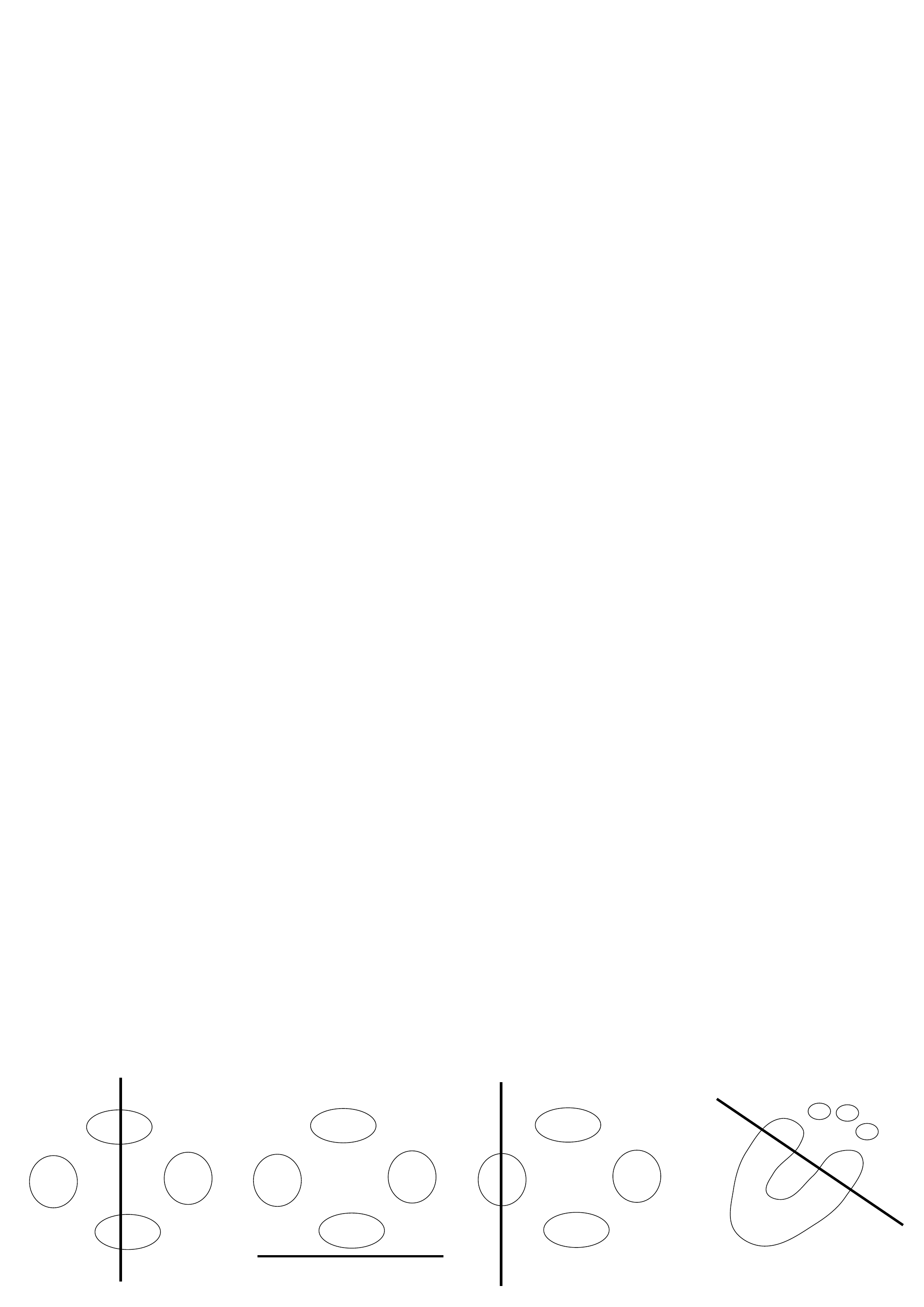}} 
%\put(40,-20){$a)$}
\end{picture}
\end{center}
\caption{Arrangements of real lines (in thick black) and real maximal quartics (in gray) in $\mathbb{R}P^2$.}
\label{fig: d=1}
\end{figure}
%dopo disegni
\begin{figure}[h!]
\begin{center}
\begin{picture}(100,100)
\put(-95,-25){\includegraphics[width=0.8\textwidth]{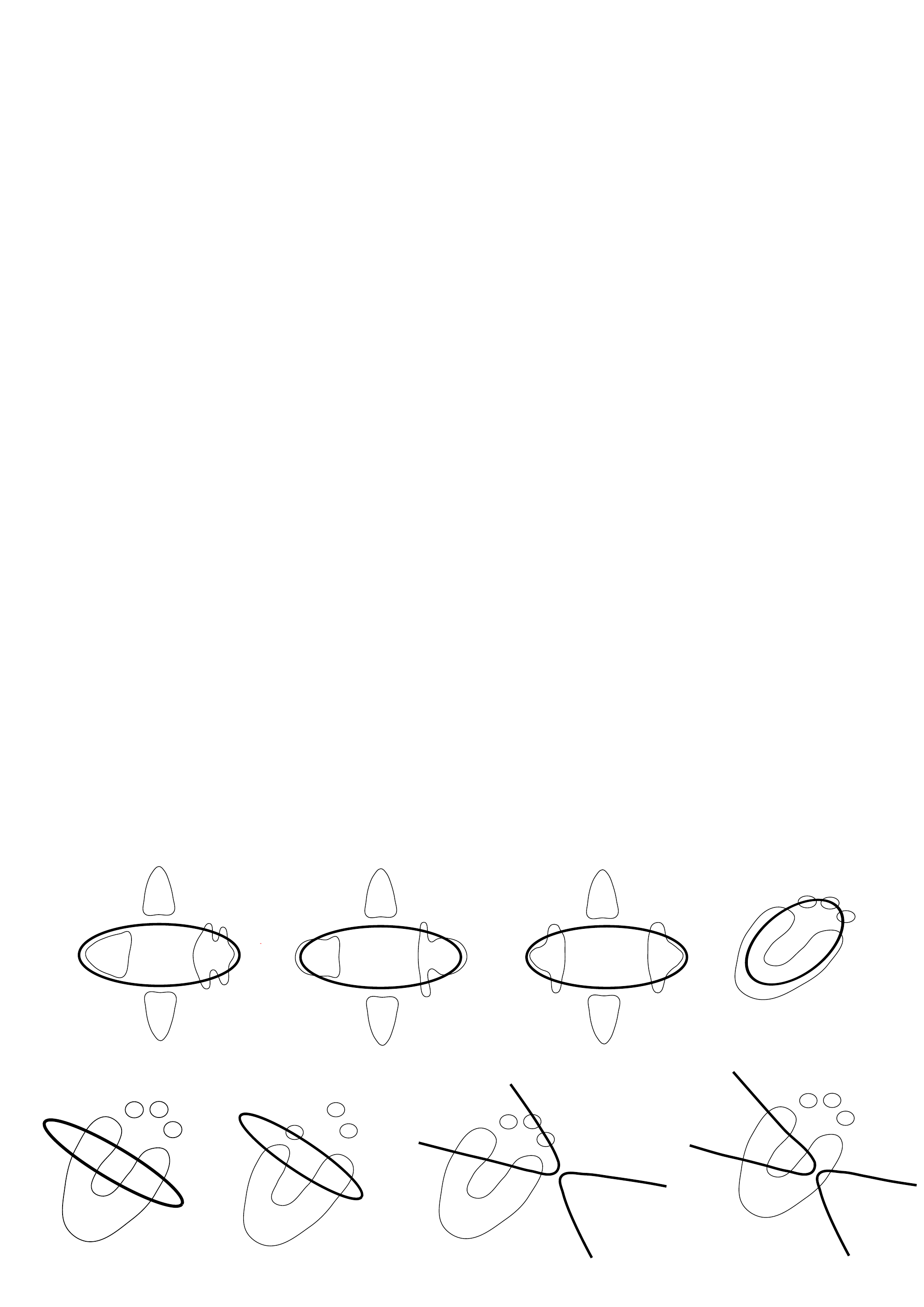}} 
%\put(40,-15){$b)$}
\end{picture}
\end{center}
\caption{Arrangements of real conics (in thick black) and maximal quartics (in gray) in $\mathbb{R}P^2$.}
\label{fig: d=2}
\end{figure}
%dopo disegni
\subsection{Class 3}
\label{subsec: class_3_DP2} 
Let us prove a part of Theorem \ref{thm: principal_DP2} and Proposition \ref{prop: principal_DP2}, realizing some symmetric real schemes in $X^{k}$ and in class $3$.
\begin{prop}
\label{prop: circ_label_DP2}
Each real scheme $\mathcal{S}$ in $\mathcal{S}_{DP2}(k,3)$ labeled with $\circ$ and/or $ \circ^{*}$ in Table \ref{tabella=realized3}, is symmetric in class $3$.
%realizable in $X^{k}$ and in class $3$, with $1 \leq k \leq 4$. Moreover $\mathcal{S}$ is 
\end{prop}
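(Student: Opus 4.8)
The plan is to realize each labeled real scheme by lifting a suitably arranged real plane cubic via the anti-canonical double cover, mirroring the strategy of the proof of Proposition \ref{prop: d=1,d=2_DP2}. Recall that a symmetric real curve $A$ of class $3$ in $X^k$ is the $\phi$-preimage of a non-singular real plane cubic $C$, and that the real scheme of the pair $(\mathbb{R}X^k,\mathbb{R}A)$ is recovered from the topological type of the triplet $(\mathbb{R}P^2,\mathbb{R}\overline{Q},\mathbb{R}C)$, where $\overline{Q}$ is the branch quartic. Thus it suffices, for each $\circ$/$\circ^{*}$-labeled $\mathcal{S}$, to produce in $\mathbb{R}P^2$ a non-singular real quartic $\overline{Q}$ with real scheme $k$ and orientable $\Pi_+$, together with a non-singular real cubic $C$, whose mutual arrangement lifts to $\mathcal{S}$.

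First I would make the lifting dictionary explicit. Over the interior of a component of $\Pi_+$ the map $\phi$ is two-to-one, and it is one-to-one (ramified) along $\overline{Q}$; hence $\mathbb{R}A$ is the double of $C\cap\Pi_+$ branched along $C\cap\overline{Q}$. Consequently, an oval of $C$ lying in $\{f>0\}$ inside a single component of $\Pi_+$ lifts to a \emph{pair} of ovals on the corresponding sphere $X_j$; an arc of $C$ meeting $\overline{Q}$ transversally in two points and lying in $\Pi_+$ between them lifts to a \emph{single} oval on $X_j$; and the relative nesting of the lifted ovals is dictated by the nesting of these arcs and ovals inside $\Pi_+$. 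Since a real cubic consists of a pseudo-line and at most one oval, each maximal arc of the pseudo-line inside $\Pi_+$ contributes one oval to the sphere covering its component of $\Pi_+$. This translates each target real scheme $\mathcal{S}=\mathcal{S}_1:\dots:\mathcal{S}_k$ into a prescribed mutual position of $\mathbb{R}C$ and $\mathbb{R}\overline{Q}$ in each connected component of $\Pi_+$.

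Then, for every labeled entry of Table \ref{tabella=realized3}, I would exhibit such an arrangement and realize it algebraically. As in Example \ref{exa: class_1_2}, the primitive tool is Hilbert's method: one starts from a degenerate configuration (a union of lower-degree real curves, such as a conic and a line, or three lines) whose real locus crosses $\overline{Q}$ in the desired pattern, and perturbs it to a non-singular real cubic preserving that pattern; the quartic itself is obtained, where needed, by perturbing a union of two real conics. Where a plain perturbation does not control the position of the cubic's oval relative to a non-convex oval of the quartic, I would instead view $C$ as a trigonal curve on a Hirzebruch surface and invoke the \emph{dessins d'enfants} machinery of Theorem \ref{thm: existence_trigonal} to guarantee an algebraic realization with the prescribed $\mathcal{L}$-scheme.

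The main obstacle is not any single construction but the bookkeeping: one must check, case by case over all labeled schemes and all admissible $k$, that a single arrangement simultaneously produces the correct number and nesting of ovals on each sphere and keeps $\Pi_+$ orientable, so that $X$ is genuinely a $k$-sphere del Pezzo surface. The delicate points are the crossings of the cubic's pseudo-line with the quartic, which distribute ovals among several spheres at once, and the verification that the lifted nests match the symbols $\langle\,\cdot\,\rangle$ of the table; these are exactly the places where the perturbation, or the completion of the real graph, must be chosen with care rather than read off directly.
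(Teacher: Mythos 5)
Your overall strategy coincides with the paper's: every $\circ$/$\circ^{*}$-labeled scheme is obtained by exhibiting a pair (non-singular real quartic $\overline{Q}$ with real scheme $k$ and orientable $\Pi_+$, non-singular real cubic $C$) in $\mathbb{R}P^2$ and lifting via $\phi$, and your lifting dictionary (ovals of $C$ interior to $\Pi_+$ doubling, arcs crossing $\overline{Q}$ lifting to single ovals) is the correct translation mechanism. For part of the table this plan works exactly as you describe: the paper realizes $2:2:2:2$ by perturbing a cubic with a product of three lines and perturbing the union of a cubic and a line by four pairwise-conjugate non-real lines (Fig.~\ref{fig: 2.2.2.2_company_DP2}), and realizes the remaining families by perturbing unions of three real lines transverse to a quartic with real scheme $k$ (Fig.~\ref{fig: giuro_DP2}), i.e.\ precisely your Hilbert-style small perturbations.

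The genuine gap is in how you handle the hard entries. Twelve of the $\circ$-labeled schemes come from mutual arrangements of a \emph{maximal} cubic and a \emph{maximal} quartic (Fig.~\ref{fig: orevkov_cubic_quartic_DP2}), and the paper does not construct these by perturbation or by a routine application of Theorem~\ref{thm: existence_trigonal}; it imports them wholesale from Orevkov's classification \cite{Orev02}. Your proposal treats the existence of these arrangements as ``bookkeeping'' to be checked case by case, with dessins d'enfants as a fallback, but this underestimates the difficulty in a way that would actually fail: the mutual position of a maximal cubic and a maximal quartic is a deep classification problem in which some candidate arrangements passing every naive count are realizable \emph{symplectically} but are prohibited (or unknown) \emph{algebraically} --- the paper's Proposition~\ref{prop: symplectic_curve_surface} and Fig.~\ref{fig: pseudoholo_DP2} exhibit exactly such an arrangement, again due to \cite{Orev02}. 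So a scheme of the form ``draw the required picture in $\Pi_+$, then perturb or complete a real graph'' has no criterion for distinguishing realizable from unrealizable pictures at this degree, and each of the twelve arrangements is a separate nontrivial construction, not a corollary of the machinery you invoke. To make your proof complete you would either have to cite \cite{Orev02} for these cases, as the paper does, or reproduce constructions of comparable difficulty for each of them.
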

\begin{figure}[h!]
\begin{center}
\begin{picture}(100,110)
\put(-95,-10){\includegraphics[width=0.8\textwidth]{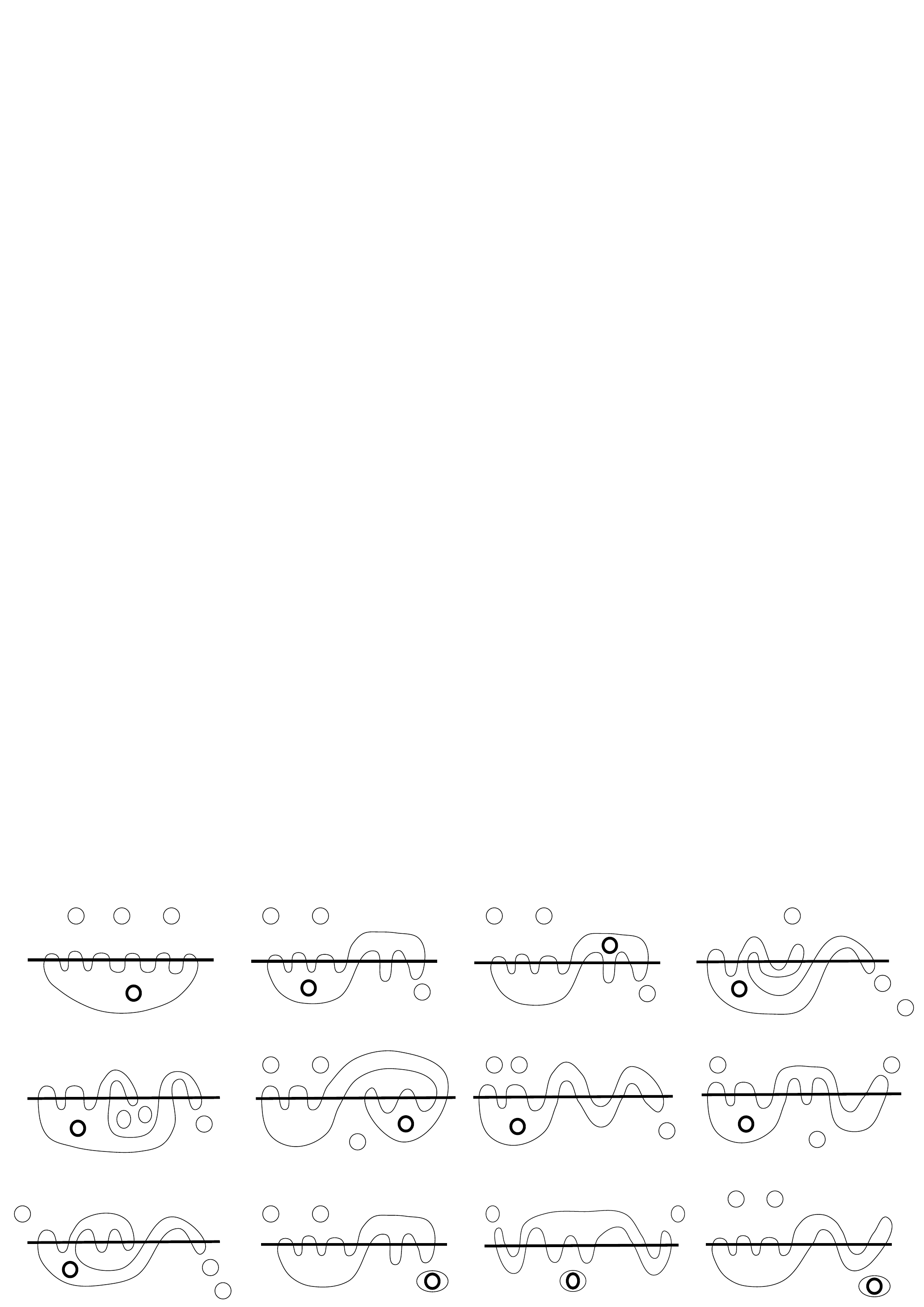}} 
\end{picture}
\end{center}
\caption{Mutual arrangements, up to isotopy, on $\mathbb{R}P^2$ of a real maximal cubic (in thick black) and a real maximal quartic (in gray).}
\label{fig: orevkov_cubic_quartic_DP2}
\end{figure}
%dopo disegni
\begin{figure}[h!]
\begin{center}
\begin{picture}(100,120)
\put(-100,0){\includegraphics[width=0.8\textwidth]{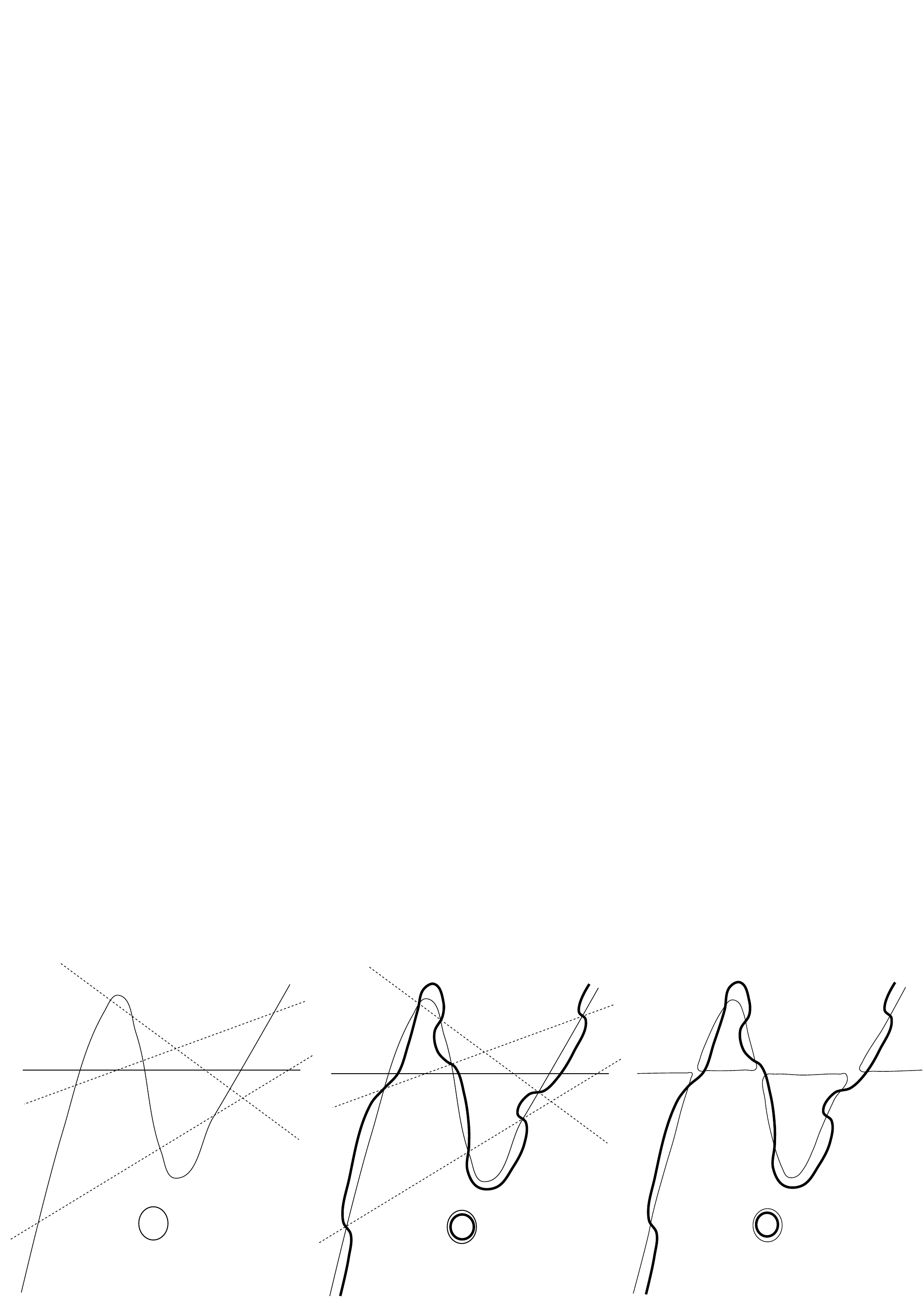}} 
\put(-70,-14){Fig. \ref{fig: 2.2.2.2_company_DP2}.1}
\put(30,-14){Fig. \ref{fig: 2.2.2.2_company_DP2}.2}
\put(127,-14){Fig. \ref{fig: 2.2.2.2_company_DP2}.3}
\end{picture}
\end{center}
\caption{$1-2$: Intermediate constructions on $\mathbb{R}P^2$. $3$: Mutual arrangement on $\mathbb{R}P^2$ of a real maximal cubic (in thick black) and a real maximal quartic (in gray).}
\label{fig: 2.2.2.2_company_DP2}
\end{figure}
%dopo disegni
\begin{figure}[h!]
\begin{center}
\begin{picture}(100,130)
\put(-150,0){\includegraphics[width=1.1\textwidth]{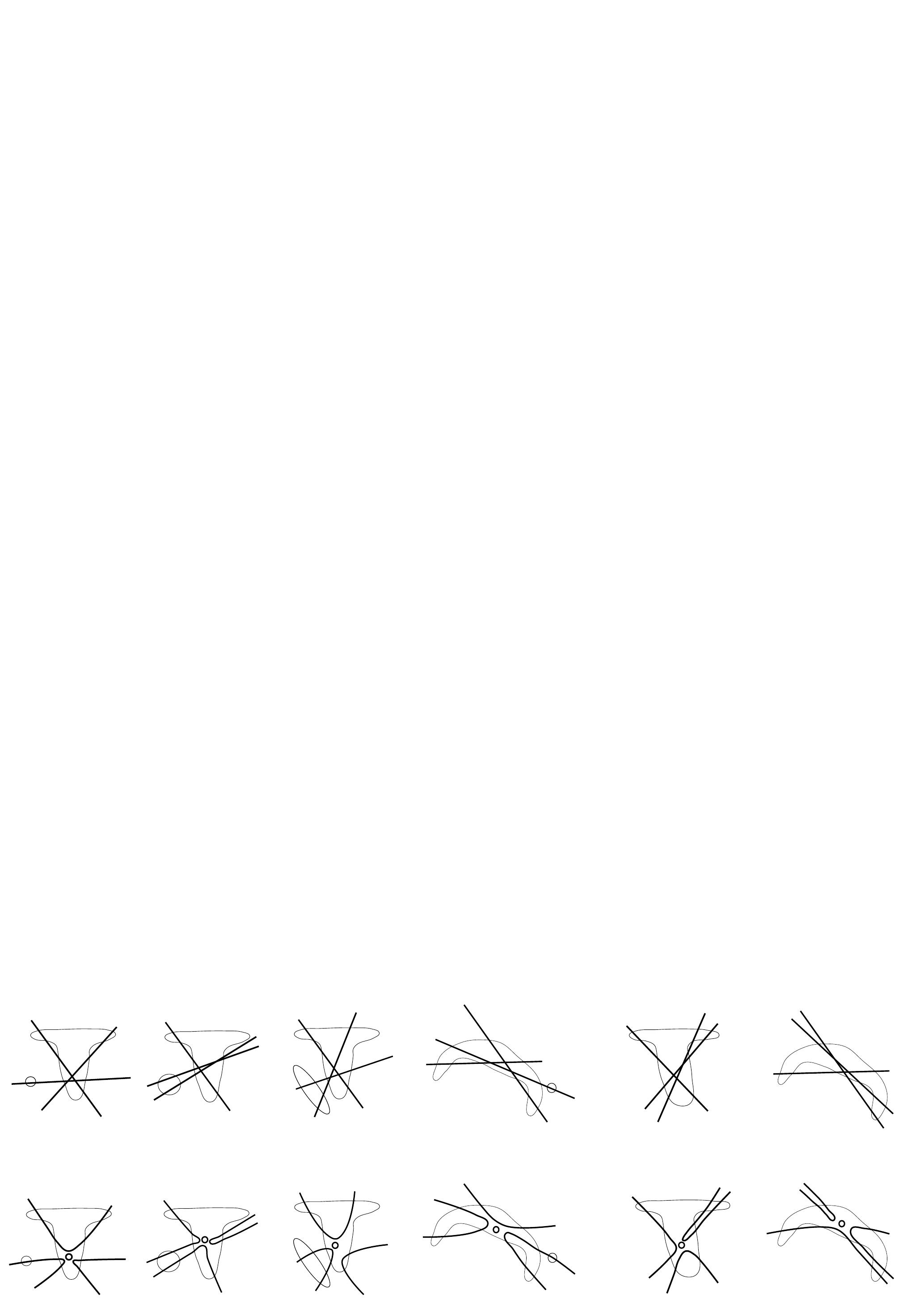}} 
\put(-135,67){Fig. \ref{fig: giuro_DP2}.1}
\put(-77,67){Fig. \ref{fig: giuro_DP2}.2}
\put(-20,67){Fig. \ref{fig: giuro_DP2}.3}
\put(48,67){Fig. \ref{fig: giuro_DP2}.4}
\put(133,67){Fig. \ref{fig: giuro_DP2}.5}
\put(198,67){Fig. \ref{fig: giuro_DP2}.6}
\put(-135,-14){Fig. \ref{fig: giuro_DP2}.i}
\put(-77,-14){Fig. \ref{fig: giuro_DP2}.ii}
\put(-20,-14){Fig. \ref{fig: giuro_DP2}.iii}
\put(48,-14){Fig. \ref{fig: giuro_DP2}.iv}
\put(133,-14){Fig. \ref{fig: giuro_DP2}.v}
\put(198,-14){Fig. \ref{fig: giuro_DP2}.vi}
\end{picture}
\end{center} 
\caption{$1-6$: Intermediate constructions on $\mathbb{R}P^2$. $i-vi$: Mutual arrangements on $\mathbb{R}P^2$ of real maximal cubics (in thick black) and real quartics $\overline{Q}$ (in gray).}
\label{fig: giuro_DP2}
\end{figure}
%dopo disegni
\begin{proof}
In \cite{Orev02}, Orevkov has constructed real maximal quartics $\overline{Q}$ and cubics $C$ arranged, up to isotopy, in $\mathbb{R}P^2$ as depicted in Fig. \ref{fig: orevkov_cubic_quartic_DP2}. To all such pairs correspond real algebraic curves of class $3$ in $4$-sphere real del Pezzo surfaces of degree $2$ realizing $12$ real schemes among those labeled with $\circ$ in Table \ref{tabella=realized3}. \\
Let us realize the real scheme $2:2:2:2$. There exist a real cubic $\tilde{C}$ and a real line $L$ in $\mathbb{C}P^2$ arranged in $\mathbb{R}P^2$ as represented in Fig. \ref{fig: 2.2.2.2_company_DP2}.1. Let $\tilde{p}(x,y,z)=0$ and $l(x,y,z)=0$ be real polynomial equations respectively defining $\tilde{C}$ and $L$. 
Pick three real lines $L_1$, $L_2$, $L_3$, as those depicted in dashed in Fig. \ref{fig: 2.2.2.2_company_DP2}.1. Take a small perturbation of $\tilde{C}$ replacing $\tilde{p}(x,y,z)$ with $p(x,y,z):=\tilde{p}(x,y,z)+ \varepsilon l_1(x,y,z)l_2(x,y,z)l_3(x,y,z)$, where $l_i(x,y,z)$ is a real polynomial defining $L_i$, with $i=1,2,3$, and $\varepsilon \not = 0$ is a sufficient small real number. Up to a choice of the sign of $\varepsilon$, the real curve $C$, defined by $p(x,y,z)=0$, is a real cubic arranged in $\mathbb{R}P^2$ as depicted (in thick black) in Fig. \ref{fig: 2.2.2.2_company_DP2}.2. Let $\bigcup_{i=1}^4 L_i$ be the union of four non-real lines pairwise complex conjugated and defined by a real polynomial $u(x,y,z)$. Take a small perturbation of $\tilde{C} \cup L$ replacing $\tilde{p}(x,y,z)l(x,y,z)$ with $\tilde{p}(x,y,z)l(x,y,z)+ \delta u(x,y,z)=0$, where $\delta\not = 0$ is a sufficient small real number. Up to the choice of the sign of $\delta$, such equation defines a non-singular real plane maximal quartic $\overline{Q}$ such that $Q\cup C$ is arranged in $\mathbb{R}P^2$ as pictured in Fig. \ref{fig: 2.2.2.2_company_DP2}.3. It follows that $2:2:2:2$ is realizable in $X^{4}$ and in class $3$.\\
%\textbf{Second construction}. 
Now, we end the proof realizing the real schemes in $\mathcal{S}_{DP2}(k,3)$ listed below. 
%labeled with $\circ$ and $ \circ^{*}$ in Table \ref{tabella=realized3}.
%; see also (\ref{eqn: symm_1}) and (\ref{eqn: symm_2}).\\
There exist a real quartic $\overline{Q}$ with real scheme $k$, where $2 \leq k \leq 4$ (resp. $1 \leq k \leq 4$), and three real lines arranged in $\mathbb{R}P^2$ as pictured in Fig. \ref{fig: giuro_DP2}.1 - \ref{fig: giuro_DP2}.4 (resp. Fig. \ref{fig: giuro_DP2}.5 and \ref{fig: giuro_DP2}.6); where we depict only the ovals of $\mathbb{R}\overline{Q}$ (in gray) intersecting the three lines (in thick black). 
%Take three real lines in generic position, as depicted in the same figure, such that each of them intersect $Q_{13}$ (resp. $Q_{14}$) in four real distinct points. 
Perturb the union of the three lines into a non-singular real cubic $C$ such that $C\cup \overline{Q}$ is arranged in $\mathbb{R}P^2$ respectively as depicted in Fig. \ref{fig: giuro_DP2}.i - \ref{fig: giuro_DP2}.vi; we depict only the ovals of $\mathbb{R}\overline{Q}$ intersecting the cubic (in thick black). From $C\cup \overline{Q}$, one realizes the following real schemes in $X^{k}$ and in class $3$:
\begin{itemize}
\item for $2 \leq k \leq 4$,
\begin{equation*}
%\label{eqn: symm_1}
2    \sqcup   \langle 4 \rangle:1:0:0, \quad
6:2:0:0, \quad
1   \sqcup   \langle 4 \rangle : 2:0:0, \quad
3    \sqcup   \langle1\rangle   \sqcup    \langle1\rangle:1:0:0 
\end{equation*}
\item for $1 \leq k \leq 4$,
\begin{equation*}
%\label{eqn: symm_2}
 1   \sqcup   \langle 6 \rangle :0:0:0, \quad \langle 1 \rangle   \sqcup   \langle 5 \rangle :0:0:0.
\end{equation*}
\end{itemize}
\end{proof}
\subsection{Symplectic curve on a $4$-sphere real symplectic degree $2$ del Pezzo surface}
\label{subsec: symplectic}
There exists a certain mutual arrangement in $\mathbb{R}P^2$ of a real symplectic cubic and a real symplectic quartic which is unrealizable algebraically; see \cite{Orev02}. Analogously to the algebraic case, one can construct from such arrangement in $\mathbb{R}P^2$ a real symplectic del Pezzo surface of degree $2$ and
%cp2 eclaté in sette boules
 a real symplectic curve of class $3$ on it with topology prescribed from the arrangement on the real projective plane. 
\begin{proof}[Proof of Proposition \ref{prop: symplectic_curve_surface}]
Let us consider $(\mathbb{C}P^2, \omega_{std}, conj)$, where $\omega_{std}$ is the symplectic Fubini-Study $2$-form on $\mathbb{C}P^2$ and $conj:\mathbb{C}P^2 \rightarrow \mathbb{C}P^2$ is the standard real structure on $\mathbb{C}P^2$. Let $conj^*: H^2(\mathbb{C}P^2;\mathbb{Z}) \rightarrow H^2(\mathbb{C}P^2;\mathbb{Z})$ be the group homomorphism map induced by $conj$. It follows that $conj^*\omega_{std}=-\omega_{std}$. Due to \cite{Orev02}, there exist a non-singular real symplectic maximal quartic $\overline{Q}$ and a non-singular real symplectic maximal cubic $\overline{C}$ which are mutually arranged in $\mathbb{R}P^2$ as depicted in Fig. \ref{fig: pseudoholo_DP2}. The double cover $\overline{\phi}: \overline{X}\rightarrow \mathbb{C}P^2$ ramified along $\overline{Q}$ %symplectic curve 
carries a natural symplectic structure $\omega$ such that $\omega=\overline{\phi}^*\omega_{std}$ (\cite{Gromo13},\cite{Aurou00}). 
%quartica symplettica è isotopa ad una quartica complessa allora i due rivestimenti ramificati sono superfici di del Pezzo. 
Let $\sigma$ be one of the two lifts of $conj$ via the double ramified cover. Since $\overline{\phi}\circ \sigma = conj \circ \overline{\phi}$, we have that $\sigma^*\omega=-\omega$; namely $\sigma: \overline{X} \rightarrow \overline{X}$ is a real structure of $\overline{X}$. Then, up to choose $\sigma$, the surface $(\overline{X},\omega,\sigma)$ is real diffeomorphich to a $4$-sphere real del Pezzo surface of degree $2$ and, from $\overline{C}$, we construct a real symplectic curve of class $3$ in $\overline{X}$ realizing $2    \sqcup  \langle1\rangle    \sqcup  \langle1\rangle    \sqcup  \langle1\rangle:0:0:0$.
%an almost complex structure tamed by $\omega$ is $\tau$-compatible if $\tau$ is $J$-anti-holomorphic, i.e. $J \circ d\tau = -d\tau \circ J$.
\end{proof}
\begin{figure}[h!]
\begin{center}
\begin{picture}(100,58)
\put(-60,-25){\includegraphics[width=0.6\textwidth]{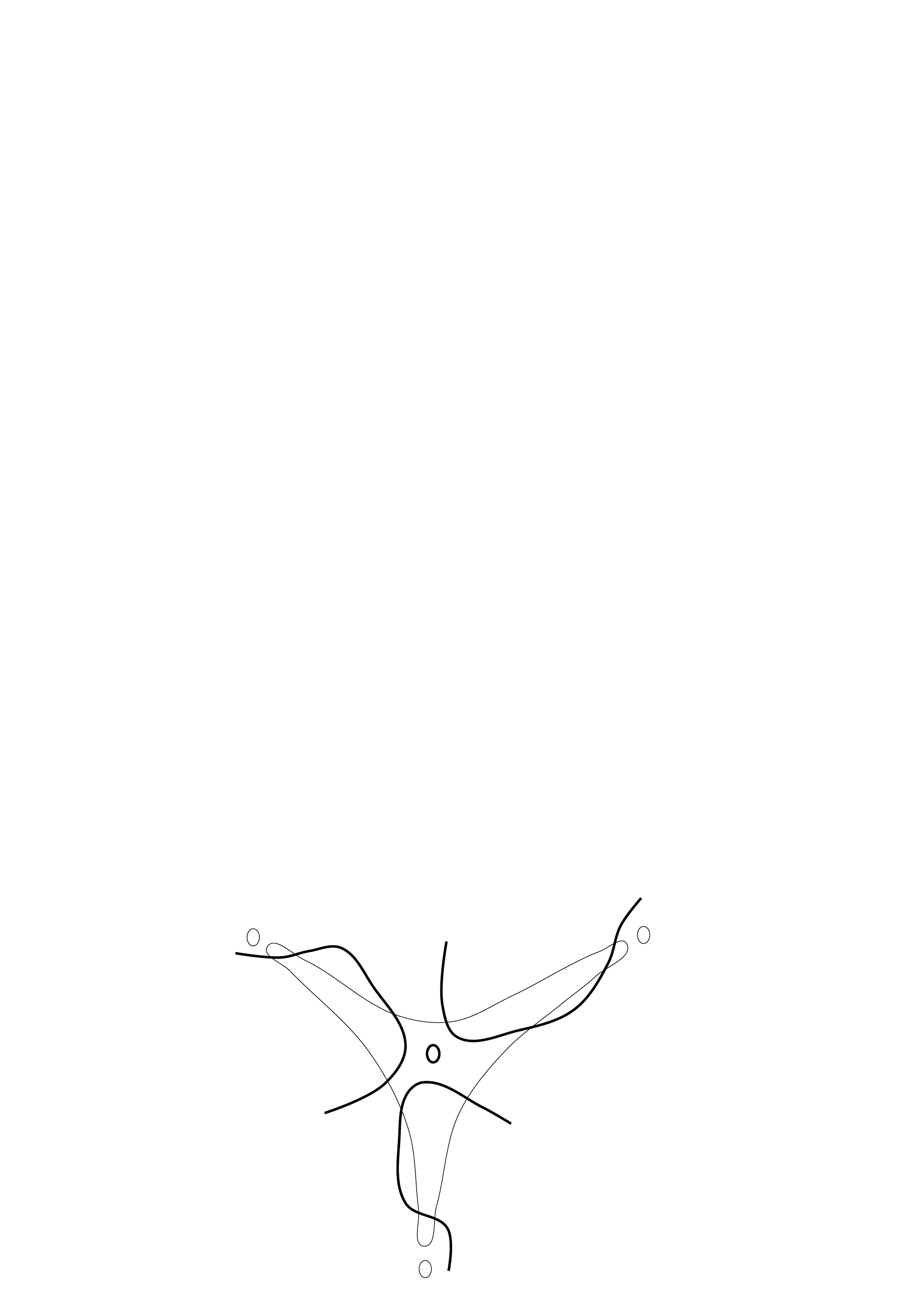}} 
\end{picture}
\end{center}
\caption{ }
\label{fig: pseudoholo_DP2}
\end{figure}
%dopo disegni
\subsection{Constructions via patchworking}
\label{subsec: difficult_constr_DP2}
In this section, we present a construction method that allows to construct (non-symmetric) real algebraic curves with prescribed topology in $k$-sphere real del Pezzo surfaces of degree $2$. First of all, let us give some definitions. \\
Let $Bl_{p_1,..,p_7} : S \rightarrow \mathbb{C}P^2$ be the blow-up of $\mathbb{C}P^2$ at a collection of $7$ points $p_1,..,p_7$ subject to the condition that all of them do not belong to a conic, $6$ of them belong to a conic, no $3$ of them belong to a line. 
%either $3$ of them belong to a line/equivalente dipende solo da come scegli di rappresentae la tua del Pezzo 
Then, the strict transform of the conic passing through $6$ points of the collection 
%or the line passing through $3$ points of the collection 
is a smooth rational curve $E_S\subset S$ of self-intersection $(-2)$ in $S$. \\
Assume from now on, that $S$ contains a unique smooth rational curve of self intersection $(-2)$. The pair $(S,E_S)$ is called a $1$\textit{-nodal degree} $2$ \textit{del Pezzo pair}. \\
The anti-canonical system $\phi'$ of $S$ decomposes into a regular map $S \rightarrow S'$ of degree $1$ which contracts the $(-2)$-curve of $S$, and a double cover $S' \rightarrow \mathbb{C}P^2$ ramified along a quartic $\tilde{Q}$ with a double point as only singularity. Let us call the surface $S'$ a $1$-nodal del Pezzo surface of degree $2$. Conversely, the minimal resolution of the double cover of $\mathbb{C}P^2$ ramified along a quartic with a double point as only singularity is a $1$-nodal degree $2$ del Pezzo pair.\\
\par Let us equip $S$ with a real structure $\sigma '$, then $E_S$ is real. Assume that $\mathbb{R}S$ is homeomorphic to $\bigsqcup_{i=1}^jS^2$. It follows that the quartic $\tilde{Q} \subset \mathbb{C}P^2$ is real, it has a real non-degenerate double point as only singularity and that $\mathbb{R}\tilde{Q}$ consists of $j$ connected components of dimension $1$. Conversely, given such a quartic, one can construct a $1$-nodal degree $2$ del Pezzo pair $(S,E_S)$ where $S$ is equipped with a real structure such that $\mathbb{R}S$ is homeomorphic to $\bigsqcup_{i=1}^j S^2 $; see \cite{DegIteKha00}. The homology group $H_2^-(S;\mathbb{Z})$ is generated by $c_1(S)$ and $E_S$ (\cite{Russ02}).\\
Let us give some more definitions. 
\begin{defn}
\label{defn: 3-sfere-DP2-pair}
\begin{itemize}
\item[]
\item Let $(S,E_S)$ be a $1$-nodal degree $2$ del Pezzo pair. Let $S$ be equipped with a real structure $\sigma '$. If $\mathbb{R}S$ is homeomorphic to $\bigsqcup_{i=1}^j S^2 $, we say that $(S, E_S)$ is a $j$-\textit{sphere real} $1$-\textit{nodal degree} $2$ \textit{del Pezzo pair}.
\item Let $A\subset S$ be a real algebraic curve realizing the class $dc_1(S)+k[E_S] \in H_2(S;\mathbb{Z})$. Then, we say that $A$ has \textit{bi-class} $(d,k)$.
\end{itemize}
\end{defn} 
\begin{note}
\label{note: bouquet}
Let $s$ be a non-negative integer greater or equal to $2$. We denote with
$\bigvee_{j=1}^s S^n$ a bouquet of $s$ $n$-dimensional spheres.
\end{note}
\paragraph*{Topological construction:} Let $X_0'$ be a real reducible surface given by the union of two real algebraic surfaces $S$ and $T$, where 
\begin{enumerate}[label=(\arabic*)] 
\item $T$ is a non-singular real quadric surface;
\item $S$ contains a unique smooth rational $(-2)$-curve $E_S\subset S$ such that $(S,E_S)$ is a $j$-sphere real $1$-nodal degree $2$ del Pezzo pair, with $1\leq j \leq 3$; 
\item $S$ and $T$ intersect transversely along a real curve $E$ which is a bidegree $(1,1)$ real curve in $T$ and $E_S$ in $S$.
\end{enumerate}
Let $C_S \subset S$ and $C_T \subset T$ be non-singular real algebraic curves respectively of bi-class $(d,k)$ and of class 
%$dc_1(S)-kE$ in $H_2(S;\mathbb{Z})$, and of class 
$kE$ in $H_2(T;\mathbb{Z})$. Both $C_{S}$ and $C_{T}$ intersect transversely $E$ in the same real configuration of $2k$ distinct points; i.e. 
$$ E \cap C_{S}= E \cap C_{T}.$$
\par If $T$ is the quadric ellipsoid (resp. $T$ is the real quadric surface with empty real part) and $\mathbb{R}E = \emptyset$, the topological type $\mathcal{S}$ realized by $(\mathbb{R}S\cup \mathbb{R}T, \mathbb{R}C_S\cup \mathbb{R}C_T)$ is an arrangement of ovals in $\bigsqcup_{j=1}^{j+1}S^2$ (resp. $\bigsqcup_{j=1}^jS^2$). \\
\par Otherwise, if $T$ is the quadric ellipsoid (resp. $T$ is the quadric hyperboloid) and $\mathbb{R}E \simeq S^1$, from the topological type realized by the pair $(\mathbb{R}S\cup \mathbb{R}T, \mathbb{R}C_S\cup \mathbb{R}C_T)$, we can realize an arrangement $\mathcal{S}$ of ovals in $\bigsqcup_{j=1}^{j+1}S^2$ (resp. $\bigsqcup_{j=1}^{j}S^2$) as follows. \\
Locally $\mathbb{R}T\cap \mathbb{R}S$ is given as the intersection of two real planes as depicted in Fig. \ref{fig: 1smoothing_family_DP2}.1, and $(\mathbb{R}S \cup \mathbb{R}T) \setminus \mathbb{R}E$ has $4$ connected components $W_1,$ $W_2 \subset \mathbb{R}S$ and $H_1,$ $H_2 \subset \mathbb{R}T$ (Fig. \ref{fig: 1smoothing_family_DP2}.2). We can glue $W_1$ either to $H_1$ or to $H_2$ along $\mathbb{R}E$. After making a choice for $W_1$, we glue $W_2$ to the remaining connected component along $\mathbb{R}E$ (Fig. \ref{fig: 1smoothing_family_DP2}.3). Either choices of gluing the four connected components give us the disjoint union of $j+1$ (resp. $j$) spheres, and from $\mathbb{R}C_S \cup \mathbb{R}C_T$ we get an arrangement $\mathcal{S}$ of ovals in $\bigsqcup_{j=1}^{j+1}S^2$ (resp. $\bigsqcup_{j=1}^{j}S^2$). Example in Fig. \ref{fig: 1smoothing_family_DP2}.4 and \ref{fig: 1smoothing_family_DP2}.5.
\begin{figure}[h!]
\begin{center}
\begin{picture}(100,110)
\put(-135,5){\includegraphics[width=1.0\textwidth]{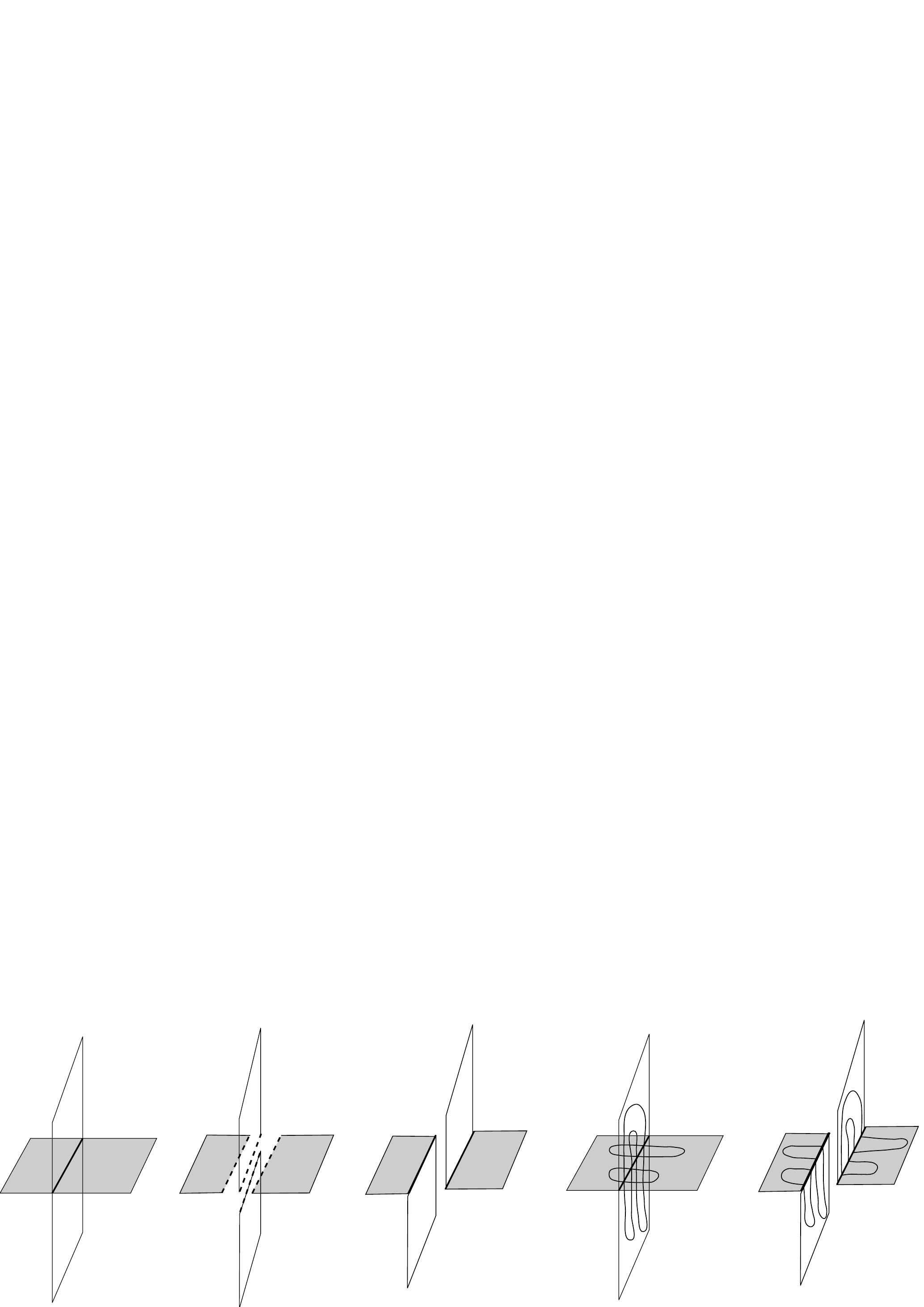}} 
\put(-123,-13){Fig. \ref{fig: 1smoothing_family_DP2}.1}
\put(-50,-13){Fig. \ref{fig: 1smoothing_family_DP2}.2}
\put(20,-13){Fig. \ref{fig: 1smoothing_family_DP2}.3}
\put(101,-13){Fig. \ref{fig: 1smoothing_family_DP2}.4}
\put(178,-13){Fig. \ref{fig: 1smoothing_family_DP2}.5}
\end{picture}
\end{center}
\caption{}
\label{fig: 1smoothing_family_DP2}
\end{figure}
\par By Theorem \ref{thm: weak_patch_DP2}, such topological construction is realizable algebraically. \\
The proof of Theorem \ref{thm: weak_patch_DP2} requires the existence of a real flat one-parameter family whose general fibers are ($j+1$)-sphere, respectively $j$-sphere real del Pezzo surfaces of degree $2$ and whose central fiber is $X_0'$. We prove the existence of such a family in Corollary \ref{cor: esistenza_family_DP2}. 
\begin{thm}
\label{thm: weak_patch_DP2}
The real scheme $\mathcal{S}$ is realizable in $X^{k}$ and in class $d$, with $k=j$, respectively $k=j+1$.
\end{thm}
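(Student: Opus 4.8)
The plan is to realise $\mathcal{S}$ by specialising a smooth real del Pezzo surface of degree $2$ to the reducible surface $X_0' = S \cup_E T$ of the topological construction, to build the desired curve on the two components of this central fibre, and then to apply the Shustin--Tyomkin patchworking theorem (\cite{ShuTyo06I}, \cite{ShuTyo06II}) to transport the curve to the smooth general fibre. The statement is essentially a packaging of that patchworking result adapted to the present non-toric degeneration.

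First I would invoke Corollary \ref{cor: esistenza_family_DP2} to produce a real flat one-parameter family $\pi \colon \mathcal{X} \to (\mathbb{C},0)$ whose central fibre is $X_0'$ and whose general fibre $X_t$ is a real del Pezzo surface of degree $2$, with $\mathbb{R}X_t \cong \bigsqcup^{\,j+1} S^2$ when $T$ is the ellipsoid and $\mathbb{R}X_t \cong \bigsqcup^{\,j} S^2$ when $T$ has empty real part or is the hyperboloid; thus $X_t$ is an $X^k$ with $k = j+1$, respectively $k = j$, as claimed. Here the compatibility of the normal bundles along the double locus, $E^2|_S + E^2|_T = (-2) + 2 = 0$, is exactly what makes $\mathcal{X}$ smooth along $E$ and exhibits $E$ as the double locus being smoothed; this is the geometric content encoded in the gluing of $\mathbb{R}S$ and $\mathbb{R}T$ along $\mathbb{R}E$ pictured in Fig. \ref{fig: 1smoothing_family_DP2}.

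Next I would form the reduced real curve $C_0 := C_S \cup C_T$ on $X_0'$. Since by hypothesis $C_S$ and $C_T$ meet $E$ transversally in the common real configuration $E \cap C_S = E \cap C_T$, the two branches match across the double locus and $C_0$ is a nodal real curve whose only singularities on $E$ are these transverse crossings; this is precisely the combinatorial input required by the patchworking theorem. The crux is then to check the regularity hypotheses of \cite{ShuTyo06I}, \cite{ShuTyo06II}: that $C_S$ and $C_T$ are unobstructed in their respective linear systems and transverse to $E$, so that the gluing deformation is unobstructed, can be realised algebraically, and can be carried out compatibly with the real structure $\sigma'$ of the family. The positivity of the classes of $C_S$ and $C_T$ should supply the $H^1$-vanishing needed for this unobstructedness.

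Granting these conditions, the patchworking theorem yields, for small real $t \neq 0$, a non-singular real algebraic curve $A_t \subset X_t = X^k$ specialising to $C_0$. Its real part is obtained by gluing $\mathbb{R}C_S$ to $\mathbb{R}C_T$ along $\mathbb{R}E$ exactly as in the topological construction, so $(\mathbb{R}X^k, \mathbb{R}A_t)$ realises $\mathcal{S}$; and tracking homology classes through the specialisation identifies $[A_t]$ with $d\,c_1(X_t)$, so that $A_t$ has class $d$ in the sense of Definition \ref{defn: curve_d}. I expect the main obstacle to be the verification of the transversality and unobstructedness conditions of the patchworking theorem in this non-toric del Pezzo setting — in particular ruling out extra components or non-reduced structure appearing along $E$ and ensuring the smoothing is equivariant with respect to conjugation — after which the realisability of $\mathcal{S}$ in $X^k$ and in class $d$ follows at once.
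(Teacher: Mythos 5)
Your proposal follows essentially the same route as the paper: the family from Corollary \ref{cor: esistenza_family_DP2}, the nodal curve $C_0 = C_S \cup C_T$ matching along $E$, the $H^1$-vanishing hypothesis, and then \cite[Theorem 2.8]{ShuTyo06I} to produce the real smoothing $C_t \subset X_t'$ realizing $\mathcal{S}$. The only point you leave as an expectation — the unobstructedness condition — is settled in the paper in one line via Ramanujam's vanishing theorem, which gives $H^1(X_0'; \mathcal{O}_{X_0'}(C_0)) = 0$ directly, exactly as your appeal to positivity anticipated.
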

\begin{proof}
Due to Corollary \ref{cor: esistenza_family_DP2}, we can put $X_0'$ in a real flat one-parameter family $\tilde{\pi}': \mathfrak{X}'\rightarrow D(0)$, where $\mathfrak{X}'$ is a $3$-dimensional real algebraic variety and $D(0)\subset \mathbb{C}$ is a real disk centered at $0$, such that the fibers $X_t'=:\tilde{\pi}'^{-1}(t)$ are ($k$-sphere real) non-singular degree $2$ del Pezzo surfaces (with $k=j$, resp. $k=j+1$), for $t\not= 0$ (and $t$ real), and the central fiber is $X_0'$. By Ramanujan's Vanishing theorem (\cite[Section 8]{Dolg12})
$$H^1(X_0'; \mathcal{O}_{X_0'}(C_0))=0;$$ 
then, \cite[Theorem 2.8]{ShuTyo06I} assures the existence of an open neighborhood $U(0)\subset D(0)$ and a deformation $C_t$ in $\tilde{\pi}'^{-1}(t)$ such that $C_t$ are non-singular (real) curves in $X_t'$ for $t$ (real) in $U(0)\setminus \{0\}$. Moreover, there exists a real $\tilde{t}\in U(0)\setminus \{0\}$ such that the pair $( \mathbb{R}X_{ \tilde{t} }',\mathbb{R}C_{\tilde{t}})$ realizes the real scheme $\mathcal{S}$. 
\end{proof}
To prove Corollary \ref{cor: esistenza_family_DP2}, we need the following proposition. In the proof we make use of a type of construction presented in \cite{Atiy58}, which found recent applications in real enumerative geometry; see \cite{BruPui13}. Similar constructions can be found in \cite{MoiTei88}, \cite{MoiTei90} and \cite{MoiTei94}. 
\begin{prop}
\label{prop: existence_DP2_family}
Let $\tilde{Q}$ be a real quartic in $\mathbb{C}P^2$ with a real non-degenerate double point $q$ as only singularity and such that $\mathbb{R}Q$ is homeomorphic to either $\bigsqcup_{i=1}^{j} S^1 \sqcup\{q \}$ or to $\bigsqcup_{i=1}^{j-1} S^1 \sqcup \bigvee_{j=1}^2 S^1$, where $1\leq j \leq 3$. Then, there exists a real flat one-parameter family of non-singular ($k$-sphere real) degree $2$ del Pezzo surfaces (with $k=j$, resp. with $k=j+1$) but the central fiber which is a real reducible surface $X_0'$ equal to the union of two real algebraic surfaces $S \cup T$, where 
\begin{enumerate}[label=(\arabic*)]
\item $T$ is either a real quadric hyperboloid or a real quadric surface with empty real part (resp. a real quadric ellipsoid);
\item $S$ is the minimal resolution of the double cover of $\mathbb{C}P^2$ ramified along $\tilde{Q}$ and it contains a unique smooth rational curve $E_S\subset S$ such that $(S,E_S)$ is a $j$-sphere $1$-nodal degree $2$ del Pezzo pair, with $1 \leq j \leq 3$; 
\item $S$ and $T$ intersect transversely along a curve $E$ which is a bidegree $(1,1)$ real curve in $T$ and the $(-2)$-curve $E_S$ in $S$.
\end{enumerate}
\end{prop}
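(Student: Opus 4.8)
The plan is to realise the required degeneration as a \emph{real conifold transition}, following the classical analysis of families of surfaces acquiring a node in \cite{Atiy58} and its real incarnation in \cite{BruPui13}. First I would embed $\tilde{Q}$ into a real one-parameter family of quartics. Since $\tilde{Q}$ has a single real non-degenerate double point $q$, a generic real pencil $\{f_0+t\,g=0\}$ through $\tilde{Q}$ smooths the node for $t\neq 0$; choosing $g$ and the sign of the real parameter $t$ appropriately, one controls the real scheme of the smooth quartic $\tilde{Q}_t$, arranging $\mathbb{R}\tilde{Q}_t$ to have $j$ ovals in the acnode case $\mathbb{R}Q\simeq\bigsqcup_{i=1}^{j}S^1\sqcup\{q\}$ and $j+1$ ovals in the crunode case $\mathbb{R}Q\simeq\bigsqcup_{i=1}^{j-1}S^1\sqcup\bigvee_{j=1}^{2}S^1$, keeping $\Pi_+^{t}$ orientable. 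This is exactly what produces a general fibre that is a $k$-sphere del Pezzo surface with $k=j$, resp. $k=j+1$. Taking the associated double covers of $\mathbb{C}P^2$ branched along $\tilde{Q}_t$ (Section \ref{subsec: def_DP2}) yields a real flat family $\mathcal{Y}\to D(0)$ of degree $2$ del Pezzo surfaces whose central fibre $Y_0$ is the nodal double cover along $\tilde{Q}$, with one ordinary double point $\hat{q}$ over $q$ (\cite{DegIteKha00}), and whose general fibres are the desired smooth $k$-sphere del Pezzo surfaces.

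Next, in local analytic coordinates in which $Y_0=\{w^2=f_0(x,y)\}$ with $f_0$ a real Morse form, the total space of $\mathcal{Y}$ is smooth while the central fibre alone is singular; to convert this into a reducible central fibre carrying a quadric component I would perform the real base change $t=s^2$. The base-changed total space then acquires a real three-dimensional ordinary double point (a conifold) modelled on $\{w^2=f_0(x,y)\pm s^2\}$, and its blow-up at this point is smooth with exceptional divisor the projectivized tangent cone $T$, a smooth quadric surface $\cong\mathbb{P}^1\times\mathbb{P}^1$. The strict transform of $Y_0$ is its minimal resolution $S$: the node $\hat{q}$ is replaced by the $(-2)$-curve $E_S$, so that $(S,E_S)$ is precisely the $j$-sphere $1$-nodal degree $2$ del Pezzo pair of condition $(2)$. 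Intersecting $T$ with the strict transform of the central fibre shows that $E=S\cap T$ is a hyperplane section of the quadric, hence a bidegree $(1,1)$ curve on $T$ and exactly $E_S$ on $S$, which is condition $(3)$; the general fibres are untouched and remain the smooth $k$-sphere del Pezzo surfaces. Flatness of the resulting family $\mathfrak{X}'\to D'$ and reducedness of its central fibre $S\cup T$ follow from the explicit local model.

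The whole construction is equivariant for the real structures: $\sigma$ descends from $conj$ on $\mathbb{C}P^2$ together with the real equation of $\tilde{Q}$, the point $q$ and hence the conifold are real, and blowing up a real point is a real operation. The real type of $T$ is then read off from the real signature of the quadratic tangent cone, which is dictated by the local real form of the node of $Y_0$: an acnode of $\tilde{Q}$ gives a \emph{definite} node $\{w^2+x^2+y^2=0\}$ with $\mathbb{R}E_S=\emptyset$, while a crunode gives a \emph{Lorentzian} node $\{w^2-x^2+y^2=0\}$ with $\mathbb{R}E_S\cong S^1$. Combining this local form with the sign of the real base change that selects the smooth side fixed in the first step, the signature comes out $(4,0)$ or $(2,2)$ in the case $k=j$ and $(3,1)$ in the case $k=j+1$; accordingly $\mathbb{R}T$ is empty or a torus (a quadric with empty real part, resp. a hyperboloid) when $k=j$, and $\mathbb{R}T\cong S^2$ (an ellipsoid) when $k=j+1$. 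This is condition $(1)$, and it is consistent with the gluing rules for $\mathbb{R}(S\cup T)$ recorded before Theorem \ref{thm: weak_patch_DP2}.

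The purely complex input — that the blow-up of a three-fold conifold has a smooth quadric as exceptional divisor meeting the strict transform of the central fibre along a $(1,1)$-curve — is classical and is what \cite{Atiy58} provides. I expect the main obstacle to be the real bookkeeping of the last two steps: one must check that the real smoothing of $\tilde{Q}$ with the prescribed real scheme indeed exists for every $1\leq j\leq 3$, that the two lifts of $conj$ through the branched double cover induce the claimed real structure on the conifold, and above all that the real signature of the tangent cone is correctly correlated with the node type and with the selected smoothing side, so as to pin down whether $T$ is an ellipsoid, a hyperboloid, or has empty real part (and hence whether $k=j$ or $k=j+1$). The delicacy is entirely in carrying the transition through $\sigma$-equivariantly and matching real forms, for which the framework of \cite{BruPui13} (and the related constructions in \cite{MoiTei88}, \cite{MoiTei90}, \cite{MoiTei94}) is the natural tool.
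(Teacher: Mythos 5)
Your proposal is correct and takes essentially the same route as the paper: the paper works with the family $f(x,y,z)+\varepsilon z^4t^2=0$, $\varepsilon\in\{1,-1\}$ (which is exactly your generic pencil followed by the base change $t=s^2$, with the explicit choice $g=\varepsilon z^4$), passes to the double cover in $\mathbb{C}P(1,1,1,2)\times D(0)$ so that the total space acquires a three-fold ordinary double point over the node, and blows it up so that the exceptional divisor is the real quadric $T$ while the strict transform of the central fibre is the minimal resolution $S$ meeting $T$ along the $(1,1)$-curve $E=E_S$, as in \cite{Atiy58} and \cite{BruPui13}. Your signature analysis pinning down the real type of $T$ (empty or hyperboloid when $k=j$, ellipsoid when $k=j+1$) is consistent with, and slightly more explicit than, the paper's statement that the real structure of $T$ depends on $f(x,y,z)$ and $\varepsilon$.
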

\begin{proof}
Let $f(x,y,z)=0$ be a real polynomial equation defining the real quartic $\tilde{Q}$ in $\mathbb{C}P^2$. %with a real non-degenerate 
Up to multiply $f(x,y,z)$ by $-1$, we can always put $\tilde{Q}$ in a real flat one-parameter family $\pi: \mathfrak{Q} \rightarrow D(0)$, where 
\begin{itemize}
\item $D(0)\subset \mathbb{C}$ equipped with the standard real structure of $\mathbb{C}$, is a real disk centered at $0$;
\item $\mathfrak{Q} \subset \mathbb{C}P^2 \times D(0)$ is defined by $f(x,y,z) +\varepsilon z^4t^2=0,$ with $\varepsilon \in \{1,-1\}$ 
\end{itemize}
and such that 
\begin{itemize}
\item the fibers $\overline{Q}_t:=\pi^{-1}(t)$ are non-singular (real) quartics (with real part homeomorphic to either $\bigsqcup_{i=1}^{j} S^1 $ or to $\bigsqcup_{i=1}^{j+1} S^1$, and $\Pi_+$ is orientable) for $t\not = 0$ (and $t$ real);
\item $\overline{Q}_0=\tilde{Q}$. 
\end{itemize}
From the family of quartics, we can construct a real flat one-parameter family $\tilde{\pi}: \mathfrak{X} \rightarrow D(0)$ such that 
\begin{itemize}
\item $\mathfrak{X}$ is the double cover of $\mathbb{C}P^2 \times D(0)$ ramified along $\mathfrak{Q}$ and $\mathfrak{X}$ is isomorphic to the hypersurface in $\mathbb{C}P(1,1,1,2) \times D(0)$ defined by the polynomial equation $f(x,y,z)+\varepsilon z^4t^2= w^2;$ 
\item $X_0$ is the double cover of $\mathbb{C}P^2$ ramified along $\tilde{Q}$. Depending on the real scheme realized by the pair $(\mathbb{R}P^2,\mathbb{R}\tilde{Q})$, the real part of $X_0$ is homeomorphic either to $\bigsqcup_{i=1}^{j} S^2 \sqcup \{pt \}$ or to $\bigsqcup_{i=1}^{j-1} S^2 \sqcup \bigvee_{j=1}^2 S^2$, where $\{pt\}$ is a point.
\item the fibers $\tilde{\pi}^{-1}(t):=X_t$ are non-singular ($k$-sphere real) degree $2$ del Pezzo surfaces (with either $k=j$ or $k=j+1$ depending on $\mathbb{R}\overline{Q}_t$), for $t \not =0$ (and $t$ real).
\end{itemize}
Now, performing the blow up $Bl_p: \mathfrak{X}' \rightarrow \mathfrak{X}$ at the node $p$ of $\mathfrak{X}$, we obtain a real flat one-parameter family $\tilde{\pi}': \mathfrak{X}'\rightarrow D(0)$ such that 
%the $3$-dimensional variety $\mathfrak{X}'$ is locally given at 
$Bl_p^{-1}(p)=:T$ 
%by the polynomial equation of a 
is a real quadric surface with real structure dependent on $f(x,y,z)$ and $\varepsilon$, 
the fibers $\tilde{\pi}'^{-1}(t):=X_t'$ are non-singular ($k$-sphere real) del Pezzo surfaces of degree $2$ (with either $k=j$ or $k=j+1$ depending on $\mathbb{R}X_t$), for $t \not =0$ (and $t$ real), and $X_0'$ is equal to the union of two real algebraic surfaces $S \cup T$, where $S$ and $T$ are as described in $(1)-(3)$. 
\end{proof}
\begin{cor}
\label{cor: esistenza_family_DP2}
Let $X_0'$ be a real reducible surface equal to the union of two real algebraic surfaces $S \cup T$, where $S$ and $T$ are as described in $(1)-(3)$ of Proposition \ref{prop: existence_DP2_family}. 
Then, there exists a real flat one-parameter family $\tilde{\pi}': \mathfrak{X}' \rightarrow D(0)$, where $D(0)\subset \mathbb{C}$ is a real disk centered in $0$, the fibers $\tilde{\pi}'^{-1}(t):=X_t'$ are non-singular ($k$-sphere real) del Pezzo surfaces of degree $2$ (with $k=j$, resp. $k=j+1$), for $t \not =0$ (and $t$ real), and the central fiber is $X_0'$.
\end{cor}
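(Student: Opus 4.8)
The plan is to derive the statement directly from Proposition \ref{prop: existence_DP2_family}, reading that proposition ``backwards'': there one starts from a nodal quartic and \emph{produces} a central fibre $S\cup T$, whereas here one is \emph{handed} such an $S\cup T$ and must exhibit the family. The whole point is therefore to recognise the given reducible surface as the output of the construction in Proposition \ref{prop: existence_DP2_family} and then invoke that proposition.

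First I would recover the branch quartic from $S$. By hypothesis~$(2)$, $S$ is the minimal resolution of the double cover of $\mathbb{C}P^2$ ramified along a real quartic $\tilde{Q}$ carrying a single real non-degenerate double point, and $(S,E_S)$ is a $j$-sphere real $1$-nodal degree $2$ del Pezzo pair; equivalently, contracting $E_S$ and composing with the anti-canonical double cover returns $\tilde{Q}$, exactly as recalled before Definition \ref{defn: 3-sfere-DP2-pair}. Since $\mathbb{R}S\simeq\bigsqcup_{i=1}^{j}S^2$, the real locus $\mathbb{R}\tilde{Q}$ is forced to be one of the two homeomorphism types appearing in the hypotheses of Proposition \ref{prop: existence_DP2_family}, according to whether the node of $\tilde{Q}$ is solitary or a real crossing. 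Thus $\tilde{Q}$ is an admissible input for that proposition.

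Next I would pin down the sign $\varepsilon\in\{1,-1\}$ used in the family $f(x,y,z)+\varepsilon z^4t^2=0$. The exceptional quadric obtained in Proposition \ref{prop: existence_DP2_family} as $T=Bl_p^{-1}(p)$ arises by blowing up the node $p$ of the total space $\mathfrak{X}$, which is an ordinary double point; hence $T$ is a smooth quadric surface in $\mathbb{P}^3$, and its real type (ellipsoid, hyperboloid, or empty real part) is governed by the signature of the local quadratic form $w^2-f_{\mathrm{loc}}-\varepsilon z_0^4t^2$ at $p$. The two values of $\varepsilon$ toggle one sign of this form, and they are precisely what separates the cases $k=j$ and $k=j+1$ in Proposition \ref{prop: existence_DP2_family}. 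I would therefore choose $\varepsilon$ so that this quadric matches the prescribed real type of $T$ listed in $(1)$; by construction the same choice produces del Pezzo fibres with the required number $k\in\{j,j+1\}$ of spheres.

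With $\tilde{Q}$ and $\varepsilon$ fixed, Proposition \ref{prop: existence_DP2_family} yields a real flat one-parameter family $\tilde{\pi}'\colon\mathfrak{X}'\to D(0)$ with smooth $k$-sphere real del Pezzo fibres over $t\neq 0$ and central fibre $S\cup T'$, and it remains to identify $T'$ with the given $T$ as real surfaces glued to $S$ along $E$. This last identification is the only delicate point: I must check that the blown-up quadric $T'$ meets $S$ transversally along the same curve $E$, which is the $(-2)$-curve $E_S$ on the $S$ side and a $(1,1)$-curve on the quadric side, matching $(3)$, and that its real structure coincides with the prescribed one. Since both $T$ and $T'$ are determined by the same local data at the node and are glued along the same $E=E_S$, they agree; hence the central fibre of the constructed family is $X_0'=S\cup T$, and the corollary follows.
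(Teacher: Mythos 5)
Your proposal is correct and follows essentially the same route as the paper: the paper's proof likewise recovers the nodal quartic $\tilde{Q}$ from the anti-canonical system of $S$ and then simply applies the proof of Proposition \ref{prop: existence_DP2_family} to it. Your additional care in fixing the sign $\varepsilon$ to match the prescribed real type of $T$ and in identifying the resulting exceptional quadric with the given $T$ along $E=E_S$ just makes explicit what the paper's two-sentence proof leaves implicit.
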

\begin{proof}
The anti-canonical system of $S$ is the minimal resolution of the double cover of $\mathbb{C}P^2$ ramified along a real algebraic quartic $\tilde{Q}$ with a real non-degenerate double point $q$ as only singularity and such that $\mathbb{R}\tilde{Q}$ is homeomorphic to either $\bigsqcup_{i=1}^{j} S^1 \sqcup\{q \}$ or to $\bigsqcup_{i=1}^{j-1} S^1 \sqcup \bigvee_{j=1}^2 S^1$, where $1\leq j \leq 3$. Applying the proof of Proposition \ref{prop: existence_DP2_family} to $\tilde{Q}$, we prove the statement. 
\end{proof}
\subsubsection{Intermediate constructions: Constructions on quadric surfaces and on $j$-sphere real $1$-nodal del Pezzo pairs of degree 2} 
\label{subsubsec: intermediate_construction_DP2} 
%\subsubsection{Constructions on the quadric ellipsoid}
%\label{subsubsec: constructions_quadric_ellipsoid_DP2}
In Section \ref{subsec: final_constr_DP2}, we use Theorem \ref{thm: weak_patch_DP2} to end the proof of Theorem \ref{thm: principal_DP2}, Proposition \ref{prop: principal_DP2} and to prove Proposition \ref{prop: non-symm_real_scheme}. In order to do that we need some intermediate constructions. Therefore, the aim of this section is to
\begin{itemize}
\item construct real algebraic curves with prescribed topology and intersection with a given real curve in the quadric ellipsoid (Proposition \ref{prop: constr_quadric_DP2}), resp. in the quadric hyperboloid (Proposition \ref{prop: constr_quadric_DP2_3}); 
\item construct real algebraic curves with prescribed topology on $j$-sphere real $1$-nodal degree $2$ del Pezzo pairs (Proposition \ref{prop: S_pencil_quartic_DP2}).
\end{itemize} 
\begin{note}
\label{note: point_p}
As mentioned in Section \ref{subsec: cha2_codage_isotopie}, the interior and exterior of any oval $\mathcal{D}$ in $S^2$ is not well defined because it depends on a choice of a point in $S^2 \setminus \mathcal{D}$. \\
In the proof of Propositions \ref{prop: constr_quadric_DP2} and \ref{prop: constr_quadric_DP2_2}, whenever we talk about interior and exterior of an oval in $S^2$ is with respect to a chosen point $p \in S^2 \setminus \mathcal{D}$ and, to be clear on the choice, we depict $S^2$ projected from such $p$ on a plane in Fig. \ref{fig: quadric_ellips_DP2} $-$ \ref{fig: quadric_ellips_DP2_non-symm}.
Recall that every oval in $S^{2}$ bounds two disks. Then, we say that the disk containing $p$ is the exterior, and the other one is the interior.
\end{note}
In the proofs of Propositions \ref{prop: constr_quadric_DP2}, \ref{prop: constr_quadric_DP2_2}, \ref{prop: constr_quadric_DP2_3}, we use variants of Harnack's construction method (\cite{Harn76}).
\begin{prop}
\label{prop: constr_quadric_DP2}
Let $T$ be the quadric ellipsoid and let $E_T$ be a non-singular real algebraic curve of bidegree $(1,1)$ in $T$. Then, for any real configuration $\mathcal{P}_{2k}$ of $2k$ distinct points in $E_T$ fixed as follows, there exists a non-singular real algebraic curve $C_T$ of bidegree $(k,k)$ on $T$, intersecting transversely $E_T$ in the $2k$ points and such that the triplet $(\mathbb{R}T, \mathbb{R}E_T, \mathbb{R}C_T)$ is arranged respectively as depicted:
\begin{enumerate}[label=(\arabic*)]
\item in Fig. \ref{fig: quadric_ellips_DP2}.1  for $k=2$ and $4$ fixed real points;
\item in Fig. \ref{fig: quadric_ellips_DP2}.2 for $k=3$ and no fixed real points;
\item in Fig. \ref{fig: quadric_ellips_DP2}.3  for $k=3$ and $2$ fixed real points;
%\item the real scheme depicted in $d)$ of Fig. \ref{fig: quadric_ellips_DP2}  for $k=3$ and $6$ fixed points whose exactly $4$ are real (??non è necessario??);
\end{enumerate}
\end{prop}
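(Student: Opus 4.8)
The statement asks to construct, on the quadric ellipsoid $T$, real algebraic curves of bidegree $(k,k)$ meeting a fixed bidegree $(1,1)$ curve $E_T$ transversally in a prescribed real configuration $\mathcal{P}_{2k}$ of $2k$ points, with the three cases $k=2$ (four real points), $k=3$ (zero real points), and $k=3$ (two real points) realizing the arrangements depicted in the figure. The natural framework is Harnack's small-perturbation method, as the text announces right before the statement. I would model the ellipsoid as $T \cong \mathbb{C}P^1 \times \mathbb{C}P^1$ with its standard real structure, so that $\mathbb{R}T \simeq S^2$ (the real locus of the ellipsoid), and view curves of bidegree $(k,k)$ as bi-homogeneous real polynomials of degree $k$ in each factor. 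The curve $E_T$ is a real $(1,1)$ curve, which on the ellipsoid appears as a great circle; here it plays the role that a fixed line or conic plays in the classical Harnack construction in $\mathbb{R}P^2$.

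\textbf{Step 1: the reducible starting configuration.} First I would build a reducible real curve of bidegree $(k,k)$ adapted to the target topology: a union of $(1,1)$ curves (and, where needed, conjugate pairs of imaginary $(1,1)$ curves to control the count of real branches), say $C_0 = \bigcup_{i=1}^k L_i$ with each $L_i$ a real $(1,1)$ curve on $T$. Each $L_i$ is an oval-type circle on $\mathbb{R}T\simeq S^2$, and I would arrange the $L_i$ so that their mutual real intersection points and their intersection points with $E_T$ sit exactly where the smoothing must pass; in particular the points of $E_T \cap C_0$ should already contain, or be perturbable to contain, the prescribed configuration $\mathcal{P}_{2k}$. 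This is the combinatorial heart: I must choose the reducible curve so that after smoothing the real nodes in the correct local branch (the choice of sign of the perturbation parameter $\varepsilon$) the resulting arrangement is $\mathcal{L}$-isotopic to the target in the figure, and so that the transverse intersection with $E_T$ lands on $\mathcal{P}_{2k}$ with the right real/imaginary split (four real for case (1), none for case (2), two for case (3)).

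\textbf{Step 2: Harnack smoothing with prescribed incidences.} Writing $C_0 = \{c_0 = 0\}$ and $E_T = \{e = 0\}$, I would perturb to $C_T = \{c_0 + \varepsilon\, h = 0\}$ for a suitable auxiliary real polynomial $h$ of bidegree $(k,k)$ and small real $\varepsilon \neq 0$, with the sign of $\varepsilon$ chosen at each node to smooth in the topologically correct direction. The novelty compared with the textbook case is the \emph{prescribed intersection with $E_T$}: I need $C_T$ to meet $E_T$ transversally at exactly the $2k$ points of $\mathcal{P}_{2k}$. I would enforce this by requiring $h$ to vanish at the chosen $2k$ points of $\mathcal{P}_{2k}$ on $E_T$, so that $C_T \cap E_T$ is pinned there to first order; a dimension count on the linear system of bidegree $(k,k)$ curves (of projective dimension $(k+1)^2-1$) against the $2k$ point conditions shows ample freedom remains to realize the transversality and the interior/exterior position of the ovals relative to the base point $p$ fixed in Notation~\ref{note: point_p}. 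Transversality of $C_T$ with $E_T$ at the prescribed points, and nonsingularity of $C_T$, are then generic among the remaining parameters.

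\textbf{The main obstacle.} The routine part is the perturbation analysis; the delicate part is the \emph{simultaneous} control of three things: the global isotopy type of $\mathbb{R}C_T$ on $S^2 = \mathbb{R}T$, the position of the $2k$ intersection points on the circle $\mathbb{R}E_T$ in the correct cyclic order, and the prescribed count of real versus imaginary intersection points. For case (2), where $E_T$ meets $C_T$ in \emph{no} real points, I must arrange that all $2k = 6$ intersection points are imaginary, which forces a careful choice of a $C_0$ whose real branches are disjoint from $\mathbb{R}E_T$ while its complexification still meets $E_T$ in the required $6$ points; this is where a conjugate pair of imaginary components in $C_0$ is the cleanest device. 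I expect verifying that the sign choices in the Harnack smoothing are \emph{mutually compatible} — i.e. that a single $\varepsilon$ smooths every node in the direction demanded by the target figure — to be the real work, and I would settle it by an explicit local model of each node together with the picture in Fig.~\ref{fig: quadric_ellips_DP2}, exactly as in the analogous plane-curve constructions of Section~\ref{subsec: class_3_DP2}.
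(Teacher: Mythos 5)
Your overall framework---Harnack-type small perturbations of reducible bidegree $(k,k)$ curves, with the perturbing polynomial chosen to vanish on $\mathcal{P}_{2k}$ so that the intersection with $E_T$ stays pinned---is indeed the strategy of the paper, and your pinning mechanism in Step 2 is sound (in the paper the pinning is even exact rather than ``to first order'': the curve being perturbed is $E_T\cup H$, i.e.\ $E_T$ itself is taken as a component, so the perturbed equation restricts on $E_T$ to the perturbing term, and $C_T\cap E_T$ is precisely the zero locus on $E_T$ of a product of $(1,1)$-forms through the prescribed points; the $(3,3)$ curves of cases (2)--(3) are then obtained by iterating, perturbing $E_T\cup\tilde H$). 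The first gap is that you defer exactly the content of the proposition: you never exhibit the reducible configurations and sign choices realizing the three arrangements of Fig.~\ref{fig: quadric_ellips_DP2}, and ``choose $C_0$ so that after smoothing the arrangement is the target'' is the assertion to be proved, not a step of its proof. Your dimension count on the linear system gives freedom to interpolate points, but says nothing about which real schemes are attained; all the topological control in the paper comes from the explicit reducible models.

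The second gap is concrete: your proposed device for case (2)---inserting a conjugate pair of imaginary $(1,1)$ components into $C_0$ so that all six intersection points with $E_T$ are imaginary---cannot produce the remaining arrangements of Fig.~\ref{fig: quadric_ellips_DP2}.2. For $k=3$ this forces $C_0=L\cup M\cup\overline{M}$ with $L$ real and $M$ imaginary, whose real part is the single circle $\mathbb{R}L$ together with at most the two points of $M\cap\overline{M}$ (which may even form a conjugate pair); a small perturbation can only turn such isolated real points into small disjoint ovals, so the resulting real schemes consist of one circle plus at most two mutually non-nested small ovals, and the deeper nested arrangements of Fig.~\ref{fig: quadric_ellips_DP2}.2 are out of reach. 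The paper's device is the key idea you are missing: keep all three components \emph{real} by taking them in the pencils $\Pi_i$ of hyperplane sections with conjugate base points $p_i,\overline{p}_i$ (a real $(1,1)$ curve through an imaginary point and its conjugate always exists), which forces the six intersection points with $E_T$ to be imaginary while retaining three real circles for topological control; the target arrangements are then reached by choosing $H_j\in\Pi_j$ and $H_k\in\Pi_k$ tangent at a real point $s_{jk}$, taking $H_i\in\Pi_i$ through $s_{jk}$, and smoothing this tangency/triple-point configuration as in Fig.~\ref{fig: case3_puttane_elissoide_DP2}. Without this pencil-plus-tangency argument, or an equally explicit substitute, your construction does not establish the statement.
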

\begin{figure}[h!]
\begin{center}
\begin{picture}(100,165)
\put(-107,92){Fig. \ref{fig: quadric_ellips_DP2}.1}
\put(79,92){Fig. \ref{fig: quadric_ellips_DP2}.2}
\put(-140,-15){\includegraphics[width=1.0\textwidth]{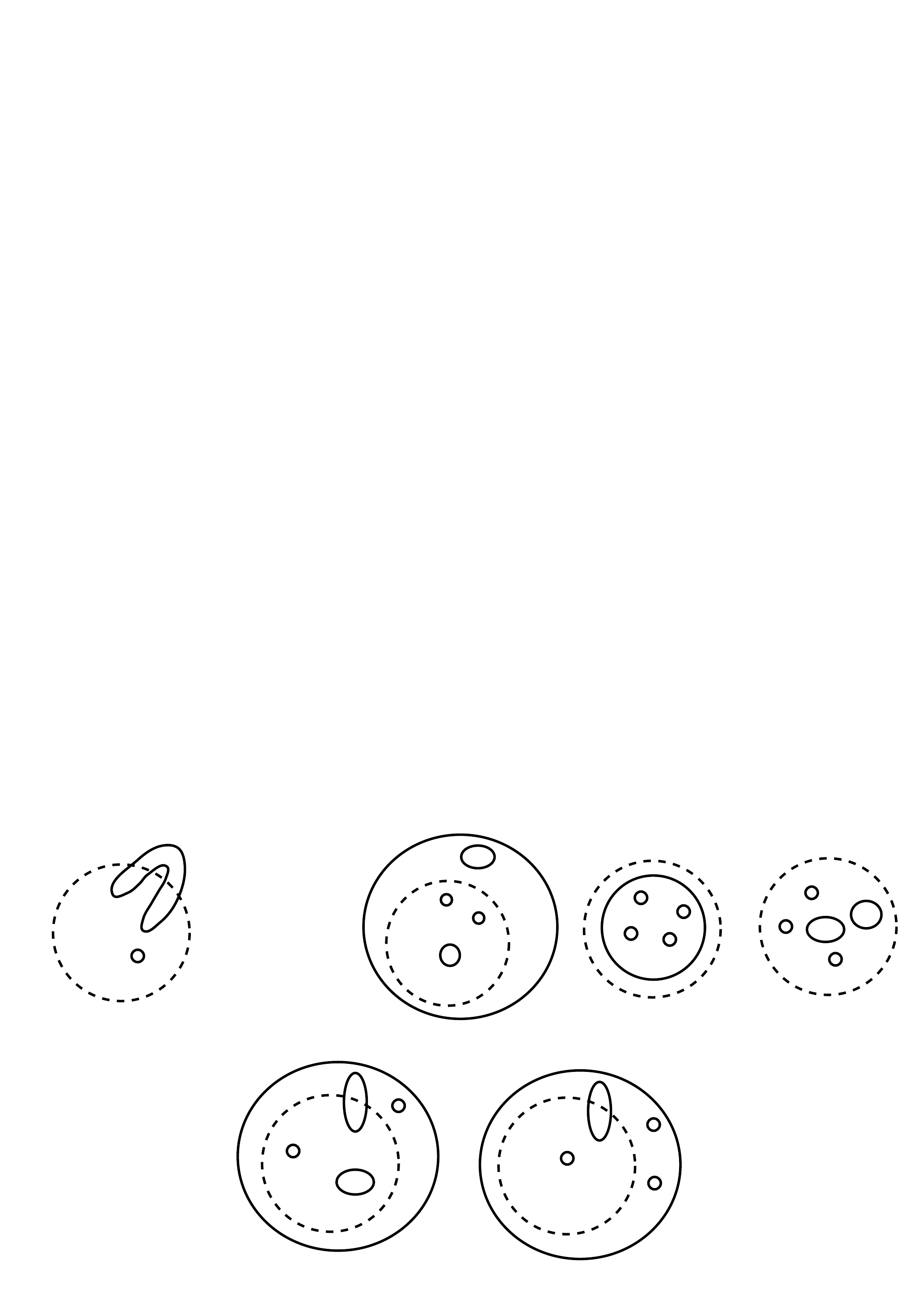}} 
\put(25,-12){Fig. \ref{fig: quadric_ellips_DP2}.3}
%\put(140,-20){$d)$}
\end{picture}
\end{center}
\caption{$\mathbb{R}E_T$ in dashed.}
\label{fig: quadric_ellips_DP2}
\end{figure}
%dopo disegni
\begin{proof}
For any real configuration $\mathcal{P}_4\subset \mathbb{R}E_T$ let us construct a real curve $\tilde{H}$ of bidegree $(2,2)$ passing through $\mathcal{P}_4$ and such that the arrangement of $\mathbb{R}\tilde{H} \cup \mathbb{R}E_T$ is as depicted in Fig. \ref{fig: quadric_ellips_DP2}.1. First of all, remind that for any $2$ fixed distinct points on $E_T$, there exists a bidegree $(1,1)$ real algebraic curve passing through them.\\ 
Let $P_0(x,y)P_1(x,y)=0$ be a real polynomial equation defining the union of $E_T$ and a bidegree $(1,1)$ real curve $H$ such that the points of $\mathcal{P}_4$ belong to one connected component $\mathcal{E}$ of $\mathbb{R}E_T\setminus \mathbb{R}H$.  
Let $H_1$ and $H_2$ be two bidegree $(1,1)$ real curves such that $H_1 \cup H_2$ contains $\mathcal{P}_4$. Replace the left side of the equation $P_0(x,y)P_1(x,y)=0$ with $P_0(x,y)P_1(x,y)+ \varepsilon f_1(x,y)f_2(x,y)$, where $f_i(x,y)=0$ is an equation for $H_i$ and $\varepsilon$ is a sufficient small real number. Up to a choice of the sign of $\varepsilon$, one constructs a small perturbation $\tilde{H}$ of $E_T \cup H$, where $\tilde{H}$ is a bidegree $(2,2)$ non-singular real curve such that $\bigcup_{i=1}^2H_i \cap E_T = \tilde{H} \cap E_T $ and the triplet $(\mathbb{R}T, \mathbb{R}E_T, \mathbb{R}\tilde{H})$ is arranged as depicted in Fig. \ref{fig: quadric_ellips_DP2}.1. \\
Now, for any real configuration $\mathcal{P}_6 \subset E_T \setminus \mathbb{R}E_T$ with no real points, respectively 
%$\mathcal{P}_6' \subset E_T$, 
with exactly $2$ real points, we want to construct real curves $C_T$ of bidegree $(3,3)$ passing through $\mathcal{P}_6$
%, respectively $\mathcal{P}_6'$ 
and such that the arrangement of $\mathbb{R}C_T \cup \mathbb{R}E_T$ is as depicted in Fig. \ref{fig: quadric_ellips_DP2}.2 on the left, respectively in Fig. \ref{fig: quadric_ellips_DP2}.3. One can construct $C_T$ applying a small perturbation to $E_T \cup \tilde{H}$, respectively to $E_T \cup \tilde{H}_2$, where $\tilde{H}_2$ is a real curve of bidegree $(2,2)$ such that:
\begin{itemize}
\item the triplet $(\mathbb{R}T, \mathbb{R}E_T, \mathbb{R}\tilde{H}_2)$ is arranged as depicted in Fig. \ref{fig: quadric_ellips_DP2}.1;
\item the real points of $\mathcal{P}_6$ belong to an interior connected component, respectively an exterior connected component, of $\mathbb{R}E_T \setminus \mathbb{R}\tilde{H}_2$; 
see Notation \ref{note: point_p}.
\end{itemize}
Let us end the proof constructing real curves $C_T$ of bidegree $(3,3)$ 
\begin{itemize}
\item containing a given real configuration $\mathcal{P}'=\{p_1,\overline{p}_1,p_2,\overline{p}_2,p_3,\overline{p}_3\}$ of points on $E_T \setminus \mathbb{R}E_T$, where $p_i$ and 
%=(x_i,y_i)$ and 
$\overline{p}_i$ are complex conjugated points;
%=(\overline{y}_i,\overline{x}_i)$; 
\item such that the arrangement of $\mathbb{R}C_T \cup \mathbb{R}E_T$ is as depicted in Fig. \ref{fig: quadric_ellips_DP2}.2 respectively in the center and on the right. 
\end{itemize}
Let $\Pi_i$ be a pencil of hyperplanes with base points $p_i$ and $\overline{p}_i$, with $i=1,2,3$. We want to show that one can always construct a non-singular real algebraic curve $C_T$ of bidegree $(3,3)$ as perturbation of the union of three hyperplanes respectively of $\Pi_1$, $\Pi_2$ and $\Pi_3$ such that the arrangement of the triplet $(\mathbb{R}T, \mathbb{R}E_T, \mathbb{R}C_T)$ is respectively as depicted in Fig. \ref{fig: particular_constructions_puttane_elissoide_DP2}.2. Namely, we prove that one can always find three hyperplanes respectively of $\Pi_1$, $\Pi_2$ and $\Pi_3$ whose union and real arrangement with respect to $\mathbb{R}E_T$ is respectively as in Fig. \ref{fig: particular_constructions_puttane_elissoide_DP2}.1.
\begin{figure}[h!]
\begin{center}
\begin{picture}(100,70)
\put(-85,-5){\includegraphics[width=0.7\textwidth]{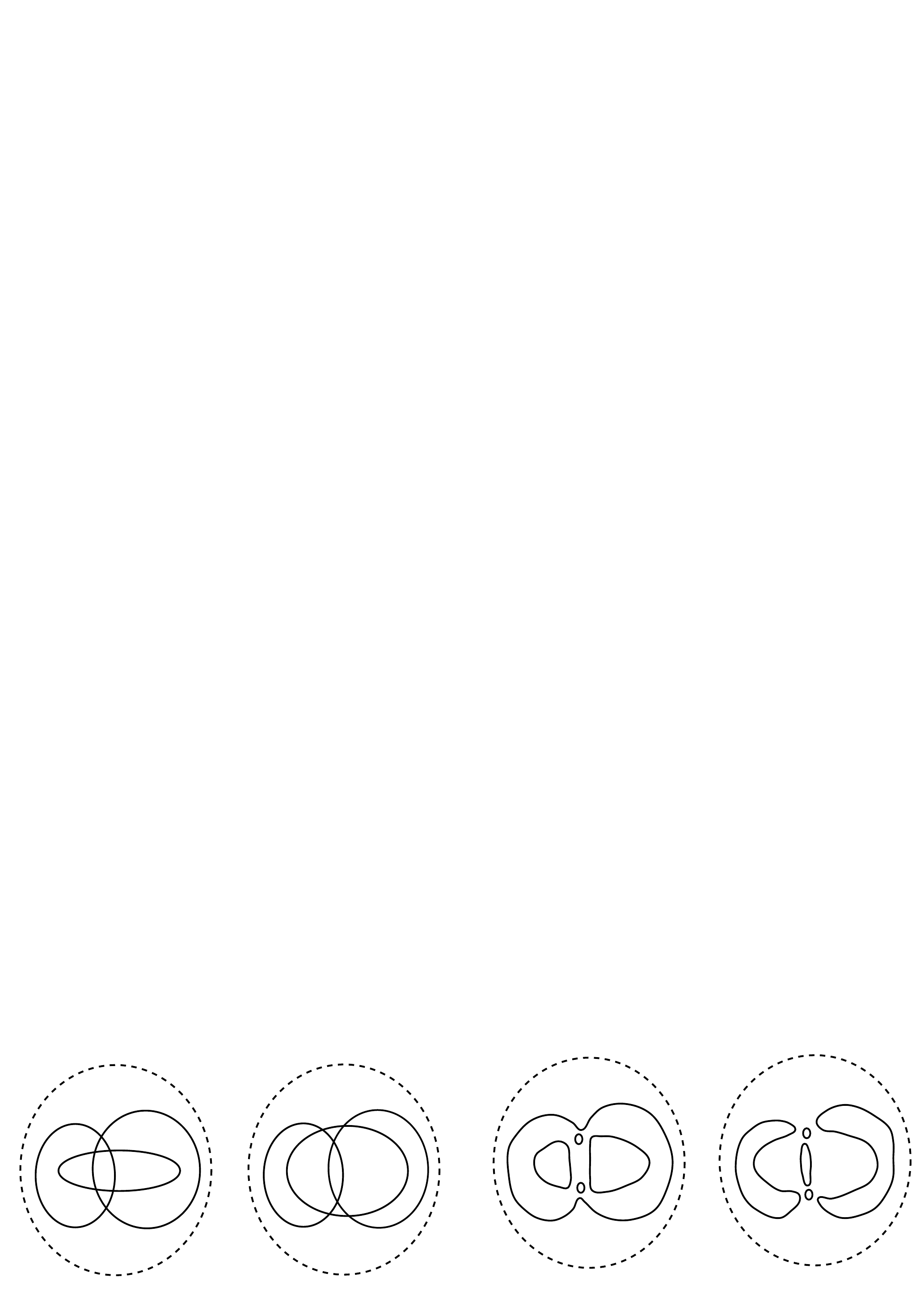}} 
\put(-35,-15){Fig. \ref{fig: particular_constructions_puttane_elissoide_DP2}.1}
\put(98,-15){Fig. \ref{fig: particular_constructions_puttane_elissoide_DP2}.2}
\end{picture}
\end{center}
\caption{$\mathbb{R}E_T$ in dashed.}
\label{fig: particular_constructions_puttane_elissoide_DP2}
\end{figure}
%dopo disegni
First of all, remark that on each of the two connected components of $\mathbb{R}T \setminus \mathbb{R} E_T$, the real part of the real hyperplanes of the pencil $\Pi_i$ vary from a real point $q_i$ to $\mathbb{R}E_T$, with $i=1,2,3$. Moreover, the real points $q_1$, $q_2$ and $q_3$ are distinct points.
\begin{figure}[h!]
\begin{center}
\begin{picture}(100,115)
\put(-85,5){\includegraphics[width=0.7\textwidth]{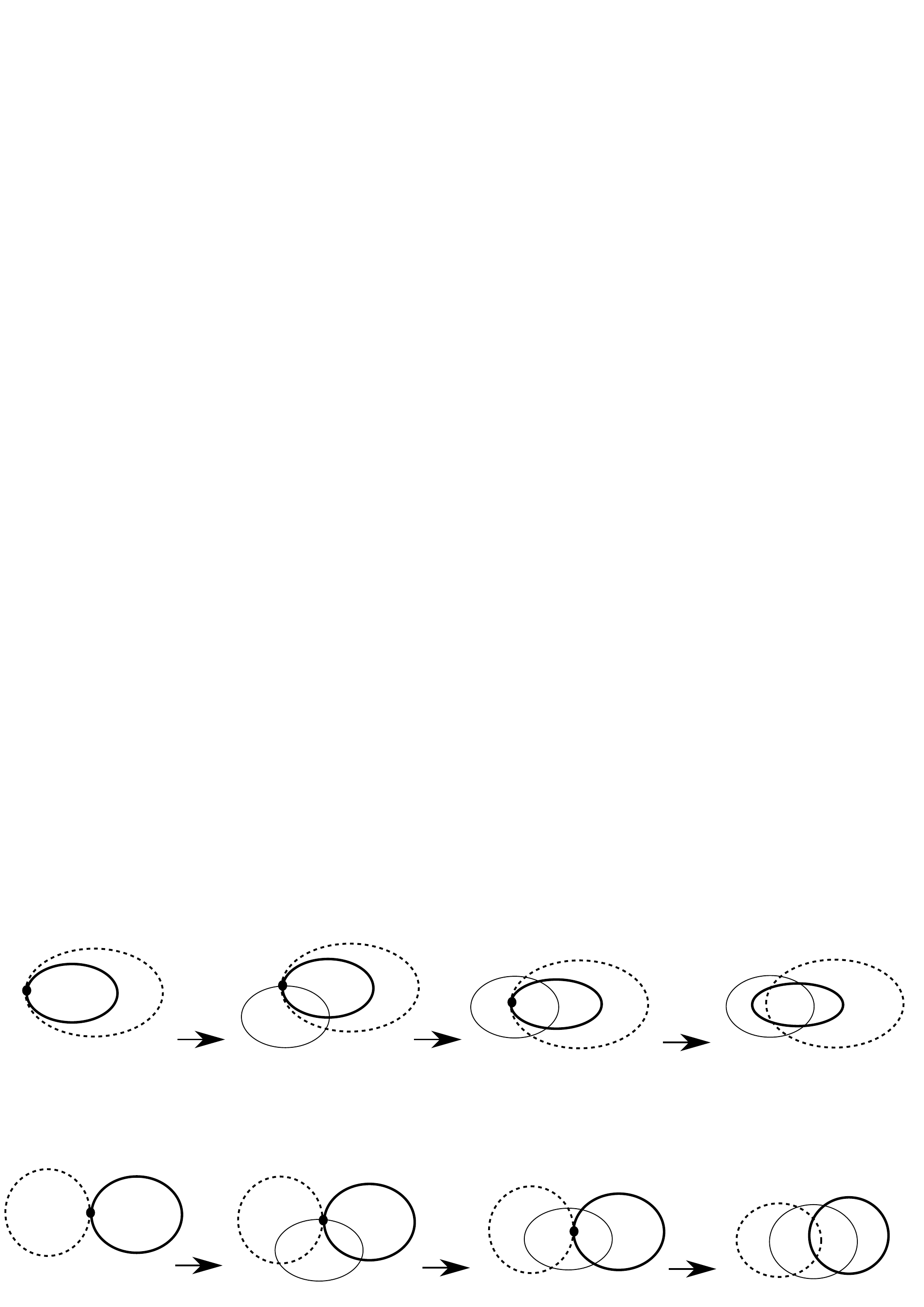}} 
\put(-80,60){Fig. \ref{fig: case3_puttane_elissoide_DP2}.1}
\put(-10,60){Fig. \ref{fig: case3_puttane_elissoide_DP2}.2}
\put(49,60){Fig. \ref{fig: case3_puttane_elissoide_DP2}.3}
\put(122,60){Fig. \ref{fig: case3_puttane_elissoide_DP2}.4}
\put(-80,-7){Fig. \ref{fig: case3_puttane_elissoide_DP2}.5}
\put(-10,-7){Fig. \ref{fig: case3_puttane_elissoide_DP2}.6}
\put(49,-7){Fig. \ref{fig: case3_puttane_elissoide_DP2}.7}
\put(122,-7){Fig. \ref{fig: case3_puttane_elissoide_DP2}.8}
\end{picture}
\end{center}
\caption{$H_{j}$ and $H_{k}$ in dashed and thick black, $H_{i}$ in gray.}
\label{fig: case3_puttane_elissoide_DP2}
\end{figure}
%dopo disegni
There exist two real hyperplanes $H_j \subset \Pi_j$ and $H_k \subset \Pi_k$ such that 
\begin{itemize}
\item $\mathbb{R}H_j$ is tangent to $\mathbb{R}H_k$ at a real point $s_{jk}$; 
\item the point $q_i$ do not belong to the interior of $\mathbb{R}H_j$ and $\mathbb{R}H_k$  
(see Notation \ref{note: point_p}); 
\item $\mathbb{R}H_j \cup \mathbb{R}H_k$ is as depicted in Fig. \ref{fig: case3_puttane_elissoide_DP2}.1 (resp. in Fig. \ref{fig: case3_puttane_elissoide_DP2}.5). 
\end{itemize}
Pick the real hyperplane $H_i \subset \Pi_i$  passing through $s_{jk}$. Then, the real part of $H_j\cup H_k \cup H_i$ is as depicted in Fig. \ref{fig: case3_puttane_elissoide_DP2}.2 (resp. in Fig. \ref{fig: case3_puttane_elissoide_DP2}.6). It follows that there exists a real hyperplane of the pencil $\Pi_i$ whose real arrangement with respect to $\mathbb{R}H_j \cup \mathbb{R} H_k$ is as depicted in Fig. \ref{fig: case3_puttane_elissoide_DP2}.3 (resp. in Fig. \ref{fig: case3_puttane_elissoide_DP2}.7). In conclusion, a small perturbation of the union of such three hyperplanes has real part as depicted in Fig. \ref{fig: case3_puttane_elissoide_DP2}.4 (resp. in Fig. \ref{fig: case3_puttane_elissoide_DP2}.8). 
\end{proof}
\begin{prop}
\label{prop: constr_quadric_DP2_2}
Let $T$ be the quadric ellipsoid and let $E_T$ be a non-singular real algebraic curve of bidegree $(1,1)$ in $T$. Then, 
\begin{itemize}
\item for any integer $k\geq 5$, 
\item for any real configuration $\mathcal{P}_{2k}$ of $2k$ distinct points, whose exactly $2$ are real, in $E_T$, 
\item for any given non-negative integers $k_1,$ $k_2$ such that $k_1+k_2=k-4$ 
\end{itemize}
there exists a non-singular real algebraic curve $C_T$ of bidegree $(k,k)$ on $T$, intersecting transversely $E_T$ in $\mathcal{P}_{2k}$ and such that the triplet $(\mathbb{R}T, \mathbb{R}E_T, \mathbb{R}C_T)$ is arranged as depicted in Fig. \ref{fig: non-symm_ellipsoid}.
\end{prop}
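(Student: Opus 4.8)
The plan is to build $C_T$ by a single controlled Harnack smoothing of the reducible curve $E_T\cup C'$, where $C'$ is an auxiliary real curve of bidegree $(k-1,k-1)$, following the scheme already used in the proof of Proposition~\ref{prop: constr_quadric_DP2} but now organised so as to read off the arrangement of Fig.~\ref{fig: non-symm_ellipsoid} from the signs of the perturbing form. Fix once and for all a real equation $P_0(x,y)=0$ for $E_T$, and recall from the proof of Proposition~\ref{prop: constr_quadric_DP2} that through any two distinct real points of $\mathbb{R}E_T$, and through any pair of complex-conjugate points of $E_T$, there passes a real curve of bidegree $(1,1)$.

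First I would fix the intersection with $E_T$. Group the $2k$ points of $\mathcal{P}_{2k}$ into $k$ real pairs: the two real points into one pair, and the remaining $2k-2$ non-real points into $k-1$ complex-conjugate pairs. Pick a real bidegree-$(1,1)$ curve $L_i$ through the $i$-th pair, let $f_i=0$ define $L_i$, and set $r:=\prod_{i=1}^{k}f_i$, a real form of bidegree $(k,k)$ whose zero set meets $E_T$ exactly in $\mathcal{P}_{2k}$. For any real form $p'$ of bidegree $(k-1,k-1)$ defining $C'$, the curve $C_T:=\{P_0\,p'+\varepsilon r=0\}$ is, for $|\varepsilon|$ small, a smooth real curve of bidegree $(k,k)$, and its restriction to $E_T$ equals $\varepsilon\,r|_{E_T}$; hence $C_T\cap E_T=\mathcal{P}_{2k}$ automatically. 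This is the mechanism that decouples the point-passing condition from the control of the topology.

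The topology is then governed by the signs of $r$ at the $2k-2$ real nodes of $E_T\cup C'$, i.e. at the points of $\mathbb{R}E_T\cap\mathbb{R}C'$. At each such node the sign of $r$ selects one of the two Harnack smoothings: either $\mathbb{R}C_T$ keeps crossing $\mathbb{R}E_T$, or it is pulled off $\mathbb{R}E_T$ into one of the two disks $D_1,D_2$ bounded by $\mathbb{R}E_T$ on $\mathbb{R}T\cong S^2$, turning the corresponding arc of $\mathbb{R}C'$ into an oval lying in that disk. Choosing a simple model for $C'$ (built, if needed, by iterating the smoothings of Proposition~\ref{prop: constr_quadric_DP2}, whose $k=2,3$ cases serve as the base), I would arrange that exactly the two real crossings survive at the two real points of $\mathcal{P}_{2k}$, and that the remaining arcs are pushed into $D_1$ or $D_2$ according to the sign of $r$ there. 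Since one may place each $L_i$ so as to prescribe the sign of $f_i$, hence of $r$, on either side of $\mathbb{R}E_T$ at the relevant nodes, the number of ovals sent to each disk can be chosen freely, and this realises every admissible pair $(k_1,k_2)$ with $k_1+k_2=k-4$ together with the fixed core of Fig.~\ref{fig: non-symm_ellipsoid}.

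The main obstacle is the interpolation step hidden in the previous paragraph: one must choose the product $r=\prod f_i$ so that simultaneously (i) its zeros on $E_T$ are exactly $\mathcal{P}_{2k}$, and (ii) its signs at the $2k-2$ nodes of $\mathbb{R}E_T\cap\mathbb{R}C'$ induce the prescribed smoothing pattern (two surviving crossings, the remaining arcs distributed as the chosen nests). These two requirements constrain the $L_i$ at two disjoint families of points, and the delicate point is to route the $k$ auxiliary curves through the prescribed pairs while still controlling all the nodal signs. I expect to resolve this by first fixing the isotopy type of $\mathbb{R}C_T$ with an auxiliary perturbing form carrying the correct nodal signs, and then deforming it, within the forms of bidegree $(k,k)$ vanishing on $\mathcal{P}_{2k}\subset E_T$, to the product $r$, using that the real isotopy type of a small smoothing depends only on the sign pattern at the nodes; once the signs agree, the smoothing by $r$ both realises Fig.~\ref{fig: non-symm_ellipsoid} and meets $E_T$ exactly in $\mathcal{P}_{2k}$.
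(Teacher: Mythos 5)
Your core mechanism---perturbing $P_0\,p'+\varepsilon r$ with $r=\prod_{i=1}^{k}f_i$ a product of bidegree-$(1,1)$ curves routed through the pairs of $\mathcal{P}_{2k}$, so that $C_T\cap E_T=\mathcal{P}_{2k}$ comes for free---is exactly the engine of the paper's proof, and that part is sound. The genuine gap is in your claim that ``the number of ovals sent to each disk can be chosen freely'' by prescribing the sign of $r$ node by node. The restriction $r|_{E_T}$ vanishes exactly at $\mathcal{P}_{2k}$, hence has exactly two simple zeros on $\mathbb{R}E_T$ (the two real points), so along the circle $\mathbb{R}E_T$ the sign of $r$ changes only at those two points: it is constant on each of the two complementary arcs. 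All real nodes of $E_T\cup C'$ lying on the same arc are therefore smoothed coherently---there is no independent per-node choice---and a single perturbation cannot distribute the arcs of $\mathbb{R}C'$ between $D_1$ and $D_2$ at will, so it cannot realize an arbitrary pair $(k_1,k_2)$. (A side error in the same paragraph: when $r(q)\neq 0$ both signs give a hyperbolic smoothing $uv=-\varepsilon r(q)$ that removes the crossing with $\mathbb{R}E_T$; a crossing survives only where $r$ vanishes, so ``keeps crossing versus pulled off'' is not the dichotomy the sign selects.)

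Moreover, one smoothing of $E_T\cup C'$ can raise nesting depth by at most one---the only new enclosing circle in the picture is the one shadowing $\mathbb{R}E_T$---so the nests of depths $k_1$ and $k_2$ in Fig.~\ref{fig: non-symm_ellipsoid} must already be present in $C'$ up to a single increment. This is precisely why the paper runs a genuine induction on $k$: the auxiliary curve is the statement's own curve $\hat{H}_{k-1}$ with one of the two parameters decremented, chosen to meet $E_T$ in only \emph{two} real points placed on the distinguished arc $\mathcal{F}_1$ (so each step has just two real nodes and the sign-coherence constraint above is harmless), and the base case $k=5$ is built by the dedicated chain $\hat{H}_2\to\hat{H}_3\to\hat{H}_4\to\hat{H}_5$ from Proposition~\ref{prop: constr_quadric_DP2}-type perturbations, tracking the two marked real points at every stage. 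Your ``iterate if needed'' remark points in this direction, but the induction hypothesis (which intermediate arrangement, and where the two real points of $\mathcal{P}_{2k}$ must sit relative to $\mathcal{D}_{k-1}$) and the base case are the actual content of the proof, and your two-stage repair---fix the signs with an auxiliary form, then deform it to $r$ among forms vanishing on $\mathcal{P}_{2k}$---does not address the real obstruction: the problem is not matching a given sign pattern by deformation, but that the admissible sign patterns of such an $r$ at the nodes are themselves constrained as above.
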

\begin{figure}[!h]
\begin{picture}(100,100)
\put(5,-19){\includegraphics[width=1.00\textwidth]{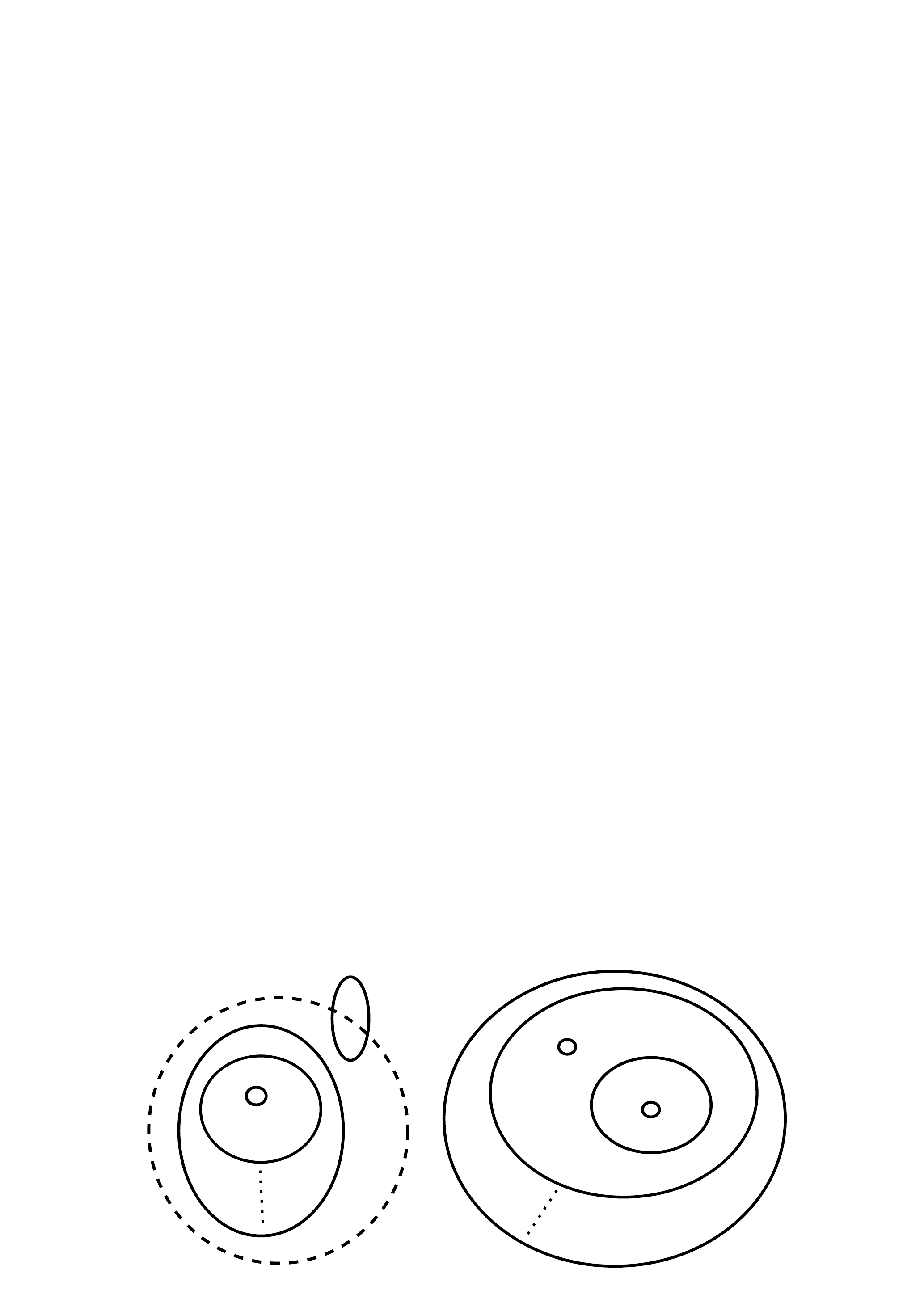}} 
\put(230,12){\rotatebox{-31}{$\textcolor{black}{\Big \} k_2}$}}
\put(115,22){$\textcolor{black}{\Big \} k_1}$}
\end{picture}
\caption{$\mathbb{R}E_T$ in dashed. The pair $(\mathbb{R}T, \mathbb{R}C_T)$ realizes $1   \sqcup   N(k_1,1)   \sqcup   N(k_2,1   \sqcup   \langle 1 \rangle)$.}
\label{fig: non-symm_ellipsoid}
\end{figure}
%dopo disegni
\begin{figure}[h!]
\begin{center}
\begin{picture}(100,70)
\put(-139,0){\includegraphics[width=1.0\textwidth]{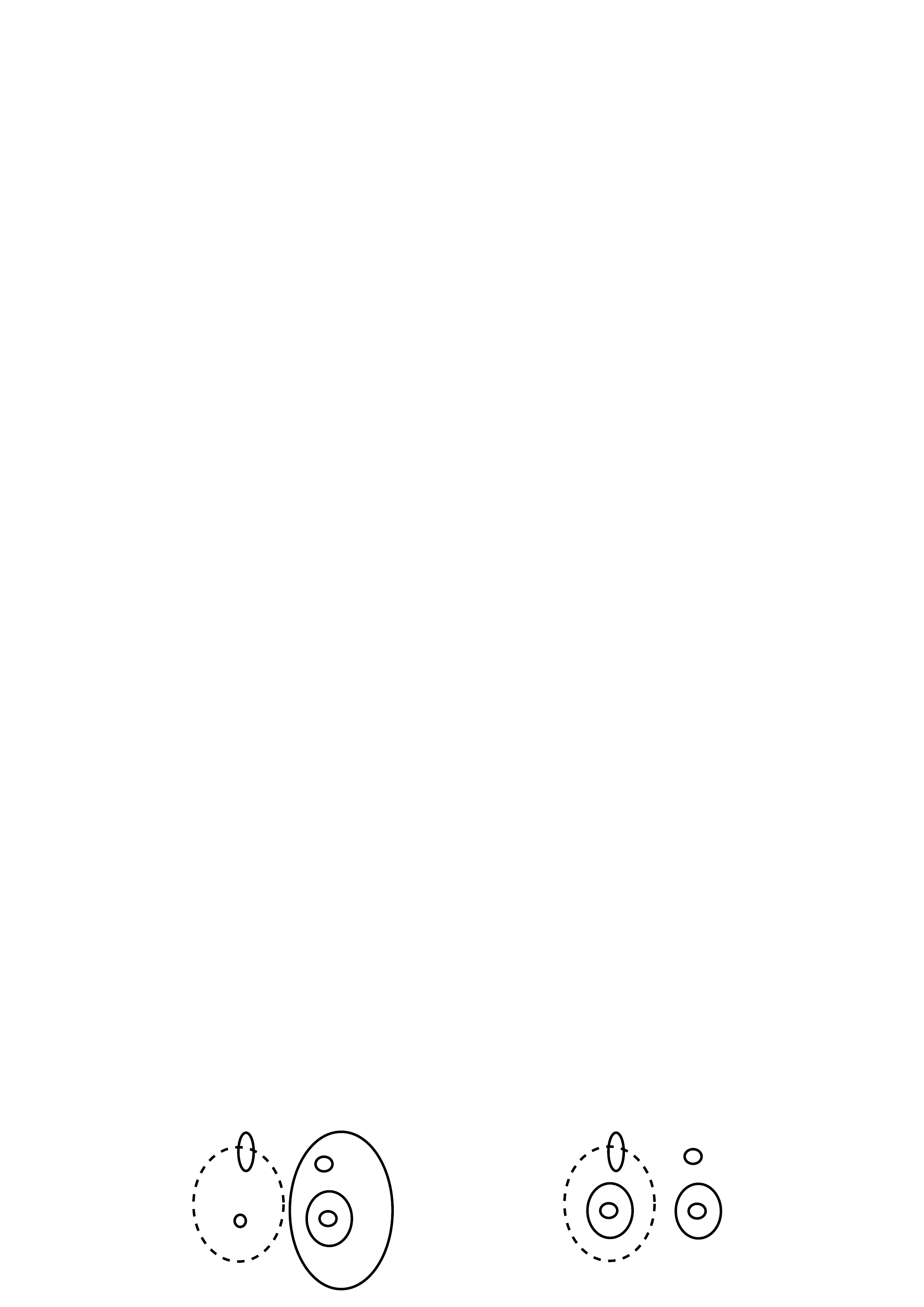}} 
\put(-80,-11){Fig. \ref{fig: k=5_quadric_ellips_DP2_non-symm}.1: $(\mathbb{R}T,\mathbb{R}E_T,\mathbb{R}\hat{H}_5)$}
\put(64,-11){Fig. \ref{fig: k=5_quadric_ellips_DP2_non-symm}.2: $(\mathbb{R}T,\mathbb{R}E_T,\mathbb{R}\hat{H}_5)$}

\end{picture}
\end{center}
\caption{$\mathbb{R}E_T$ in dashed.}
\label{fig: k=5_quadric_ellips_DP2_non-symm}
\end{figure}
%dopo disegni
\begin{figure}[h!]
\begin{center}
\begin{picture}(100,85)
\put(-121,-5){Fig. \ref{fig: quadric_ellips_DP2_non-symm}.1:}
\put(-134,-15){$(\mathbb{R}T,\mathbb{R}E_T,\mathbb{R}\hat{H}_2)$}
\put(-54,-5){Fig. \ref{fig: quadric_ellips_DP2_non-symm}.2}

\put(23,-5){Fig. \ref{fig: quadric_ellips_DP2_non-symm}.3: }
\put(10,-15){$(\mathbb{R}T,\mathbb{R}E_T,\mathbb{R}\hat{H}_3)$}
\put(93,-5){Fig. \ref{fig: quadric_ellips_DP2_non-symm}.4}

\put(177,-5){Fig. \ref{fig: quadric_ellips_DP2_non-symm}.5: }
\put(167,-15){$(\mathbb{R}T,\mathbb{R}E_T,\mathbb{R}\hat{H}_4)$}

\put(-125,0){\includegraphics[width=1.0\textwidth]{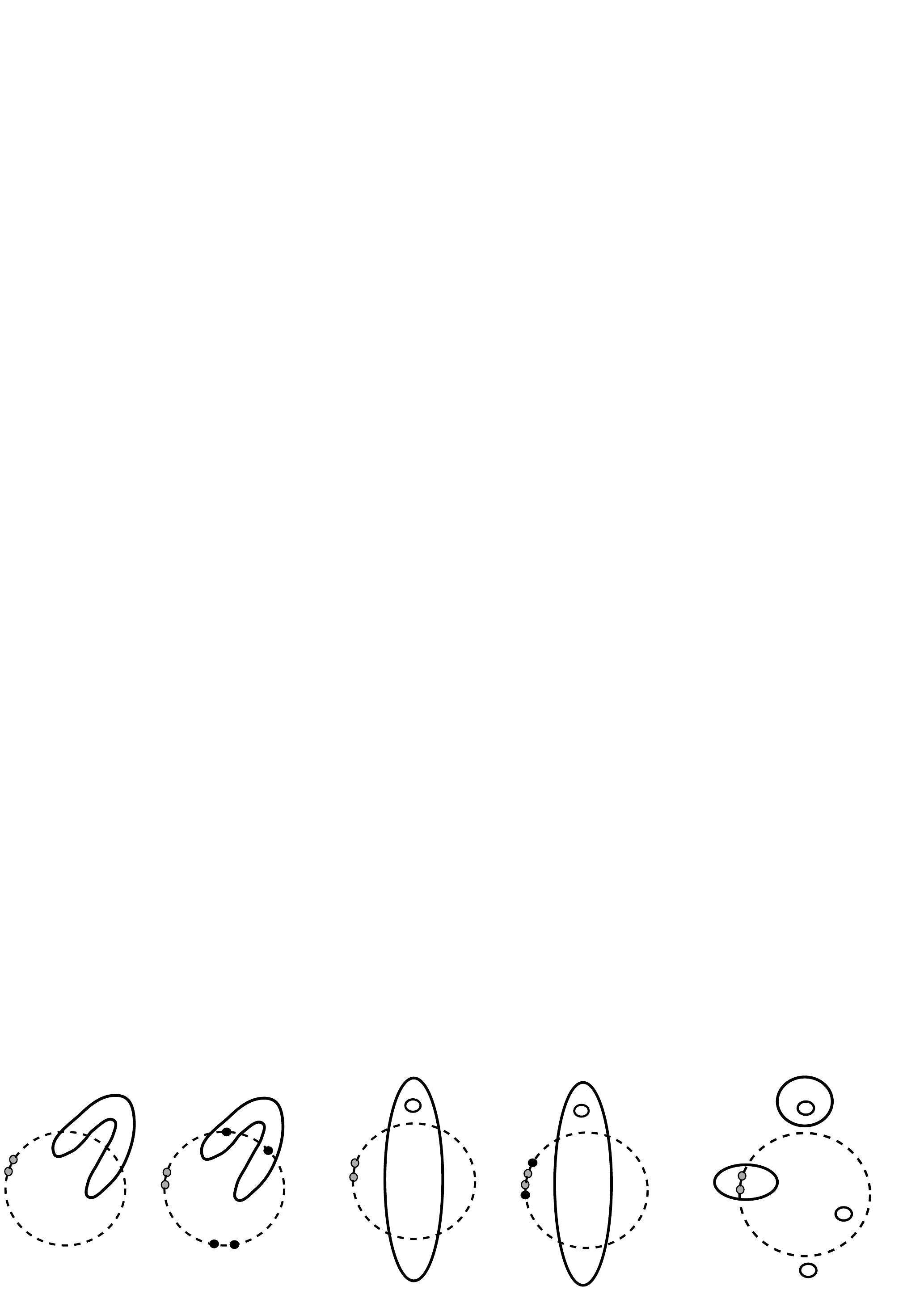}} 

\end{picture}
\end{center}
\caption{$\mathbb{R}E_T$ in dashed and $\{p_1,p_2\}=\{\textcolor{gray}{\bullet},\textcolor{gray}{\bullet}\}$.} 
\label{fig: quadric_ellips_DP2_non-symm}
\end{figure}
%dopo disegni

\begin{proof} 
The first step is to prove the statement for $k=5$. After we end the proof by induction on $k$. Let us introduce some notation. Let us call $\hat{H}_i$ any real curve of bidegree $(i,i)$ such that only one oval of $\mathbb{R}\hat{H}_i$, denoted with $\mathcal{D}_i$, intersects $\mathbb{R}E_T$. Moreover, we denote with $\mathcal{F}_1,\dots,\mathcal{F}_j$ and $\mathcal{\tilde{F}}_1,\dots, \mathcal{\tilde{F}}_j$ the connected components of $\mathbb{R}E_T \setminus \mathcal{D}_i$ respectively in the interior and in the exterior of $\mathcal{D}_i$; 
see Notation \ref{note: point_p}.\\ 
Fix $\mathcal{P}_{10} \subset E_T$ and denote with $p_1,p_2$ the two real points of $\mathcal{P}_{10}$. We start with the construction of a real curve $\hat{H}_5$ of bidegree $(5,5)$ passing through $\mathcal{P}_{10}$ and such that the triplet $(\mathbb{R}T,\mathbb{R}E_T,\mathbb{R}\hat{H}_5)$ is arranged respectively as depicted in Fig. \ref{fig: k=5_quadric_ellips_DP2_non-symm}.1 and \ref{fig: k=5_quadric_ellips_DP2_non-symm}.2.
If one can construct a real curve $\hat{H}_4$ of bidegree $(4,4)$ such that 
\begin{enumerate}[label=(\arabic*)]
\item $\mathbb{R}\hat{H}_4 \cap \mathbb{R}E_T= \mathcal{D}_4 \cap \mathbb{R}E_T$ consists of $2$ points; 
\item $\{p_1,p_2\} \subset \mathcal{F}_1$ of $\mathcal{D}_4$;
\item the triplet $(\mathbb{R}T, \mathbb{R} E_T, \mathbb{R}\hat{H}_4)$ is arranged as depicted in Fig. \ref{fig: quadric_ellips_DP2_non-symm}.5
\end{enumerate}
then, the real curve $\hat{H}_5$ exists as small perturbation of $\hat{H}_4 \cup E_T$; see the proof of Proposition \ref{prop: constr_quadric_DP2} for details on small perturbation method.\\
Let us construct $\hat{H}_4$. Fix a configuration $\mathcal{P}_4$ of $4$ points on $\mathbb{R}E_T$ such that $\{p_1,p_2\}$ belong to the same connected components of $\mathbb{R} E_T \setminus \mathcal{P}_4$. Via small perturbation, we construct a real curve $\hat{H}_2$ of bidegree $(2,2)$ such that 
\begin{itemize}
\item the triplet $(\mathbb{R}T, \mathbb{R} E_T, \mathbb{R}\hat{H}_2)$ is arranged as depicted in Fig. \ref{fig: quadric_ellips_DP2_non-symm}.1;
\item $\mathbb{R}\hat{H}_2 \cap \mathbb{R}E_T= \mathcal{D}_2 \cap \mathbb{R}E_T=\mathcal{P}_4$; 
\item $\{p_1,p_2\} \subset \tilde{\mathcal{F}_1}$ of $\mathcal{D}_2$.
\end{itemize}
Now, fix a real configuration $\mathcal{P}_6$ of $6$ points on $E_T$ such that (see Fig. \ref{fig: quadric_ellips_DP2_non-symm}.2) 
\begin{itemize}
\item exactly $4$ points are real; 
\item one of the fixed real points belongs to $\mathcal{F}_i \subset \mathcal{D}_2$, for $i=1,2$;
\item two of the fixed real points are on $\mathcal{\tilde{F}}_1 \subset \mathcal{D}_2$ and $p_1,p_2$ belong to the same connected component of $\mathcal{\tilde{F}}_1 \setminus \mathcal{D}_2$.
\end{itemize} 
One can construct a small perturbation $\hat{H}_3$ of $E_T \cup \hat{H}_2$, where $\hat{H}_3$ is a bidegree $(3,3)$ non-singular real curve passing through $\mathcal{P}_6$ and such that the triplet $(\mathbb{R}T, \mathbb{R}E_T, \mathbb{R}\hat{H}_3)$ is arranged as depicted in Fig. \ref{fig: quadric_ellips_DP2_non-symm}.3. To end the construction of $\hat{H}_4$, fix $8$ points on $E_T$ such that exactly $2$ points are real and belong to two different connected components of $\tilde{\mathcal{F}_1} \setminus \{p_1,p_2\} \subset \mathcal{D}_3$; see Fig. \ref{fig: quadric_ellips_DP2_non-symm}.4. One obtains $\hat{H}_4$ as small perturbation of $\hat{H}_3 \cup E_T$.
\par Let us proceed by induction to end the proof. Assume that the statement hold for $k-1$. Now, for any given non-negative integers $\tilde{k}_1,$ $\tilde{k}_2$ such that $\tilde{k}_1+\tilde{k}_2=k-4$, 
let us construct a bidegree $(k,k)$ real algebraic curve $C_T$ passing through a given real configuration $\mathcal{P}_{2k} \subset E_T$, whose exactly $2$ points are real and such that the triplet $(\mathbb{R}T,\mathbb{R}E_T,\mathbb{R}C_T)$ is arranged as depicted in Fig. \ref{fig: non-symm_ellipsoid}.\\
By induction, for any choice of $\tilde{k}_1,$ $\tilde{k}_2$ and $\mathcal{P}_{2k}$, there exists a bidegree $(k-1,k-1)$ real algebraic curve $\hat{H}_{k-1}$ such that
\begin{itemize}
\item $\hat{H}_{k-1}$ passes through a given real configuration $\mathcal{P}_{2k-2} \subset E_T$; 
\item the triplet $(\mathbb{R}T,\mathbb{R}E_T,\mathbb{R}\hat{H}_{k-1})$ is arranged as depicted in Fig. \ref{fig: non-symm_ellipsoid}, where $k_i=\tilde{k}_i$ and $k_j=\tilde{k}_j-1$ with $\{i,j\}=\{1,2\}$ and $\tilde{k}_{j} \not = 0$; 
\item the connected component $\mathcal{F}_1 \subset \mathcal{D}_{k-1}$ contains the $2$ real points of $\mathcal{P}_{2k}$.  
\end{itemize}
Finally, let $P_0(x,y)P_1(x,y)=0$ be a real polynomial equation defining the union of $E_T$ and $\hat{H}_{k-1}$ in some local affine chart of $T$. Pick $k$ real curves $L_{i}$ of bidegree $(1,1)$ such that $\bigcup_{i=1}^{k} L_{i}$ passes trhough $\mathcal{P}_{2k}$. Replace the left side of the equation $P_0(x,y)P_1(x,y)=0$ with $P_0(x,y)P_1(x,y)+ \varepsilon f_1(x,y)\dots f_k(x,y)$, where $f_i(x,y)=0$ is an equation for $L_i$ and $\varepsilon$ is a sufficient small real number. Up to a choice of the sign of $\varepsilon$, one constructs a small perturbation $C_{T}$ of $E_T \cup \hat{H}_{k-1}$ which is the wanted real curve of bidegree $(k,k)$.
\end{proof}

\begin{prop}
\label{prop: constr_quadric_DP2_3}
Let $T$ be the quadric hyperboloid and let $E_T$ be a non-singular real algebraic curve of bidegree $(1,1)$ in $T$. Then, for any real configuration of $2k$ distinct points in $E_T$ fixed as follows, there exists a non-singular real algebraic curve $C_T$ of bidegree $(k,k)$ on $T$, intersecting transversely $E_T$ in the $2k$ points and such that the triplet $(\mathbb{R}T, \mathbb{R}E_T, \mathbb{R}C_T)$ is arranged respectively as depicted:
\begin{enumerate}[label=(\arabic*)]
\item in Fig. \ref{fig: quadric_hyperb_DP2}.1,  \ref{fig: quadric_hyperb_DP2}.2 and  \ref{fig: quadric_hyperb_DP2}.3 for $k=3$ and $2$ fixed real points;
\item in Fig. \ref{fig: quadric_hyperb_DP2}.4 for $k=3$ and no fixed real points;
\item in Fig. \ref{fig: quadric_hyperb_DP2}.5 and  \ref{fig: quadric_hyperb_DP2}.6 for $k=2$ and $4$ fixed real points.
%\item the real scheme depicted in $d)$ of Fig. \ref{fig: quadric_ellips_DP2}  for $k=3$ and $6$ fixed points whose exactly $4$ are real (??non è necessario??);
\end{enumerate}
\end{prop}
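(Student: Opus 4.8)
The plan is to follow verbatim the strategy of the proof of Proposition~\ref{prop: constr_quadric_DP2}, replacing the sphere $\mathbb{R}T\simeq S^2$ by the torus $\mathbb{R}T\simeq S^1\times S^1$ and smoothing, at each step, a union of a lower bidegree curve with $E_T$ by Harnack's small perturbation method. I would realize $T$ as $\mathbb{C}P^1\times\mathbb{C}P^1$ with the product real structure, so that $\mathbb{R}T$ is the torus $\mathbb{R}P^1\times\mathbb{R}P^1$ and a generic real curve of bidegree $(1,1)$ has real part a circle of homology class $(1,1)$; in particular $\mathbb{R}E_T$ is such a circle and is \emph{non-separating}. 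Two bidegree $(1,1)$ curves meet in $(1,1)\cdot(1,1)=2$ points, a bidegree $(k,k)$ curve meets $E_T$ in $2k$ points, and through any two prescribed points of $E_T$ there is a pencil of bidegree $(1,1)$ curves; these are the only incidence facts needed.

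For the cases with real base points I would build $C_T$ by a short induction on $k$, exactly as in Proposition~\ref{prop: constr_quadric_DP2_2}. For $k=2$ (Fig.~\ref{fig: quadric_hyperb_DP2}.5 and~\ref{fig: quadric_hyperb_DP2}.6) I start from a bidegree $(1,1)$ curve $H$ chosen so that the four points $\mathcal{P}_4$ lie on a single arc of $\mathbb{R}E_T\setminus\mathbb{R}H$ of the prescribed type, pick two bidegree $(1,1)$ curves $H_1,H_2$ whose union passes through $\mathcal{P}_4$, and replace a local equation $P_0P_1=0$ of $E_T\cup H$ by $P_0P_1+\varepsilon f_1f_2$; the two arrangements of Fig.~\ref{fig: quadric_hyperb_DP2}.5--\ref{fig: quadric_hyperb_DP2}.6 arise from the two admissible initial positions of $H$ together with the sign of $\varepsilon$. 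For $k=3$ with two real points (Fig.~\ref{fig: quadric_hyperb_DP2}.1--\ref{fig: quadric_hyperb_DP2}.3) I would instead smooth $E_T\cup\tilde H$, where $\tilde H$ is a bidegree $(2,2)$ curve produced by the previous step and positioned so that $\mathcal{P}_6$ sits in the required arcs, perturbing by $\varepsilon\,g$, where $g=0$ defines the union of three bidegree $(1,1)$ curves through $\mathcal{P}_6$; the three target figures correspond to the three admissible placements of these auxiliary curves relative to $\mathbb{R}E_T$.

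The case $k=3$ with no real base points (Fig.~\ref{fig: quadric_hyperb_DP2}.4) is the one requiring a genuine extra argument, and I expect it to be the main obstacle. Here $\mathcal{P}_6=\{p_1,\overline p_1,p_2,\overline p_2,p_3,\overline p_3\}$ consists of three conjugate pairs, so there are no real incidence conditions to anchor the real picture, and one must select, in each pencil $\Pi_i$ of bidegree $(1,1)$ curves through $\{p_i,\overline p_i\}$, a member so that the real union $\mathbb{R}H_1\cup\mathbb{R}H_2\cup\mathbb{R}H_3$ is arranged with respect to $\mathbb{R}E_T$ exactly as needed before smoothing. I would reproduce the tangency argument of Proposition~\ref{prop: constr_quadric_DP2} (the analogue of Fig.~\ref{fig: particular_constructions_puttane_elissoide_DP2} and~\ref{fig: case3_puttane_elissoide_DP2}): fix $H_j\in\Pi_j$ and $H_k\in\Pi_k$ tangent at a real point $s_{jk}$, then take the member of $\Pi_i$ through $s_{jk}$ and slide it within its pencil to obtain the required mutual position, finally smoothing the triple union.

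The delicate point throughout, and the reason the hyperboloid case is not a literal copy of the ellipsoid one, is that $\mathbb{R}T$ is a torus and $\mathbb{R}E_T$ is non-separating, so there is no well-defined interior/exterior of an oval and no distinguished disk as in Notation~\ref{note: point_p}. Consequently the bookkeeping of \emph{which} arcs of $\mathbb{R}E_T\setminus\mathbb{R}H$, and which complementary regions of the torus, the ovals and the non-contractible components of the perturbation fall into must be done by tracking homology classes on $S^1\times S^1$ rather than nesting data. I would carry this out case by case, checking at each smoothing that the perturbed curve meets $E_T$ transversally in precisely $\mathcal{P}_{2k}$ (automatic from $\bigcup_i H_i\cap E_T=C_T\cap E_T$ and the smallness of $\varepsilon$) and that its real scheme is the depicted one; genericity of the chosen auxiliary curves guarantees the smoothness of $C_T$.
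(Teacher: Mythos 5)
Your core method is the same as the paper's: the actual proof of Proposition \ref{prop: constr_quadric_DP2_3} handles \emph{all} three cases by the Harnack-style small perturbation used in Proposition \ref{prop: constr_quadric_DP2} --- it smooths $E_T\cup H$ by $P_0P_1+\varepsilon f_1f_2$ to get the bidegree $(2,2)$ curves of case $(3)$, and then simply says that the bidegree $(3,3)$ curves of cases $(1)$ and $(2)$ are obtained ``analogously, via small perturbation method,'' i.e.\ by smoothing $E_T\cup\tilde H$ with a perturbation term vanishing on $\mathcal{P}_6$. Where you diverge is case $(2)$: you predict the no-real-points case is the main obstacle and import the triple-tangency pencil argument of Proposition \ref{prop: constr_quadric_DP2} (the analogue of Fig.~\ref{fig: case3_puttane_elissoide_DP2}). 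That machinery is not needed here: on the ellipsoid it was required only because \emph{two additional} arrangements (Fig.~\ref{fig: quadric_ellips_DP2}.2, center and right) were demanded beyond the one obtained by smoothing, whereas the hyperboloid statement asks for a single arrangement (Fig.~\ref{fig: quadric_hyperb_DP2}.4), which the plain smoothing of $E_T\cup\tilde H$ already produces. Moreover, transplanted literally, the tangency argument is on shakier ground than you suggest: on the sphere it exploits the fact that the real members of a pencil $\Pi_i$ through a conjugate pair sweep each half of $\mathbb{R}T\setminus\mathbb{R}E_T$ from a point $q_i$ out to $\mathbb{R}E_T$, and the combinatorics is phrased via interiors of ovals; on the torus the real members of such a pencil are non-contractible $(1,1)$-circles which can degenerate, if at all, only to a pair of crossing generatrices --- never to a point $q_i$ --- so the configuration bookkeeping you sketch would have to be redone from scratch, as you partly acknowledge. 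In short: your construction is correct and matches the paper for cases $(1)$ and $(3)$, and for case $(2)$ the uniform smoothing you already use elsewhere (and which the paper uses) suffices, making your extra pencil argument both unnecessary and, as stated, not directly valid on the hyperboloid.
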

\begin{figure}[h!]
\begin{center}
\begin{picture}(100,107)
\put(-96,56){Fig. \ref{fig: quadric_hyperb_DP2}.1}
\put(30,56){Fig. \ref{fig: quadric_hyperb_DP2}.2}
\put(155,56){Fig. \ref{fig: quadric_hyperb_DP2}.3}
\put(-140,-10){\includegraphics[width=1.0\textwidth]{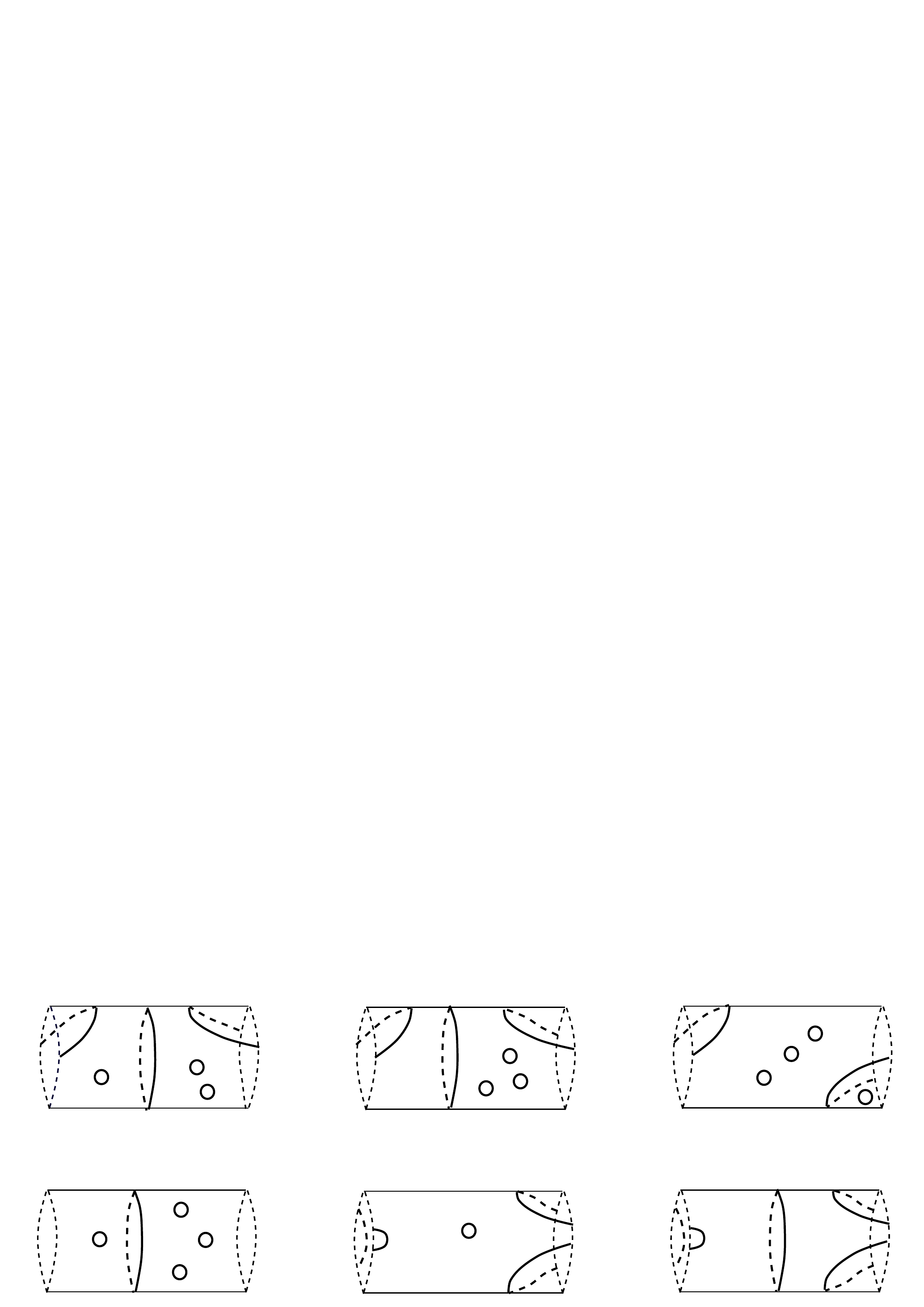}} 
\put(-96,-16){Fig. \ref{fig: quadric_hyperb_DP2}.4}
\put(30,-16){Fig. \ref{fig: quadric_hyperb_DP2}.5}
\put(158,-16){Fig. \ref{fig: quadric_hyperb_DP2}.6}
%\put(140,-20){$d)$}
\end{picture}
\end{center}
\caption{$\mathbb{R}T \simeq S^1 \times S^1$ is depicted as a cylinder such that the $S^1$'s depicted in dashed on the sides of each cylinder are identified and represent $\mathbb{R}E_T$.}
\label{fig: quadric_hyperb_DP2}
\end{figure}
%dopo disegni
\begin{proof}
For any configuration $\mathcal{P}_4$ of $4$ fixed points on $\mathbb{R}E_T$ let us construct real curves $\tilde{H}$ of bidegree $(2,2)$ passing through $\mathcal{P}$ and such that the arrangements of $\mathbb{R}\tilde{H} \cup \mathbb{R}E_T$ are respectively as depicted in Fig. \ref{fig: quadric_hyperb_DP2}.5 and  \ref{fig: quadric_hyperb_DP2}.6. Let $P_0(x,y)P_1(x,y)=0$ be a real polynomial equation defining the union of $E_T$ and a bidegree $(1,1)$ real curve $H$ such that the points of $\mathcal{P}_4$ belong to one connected component $\mathcal{E}$ of $\mathbb{R}E_T\setminus \mathbb{R}H$. 
%For any $2$ fixed distinct real points on $E_T$, there exists a bidegree $(1,1)$ real algebraic curve passing through them. 
Let $H_1$ and $H_2$ be two bidegree $(1,1)$ real curves such that $H_1 \cup H_2$ contains $\mathcal{P}_4$. Replace the left side of the equation $P_0(x,y)P_1(x,y)=0$ with $P_0(x,y)P_1(x,y)+ \varepsilon f_1(x,y)f_2(x,y)$, where $f_i(x,y)=0$ is an equation for $H_i$ and $\varepsilon$ is a sufficient small real number. Up to a choice of the sign of $\varepsilon$, one constructs a small perturbation $\tilde{H}$ of $E_T \cup H$, where $\tilde{H}$ is a bidegree $(2,2)$ non-singular real curve such that $\bigcup_{i=1}^2H_i \cap E_T = \tilde{H} \cap E_T $ and the triplet $(\mathbb{R}T, \mathbb{R}E_T, \mathbb{R}\tilde{H})$ is arranged respectively as depicted in Fig. \ref{fig: quadric_hyperb_DP2}.5 and  \ref{fig: quadric_hyperb_DP2}.6. Analogously, via small perturbation method we can construct real algebraic curves of bidegree $(3,3)$ as described in $(2)-(3)$ and end the proof.  
\end{proof}
In order to accomplish some particular constructions in the proof of Proposition \ref{prop: S_pencil_quartic_DP2}, we need the following lemma.
\begin{lem}
\label{lem: curves_sigma1_DP2}
There exist real algebraic curves $\tilde{Q}$ and $C$ respectively of degree $4$ and $3$ in $\mathbb{C}P^2$ with a unique real non-degenerate double singularity at a point $q$, such that the triplets $(\mathbb{R}P^2,\mathbb{R}\tilde{Q},\mathbb{R}C)$ realize the real scheme depicted in Fig. \ref{fig: particular_constructions_sigma1_DP2} and Fig. \ref{fig: particular_constructions_sigma1_DP2_meno_sfere}.
\end{lem}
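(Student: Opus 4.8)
The plan is to recast the statement as the construction of a reducible real trigonal curve on a Hirzebruch surface and to invoke the dessins d'enfants technique of Section~\ref{subsec: cha2_dess_enf}. First I would project $\mathbb{C}P^2$ from $q$, i.e.\ blow up $q$ to obtain $\Sigma_1$ together with the ruling $\pi_1$ induced by the pencil of lines through $q$. If $\tilde{Q}$ and $C$ both have a real non-degenerate double point at $q$, then their strict transforms are respectively a smooth bisection of bidegree $(2,2)$ and a smooth section of bidegree $(1,2)$, so their union meets a generic fiber of $\pi_1$ in three points. Hence $\tilde{Q}\cup C$ corresponds to a reducible trisection; performing the elementary transformations $\beta^p_n$ to move it onto a suitable $\Sigma_n$ and off the negative section, it becomes a real trigonal curve $y^3+b_2(x)y+b_3(x)=0$ whose cubic in $y$ splits as a linear factor (the sheet giving $C$) times a quadratic factor (the two sheets giving $\tilde{Q}$). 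The two figures of the statement then prescribe the trigonal $\mathcal{L}$-schemes to be realized.

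Next I would realize each prescribed $\mathcal{L}$-scheme by Orevkov's criterion. For every arrangement in Fig.~\ref{fig: particular_constructions_sigma1_DP2} and Fig.~\ref{fig: particular_constructions_sigma1_DP2_meno_sfere} I would read off the associated real graph $\overline{\Gamma}$ on $\mathbb{R}P^1$ following Definition~\ref{defn: completable_graph}: a $\times$-vertex for each fiber tangent to the scheme, $\circ$- and $\bullet$-vertices joined by bold edges where the scheme exhibits the local models of Fig.~\ref{fig: real graph_translate}, and dotted or solid edges according to whether the intermediate fibers meet the scheme in three or in one real points. It then suffices to exhibit, case by case, a completion of $\overline{\Gamma}$ to a real trigonal graph $\Gamma$ of the appropriate degree meeting conditions $(1)$--$(9)$ of Definition~\ref{defn: complete_graph}; by Theorem~\ref{thm: existence_trigonal} such a completion yields a real algebraic trigonal curve inducing the chosen $\mathcal{L}$-scheme. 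Reducibility into a section and a bisection is forced by choosing $\overline{\Gamma}$ so that one sheet of the trisection stays globally separated from the other two.

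Finally I would transport everything back to $\mathbb{C}P^2$: undoing the $\beta^p_n$ and blowing down the exceptional curve over $q$ sends the section to the cubic $C$ and the bisection to the quartic $\tilde{Q}$, each acquiring a single real double point at $q$ (its two local branches being the images of the two fiber points lying over $q$) and no further singularity, since the trigonal curve is elsewhere smooth. The hard part will be the combinatorics: for each prescribed real scheme one must actually produce a completable graph $\overline{\Gamma}$, i.e.\ check that the $12n$ edge-count, the colouring and the alternating-orientation constraints of Definition~\ref{defn: complete_graph} can be satisfied simultaneously with the required factorization, and one must confirm that the double points created at $q$ are non-degenerate and are the only singularities. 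The remaining ingredients, namely the birational dictionary between plane curves through $q$ and curves on $\Sigma_n$ and the final blow-down, are routine.
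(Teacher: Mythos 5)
Your overall route coincides with the paper's proof: blow up at $q$ to pass to $\Sigma_1$, identify the strict transforms of the nodal quartic and cubic as a bidegree $(2,2)$ bisection and a bidegree $(1,2)$ section, relate the union by elementary transformations to a trigonal curve (the paper works on $\Sigma_5$, prescribing four real nodes $p_1,\dots,p_4$ at which the inverse transformations $\beta^{-1}_{p_j}$ are later performed), realize the prescribed trigonal $\mathcal{L}$-schemes by exhibiting completable real graphs via Theorem \ref{thm: existence_trigonal}, and blow back down so that the two intersection points of each component with the exceptional section produce a non-degenerate double point at $q$. All of that is exactly the paper's scheme of argument.

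The genuine gap is your reducibility step. You assert that the splitting into a section and a bisection ``is forced by choosing $\overline{\Gamma}$ so that one sheet of the trisection stays globally separated from the other two.'' Theorem \ref{thm: existence_trigonal} only produces a real trigonal curve whose \emph{real part} realizes the given $\mathcal{L}$-scheme; it gives no control over the factorization of $y^3+b_2(x)y+b_3(x)$, and a real sheet that is topologically separated from the other two does not prevent the complex curve from being irreducible, since the three sheets can be permuted by monodromy around non-real branch points. So your criterion proves nothing about the components, and without reducibility you obtain a trigonal curve with the right real picture but not the required quartic-plus-cubic pair. The paper closes this gap numerically: the prescribed $\mathcal{L}$-schemes force $12$ non-degenerate double points, and the normalization of the constructed curve has $4$ real connected components; a non-singular bidegree $(3,0)$ curve on $\Sigma_5$ has genus $13$, so an irreducible curve with $12$ nodes would have normalization of genus $1$ and hence at most $2$ real components by Harnack--Klein's inequality, a contradiction. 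This forces each $\tilde{K}_i$ to split, necessarily as a bidegree $(2,0)$ curve union a bidegree $(1,0)$ curve. Some argument of this kind is indispensable, and it is the one substantive ingredient missing from your proposal; the remaining items you flag as ``combinatorics'' (the case-by-case completions, given in the paper in Fig. \ref{fig: intermediate_construction_dessin_enfant_DP2_2}, and the non-degeneracy of the nodes at $q$) are handled in the paper as you anticipate.
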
 
\begin{figure}[h!]
\begin{center}
\begin{picture}(100,183)
\put(-140,0){\includegraphics[width=1.0\textwidth]{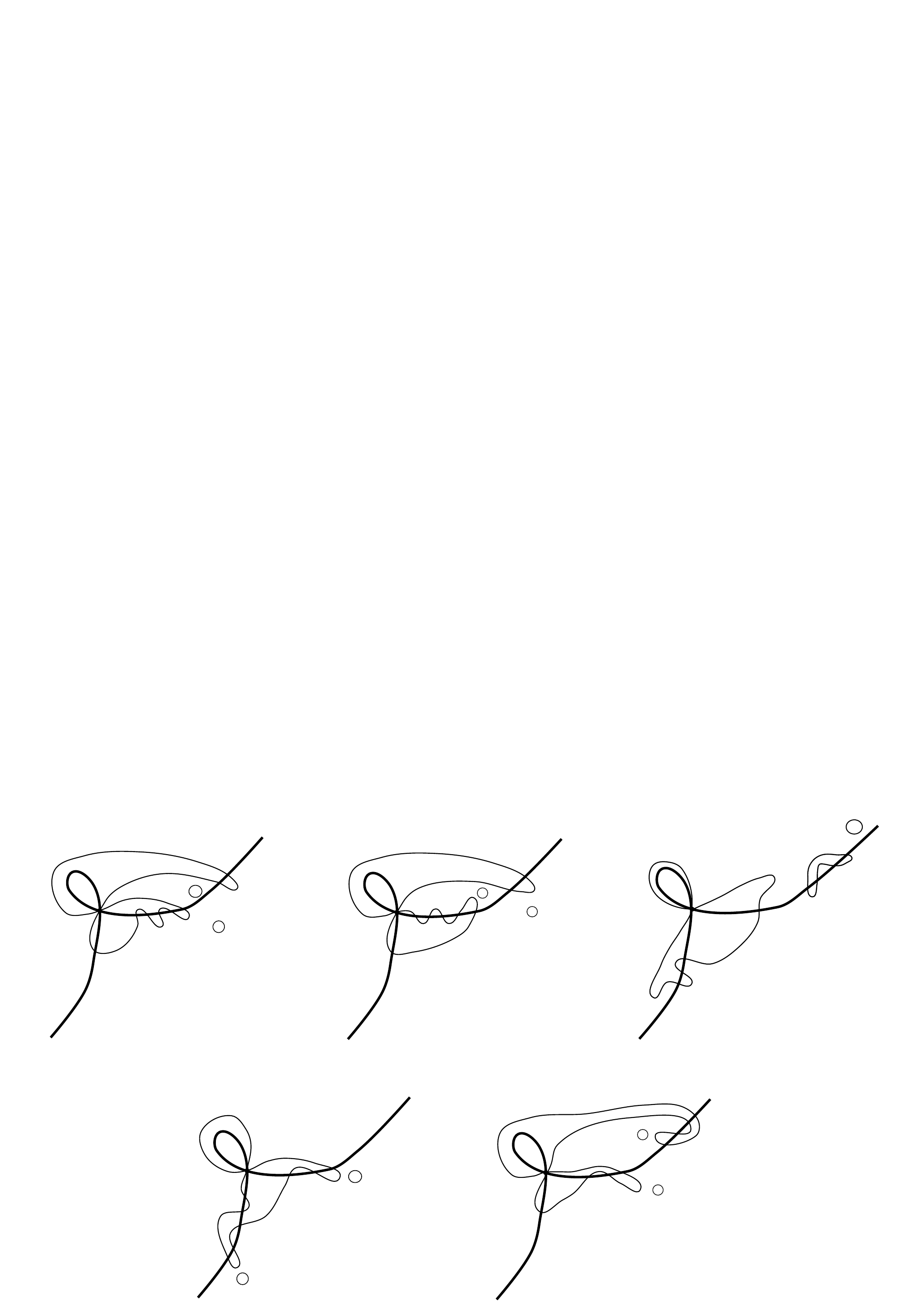}} 
\put(-91,97){Fig. \ref{fig: particular_constructions_sigma1_DP2}.1}
\put(29,97){Fig. \ref{fig: particular_constructions_sigma1_DP2}.2}
\put(146,97){Fig. \ref{fig: particular_constructions_sigma1_DP2}.3}

\put(-36,-12){Fig. \ref{fig: particular_constructions_sigma1_DP2}.4}
\put(90,-12){Fig. \ref{fig: particular_constructions_sigma1_DP2}.5}

\end{picture}
\end{center}
\caption{ }
\label{fig: particular_constructions_sigma1_DP2}
\end{figure}
%dopo disegni
\begin{figure}[h!]
\begin{center}
\begin{picture}(100,163)
\put(-140,0){\includegraphics[width=1.0\textwidth]{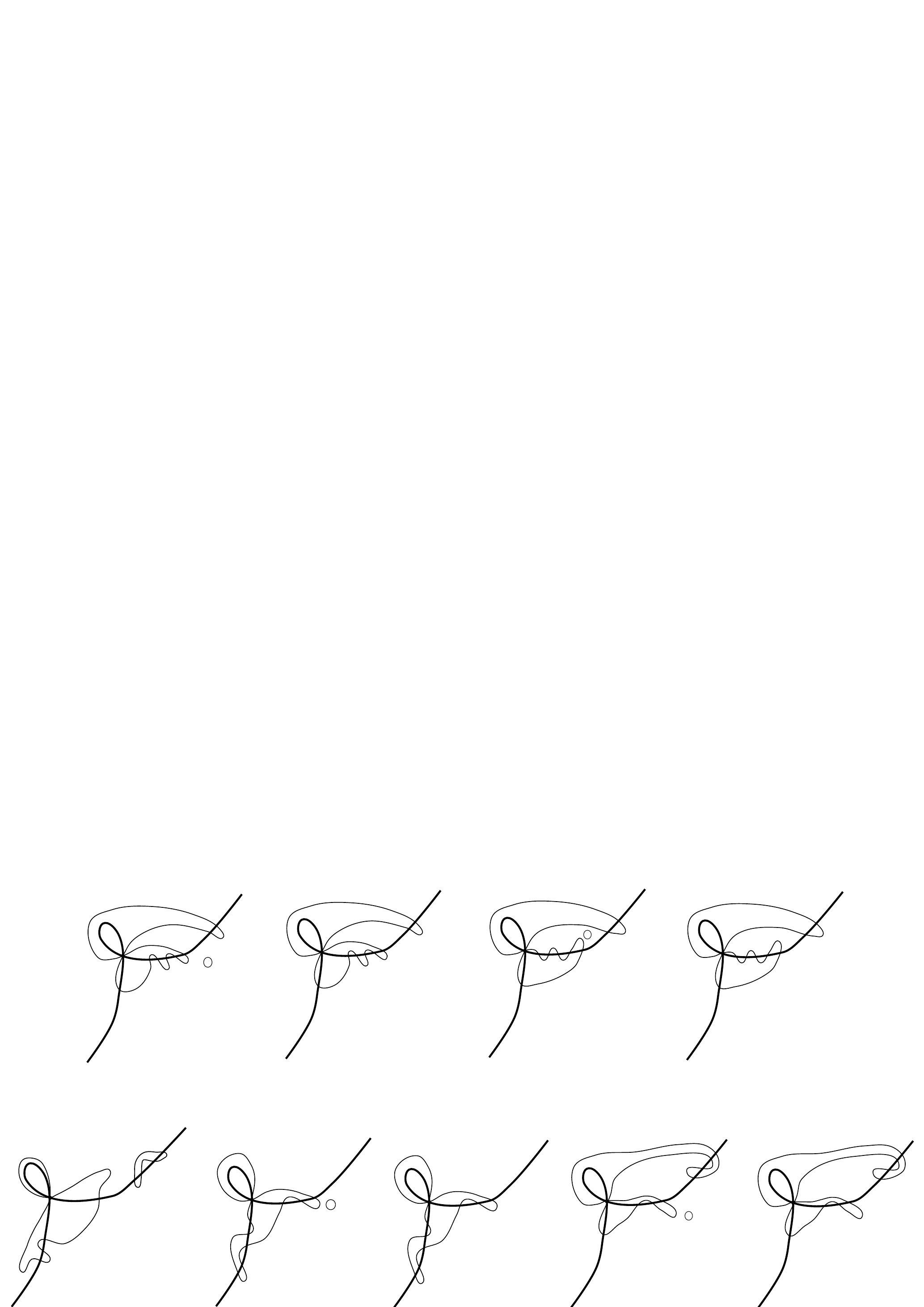}} 
\end{picture}
\end{center}
\caption{ }
\label{fig: particular_constructions_sigma1_DP2_meno_sfere}
\end{figure}
%dopo disegni
\begin{figure}[h!]
\begin{center}
\begin{picture}(100,133)
\put(-140,0){\includegraphics[width=1.0\textwidth]{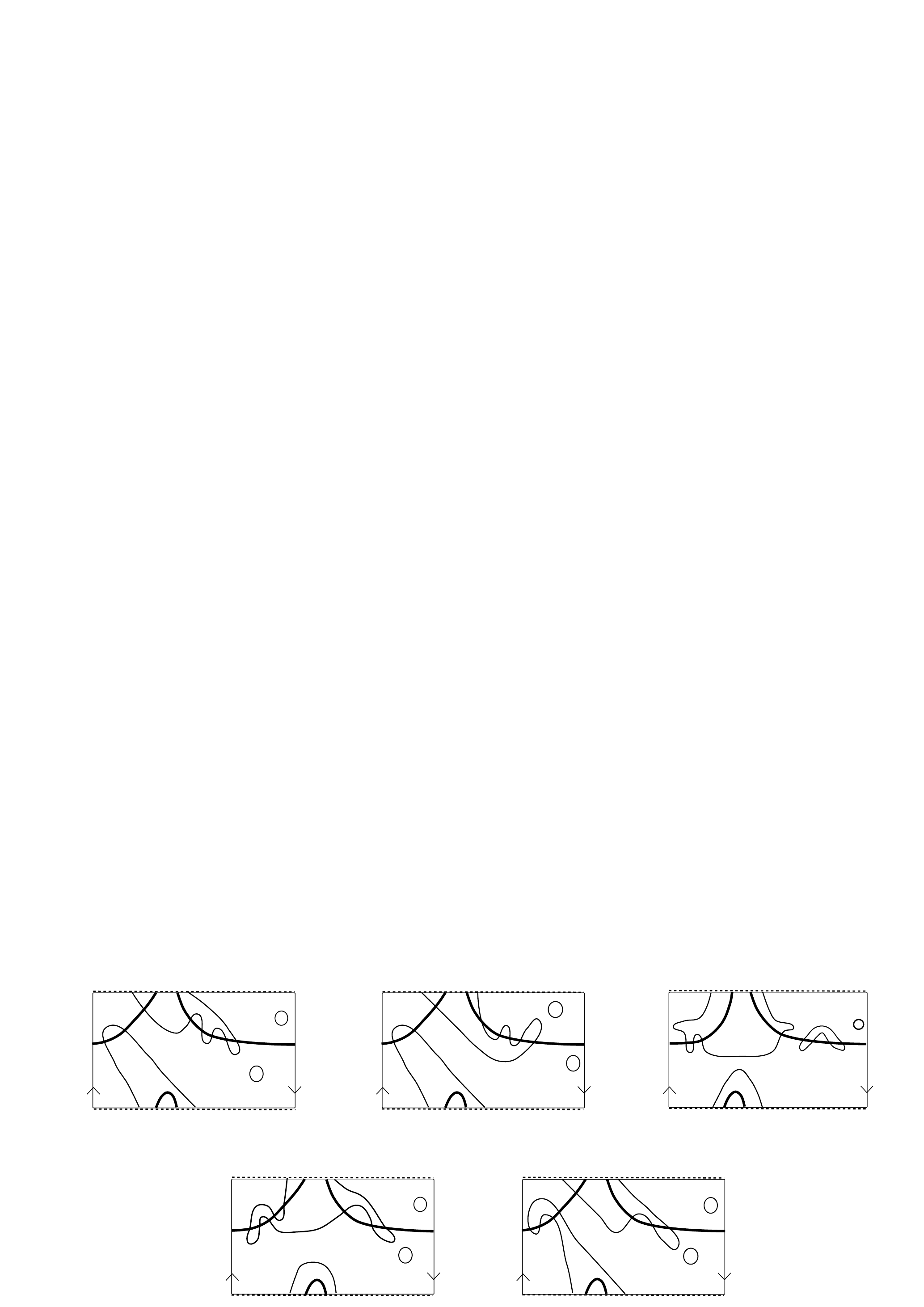}} 
\put(-81,62){Fig. \ref{fig: particular_constructions_sigma1_DP2_2}.1}
\put(37,62){Fig. \ref{fig: particular_constructions_sigma1_DP2_2}.2}
\put(150,62){Fig. \ref{fig: particular_constructions_sigma1_DP2_2}.3}

\put(-26,-12){Fig. \ref{fig: particular_constructions_sigma1_DP2_2}.4}
\put(91,-12){Fig. \ref{fig: particular_constructions_sigma1_DP2_2}.5}
\end{picture}
\end{center}
\caption{ }
\label{fig: particular_constructions_sigma1_DP2_2}
\end{figure}
%dopo disegni
\begin{proof}
Let us realize the real schemes in Fig. \ref{fig: particular_constructions_sigma1_DP2}. Then, thanks to an analogue construction method, one also realizes all real schemes in Fig. \ref{fig: particular_constructions_sigma1_DP2_meno_sfere}. In fact, remark that one gets the real schemes in Fig. \ref{fig: particular_constructions_sigma1_DP2_meno_sfere}, just deleting the empty ovals of the real quartic schemes in Fig. \ref{fig: particular_constructions_sigma1_DP2}.\\
The blow-up of $\mathbb{C}P^2$ at the point $q$ is the first Hirzebruch surface $\Sigma_1$ (Section \ref{subsec: cha2_Hirzebruch_surf}). Then, in order to prove the statement, it is sufficient to construct reducible real algebraic curves $K_i$, with $i=1,2,3,4,5$, of bidegree $(3,4)$ in $\Sigma_1$ as union of two non-singular real algebraic curves $\overline{Q}$ and $A$ respectively of bidegree $(2,2)$ and $(1,2)$ in $\Sigma_1$ such that the pairs $(\mathbb{R}\Sigma_1,\mathbb{R}\overline{Q} \cup\mathbb{R}A)$ realize the $\mathcal{L}$-schemes in Fig. \ref{fig: particular_constructions_sigma1_DP2_2}.\\
\begin{figure}[!h]
\begin{picture}(100,177)
\put(33,102){Fig. \ref{fig: intermediate_construction_dessin_enfant_DP2}.1}
\put(18,142){$p_1$}
\put(18,164){$p_3$}
\put(33,156){$p_4$}
\put(49,151){$p_2$}
\put(172,102){Fig. \ref{fig: intermediate_construction_dessin_enfant_DP2}.2}
\put(148,152){$p_1$}
\put(160,164){$p_3$}
\put(175,156){$p_4$}
\put(196,152){$p_2$}
\put(305,102){Fig. \ref{fig: intermediate_construction_dessin_enfant_DP2}.3}
\put(292,142){$p_1$}
\put(293,164){$p_3$}
\put(307,152){$p_4$}
\put(324,150){$p_2$}
\put(103,-2){Fig. \ref{fig: intermediate_construction_dessin_enfant_DP2}.4}
\put(103,44){$p_1$}
\put(112,56){$p_3$}
\put(122,67){$p_4$}
\put(133,54){$p_2$}
\put(245,-2){Fig. \ref{fig: intermediate_construction_dessin_enfant_DP2}.5}
\put(227,55){$p_1$}
\put(237,66){$p_3$}
\put(257,67){$p_4$}
\put(270,42){$p_2$}
\put(-15,7){\includegraphics[width=1.1\textwidth]{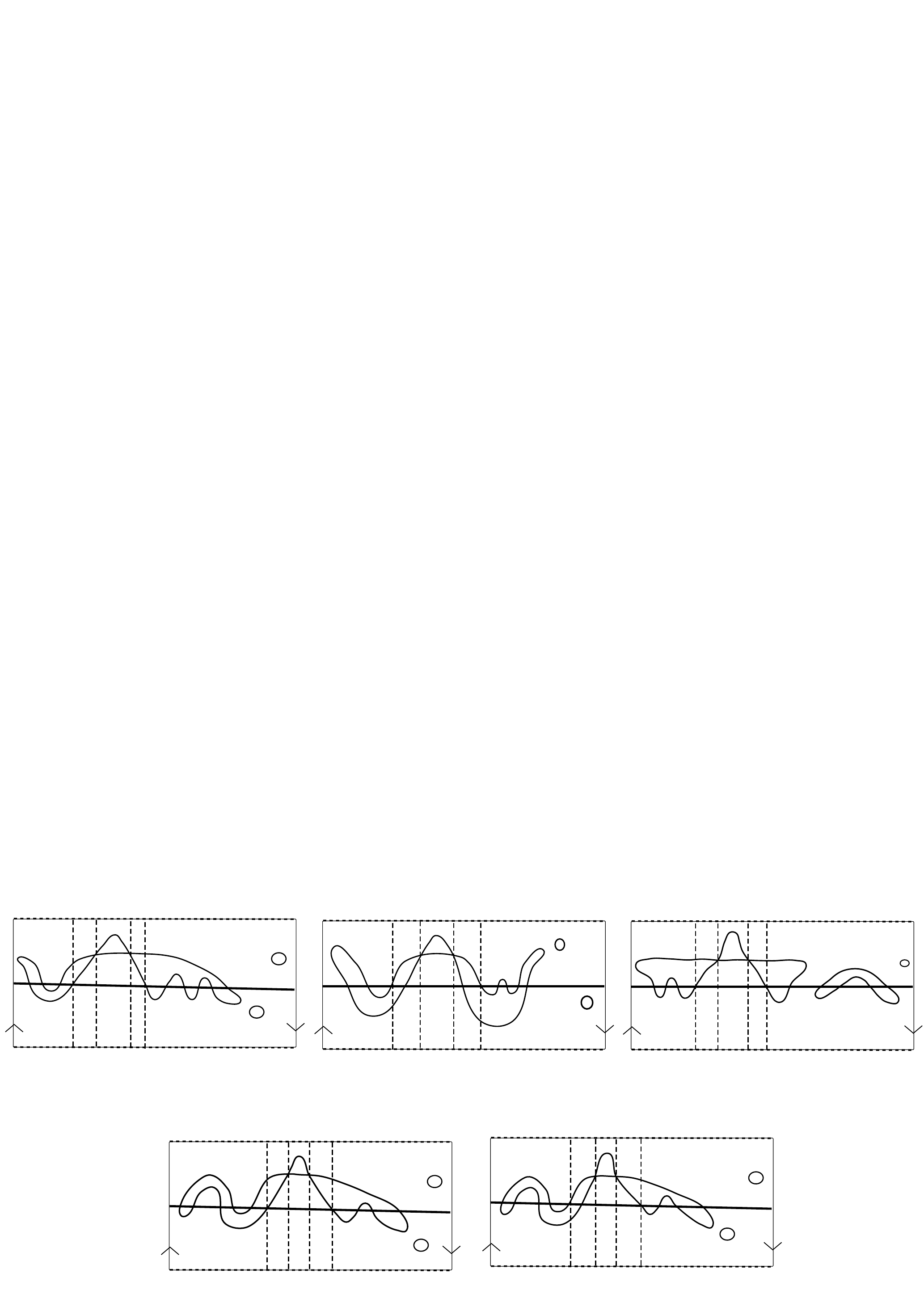}} 
\end{picture}
\caption{Intermediate constructions.}
\label{fig: intermediate_construction_dessin_enfant_DP2}
\end{figure}
%dopo disegni
\begin{figure}[!h]
\begin{picture}(100,90)
\put(10,-2){Fig. \ref{fig: intermediate_construction_dessin_enfant_DP2_2}.1}
\put(120,-2){Fig. \ref{fig: intermediate_construction_dessin_enfant_DP2_2}.2}
\put(225,-2){Fig. \ref{fig: intermediate_construction_dessin_enfant_DP2_2}.3}
\put(327,-2){Fig. \ref{fig: intermediate_construction_dessin_enfant_DP2_2}.4}
\put(-15,7){\includegraphics[width=1.1\textwidth]{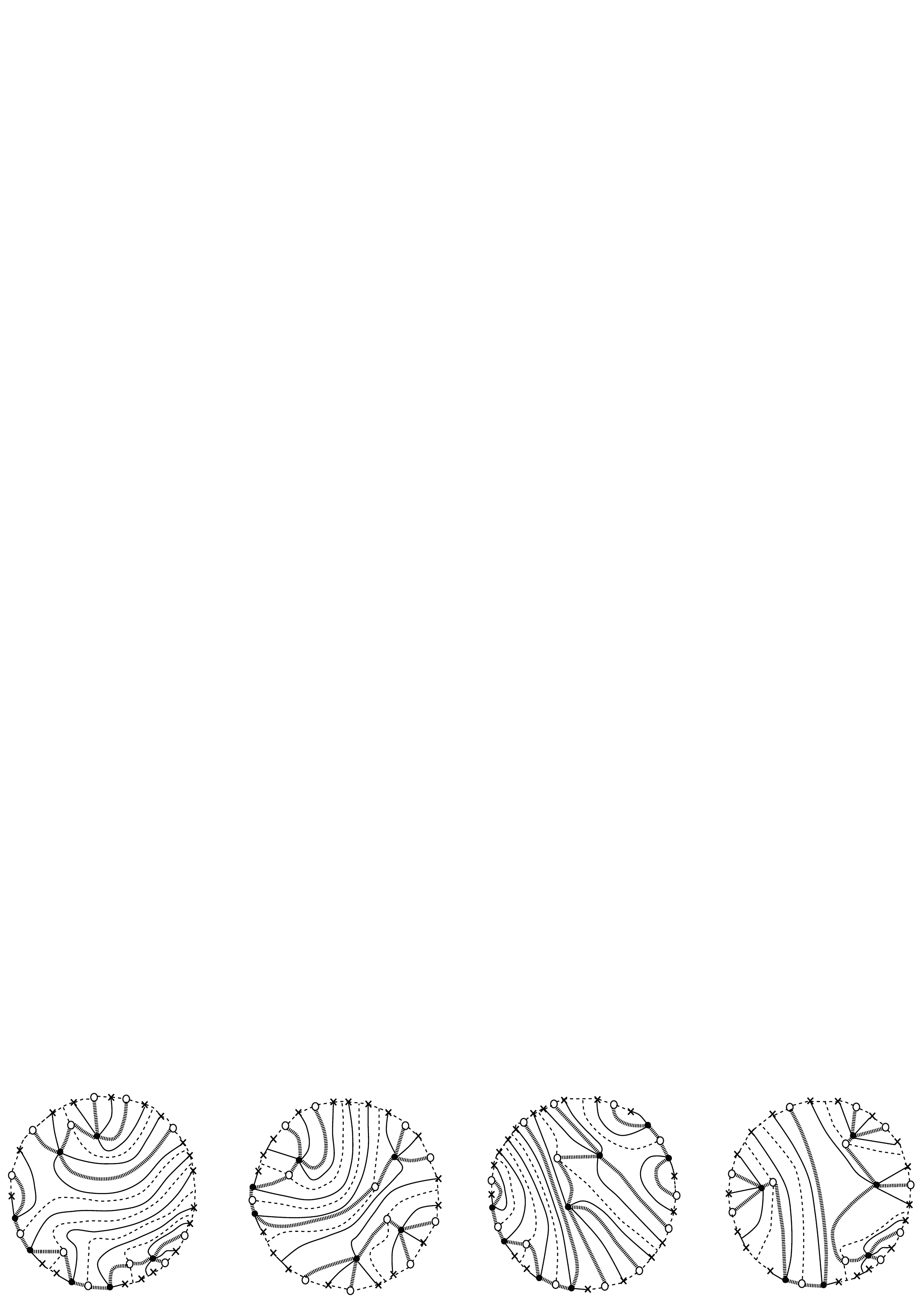}} 
\end{picture}
\caption{Intermediate constructions.}
\label{fig: intermediate_construction_dessin_enfant_DP2_2}
\end{figure}
Let us denote with $\tilde{\eta}_i$ 
%, $\tilde{\eta}_2$, $\tilde{\eta}_3$, $\tilde{\eta}_4$ and $\tilde{\eta}_5$ 
the trigonal $\mathcal{L}$-schemes in $\mathbb{R}\Sigma_5$ respectively depicted in Fig. \ref{fig: intermediate_construction_dessin_enfant_DP2}, for $i=1,2,3,4,5$. Due to Theorem \ref{thm: existence_trigonal}, if the real graph associated to each $\tilde{\eta}_i$ is completable in degree $5$ to a real trigonal graph, then there exists a real algebraic trigonal curve $\tilde{K}_i$ realizing $\tilde{\eta}_i$, for all $i \in \{1,2,3,4,5\}$. Therefore, the completion $\Gamma_i$ of the real graph, associated to each $\tilde{\eta}_i$, respectively depicted in Fig. \ref{fig: intermediate_construction_dessin_enfant_DP2_2}.1 - \ref{fig: intermediate_construction_dessin_enfant_DP2_2}.3 and Fig. \ref{fig: intermediate_construction_dessin_enfant_DP2_2}.4 for $i=1$, $2$, $3$ and $i=4,5$, proves the existence of such $\tilde{K}_i$'s. \\
Each $\tilde{K}_i$ is reducible because it has $12$ non-degenerate double points and its normalization has $4$ real connected components. 
In particular, every $\tilde{K}_i$ has to be the union of a real curve of bidegree $(2,0)$ and a real curve of bidegree $(1,0)$.\\
Let us consider the birational transformation $$\Xi:= \beta^{-1}_{p_{1}}\beta^{-1}_{p_{2}}\beta^{-1}_{p_{3}}\beta^{-1}_{p_{4}}: (\Sigma_5,\tilde{K}_i)\dashrightarrow (\Sigma_1, K_i),$$ 
defined as in Section \ref{subsec: cha2_Hirzebruch_surf}; where the points $p_j$'s, with $j=1,2,3,4$, are the real double points of $\tilde{K}_i$ respectively as depicted in Fig. \ref{fig: intermediate_construction_dessin_enfant_DP2}, and the dashed real fibers are those intersecting the $p_j$'s. The image via $\Xi$ of the reducible real trigonal curve $\tilde{K}_i$ is a reducible curve $K_i$ of bidegree $(3,4)$ which is the union of two non-singular real curves $\overline{Q}$ and $A$, respectively of bidegree $(2,2)$ and $(1,2)$ in $\Sigma_1$. Moreover, the $\mathcal{L}$-scheme of each $K_i$ is as respectively depicted in Fig. \ref{fig: particular_constructions_sigma1_DP2_2}.
\end{proof}
We end this section giving the following intermediate constructions.
\begin{prop}
\label{prop: S_pencil_quartic_DP2}
\begin{enumerate}[label=(\roman*)]
\item[]
\item For every arrangement of ovals in $\bigsqcup_{i=1}^sS^2$ respectively depicted in Fig. \ref{fig: S_particular3_erwan_DP2}, Fig. \ref{fig: S_particular1_DP2} and Fig. \ref{fig: S_particular2_DP2}, for every $s \leq j \leq 3$, there exists a $j$-sphere real $1$-nodal del Pezzo pair $(S, E_S)$, and real algebraic 
curve $C_S \subset S$ of bi-class $(d,k)$ such that $\mathbb{R}C_S \cup \mathbb{R}E_S$ is arranged in $\mathbb{R}S$ as depicted 
\begin{enumerate}[label=(\arabic*)]
\item in Fig. \ref{fig: S_particular3_erwan_DP2}, for $d=3$ and $k=2$;
\item in Fig. \ref{fig: S_particular1_DP2} and in Fig. \ref{fig: S_particular2_DP2}, for $d=3$ and $k=3$.
\end{enumerate}
\item Let $d,$ $k_1,k_2$ and $h_1,h_2,h_3,h_4$  be non-negative integers such that
\begin{itemize}
\item $d\geq 5$;
\item $k_1 + k_2=d-4$;
\item $\sum\limits_{i=1}^{4}h_i=d-1$;
\item $h_3 \equiv 1 \mod  2$, if $h_{3} \not = 0$;
\item $h_4 \equiv 1 \mod  2$, if $h_{4} \not = 0$.
\end{itemize}
Then, there exist a $3$-sphere real $1$-nodal del Pezzo pair $(S, E_S)$ and a bi-class $(d,d)$ real curve $C_S \subset S$ such that $\mathbb{R}C_S \cup \mathbb{R}E_S$ is arranged in $\mathbb{R}S$ as depicted in Fig. \ref{fig: non-symm_3spheres_DP2}.
\end{enumerate}
\end{prop}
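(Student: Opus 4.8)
The plan is to realize every pair $(S,E_S)$ as the minimal resolution of a double cover $\phi\colon S\to\mathbb{C}P^2$ branched along a real quartic $\tilde{Q}$ with a single real non-degenerate node $q$, so that $E_S$ is the $(-2)$-curve over $q$; equivalently $S$ is the double cover of $\Sigma_1=\mathrm{Bl}_q\mathbb{C}P^2$ branched along the smooth proper transform $\overline{Q}$ of $\tilde{Q}$ (of bidegree $(2,2)$). As in Proposition \ref{prop: existence_DP2_family}, the number $j$ of spheres of $\mathbb{R}S$ is governed by the real scheme of $\mathbb{R}\tilde{Q}$ together with the orientability of $\Pi_{+}$, so by choosing $\tilde{Q}$ appropriately I can prescribe $\mathbb{R}S\cong\bigsqcup_{i=1}^{j}S^2$ for any $s\le j\le 3$. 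In this model $c_1(S)=\phi^{*}(\mathrm{line})$, one has $c_1(S)^2=2$, $E_S^2=-2$ and $c_1(S)\cdot E_S=0$, and the pullback of the ruling of $\Sigma_1$ gives a genus-$0$ fibration with fibre class $f_S=c_1(S)-E_S$ (the lift of a line through $q$), meeting $E_S$ in two points. Lifting a real plane (or Hirzebruch) curve $C$ produces $C_S=\phi^{-1}(C)$, and the triplet $(\mathbb{R}S,\mathbb{R}E_S,\mathbb{R}C_S)$ is read off from $(\mathbb{R}P^2,\mathbb{R}\tilde{Q},\mathbb{R}C)$ by the double-cover rule recalled in Section \ref{subsec: def_DP2}.

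For part (i) I would feed in Lemma \ref{lem: curves_sigma1_DP2}: its nodal quartic $\tilde{Q}$ furnishes the pair $(S,E_S)$, and its curves realize the required arrangements after lifting. In case (1) I lift the nodal cubic $C$, whose proper transform in $\Sigma_1$ has bidegree $(1,2)$; hence $C_S=\phi^{-1}(C)$ realises $3c_1(S)-2E_S$, i.e.\ has bi-class $(3,2)$, is irreducible of genus $3$, meets $E_S$ in four points, and its real locus reproduces Fig.\ \ref{fig: S_particular3_erwan_DP2}. In case (2) the target bi-class is $(3,3)=3f_S$, which can only be the lift of three real lines through $q$ (a triple point being the sole way an effective cubic class meets $E_S$ in six points); I choose this triple in the pencil of lines through $q$ as in the pencil arguments of Proposition \ref{prop: constr_quadric_DP2}, so that the three lifted fibres together with $\mathbb{R}E_S$ realize Figs.\ \ref{fig: S_particular1_DP2} and \ref{fig: S_particular2_DP2}. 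Adding empty real ovals to $\tilde{Q}$ supplies the surplus spheres needed for every $s\le j\le 3$.

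For part (ii) the prescribed bi-class $(d,d)=d\,f_S$ is again a pullback class: its complete linear system is $\phi^{*}|dF_1|$, whose members are lifts of $d$ lines through $q$, i.e.\ unions of $d$ fibres of the fibration. I would therefore fix a nodal quartic $\tilde{Q}$ with $\mathbb{R}S\cong\bigsqcup_{i=1}^3 S^2$ and build $C_S$ as the lift of $d$ carefully placed real lines through $q$. On each sphere the fibres appear as nested circles and $\mathbb{R}E_S$ as a bisection, so prescribing the angular positions of the $d$ lines relative to the real branches of $\tilde{Q}$ distributes the fibres among the three spheres and fixes their nesting depths $h_1,\dots,h_4$ (and the auxiliary $k_1,k_2$). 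I would organize this choice inductively on $d$, paralleling the construction of the bidegree $(k,k)$ curves on the ellipsoid in Proposition \ref{prop: constr_quadric_DP2_2}: inserting one more line adds one nested fibre to the required nest $N(h_i,\cdot)$ without disturbing the others, and reading the lift produces Fig.\ \ref{fig: non-symm_3spheres_DP2}. The congruences $h_3\equiv h_4\equiv 1 \pmod 2$ are forced at the lifting stage, since a real line through $q$ crosses $\partial\Pi_{+}=\mathbb{R}\tilde{Q}$ an even number of times, pinning down the parity of the ovals contributed to the corresponding sphere.

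The main obstacle is the combinatorial bookkeeping in part (ii): one must verify that every admissible tuple $(d;k_1,k_2;h_1,\dots,h_4)$ subject to $k_1+k_2=d-4$, $\sum_i h_i=d-1$ and the two parity conditions is actually attained, that the $d$ lines can always be placed so the lifted fibres land on the correct spheres with the correct nesting and correct interaction with $\mathbb{R}E_S$, and that the inductive step never creates an unwanted injective pair. Checking case by case that the lifted arrangement matches Fig.\ \ref{fig: non-symm_3spheres_DP2}, and that the stated congruences are precisely those imposed by the double cover, is the delicate point; by contrast the existence of $(S,E_S)$, the bi-class computations, and the lifting of the Lemma's curves in part (i) are routine given the results already established.
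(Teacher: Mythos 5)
Your proposal follows essentially the same route as the paper's proof: both realize $(S,E_S)$ as the (resolved) double cover of $\mathbb{C}P^2$ branched along a real quartic with one real non-degenerate node $q$, obtain the bi-class $(3,2)$ curves by lifting the nodal cubics of Lemma \ref{lem: curves_sigma1_DP2}, and obtain the bi-class $(3,3)$ and $(d,d)$ curves by lifting unions of three, respectively $d$, lines of the pencil through $q$ arranged against $\mathbb{R}\tilde{Q}$ as in Fig. \ref{fig: quartic_nodal_pencil_DP2_bis}. The additional intersection-theoretic computations and the inductive placement of lines you include are consistent with, and slightly more explicit than, the paper's argument, which simply reads the arrangements off the figures.
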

\begin{figure}[h!]
\begin{center}
\begin{picture}(100,92)
\put(-105,5){\includegraphics[width=0.8\textwidth]{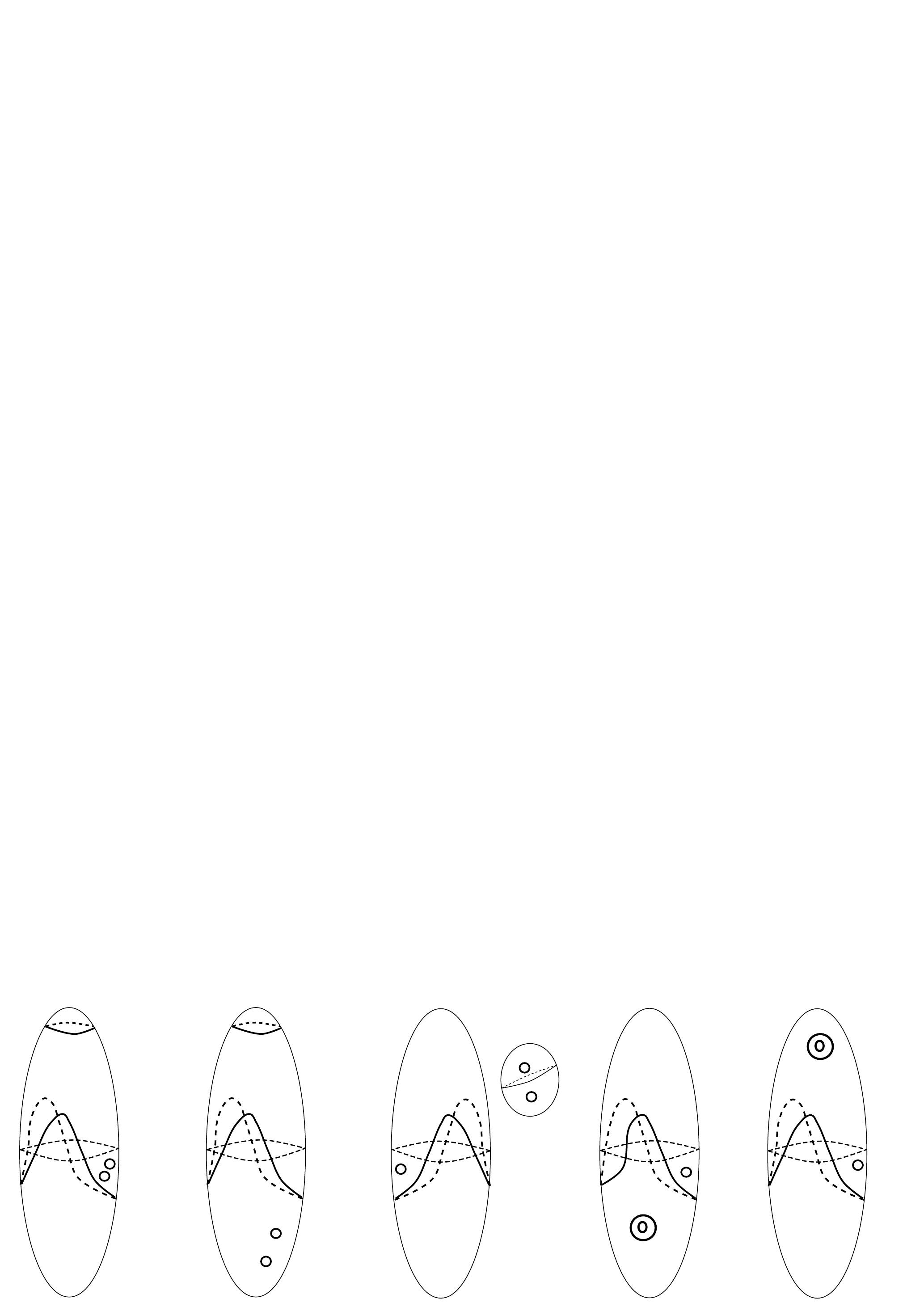}} 
\put(-102,-7){Fig. \ref{fig: S_particular3_erwan_DP2}.1:} 
\put(-92,-17){$s=1$}%s \in \{0,1,2\}$}

\put(-40,-7){Fig. \ref{fig: S_particular3_erwan_DP2}.2: }
\put(-30,-17){$s=1$}%s \in \{0,1,2\}$}

\put(24,-7){Fig. \ref{fig: S_particular3_erwan_DP2}.3: }
\put(34,-17){$s=2$}%s \in \{0,1\}$}

\put(85,-7){Fig. \ref{fig: S_particular3_erwan_DP2}.4: }
\put(95,-17){$s=1$}%s \in \{0,1,2\}$}

\put(142,-7){Fig. \ref{fig: S_particular3_erwan_DP2}.5:} 
\put(152,-17){$s=1$}%s \in \{0,1,2\}$}
\end{picture}
\end{center}
\caption{$\mathbb{R}E_S\simeq S^1$ in dashed.}
\label{fig: S_particular3_erwan_DP2}
\end{figure}
%dopo disegni
\begin{figure}[h!]
\begin{center}
\begin{picture}(100,180)
\put(-105,-25){\includegraphics[width=0.8\textwidth]{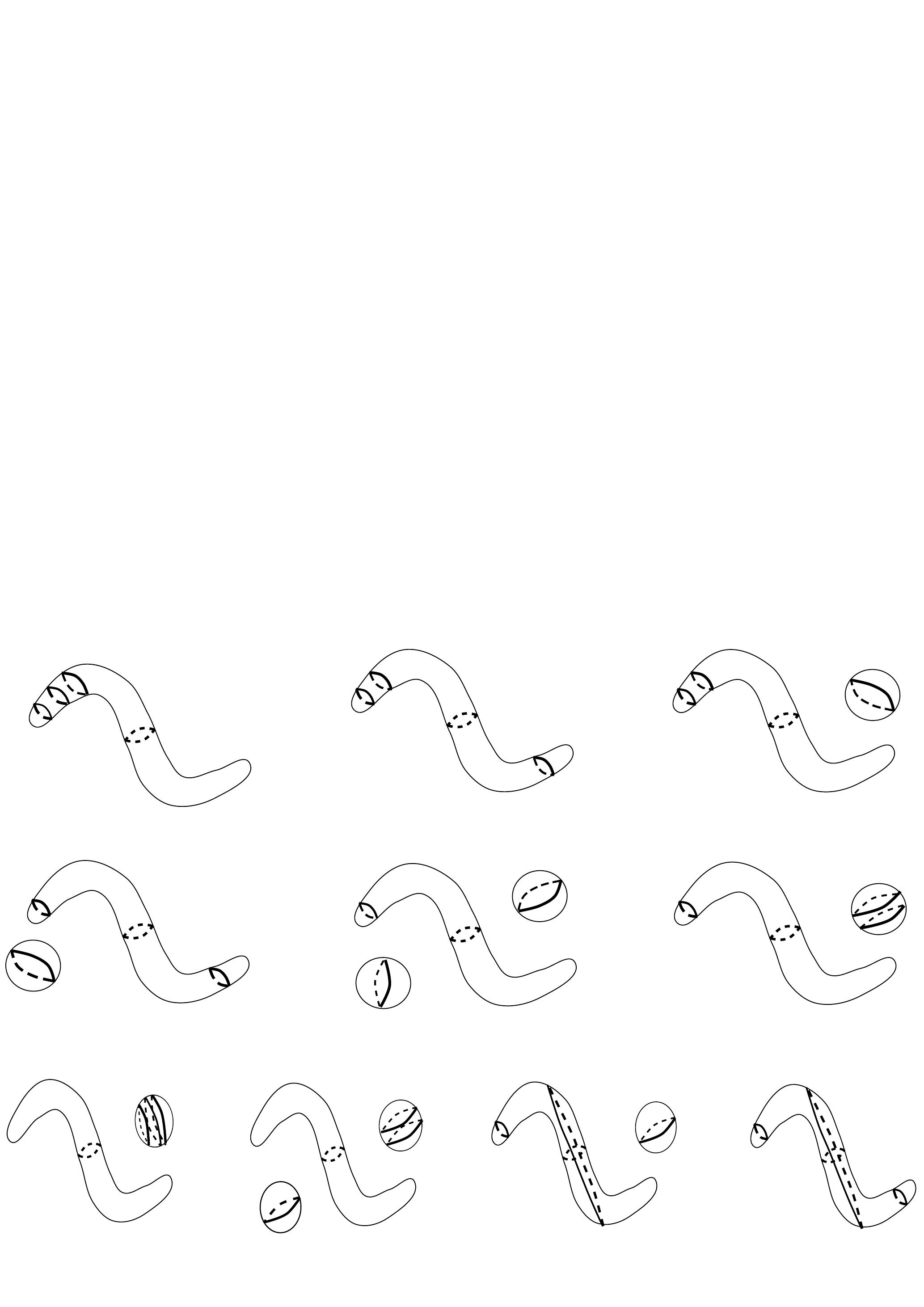}} 
\end{picture}
\end{center}
\caption{$\mathbb{R}E_S\simeq S^1$ in dashed.}
\label{fig: S_particular1_DP2}
\end{figure}
%dopo disegni
\begin{figure}[h!]
\begin{center}
\begin{picture}(100,70)
\put(-145,-10){\includegraphics[width=1.0\textwidth]{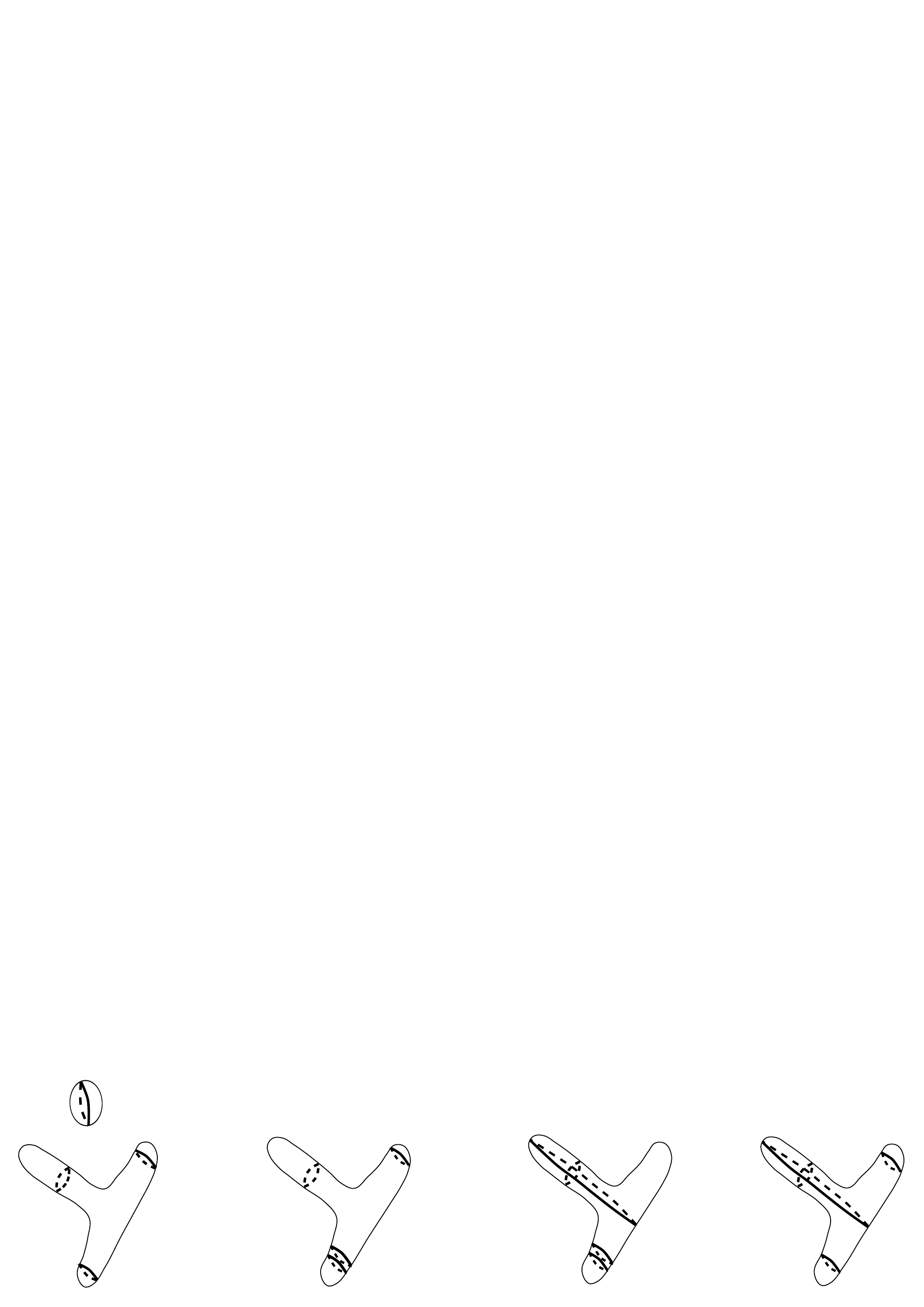}} 
%\put(-30,-10){$a)$}
%\put(107,-10){$b)$}
\end{picture}
\end{center}
\caption{$\mathbb{R}E_S\simeq S^1$ in dashed.}
\label{fig: S_particular2_DP2}
\end{figure}
%dopo disegni
\begin{figure}[h!]
\begin{center}
\begin{picture}(100,160)
\put(-80,5){\includegraphics[width=0.7\textwidth]{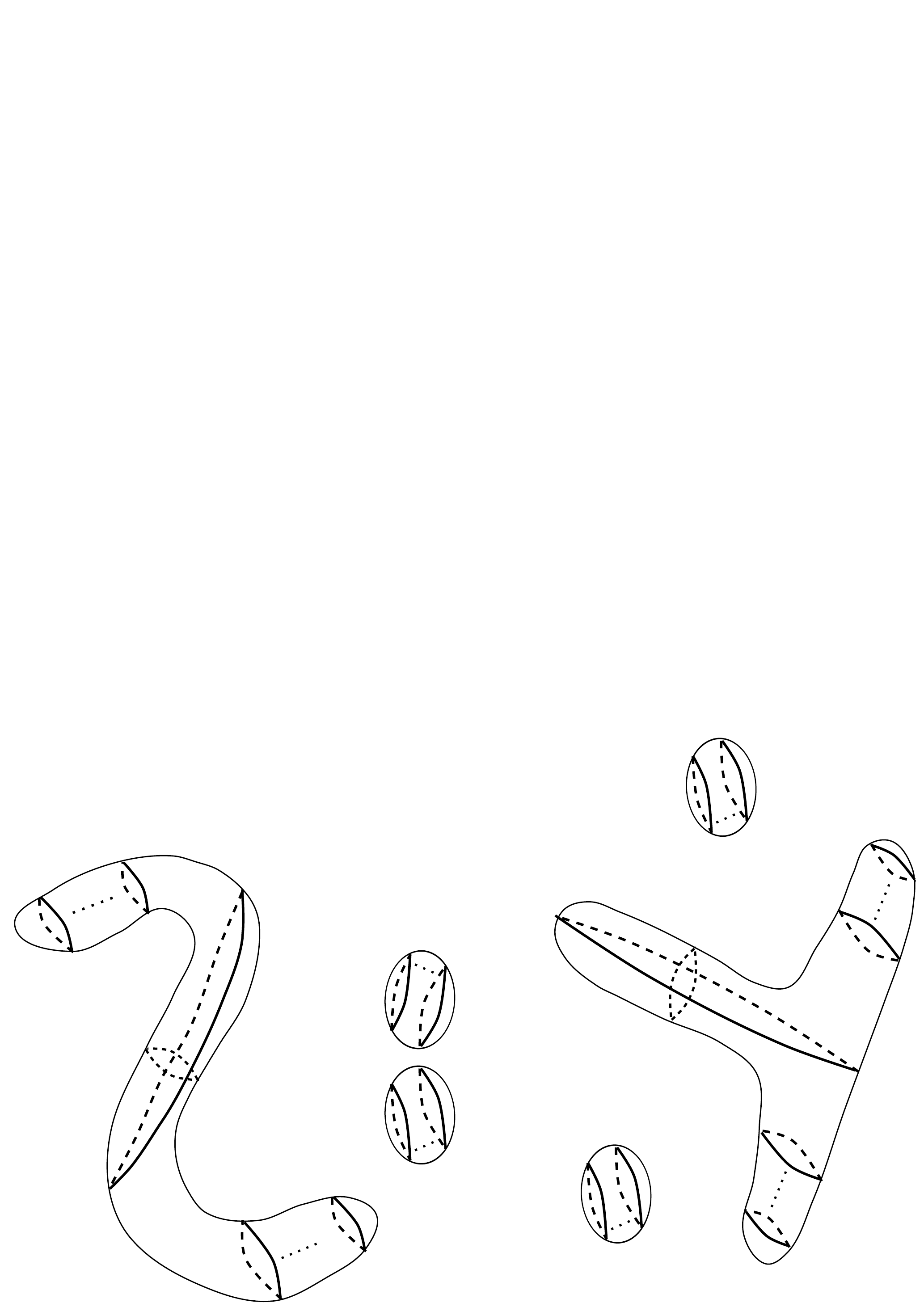}} 
\put(-35,-15){Fig. \ref{fig: non-symm_3spheres_DP2}.1}

\put(-76,132){\rotatebox{120}{$\textcolor{black}{\Big \}}$}}
\put(-71,142){$\textcolor{black}{h_1}$}
\put(4,4){\rotatebox{-61}{$\textcolor{black}{\Big \} h_2}$}}
\put(30,41){\rotatebox{-90}{$\textcolor{black}{\Big \}}$}}
\put(35,24){$\textcolor{black}{h_3}$}
\put(29,104){\rotatebox{90}{$\textcolor{black}{\Big \}}$}}
\put(34,114){$\textcolor{black}{h_4}$}

\put(108,-15){Fig. \ref{fig: non-symm_3spheres_DP2}.2}
\put(120,133){\rotatebox{-61}{$\textcolor{black}{\Big \} }$}}
\put(128,120){$\textcolor{black}{ h_3}$}
\put(92,16){\rotatebox{-61}{$\textcolor{black}{\Big \}}$}}
\put(100,3){$\textcolor{black}{ h_4}$}
\put(150,22){\rotatebox{-31}{$\textcolor{black}{\Big \} h_2}$}}
\put(177,107){\rotatebox{-20}{$\textcolor{black}{\Big \} h_1}$}}

\end{picture}
\end{center}
\caption{$\mathbb{R}E_S\simeq S^1$ in dashed. The pair $(\mathbb{R}S, \mathbb{R}C_S)$ realizes $ \langle   N(h_1,0)   \rangle   \sqcup   N(h_2,0) : N(h_3,0) : N(h_4,0)$.}
\label{fig: non-symm_3spheres_DP2}
\end{figure}
%dopo disegni
\begin{proof}
Let $\tilde{Q}$ be a real quartic with a real non-degenerate and non-isolated double point at $q$ as only singularity and 
let $C$ be a real curve of degree $d$ with one $k$-fold singularity at $q$. To a pair $(\tilde{Q},C)$ correspond a  
real $1$-nodal del Pezzo pair $(S,E_S)$ and a real algebraic curve $C_S\subset S$ of bi-class $(d,k)$ 
with topology described by the topological type realized by the triplet $(\mathbb{R}P^2, \mathbb{R}\tilde{Q}, \mathbb{R}C)$.\\
Proof of $(i)$: First of all, Lemma \ref{lem: curves_sigma1_DP2} immediately implies the existence of $j$-sphere real $1$-nodal degree $2$ del Pezzo pairs $(S,E_S)$ and real curves $C_S \subset S$ of bi-class $(3,2)$ 
such that the triplet $(\mathbb{R}S, \mathbb{R}E_S, \mathbb{R}C_S)$ is arranged as depicted in Fig. \ref{fig: S_particular3_erwan_DP2}, where we depict only the non-empty spheres of $\mathbb{R}S$.\\
\begin{figure}[h!]
\begin{center}
\begin{picture}(100,90)
%\put(-58,89){\textcolor{black}{$\mathbb{R}L_{q_1}$}}
%\put(25,45){$\mathbb{R}Q$}
\put(-135,5){\includegraphics[width=1.0\textwidth]{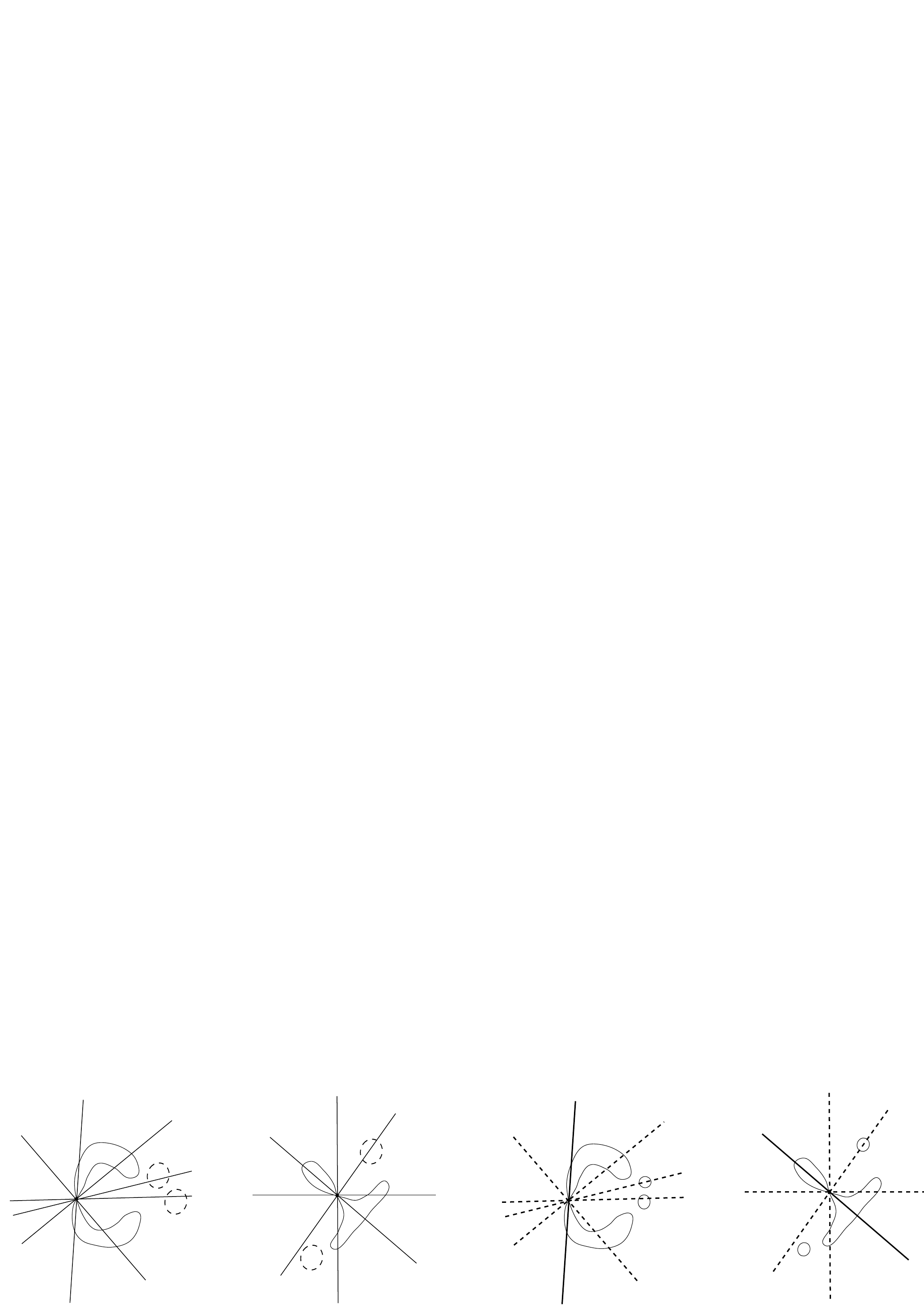}}
%\put(-98,9){$(\mathbb{R}P^{2},\mathbb{R}L_{q_1}, \mathbb{R}Q)$} 
%\put(167,95){\textcolor{black}{$\mathbb{R}L_{q_1}$}}
%\put(82,67){$\mathbb{R}Q$}
\put(-103,-12){Fig. \ref{fig: quartic_nodal_pencil_DP2_bis}.1: $(\mathbb{R}P^{2},\mathbb{R}L_{q_1}, \mathbb{R}Q)$}
\put(132,-12){Fig. \ref{fig: quartic_nodal_pencil_DP2_bis}.2}
\put(80,77){$\mathbb{R}L$}
\put(165,77){$\mathbb{R}L$}
\end{picture}
\end{center}
\caption{$1$: The set of points of $\mathbb{R}Q$ homeomorphic to $\bigsqcup_{i=1}^{j-1} S^1$ is depicted in dashed. $2$: The lines in dashed represent the $d-1$ lines of the pencil $L_{q_1}$.}
\label{fig: quartic_nodal_pencil_DP2_bis}
\end{figure}
%dopo disegni
Let us realize the arrangements in $(2)$. It is easy to see that there exist a real plane quartic $\tilde{Q}_1$ with a real non-degenerate double point $q_1$ as only singularity and real part homeomorphic to $\bigsqcup_{i=1}^{j-1} S^1 \sqcup \bigvee_{j=1}^2 S^1$ and a pencil of lines $L_{q_1} \subset \mathbb{C}P^2$, centered at $q_1$, such that $\mathbb{R}\tilde{Q}_1\cup \mathbb{R} L_{q_1}$ is arranged respectively as depicted in Fig. \ref{fig: quartic_nodal_pencil_DP2_bis}.1. The union of any three distinct lines of $L_{q_1}$ is a cubic $C_1$ with a triple point at $q_1$. From $\tilde{Q}_1$ and $C_1$ one can construct $j$-sphere real $1$-nodal degree $2$ del Pezzo pairs $(S,E_S)$ and curves $C_S \subset S$ of bi-class $(3,3)$ 
%$3c_1(S)-3E_S$ 
such that the triplet $(\mathbb{R}S, \mathbb{R}E_S, \mathbb{R}C_S)$ is arranged as depicted in Fig. \ref{fig: S_particular1_DP2} (resp. Fig. \ref{fig: S_particular2_DP2}) where we depict only the non-empty spheres of $\mathbb{R}S$.\\
Proof of $(ii)$: Assume that $\tilde{Q}_1$ has real part homeomorphic to $\bigsqcup_{i=1}^2 S^{1} \sqcup \bigvee_{j=1}^2 S^1$. The union of a line $L\subset L_{q_1}$ (in thick black) and other $d-1$ distinct lines (in dashed) of $L_{q_1}$ respectively as depicted in Fig. \ref{fig: quartic_nodal_pencil_DP2_bis}.2 is a degree $d$ real curve $C_d$ with a $d$-fold singularity at $q_1$. From $\tilde{Q}_1$ and $C_d$ one can construct $3$-sphere real $1$-nodal degree $2$ del Pezzo pairs $(S,E_S)$ and real curves $C_S \subset S$ of bi-class $(d,d)$ 
%$dc_1(S)-dE_S$ 
such that the triplet $(\mathbb{R}S, \mathbb{R}E_S, \mathbb{R}C_S)$ is arranged respectively as depicted in Fig. \ref{fig: non-symm_3spheres_DP2}.1 and \ref{fig: non-symm_3spheres_DP2}.2.
\end{proof}
\subsection{Final constructions}
\label{subsec: final_constr_DP2} 
We end the proof of Theorem \ref{thm: principal_DP2} and Proposition \ref{prop: principal_DP2}. Moreover, we prove Proposition \ref{prop: non-symm_real_scheme}. The proofs combine the results and constructions of Theorem \ref{thm: weak_patch_DP2} and Propositions \ref{prop: constr_quadric_DP2}, \ref{prop: constr_quadric_DP2_2}, \ref{prop: constr_quadric_DP2_3}, \ref{prop: S_pencil_quartic_DP2}.
\begin{prop}
\label{prop: star_label_DP2}
Every real scheme $\mathcal{S}$ in $\mathcal{S}_{DP2}(4,3)$ labeled with $\dagger$ in Table \ref{tabella=realized3}, is realizable in $X^{4}$ and in class $3$. Moreover, every $\mathcal{S}$ labeled with $\dagger^{*}$ is realizable in $X^{k}$ and in class $3$, with $1 \leq k \leq 3$.
\end{prop}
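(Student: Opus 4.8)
The plan is to realize every scheme labelled $\dagger$ or $\dagger^{*}$ by the degeneration-and-patchworking engine already set up, rather than by lifting plane curves as was done for the $\circ$-schemes in Proposition \ref{prop: circ_label_DP2}. Concretely, by Theorem \ref{thm: weak_patch_DP2} it suffices, for each such scheme $\mathcal{S}$, to exhibit a reducible central fibre $X_0' = S \cup T$ together with real curves $C_S \subset S$ and $C_T \subset T$ satisfying conditions $(1)$--$(3)$ of the topological construction, such that the triplet $(\mathbb{R}S \cup \mathbb{R}T, \mathbb{R}E, \mathbb{R}C_S \cup \mathbb{R}C_T)$, glued along $\mathbb{R}E$, produces $\mathcal{S}$. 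The patchworking theorem then yields a genuine non-singular real algebraic curve of class $3$ on a nearby smooth fibre $X^{k}$ realizing $\mathcal{S}$, so the whole proposition reduces to assembling the correct building blocks for each entry of Table \ref{tabella=realized3}.

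The building blocks are furnished by the intermediate constructions. Since we work in class $d=3$, the curve $C_S$ will have bi-class $(3,2)$ or $(3,3)$, and I would take it from Proposition \ref{prop: S_pencil_quartic_DP2}$(i)$: the pairs $(S, E_S, C_S)$ arranged as in Figures \ref{fig: S_particular3_erwan_DP2}, \ref{fig: S_particular1_DP2} and \ref{fig: S_particular2_DP2} give precisely the admissible configurations on the $j$-sphere side, with $E \cap C_S$ a prescribed set of $4$ or $6$ points on $\mathbb{R}E_S \simeq S^{1}$. Correspondingly $C_T$ will be a bidegree $(2,2)$ or $(3,3)$ curve, chosen from Proposition \ref{prop: constr_quadric_DP2} when $T$ is the quadric ellipsoid and from Proposition \ref{prop: constr_quadric_DP2_3} when $T$ is the quadric hyperboloid; both propositions allow us to prescribe the intersection $E \cap C_T$ with the same number of real and complex-conjugate points as on the $S$-side. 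Matching the two configurations of $2k$ points on $E$ gives the required equality $E \cap C_S = E \cap C_T$.

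The choice of $T$ and of $j$ is dictated by the target number of spheres. For a scheme labelled $\dagger$, which must live on $X^{4}$, I would take $j = 3$, let $T$ be the quadric ellipsoid and $\mathbb{R}E \simeq S^{1}$, so that gluing produces $\bigsqcup^{4} S^{2}$; the gluing pattern (which half $W_i \subset \mathbb{R}S$ is attached to which half $H_i \subset \mathbb{R}T$ along $\mathbb{R}E$, as in Figure \ref{fig: 1smoothing_family_DP2}) is then selected to reproduce the prescribed arrangement of ovals. For a scheme labelled $\dagger^{*}$, which must be realized on $X^{k}$ for each $1 \leq k \leq 3$, I would instead take $T$ to be the quadric hyperboloid (or, with $\mathbb{R}E = \emptyset$, a quadric with empty real part) and let $j$ run over $1 \leq j \leq 3$; since the constructions of Proposition \ref{prop: S_pencil_quartic_DP2}$(i)$ are stated uniformly for all $s \leq j \leq 3$, the same combinatorial triplet yields the scheme on every $X^{k}$ in the required range.

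The genuinely laborious part, and the main obstacle, is the case-by-case bookkeeping: for each of the many $\dagger$ and $\dagger^{*}$ entries of Table \ref{tabella=realized3}, one must identify a single admissible pair $(C_S, C_T)$ among the figures above whose glued triplet is exactly that real scheme, and check that the reality type and cardinality of $E \cap C_S$ and $E \cap C_T$ agree so that Theorem \ref{thm: weak_patch_DP2} applies. No new geometric input is needed beyond the intermediate propositions; the difficulty is purely the combinatorial matching, and the proof will consist of tabulating, for each entry, the chosen building blocks and the gluing pattern that realizes it.
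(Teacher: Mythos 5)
Your proposal is essentially the paper's own proof: the paper likewise reduces everything to the patchworking statement, Theorem \ref{thm: weak_patch_DP2} (through Corollary \ref{cor: esistenza_family_DP2}), takes $C_S$ of bi-class $(3,h)$, $h=2,3$, from Proposition \ref{prop: S_pencil_quartic_DP2}, takes $C_T$ of bidegree $(h,h)$ from Proposition \ref{prop: constr_quadric_DP2} (ellipsoid) or Proposition \ref{prop: constr_quadric_DP2_3} (hyperboloid) with matching intersection with $E$, and finishes with exactly the case-by-case tabulation you describe, organized into four groups of schemes.

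One bookkeeping caveat where you diverge from the paper, and where your plan as stated would run into trouble: you key the choice of $T$ to the label, using the ellipsoid with $j=3$ only for the $\dagger$-schemes and the hyperboloid for all $\dagger^{*}$-schemes. Since the ellipsoid degeneration yields $k=j+1$ and the hyperboloid yields $k=j$, the paper instead uses the ellipsoid with $j+1=k$ to realize the $\dagger^{*}$-schemes for $k=2,3$ as well --- in particular all those with non-empty parts on two or three spheres --- and reserves the hyperboloid gluing ($j=k$) for schemes supported on a single sphere, where it is the only way to reach $k=1$. The hyperboloid-side building blocks supplied by Proposition \ref{prop: S_pencil_quartic_DP2} (the arrangements of Fig. \ref{fig: S_particular3_erwan_DP2} and Fig. \ref{fig: S_particular_repetition} used in cases $(2)$ and $(3)$ of the paper's proof) only cover such one-sphere arrangements, so forcing every $\dagger^{*}$-scheme through the hyperboloid would require intermediate constructions that the paper does not provide. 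In your tabulation you should therefore allow both degenerations for each scheme, choosing $j$ and $T$ per target $k$, rather than per label.
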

\begin{proof}
The realization of real schemes in class $3$ is done as follows.\\
%The realization of each real scheme labeled with $\dagger$ and with $\dagger^{*}$ in Table \ref{tabella=realized3} is done as follows.\\
\textbf{General construction}: Pick any $j$-sphere real $1$-nodal degree $2$ del Pezzo pair $(S,E_S)$ and any real algebraic curve $C_S \subset S$ of bi-class $(3,h)$  
%$3c_1(X)-2E_S$ 
constructed as in proof of Proposition \ref{prop: S_pencil_quartic_DP2}, with $h=2,3$. 
Due to Corollary \ref{cor: esistenza_family_DP2}, there exists a real algebraic surface $X_0'$ as union of $S$ and $T$, intersecting along a curve $E$, and there exists a real algebraic curve $C_0$ as union of $C_S$ and $C_T$, intersecting along $2h$ points of $E$; where $T$ is a quadric ellipsoid, respectively a quadric hyperboloid 
%the quadric ellipsoid 
and $C_T \subset T$ is a real algebraic curve of bidegree $(h,h)$ constructed  as in proof of Proposition \ref{prop: constr_quadric_DP2}, respectively Proposition \ref{prop: constr_quadric_DP2_3}. 
Then, thanks to Theorem \ref{thm: weak_patch_DP2},  one realizes a real scheme in $X^{j+1}$, respectively in  $X^{j}$ and in class $3$.
\par Applying the above general construction in $4$ different ways, one realizes in $X^{k}$ and in class $3$ all the real schemes listed below. Let us divides such real schemes in $4$ groups:
\begin{enumerate}[label=(\arabic*)]
\item for $1 \leq k \leq 4$,
\begin{equation*}
\begin{aligned}[left]
&1   \sqcup   \langle 1 \rangle   \sqcup   \langle 1 \rangle :3:0:0,& \\
&2     \sqcup   \langle 2 \rangle :3:0:0, &
\end{aligned}
\begin{aligned}[left]
& 1   \sqcup   \langle 3 \rangle: \langle \langle 1 \rangle \rangle:0:0,&\\
&  1   \sqcup   \langle 2 \rangle:1   \sqcup   \langle 2 \rangle:0:0,&
\end{aligned}
\begin{aligned}[left]
&5:3:0:0,&  1     \sqcup   \langle 2 \rangle :2:2:0,\\
&  4:4:0:0
\end{aligned}
\end{equation*}
\item for $ 1 \leq k \leq 3$
\begin{equation*}
\begin{aligned}[left]
&1    \sqcup   \langle1\rangle    \sqcup  \langle2   \sqcup  \langle1\rangle\rangle  :0:0:0, &   \\
&1    \sqcup   \langle1\rangle    \sqcup    \langle1\rangle    \sqcup    \langle2\rangle:0:0:0, &  \\
&4    \sqcup   \langle1\rangle    \sqcup    \langle1\rangle  :0:0:0,&\\
\end{aligned}
\begin{aligned}[left]
&1    \sqcup   \langle2\rangle    \sqcup    \langle3\rangle  :0:0:0,&\\
&3    \sqcup    \langle4\rangle  :0:0:0,&\\
&8:0:0:0&
\end{aligned}
\end{equation*}

\item for $ 1 \leq k \leq 3$

\begin{equation*}
\begin{aligned}[left]
&2 \sqcup \langle 5 \rangle:0:0,&\\
&3 \sqcup  \langle  1 \rangle \sqcup  \langle 2 \rangle:0:0,&\\
&1  \sqcup  \langle 3 \rangle  \sqcup  \langle  \langle 1 \rangle \rangle:0:0,&\\
\end{aligned}
\begin{aligned}[left]
&1 \sqcup  \langle  1 \rangle \sqcup  \langle 4 \rangle:0:0, &\\
&1 \sqcup  \langle  \langle 4 \rangle \rangle:0:0, &\\
&2  \sqcup  \langle 2 \rangle  \sqcup  \langle  \langle 1 \rangle \rangle:0 :0, &
\end{aligned}
\begin{aligned}[left]
&2 \sqcup  \langle  1 \rangle \sqcup  \langle 3 \rangle:0:0,&\\
&1 \sqcup   \langle 1 \rangle \sqcup   \langle  \langle 3 \rangle \rangle:0:0,&\\
&3  \sqcup  \langle 1 \rangle  \sqcup  \langle  \langle 1 \rangle \rangle:0:0&
\end{aligned}
\end{equation*}
\item for $ 1 \leq k \leq 4$, all the remaining real schemes labeled with $\dagger$ and/or $\dagger^{*}$ in Table \ref{tabella=realized3}.
%labeled with $\dagger$ and with $\dagger^{*}$ in Table \ref{tabella=realized3}, for $ 1 \leq k \leq 4$.
\end{enumerate}
Now let us apply the general construction to each case as follows.
\begin{enumerate}[label=(\arabic*)]
\item Take $j+1=k$ and $h=2$. Let $T$ be a quadric ellipsoid and $C_T \subset T$ a real curve of bidegree $(2,2)$ constructed  as in proof of Proposition \ref{prop: constr_quadric_DP2}.
\item Take $j=k$ and $h=2$. Let $T$ be a quadric hyperboloid and $C_T \subset T$  a real curve of bidegree $(2,2)$ constructed  as in proof of Proposition \ref{prop: constr_quadric_DP2_3}. Moreover, take $C_{S}\subset S$ such that the triplet $(\mathbb{R}S, \mathbb{R}E_S, \mathbb{R}C_S)$ is as depicted respectively in Fig. \ref{fig: S_particular3_erwan_DP2}.1, \ref{fig: S_particular3_erwan_DP2}.2, \ref{fig: S_particular3_erwan_DP2}.4 and \ref{fig: S_particular3_erwan_DP2}.5, where we depict only the non-empty spheres of $\mathbb{R}S$.
\item Take $j=k$, $h=3$: Let $T$ be a quadric hyperboloid and $C_T \subset T$ constructed  as in proof of Proposition \ref{prop: constr_quadric_DP2_3}. Moreover, take $C_{S}\subset S$ such that the triplet $(\mathbb{R}S, \mathbb{R}E_S, \mathbb{R}C_S)$ is as depicted in Fig. \ref{fig: S_particular_repetition}.
\item Take $j+1=k$, $h=3$. Let $T$ be a quadric ellipsoid and $C_T \subset T$ constructed as in proof of Proposition \ref{prop: constr_quadric_DP2}.
\end{enumerate}
\begin{figure}[h!]
\begin{center}
\begin{picture}(100,50)
\put(-132,-10){\includegraphics[width=1.0\textwidth]{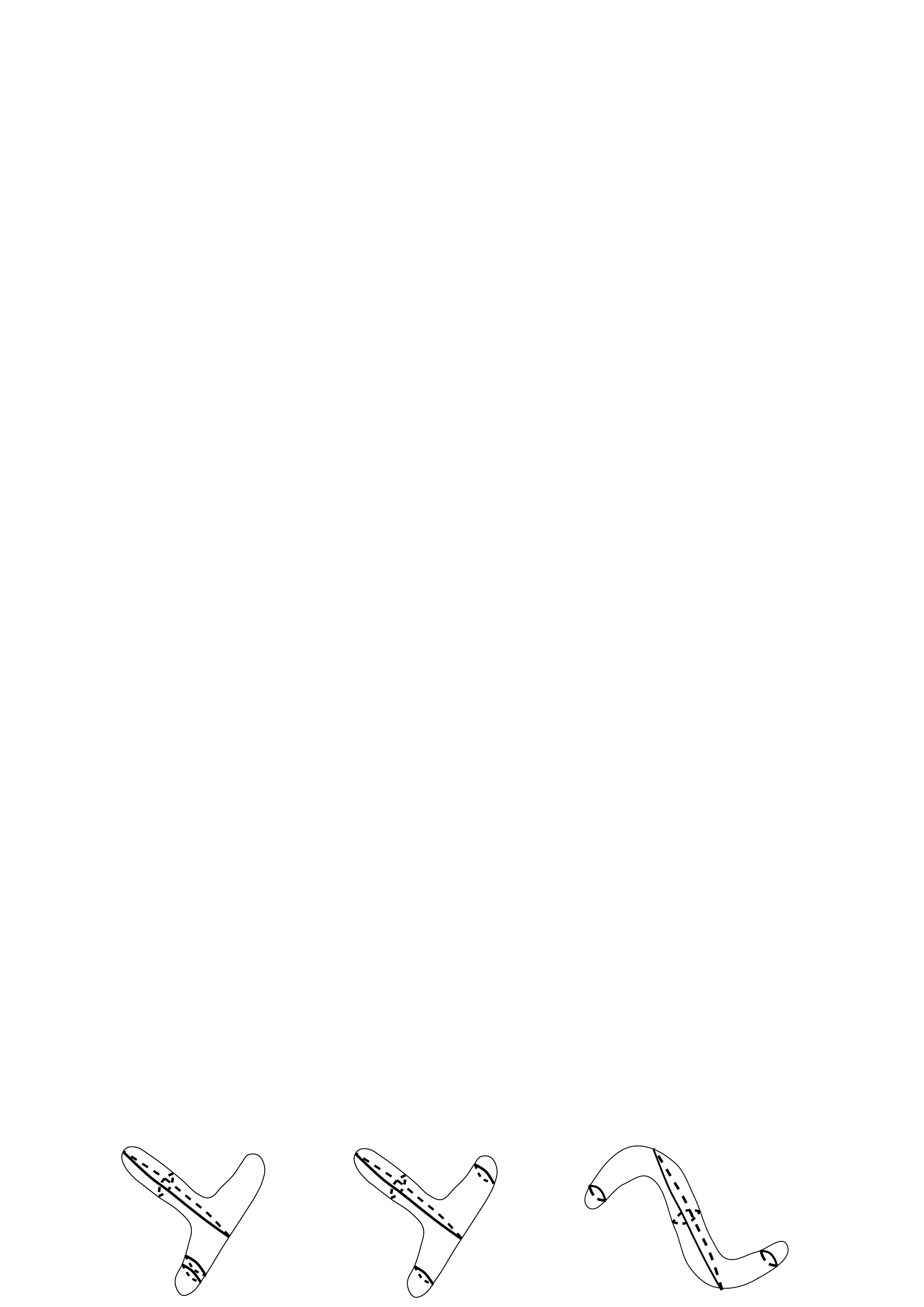}} 
%\put(-30,-10){$a)$}
%\put(107,-10){$b)$}
\end{picture}
\end{center}
\caption{$\mathbb{R}E_S\simeq S^1$ in dashed.}
\label{fig: S_particular_repetition}
\end{figure}
%dopo disegni
\end{proof}
\begin{exa}
\label{exa: <1><2>:3_DP2}
We follow the steps of the proof of Proposition \ref{prop: star_label_DP2} to realize:
\begin{enumerate}[label=(\arabic*)]
\item the real scheme $\langle1\rangle    \sqcup   \langle2\rangle:3:0:0$ in $X^{4}$ and in class $3$.  
\begin{itemize}
\item Let $C_S\subset S$ be the real algebraic curve such that $\mathbb{R}E_S \cup \mathbb{R}C_S$ is arranged in $\mathbb{R}S$ as pictured in Fig. \ref{fig: <1><2>:3_ex_DP2}.1. 
\item Let $T$ be the quadric ellipsoid and let $C_T\subset T$ be the real algebraic curve of bidegree $(3,3)$ such that $\mathbb{R}E_T \cup \mathbb{R}C_T$ is arranged in $\mathbb{R}T$ as depicted in Fig. \ref{fig: <1><2>:3_ex_DP2}.2.
\begin{figure}[h!]
\begin{center}
\begin{picture}(100,110)
\put(-105,0){\includegraphics[width=0.9\textwidth]{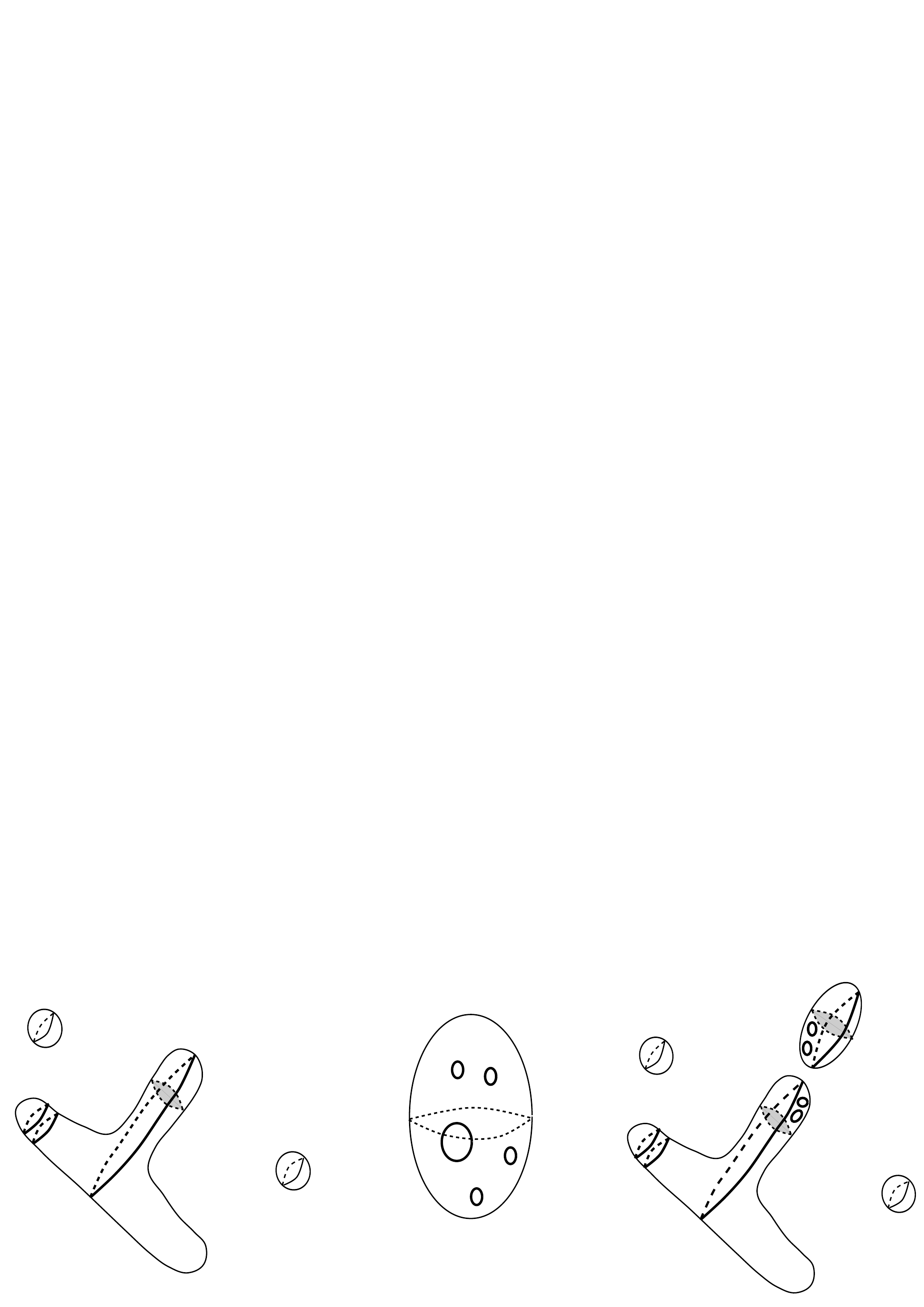}} 
\put(-74,-12){Fig. \ref{fig: <1><2>:3_ex_DP2}.1}
\put(52,-12){Fig. \ref{fig: <1><2>:3_ex_DP2}.2}
\put(140,-12){Fig. \ref{fig: <1><2>:3_ex_DP2}.3}
\end{picture}
\end{center}
\caption{$\mathbb{R}E\simeq S^1$ in thick dashed.}
\label{fig: <1><2>:3_ex_DP2}
\end{figure}
%dopo disegni
\item Thanks to Theorem \ref{thm: weak_patch_DP2} $\langle1\rangle    \sqcup   \langle2\rangle:3:0:0$ is realizable in $X^{4}$ and in class $3$ (Fig. \ref{fig: <1><2>:3_ex_DP2}.3);
\end{itemize}
\item the real scheme  $ b   \sqcup   \langle a+1 \rangle    \sqcup   \langle \langle 1 \rangle \rangle :0:0$ in $X^{k}$ and in class $3$, where $a,b$ denotes number of ovals and $a+b=3$ and $k=3,2,1$.
\begin{itemize}
\item Let $C_S\subset S$ be the real algebraic curve such that $\mathbb{R}E_S \cup \mathbb{R}C_S$ is arranged in $\mathbb{R}S$ as pictured in Fig. \ref{fig: meno_sfere_a_b:3_ex_DP2}.1, where we depict only the non-empty spheres of $\mathbb{R}S$. 
\item Let $T$ be the quadric hyperboloid and $C_T\subset T$ be the real algebraic curve of bidegree $(3,3)$ such that $\mathbb{R}E_T \cup \mathbb{R}C_T$ is arranged in $\mathbb{R}T$ as depicted in Fig. \ref{fig: meno_sfere_a_b:3_ex_DP2}.2.
% \ref{fig: quadric_hyperb_DP2}.1, resp.  \ref{fig: quadric_hyperb_DP2}.2. ????ma la figura \ref{fig: meno_sfere_a_b:3_ex_DP2}.2??????
\item Thanks to Theorem \ref{thm: weak_patch_DP2}, for any values of $a,b$, the real scheme $ b   \sqcup   \langle a+1 \rangle    \sqcup   \langle \langle 1 \rangle \rangle :0:0:0$ is realizable in $X^{k}$ and in class $3$. See Fig. \ref{fig: meno_sfere_a_b:3_ex_DP2}.3, where we depict only the non-empty spheres of $\mathbb{R}X$.
\end{itemize}
\end{enumerate}
\begin{figure}[h!]
\begin{center}
\begin{picture}(100,100)
\put(-105,0){\includegraphics[width=0.9\textwidth]{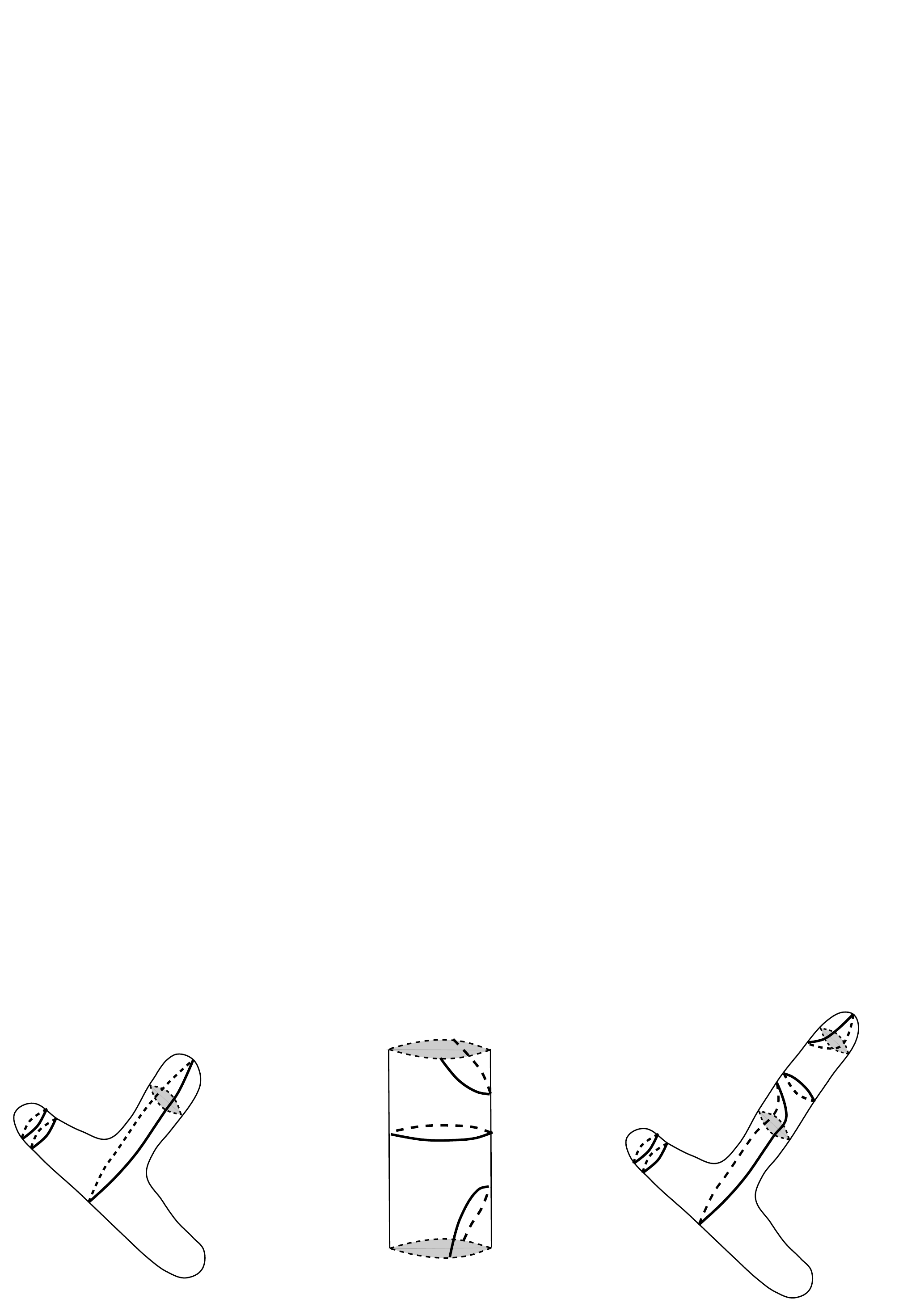}} 
\put(-80,-12){Fig. \ref{fig: meno_sfere_a_b:3_ex_DP2}.1}
\put(37,-12){Fig. \ref{fig: meno_sfere_a_b:3_ex_DP2}.2}
\put(40,72){$a$}
\put(40,42){$b$}
\put(140,-12){Fig. \ref{fig: meno_sfere_a_b:3_ex_DP2}.3}
\put(184,87){$a$}
\put(179,68){$b$}
\end{picture}
\end{center}
\caption{$\mathbb{R}E\simeq S^1$ in thick dashed.}
\label{fig: meno_sfere_a_b:3_ex_DP2}
\end{figure}
%dopo disegni
\end{exa}
The following definition is used as a (non-)symmetry detector in the proof of Proposition \ref{prop: non-symm_real_scheme}.
\begin{defn}
\label{defn: trivial_oval}
Let $\mathcal{S}$ be a topological type in $S^2$.
We say that $\mathcal{S}$ has a \textit{mirror} if there exist an element in the equivalence class of $\mathcal{S}$ (Definition \ref{defn: realizing_real_scheme}) of the form $\tilde{\mathcal{S}} \sqcup \tilde{\mathcal{S}}\sqcup \mathcal{T}$, with $\tilde{\mathcal{S}}$ different from $0$. 
Otherwise, we say that $\mathcal{S}$ has \textit{no mirrors}.
\end{defn} 
\begin{exa}
Let $\mathcal{S}$ be the topological type $1\sqcup \langle \langle 1 \rangle \rangle$ in $S^{2}$. There exists an element in the equivalence class of $\mathcal{S}$ of the form $\langle 1 \rangle \sqcup \langle 1 \rangle $, it follows that $\mathcal{S}$ has a mirror. An example of topological type in $S^{2}$ with no mirrors is $1 \sqcup \langle 1 \rangle$.
\end{exa}
\begin{proof}[Proof of Proposition \ref{prop: non-symm_real_scheme}]
Let $\mathcal{S}$ be any real scheme in class $d$ in Table \ref{tabella=non-symm} for some fixed integers $d,$ $k_1,k_2$ and $h_1,h_2,h_3,h_4$. To realize $\mathcal{S}$ in $X^{4}$ and in class $d$, let us proceed as in the proof of Proposition \ref{prop: star_label_DP2}.
\begin{itemize}
\item Let $C_S\subset S$ be the real algebraic curve of bi-class $(d,d)$ constructed as in proof of Proposition \ref{prop: S_pencil_quartic_DP2}. We have that $\mathbb{R}E_S \cup \mathbb{R}C_S$ is arranged in $\mathbb{R}S$ respectively as pictured in Fig. \ref{fig: non-symm_3spheres_DP2}.1 and \ref{fig: non-symm_3spheres_DP2}.2.
\item Let $T$ be the quadric ellipsoid and let $C_T\subset T$ be the real algebraic curve of bidegree $(d,d)$ constructed as in proof of Proposition \ref{prop: constr_quadric_DP2_2}. Then $\mathbb{R}E_T \cup \mathbb{R}C_T$ is arranged in $\mathbb{R}T$ as depicted in Fig. \ref{fig: non-symm_ellipsoid}.
\item Thanks to Theorem \ref{thm: weak_patch_DP2} $\mathcal{S}$ is realizable in $X^{4}$ and in class $d$.
\end{itemize}
Moreover, if $d,$ $k_1,k_2$ and $h_1,h_2,h_3,h_4$ respect the extra conditions in Table \ref{tabella=non-symm}, one can prove that $\mathcal{S}$ is non-symmetric in class $d$. Let us prove it by contradiction.\\ 
Assume that $\mathcal{S} $ is symmetric in class $d$. Then, there exist a real maximal quartic $\overline{Q}$ and a degree $d$ real algebraic curve $B \subset \mathbb{C}P^2$ such that the pair $(\mathbb{R}X, \mathbb{R}\phi^{-1}(B))$ realizes $\mathcal{S}$, where $\phi: X \rightarrow \mathbb{C}P^2$ is the double cover of $\mathbb{C}P^2$ ramified along $\overline{Q}$. Remark that the extra conditions of Table \ref{tabella=non-symm} constrain $\mathcal{S}$ to have no mirrors; see Definition \ref{defn: trivial_oval}. The absence of mirrors in $\mathcal{S}$ forces 
every oval of $\mathbb{R}B$ either to intersect $\mathbb{R}\overline{Q}$ or to form a non-injective pair with each oval of $\mathbb{R}\overline{Q}$. Since $\mathcal{S}$ has $2d+1$ ovals, the real curve $B$ has to intersect $\overline{Q}$ in at least $4d+2$ real points: that contradicts Bézout's theorem. It follows that $\mathcal{S}$ is non-symmetric in class $d$.
\end{proof}
\section{Real curves on $k$-sphere real del Pezzo surfaces of degree 1}
\label{sec: DP1}
\subsection{Definitions}
\label{subsec: intro_DP1}
Let $Y$ be $\mathbb{C}P^2$ blown up at eight points in generic position; then, the surface $Y$ is a del Pezzo surface of degree $1$ (see \cite{Russ02}, \cite[Chapter 8]{Dolg12}). The anti-bicanonical map $\psi: Y \rightarrow \mathbb{C}P^3$ is a double ramified cover of an irreducible singular quadric $Q$ in $\mathbb{C}P^3$; the branch locus of $\psi$ consists of the node $V$ of $Q$ and a non-singular cubic section $\tilde{S}$ on $Q$ disjoint from $V$. Conversely, any such double covering is a del Pezzo surface of degree $1$. 
By construction, the anti-canonical class $c_1(Y)$ is the pull back via $\psi$ of the class of a generatrix on $Q$ (\cite{DegIteKha00}). Let  $d$ be equal to $2s+ \varepsilon$, where $s$ in a non-negative integer and $\varepsilon \in \{0,1\}$. The lifting of any algebraic curve $C\subset Q$ of bidegree $(s,\varepsilon)$ via $\psi$ is a real algebraic curve $B$ of class $d=2s+ \varepsilon$ in $Y$ (Definition \ref{defn: curve_d}). \\
\par Let us consider the standard real structure of $\mathbb{C}P^3$. Form now on, assume that $Q$, $\tilde{S}$ are real and $Q$ is the quadratic cone of equation $X^2+Y^2-Z^2=0$ in $\mathbb{C}P^3$. The real part of $Q$ will be depicted as a quadrangle whose opposite sides are identified in a suitable way and the horizontal sides represent the node $V:=[0:0:0:1]$.\\
Let $Q_1$ and $Q_2$ be two distinct disjoint unions of connected components of $\mathbb{R}Q\setminus (\mathbb{R}\tilde{S}\cup \{V\})$ such that each $Q_i$ is bounded by $\mathbb{R}\tilde{S}\cup \{V\}$. There exist two lifts to $Y$ of the real structure of $Q$ via the double cover $\psi$ and the real part of $Y$ is the double of one of the $Q_i$'s. Let $\sigma$ be a lifting to $Y$ of the standard real structure on $Q$. Then  $Y$ is a $k$-sphere real del Pezzo surface of degree $1$ (Definition \ref{defn: ksphere} ) 
%$\mathbb{R}Y$ is homeomorphic to $\mathbb{R}P^2  \sqcup  \bigsqcup_{j=1}^k S^2 $ 
if and only if the pair $(\mathbb{R}Q,\mathbb{R}\tilde{S})$ realizes the real scheme depicted in Fig. \ref{fig: diverse_parti_reali_DP1}.k, with $0 \leq k \leq 4$. Moreover  $(Y,\sigma)$ is $\mathbb{R}$-minimal\textsuperscript{\ref{myfootnote}} if and only if 
%$Q$ is a quadratic cone, the cubic section $\tilde{S}$ is real and maximal, and 
$Y$ is a $4$-sphere real del Pezzo surface of degree $1$; see \cite{DegKha02}, \cite{DegIteKha00}.
% and Fig. \ref{fig: del_pezzo_1}.
\begin{figure}[!h]
\begin{picture}(100,40)
\put(0,-10){\includegraphics[width=1.00\textwidth]{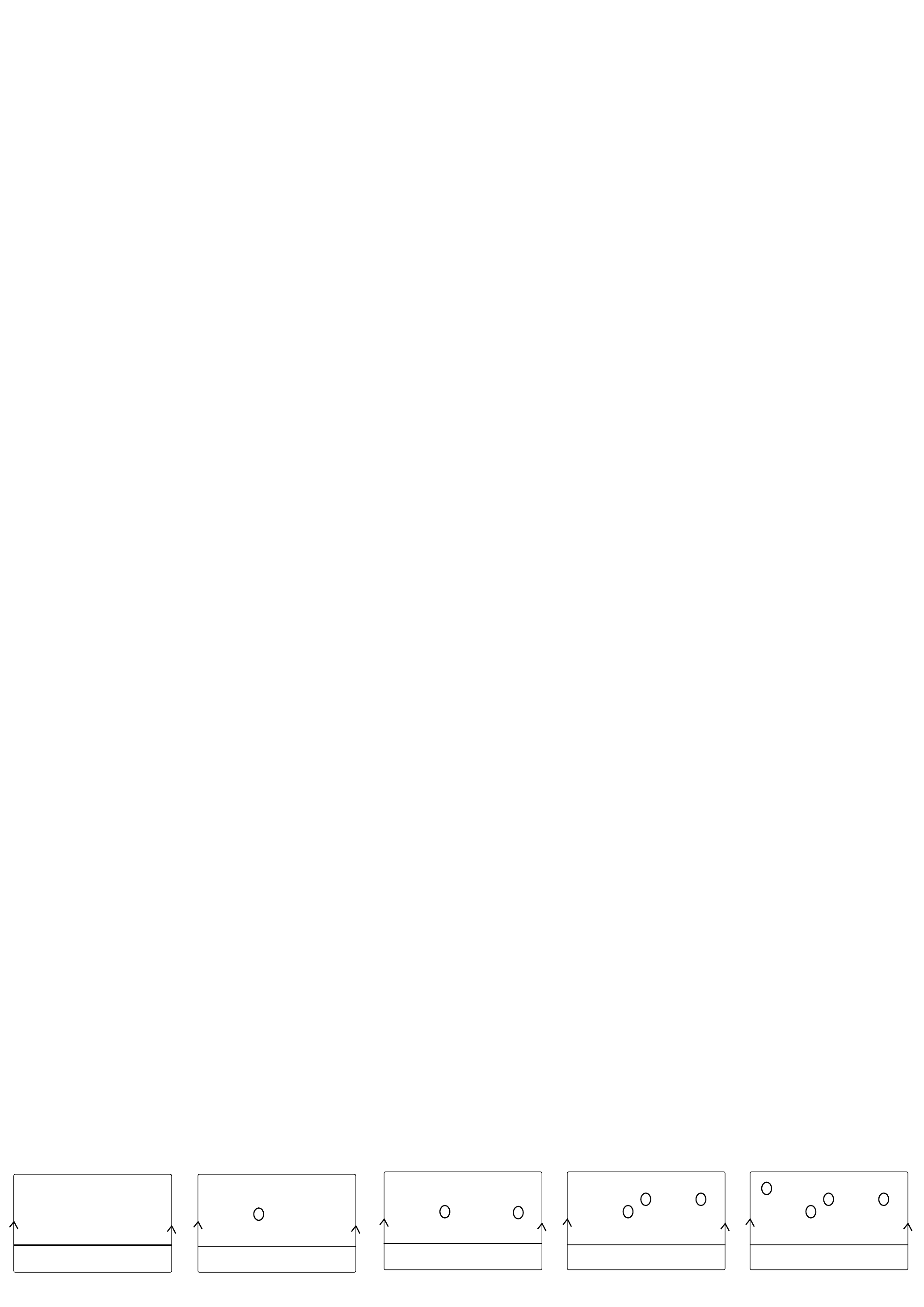}} 
\put(19,-8){Fig. \ref{fig: diverse_parti_reali_DP1}.0}
\put(93,-8){Fig. \ref{fig: diverse_parti_reali_DP1}.1}
\put(169,-8){Fig. \ref{fig: diverse_parti_reali_DP1}.2}
\put(243,-8){Fig. \ref{fig: diverse_parti_reali_DP1}.3}
\put(315,-8){Fig. \ref{fig: diverse_parti_reali_DP1}.4}
\end{picture}
\caption{$(\mathbb{R}Q, \mathbb{R}\tilde{S})$ respectively for $k=0,1,2,3,4$}
\label{fig: diverse_parti_reali_DP1}
\end{figure}
\begin{note}
Let $Y$ be a $k$-sphere real del Pezzo surface of degree $1$, with $0 \leq k \leq 4$. We denote the connected components of $\mathbb{R}Y$ with $Y_0, \dots,Y_k$, where $Y_0$ is homeomorphic to $\mathbb{R}P^2$ and $Y_j$, with $j \geq 1$, is homeomorphic to $S^2$. 
\end{note}
Let $Y$ be a $4$-sphere real del Pezzo surface of degree $1$, then $H_2^-(Y;\mathbb{Z})$ is generated by $c_1(Y)$ (\cite{Russ02}). 
\subsection{$\mathcal{J}$-Obstruction}
\label{subsec: obstructions_DP1}
The number of pseudo-lines of a real algebraic curve of class $d$ in a real minimal del Pezzo surface $Y$ of degree $1$, is determined by $d$.
\begin{prop}
\label{prop: pseudolines}
Let $B$ be a non-singular real algebraic curve of class $d$ in a real minimal del Pezzo surface $Y$ of degree $1$. Then, the real scheme realized by $\mathbb{R}B$ has one and only one pseudo-line if $d\equiv 1 \pmod{2}$ and no pseudo-lines otherwise.
\end{prop}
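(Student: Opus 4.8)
The plan is to reduce the statement to a parity computation for the number of pseudo-lines and then extract that parity from the self-intersection of the class. First I would record the topological constraints coming from the ambient real locus. Since $Y$ is real minimal of degree $1$, its real part is $\mathbb{R}Y \cong \mathbb{R}P^2 \sqcup \bigsqcup_{j=1}^4 S^2$, with $Y_0 \cong \mathbb{R}P^2$ the only non-orientable component. As every embedded circle in a sphere is an oval, any pseudo-line of $\mathbb{R}B$ must lie on $Y_0$. Moreover $\mathbb{R}B$ is a disjoint union of circles, and two disjoint circles in $\mathbb{R}P^2$ cannot both be non-contractible (two generators of $H_1(\mathbb{R}P^2;\mathbb{Z}/2\mathbb{Z})$ meet with odd intersection index). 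Hence $\mathbb{R}B$ carries at most one pseudo-line, and it suffices to prove that the number $p(B)$ of pseudo-lines of $\mathbb{R}B$ satisfies $p(B) \equiv d \pmod 2$.

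The core of the argument is an intersection-theoretic identity. Let $g$ denote the generator of $H_1(\mathbb{R}P^2;\mathbb{Z}/2\mathbb{Z})$, so that $H_1(\mathbb{R}Y;\mathbb{Z}/2\mathbb{Z}) \cong \mathbb{Z}/2\mathbb{Z}\,g$ (only $Y_0$ contributes) and the mod $2$ intersection form satisfies $g\cdot g = 1$. For a non-singular real curve $A$ of class $d$ one has $[\mathbb{R}A] = p(A)\,g$ in $H_1(\mathbb{R}Y;\mathbb{Z}/2\mathbb{Z})$. I would then choose a second non-singular real curve $D$ of the same class $d$, obtained as a generic small real deformation of $B$ inside the linear system $|d\,c_1(Y)|$, arranged to meet $B$ transversally. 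Since $D$ is close to $B$, its real part is isotopic to $\mathbb{R}B$, so $p(D) = p(B)$. Counting intersection points, the non-real ones occur in complex conjugate pairs, whence the number of real intersection points is congruent mod $2$ both to $B\cdot D$ and to $[\mathbb{R}B]\cdot[\mathbb{R}D]$. The latter equals $p(B)\,p(D)$ because the sphere components contribute nothing and $g\cdot g = 1$, while $B\cdot D = (d\,c_1(Y))^2 = d^2\,c_1(Y)^2 = d^2$, using that $Y$ has degree $1$, i.e.\ $c_1(Y)^2 = 1$. Combining, $p(B)^2 = p(D)\,p(B) \equiv d^2 \pmod 2$, and since $n^2 \equiv n \pmod 2$ this gives $p(B) \equiv d \pmod 2$. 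Together with $p(B) \le 1$, this yields exactly one pseudo-line when $d$ is odd and none when $d$ is even.

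The step I expect to require the most care is the existence of the auxiliary curve $D$: I must guarantee a non-singular real member of $|d\,c_1(Y)|$ that is transverse to $B$ and isotopic to $\mathbb{R}B$. Here I would check by Riemann--Roch that $\dim |d\,c_1(Y)| = 1 + \tfrac{d^2+d}{2} \ge 1$ for $d \ge 1$, so the system is positive-dimensional, and then invoke the real version of Bertini's theorem to produce a smooth real member arbitrarily close to $B$ and in general position with respect to it; proximity forces isotopy of the real parts and hence equality of pseudo-line counts. Everything else is routine: the identification of $H_1(\mathbb{R}Y;\mathbb{Z}/2\mathbb{Z})$, the self-intersection $g\cdot g = 1$, and the parity comparison of real versus complex intersection points. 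An alternative, slightly more abstract route would be to use that $[\mathbb{R}A]$ depends only on the class $[A] = d\,c_1(Y) \bmod 2$ through the natural "real locus" homomorphism $H_2(Y;\mathbb{Z}/2\mathbb{Z}) \to H_1(\mathbb{R}Y;\mathbb{Z}/2\mathbb{Z})$, reducing to the single computation of $p$ for an anti-canonical curve; but the self-intersection argument above is self-contained and avoids appealing to that well-definedness result.
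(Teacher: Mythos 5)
Your proposal is correct and takes essentially the same approach as the paper: the paper likewise deduces the parity $p(B)\equiv d \pmod 2$ from the fact that the mod $2$ intersection form on $H_2^-(Y;\mathbb{Z})$ descends to $H_1(\mathbb{R}Y;\mathbb{Z}/2\mathbb{Z})\simeq H_1(\mathbb{R}P^2;\mathbb{Z}/2\mathbb{Z})$, and it rules out a second pseudo-line by the same observation that any two pseudo-lines must intersect. The only difference is that the paper cites the descent of the intersection form as a known fact, whereas you prove it directly by the deformation-and-transversality count $[\mathbb{R}B]\cdot[\mathbb{R}D]\equiv B\cdot D=d^2 \pmod 2$, which is a legitimate self-contained justification of the same key step rather than a genuinely different argument.
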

\begin{proof}
Let $d$ be odd (resp. even). Since the value modulo $2$ of the intersection form on $H_2^-(Y;\mathbb{Z})$ descends on $H_1(\mathbb{R}Y;\mathbb{Z}/2\mathbb{Z}) \simeq H_1(\mathbb{R}P^2;\mathbb{Z}/2\mathbb{Z})$, it follows that $\mathbb{R}B$ has an odd (resp. even) number of pseudo-lines. Moreover, the real part of $B$ has at most one pseudo-line since any two pseudo-lines meet in at least one point.
\end{proof}
The number of connected components of a real curve in a $k$-sphere real del Pezzo surface of degree $1$ is bounded as follows.
\begin{prop}
\label{prop: H-K_DP1}
Let $B$ be a non-singular real algebraic curve of class $d$ in a $k$-sphere real del Pezzo surface $Y$ of degree $1$, with $0 \leq k \leq 4$. Then, the number $l$ of connected components of $\mathbb{R}B$ is bounded as follows:
$$\varepsilon \leq l \leq \frac{d(d-1)}{2}+2,$$
where $\varepsilon \in \{0,1\}$ is such that $\varepsilon \equiv d \pmod{2}$.
\end{prop}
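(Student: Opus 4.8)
The plan is to combine the adjunction formula with Harnack-Klein's inequality to get the upper bound, and to extract the lower bound from a parity argument on pseudo-lines entirely analogous to the proof of Proposition \ref{prop: pseudolines}.

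First I would compute the genus $g$ of the complexification of $B$. Since $B$ realizes $d c_1(Y)$ and $Y$ has degree $1$, i.e. $c_1(Y)^2 = K_Y^2 = 1$ and $K_Y = -c_1(Y)$, I obtain $B^2 = d^2 c_1(Y)^2 = d^2$ and $B\cdot K_Y = -d\, c_1(Y)^2 = -d$. The adjunction formula $2g-2 = B\cdot(B+K_Y) = B^2 + B\cdot K_Y$ then gives $2g-2 = d^2 - d$, whence $g = \frac{d(d-1)}{2}+1$. Applying Harnack-Klein's inequality to the (non-singular) complexification of $B$ bounds the number $l$ of connected components of $\mathbb{R}B$ by $g+1$, which yields the upper bound $l \leq \frac{d(d-1)}{2}+2$.

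For the lower bound, when $d$ is even we have $\varepsilon = 0$ and there is nothing to prove. When $d$ is odd I would argue, exactly as in the proof of Proposition \ref{prop: pseudolines}, that $\mathbb{R}B$ carries an odd number of pseudo-lines and is in particular non-empty. By Definition \ref{defn: ksphere}, $\mathbb{R}Y$ always contains a unique $\mathbb{R}P^2$ summand and all remaining components are spheres, so $H_1(\mathbb{R}Y;\mathbb{Z}/2\mathbb{Z}) \simeq H_1(\mathbb{R}P^2;\mathbb{Z}/2\mathbb{Z}) \simeq \mathbb{Z}/2\mathbb{Z}$ for every $k$. The mod $2$ value of the self-intersection form on $H_2^-(Y;\mathbb{Z})$ descends to this group and sends the class of $\mathbb{R}B$ to the reduction of $d$; hence for $d$ odd the real part must contain at least one pseudo-line, giving $l \geq 1 = \varepsilon$.

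The genus computation and the upper bound are routine, so I expect the only delicate point to be the lower bound. Although Proposition \ref{prop: pseudolines} is stated for the $\mathbb{R}$-minimal (i.e. $4$-sphere) case, its proof uses only the isomorphism $H_1(\mathbb{R}Y;\mathbb{Z}/2\mathbb{Z}) \simeq \mathbb{Z}/2\mathbb{Z}$ arising from the single $\mathbb{R}P^2$ summand, which persists for every $k$ with $0 \leq k \leq 4$. Thus the main (minor) obstacle is to verify that the descent of the intersection form to $H_1(\mathbb{R}Y;\mathbb{Z}/2\mathbb{Z})$ identifies the class of $\mathbb{R}B$ with $d \bmod 2$ uniformly in $k$; this holds precisely because the $S^2$ components contribute nothing to $H_1$, so the pseudo-line count is governed solely by the $\mathbb{R}P^2$ factor.
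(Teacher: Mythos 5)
Your proof is correct and follows essentially the same route as the paper: adjunction plus Harnack--Klein for the upper bound, and the pseudo-line parity argument of Proposition \ref{prop: pseudolines} for the lower bound. Your explicit verification that the argument of Proposition \ref{prop: pseudolines} extends from the $\mathbb{R}$-minimal ($k=4$) case to all $0 \leq k \leq 4$ --- because $H_1(\mathbb{R}Y;\mathbb{Z}/2\mathbb{Z})\simeq H_1(\mathbb{R}P^2;\mathbb{Z}/2\mathbb{Z})$ no matter how many sphere components $\mathbb{R}Y$ has --- fills in a detail the paper leaves implicit when it cites that proposition for arbitrary $k$.
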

\begin{proof}
The right inequality follows from Harnack-Klein's inequality and the adjunction formula; while the left one follows from Proposition \ref{prop: pseudolines}.
\end{proof}
\subsection{Real schemes}
\label{subsec: real_schemes_DP1}
\begin{defn}
\label{defn: real_schemeDP1}
Let $Y$ be a $k$-sphere real del Pezzo surface $X$ of degree $1$, with $0 \leq k \leq 4$. Let us denote $\mathcal{S}_{DP1}(Y,k)$ the set of all real schemes in $Y$. \\
Notice that $\mathcal{S}_{DP1}(X,k)$ does not depend on the choice of $Y$. Therefore, from now on, we omit $Y$ and write $\mathcal{S}_{DP1}(k).$
\end{defn}
Let us enrich the notion of real scheme with some extra conditions deriving from Proposition \ref{prop: H-K_DP1}.

\begin{defn}
Let $\mathcal{S}$ be in $\mathcal{S}_{DP1}(k)$. We say that $\mathcal{S}$ is \textit{in class} $d$ or we write $\mathcal{S} \in \mathcal{S}_{DP1}(k,d)$ if the number $l$ of connected components of $\mathcal{S}$ is bounded as follows
$$\varepsilon \leq l \leq \frac{d(d-1)}{2}+2,$$
where $\varepsilon \in \{0,1\}$ and $\varepsilon \equiv d \mod{2}$.
\end{defn}

\begin{defn}
We say that $\mathcal{S} \in \mathcal{S}_{DP1}(k,d)$ is \textit{realizable in} $Y^{k}$ \textit{and in class} $d$, if there exist a $k$-sphere real del Pezzo surface $Y^{k}$ of degree $1$ and a real algebraic curve $B \subset Y^{k}$ of class $d$, such that the pair $(\mathbb{R}Y^{k}, \mathbb{R}B)$ realizes $\mathcal{S}$. 
\end{defn}
Let us lighten the real scheme notation introduced in Section \ref{subsec: cha2_codage_isotopie}.
\begin{note}
\label{note: simplify_notation_DP1}
Let $\mathcal{S}:=\mathcal{T}|\mathcal{S}_1:\dots: \mathcal{S}_4$ be a topological type in the disjoint union of a real projective plane and $4$ spheres. Let $Y^{k}$ be a $k$-sphere real del Pezzo surface of degree $1$, with $0 \leq k \leq 4$.  If at least $4-k$ entries $S_{j}$ are $0$, we say that $\mathcal{S}$ is a real scheme in $\mathbb{R}Y^{k}$.
\end{note}
\subsection{Positive and negative connected components}
\label{subsec: positive_negative_DP1} 
One can refine real scheme classifications in different ways. About $k$-sphere real del Pezzo surfaces of degree $1$, we are interested in a classification refinement which involves only $4$-sphere real del Pezzo surfaces of degree $1$. In this case, one can define a notion of positivity of the spheres inducing a refined classification.
%one can label the spheres of the real part of such surfaces as positive and negative as follows. 
\par Let $Y$ be a $4$-sphere real del Pezzo surface of degree $1$. The anti-bicanonical map $\psi$ of $Y$ is a double cover of $Q$ ramified along a real maximal cubic section $\tilde{S}$ and the vertex of $Q$. 
Independently from the choice of a complex orientation on $\mathbb{R}\tilde{S}$, we can distinguish the connected components of $\mathbb{R}\tilde{S}$ on $\mathbb{R}Q$ in the following way. \\
Let us consider $Q \setminus \{V\}$. There are four connected components, called \textit{ovals}, of $\mathbb{R}\tilde{S}$ realizing the trivial class in $H_1(\mathbb{R}Q \setminus \{V\}; \mathbb{Z}/2\mathbb{Z})$; while the connected component of $\mathbb{R}\tilde{S}$ realizing the non-trivial class in $H_1(\mathbb{R}Q \setminus \{V\}; \mathbb{Z}/2\mathbb{Z})$, is called \textit{long-component}. If the union of an oval and the long-component of $\mathbb{R}\tilde{S}$ in $\mathbb{R}Q \setminus \{V\}$ bounds an oriented surface, the oval is called \textit{positive}; otherwise \textit{negative}. 
\begin{figure}[!h]
\begin{picture}(100,107)
\put(-20,-25){\includegraphics[width=1.10\textwidth]{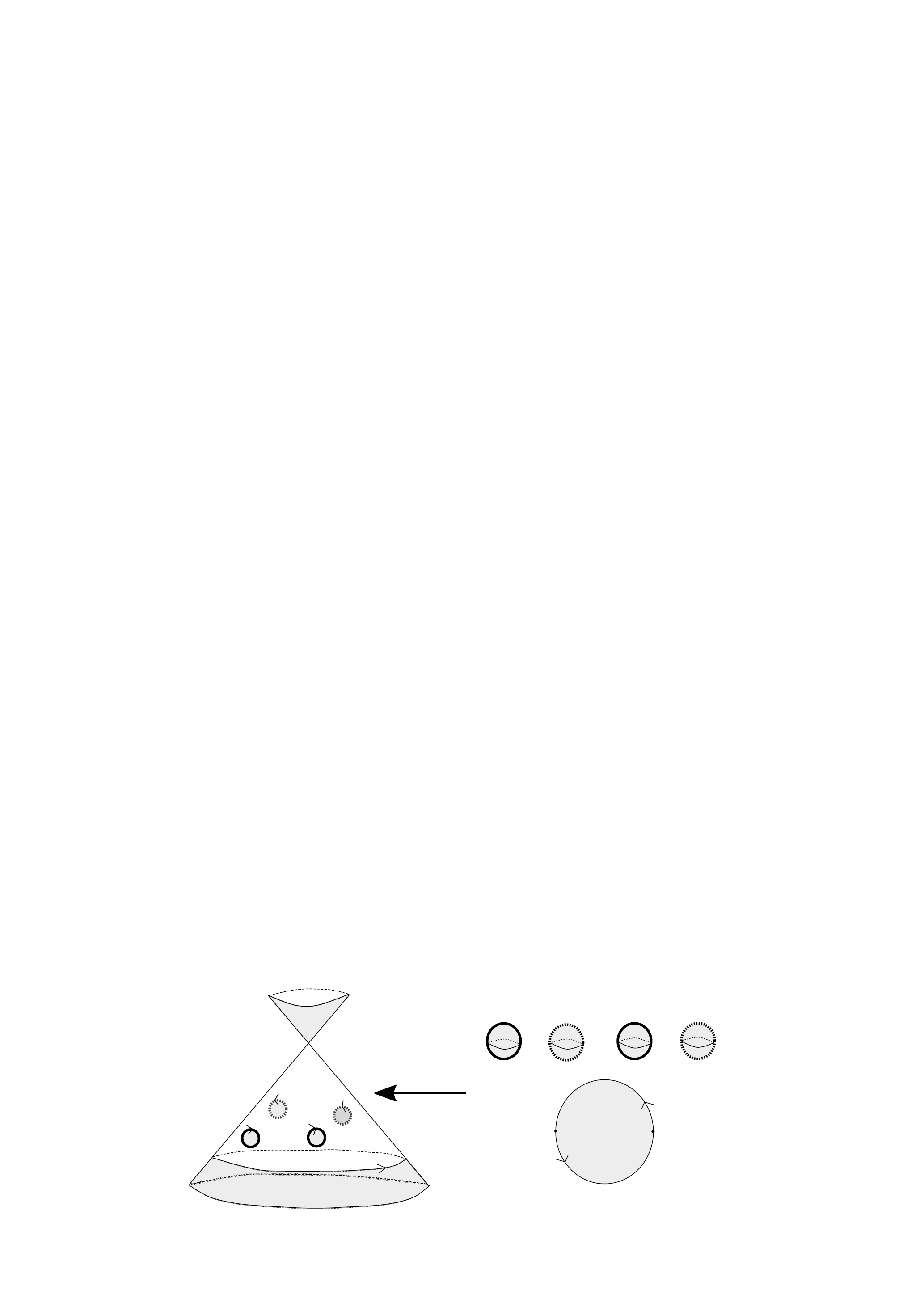}} 
\put(95,-5){$(\mathbb{R}Q,\mathbb{R}\tilde{S})$}
\put(165,72){$\psi$}
\put(241,-5){$\mathbb{R}Y$}
\end{picture}
\caption{Positive and negative connected components of $\mathbb{R}Y$.}
\label{fig: da sigma2_alla_del_pezzo}
\end{figure}
%dopo disegni
Each oval of $\mathbb{R}\tilde{S}$ is either positive or negative, independently from the choice of a complex orientation on $\mathbb{R}\tilde{S}$; moreover, two ovals are positive and two are negative. \\
%Let $\psi: Y \rightarrow Q$ be the double cover map of $Q$ ramified along $\tilde{S}$ and $V$. 
Via $\psi$ we can label the connected components of $\mathbb{R}Y$ homeomorphic to $S^2$ as positive and negative; see Fig. \ref{fig: da sigma2_alla_del_pezzo}, where the preimage of negative and positive ovals, is depicted respectively in dashed and thick black. It follows that for any fixed non-negative integer $d$, we have a refined and a non-refined topological classification of real algebraic curves of class $d$ in $4$-sphere real del Pezzo surfaces of degree $1$.
%, and they alternate because of Fiedler's Theorem (\cite{Fied83}). 
\begin{defn}
\label{defn: coarse_real_scheme}
A real scheme in $\mathcal{S}_{DP1}(4)$ up to refined homeomorphism, is called a \textit{refined real scheme}. 
%(Definition \ref{defn: homeorefined})
\end{defn}
\begin{note}
\label{note: coarse_real_scheme}
\begin{itemize}
\item[]
\item We use the convention that the connected components $Y_1$ and $Y_2$ are positive and $Y_3$ and $Y_4$ negative.
\item Let $ \mathcal{S}$ be a refined real scheme, we write $$ \mathcal{S}:=\mathcal{S}_0  |  \mathcal{S}_1:\mathcal{S}_2:\mathcal{S}_3:\mathcal{S}_{4}$$
if $\mathcal{S}_1 : \mathcal{S}_{2}$ encodes the oval arrangement on $Y_1 \sqcup Y_{2}$ and $\mathcal{S}_3 : \mathcal{S}_{4}$ that on $Y_3 \sqcup Y_{4}$.
\end{itemize}
\end{note}
\begin{defn}
\label{defn: refined_real_scheme_realizable}
We say that a \textit{refined real scheme} $\mathcal{S}:=\mathcal{S}_0  |  \mathcal{S}_1:\mathcal{S}_2:\mathcal{S}_3:\mathcal{S}_{4}$ is \textit{realizable in} $Y$ \textit{and in class} $d$ if there exist a $4$-sphere real del Pezzo surface $Y$ of degree $1$ and a class $d$ real algebraic curve $B\subset Y$ such that the pair 
\begin{itemize}
\item $(\mathbb{R}Y, \mathbb{R}B)$ realizes $\mathcal{S}$;
\item $(Y_{i} \sqcup Y_{j}, \mathbb{R}B)$ realizes $\mathcal{S}_{i} : \mathcal{S}_{j}$, for $(i,j) \in \{(1,2), (3,4)\}$.
\end{itemize}
\end{defn}

\subsection{Bézout-type obstrction, $d \geq 4$}
The $\mathcal{J}$-obstruction (Propositions \ref{prop: pseudolines}) is the only restriction for (refined) real schemes up to class $3$. The next statement provides an example of additional obstructions for real schemes in class $d \geq 4$ in $4$-sphere real del Pezzo surfaces of degree $1$. 
\begin{prop}
\label{prop: bezout_del_pezzo_deg_1}
Let $B$ be a non-singular real algebraic curve of class $d= 2s+ \varepsilon$ in a $4$-sphere real del Pezzo surface $Y$ of degree $1$, where $\varepsilon\in \{0,1\}$ and $s \in \mathbb{Z}_{\geq 1}$. Assume that all connected components of $\mathbb{R}B$ lie on $Y_0$ and on $t$ of the $Y_j$'s, for $j=1,2,3,4$. Assume that $N_{h}$, with $h=1,2,3$, are three nests of depth $i_h$ of $\mathbb{R}B$. Moreover, assume that $i_1 \leq i_2$ and $N_{1},$ $N_{2}$ form a disjoint pair of nests in $Y_0$; while $N_{3}$ lies on some $Y_j$, where $j\not = 0$. Then, we have the following restrictions on the depths of the nests:
\begin{equation}
\label{eq: obstr_1}
i_1+i_2 \leq 3s+ \varepsilon -t;
\end{equation}
\begin{equation}
\label{eq: obstr_2}
i_2+i_3 \leq 3s + \varepsilon - (t-1).
\end{equation}
\end{prop}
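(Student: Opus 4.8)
The plan is to run a Bézout-type argument against an auxiliary real curve, exactly in the spirit of Proposition \ref{prop: bezout_inv_welschinger} and Lemma \ref{lem: caso_particolare_4_sfere_k=2}. Since $Y$ is del Pezzo of degree $1$ we have $c_1(Y)^2=1$, so a real curve $T$ of class $3$ meets $B$ in $B\circ T=3d=6s+3\varepsilon$ points counted with multiplicity. By Riemann--Roch one has $\dim|{-mK_Y}|=\tfrac{m(m+1)}{2}$, so $\dim|{-3K_Y}|=6$; hence passing through any real configuration of at most $6$ points imposes at most $6$ real linear conditions and is always satisfiable by a real curve $T$ of class $3$, which for a generic choice shares no component with $B$. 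The number of real intersection points of $T$ and $B$, counted with multiplicity, is then bounded above by $B\circ T=6s+3\varepsilon$, and the two inequalities will follow by choosing the interpolation points so as to force many real intersections, using the parity of $6s+3\varepsilon$ to absorb the term $\tfrac{3\varepsilon}{2}$ into $\varepsilon$.

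For \eqref{eq: obstr_1} I would place one point $p_1$ in the innermost disk of $N_1$ and one point $p_2$ in the innermost disk of $N_2$ (both on $Y_0$), together with one point interior to an oval of $\mathbb{R}B$ on each of the $t$ spheres carrying $\mathbb{R}B$; this is $2+t\le 6$ points. For \eqref{eq: obstr_2} I would instead take $p_2$ in the innermost disk of the deeper nest $N_2$ on $Y_0$, a point $p_3$ in the innermost disk of $N_3$ on the sphere $Y_j$, and one interior point on each of the remaining $t-1$ spheres, for a total of $t+1\le 6$ points. In both cases $T$ exists by the dimension count above, and since $t\le 4$ the configurations never exceed $6$ points.

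The core of the proof is the count of forced real intersections. On $Y_0=\mathbb{R}P^2$ the curve $T$ has class $3$, hence by Proposition \ref{prop: pseudolines} carries exactly one pseudo-line; since every oval of a nest $N_h$ is null-homologous while a pseudo-line is non-contractible, the branch of $\mathbb{R}T$ through an interior point $p_h$ is forced to leave the nest and thus crosses each of the $i_h$ ovals an even and positive number of times, contributing at least $2i_h$ to $B\circ T$. On a sphere $Y_j$ both $\mathbb{R}T$ and $\mathbb{R}B$ are unions of null-homologous ovals, so their mod $2$ intersection vanishes; a branch of $T$ through an interior point of an oval of $B$ must cross that oval an even, positive number of times, giving at least $2$, and at least $2i_3$ when the chosen oval is the innermost one of $N_3$. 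Summing, \eqref{eq: obstr_1} gives $2(i_1+i_2)+2t\le 6s+3\varepsilon$ and \eqref{eq: obstr_2} gives $2(i_2+i_3)+2(t-1)\le 6s+3\varepsilon$; dividing by $2$ and using that the left-hand sides are integers (so $\tfrac{3\varepsilon}{2}$ may be replaced by $\varepsilon$) yields the two stated bounds.

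The step I expect to be delicate is guaranteeing that each marked interior point genuinely lands on a component of $\mathbb{R}T$ that escapes the surrounding nest, rather than on a small contractible oval of $T$ buried inside it; a purely dimension-theoretic existence of $T$ does not by itself control which component of $\mathbb{R}T$ passes through a prescribed point. I would resolve this exactly as Proposition \ref{prop: shustin_welschinger} is used in the proof of Proposition \ref{prop: bezout_inv_welschinger}, namely by invoking the Welschinger-type interpolation results of \cite{Shus15} and \cite{IteKhaShu15}, which produce the auxiliary curve with the prescribed points lying on its one-dimensional real components; this makes the pseudo-line and parity arguments above rigorous and completes the count.
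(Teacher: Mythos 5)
Your global framework agrees with the paper's: intersect $B$ with an auxiliary real curve $T$ of class $3$, use $B\circ T=3d=6s+3\varepsilon$ together with the dimension count $\dim|-3K_Y|=6$ (exactly as in Lemma \ref{lem: caso_particolare_4_sfere_k=2}), and conclude by parity plus integrality. You also correctly isolate the delicate step, but your resolution of it does not work, so there is a genuine gap. First, your assertion that a branch of $\mathbb{R}T$ through a point interior to a disk ``must cross'' the surrounding ovals ``an even, positive number of times'' is precisely what needs proof: the homological argument yields evenness only, and zero is even; the component of $\mathbb{R}T$ through your marked point may perfectly well be a small oval contained in the innermost disk, contributing nothing to the count. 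Second, the Welschinger-type interpolation you invoke does not repair this: it guarantees that the marked points lie on one-dimensional components of $\mathbb{R}T$, which is entirely compatible with those components being buried ovals. Moreover, the interpolation statement available in the paper (Proposition \ref{prop: shustin_welschinger}, i.e.\ \cite[Propositions 4 and 5]{Shus15}) is formulated for $k$-sphere del Pezzo surfaces of degree $2$; no analogue for degree $1$ surfaces with real part $\mathbb{R}P^2\sqcup\bigsqcup_{j=1}^{4}S^2$ is established in the paper, whose proof of this proposition deliberately avoids Welschinger invariants.

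The paper's actual mechanism, which your configurations cannot trigger, is a component-counting argument: distribute the six points so that \emph{every} one of the five connected components of $\mathbb{R}Y$ contains a marked point, and place the points on $\mathbb{R}B$ itself (on the boundaries of the distinguished disks) rather than in their interiors. A curve $T$ of class $3$ has genus $4$, so by Harnack--Klein $\mathbb{R}T$ has at most five connected components; with one marked point in each component of $\mathbb{R}Y$, it has exactly one component per component of $\mathbb{R}Y$. On $Y_0$ this unique component must then be the pseudo-line of Proposition \ref{prop: pseudolines} (class $3$ is odd), and a pseudo-line cannot be contained in any disk, so it crosses each of the $i_1+i_2$ ovals of the two disjoint nests at least twice; on a sphere, a marked point lying on an oval of $\mathbb{R}B$ is automatically a real intersection point, and parity promotes its contribution to at least two. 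Your configurations use only $2+t$, respectively $t+1$, points and mark only the components of $\mathbb{R}Y$ carrying $\mathbb{R}B$, so for $t<4$ the uniqueness of the components of $\mathbb{R}T$ is not forced, and nothing in your argument identifies the branch of $\mathbb{R}T$ through $p_1$ or $p_2$ with the pseudo-line. The repair is exactly the paper's second bullet: spend the spare points so as to mark all five components of $\mathbb{R}Y$, and move the marked points onto $\mathbb{R}B$.
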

\begin{proof}
The argument of the proof is similar to that used in the proof of Lemma \ref{lem: caso_particolare_4_sfere_k=2}. In fact, it is sufficient to combine 
\begin{itemize}
\item the existence of a real algebraic curve $T$ of class $3$ and genus $4$ on $Y$ passing through a given real configuration $\mathcal{P}$ of $6$ distinct points;
\item the observation that $\mathbb{R}T$ has exactly one connected component on each connected component of $\mathbb{R}Y$, if each connected component of $\mathbb{R}Y$ contains at least one point of $\mathcal{P}$;
\item the strategy adopted in the proof of Proposition \ref{prop: bezout_inv_welschinger}.
\end{itemize}
\end{proof}
\subsection{Main results}
\label{subsec: main_DP1}
We classify real schemes up to class $3$ in $Y^{k}$, for all $0 \leq k \le 4$ (Proposition \ref{prop: delpezzo_1}). Furthermore, we show that the real scheme classification and the refined real scheme classification are the same up to class $3$ in $Y^{4}$ (Theorem \ref{thm: delpezzo_1}).
\begin{thm} %[Classification of (refined) real schemes in class $d \in \{1,2,3\}$]
\label{thm: delpezzo_1}
%Let $Y$ be a $4$-sphere real del Pezzo surface of degree $1$. 
Any refined real scheme in class $d \in \{1,2,3\}$ which is not prohibited by Proposition \ref{prop: pseudolines}, is realizable in $Y$ and in class $d$.
\end{thm}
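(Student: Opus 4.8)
The plan is to derive the refined statement from the non-refined realizability of Proposition \ref{prop: delpezzo_1} by upgrading its constructions so as to control the positive/negative labelling of the spheres. First I would recall the mechanism of the anti-bicanonical double cover $\psi\colon Y\to Q$ branched along $\tilde{S}\cup\{V\}$: a class $d=2s+\varepsilon$ curve $B\subset Y$ is the lift of a bidegree $(s,\varepsilon)$ curve $C\subset Q$, and the triplet $(\mathbb{R}Q,\mathbb{R}\tilde{S},\mathbb{R}C)$ recovers the pair $(\mathbb{R}Y,\mathbb{R}B)$. Each of the four spheres $Y_1,\dots,Y_4$ is the double of the interior disk of one oval of $\tilde{S}$, and by the definition in Section \ref{subsec: positive_negative_DP1} the sign of $Y_j$ is exactly the sign of the corresponding oval of $\tilde{S}$ (compare Fig. \ref{fig: da sigma2_alla_del_pezzo}). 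Hence the triplet, together with the knowledge of which two ovals of $\tilde{S}$ are positive and which two are negative, determines the refined real scheme in the sense of Definition \ref{defn: refined_real_scheme_realizable}.

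Since the $\mathcal{J}$-obstruction of Proposition \ref{prop: pseudolines} only constrains the number of pseudo-lines on $Y_0\cong\mathbb{R}P^2$ and is insensitive to the positive/negative marking, and since it is the only obstruction up to class $3$, Proposition \ref{prop: delpezzo_1} already realizes every non-prohibited non-refined scheme. The task is therefore reduced to realizing, for each such scheme, every refinement compatible with the partition into two positive and two negative spheres. I would handle this in two steps. First, the local arrangement of $C$ around a given oval of $\tilde{S}$ built in the proof of Proposition \ref{prop: delpezzo_1} does not depend on the sign of that oval; so the only remaining freedom to exhibit is the ability to prescribe, independently, which ovals of $\tilde{S}$ carry which local configuration and which are positive or negative. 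Second, I would produce the requisite symmetries of the marked configuration $(\mathbb{R}Q,\mathbb{R}\tilde{S})$: an automorphism exchanging the two positive ovals (and, symmetrically, the two negative ovals), and an involution of the cone—induced for instance by $[X:Y:Z:W]\mapsto[X:Y:-Z:W]$—exchanging the positive pair with the negative pair. Lifting these through $\psi$ permutes the spheres both within and across the sign partition, so that any realization of a non-refined scheme can be transported to the desired refinement.

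Concretely, for $d\in\{1,2,3\}$ the curves $C$ have bidegree $(0,1)$, $(1,0)$ and $(1,1)$ respectively, and the finitely many refined schemes are precisely the refinements of the non-prohibited non-refined schemes classified in Proposition \ref{prop: delpezzo_1}; enumerating them and matching each to a construction obtained by the above relabelling (or by directly choosing $\tilde{S}$ with its signed ovals in the required mutual position, guided by Fig. \ref{fig: diverse_parti_reali_DP1}) yields the theorem. The main obstacle is the positive$\leftrightarrow$negative exchange: one must check that the cone really admits a real symmetry sending positive ovals to negative ones for the relevant maximal $\tilde{S}$—equivalently, that a maximal cubic section exists with its two positive and two negative ovals in every position forced by the target scheme—and that the symmetry (or the corresponding direct construction) preserves non-singularity, the bidegree of $C$, and the property that $Y$ is a genuine $4$-sphere del Pezzo surface of degree $1$. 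A separate, direct verification is needed for the configurations that meet $Y_0\cong\mathbb{R}P^2$, since those interact with the long-component of $\tilde{S}$ rather than with a signed oval and so are untouched by the sign-exchanging symmetry.
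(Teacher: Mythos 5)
Your reduction rests on a tool that does not exist: a real symmetry of the cone exchanging the two positive ovals of $\tilde{S}$ with the two negative ones. Positivity is an \emph{intrinsic} invariant of the triple $(\mathbb{R}Q, V, \mathbb{R}\tilde{S})$: whether the region cobounded by an oval and the long-component carries an orientation compatible with the complex orientations is preserved by any homeomorphism of the pair, and in particular by every real automorphism of $Q$, holomorphic or anti-holomorphic (such a map sends the halves of $\tilde{S}\setminus\mathbb{R}\tilde{S}$ to the halves of the image curve, hence complex orientations to complex orientations). Your candidate involution $[X:Y:Z:W]\mapsto[X:Y:-Z:W]$ therefore sends positive ovals of $\tilde{S}$ to \emph{positive} ovals of $g(\tilde{S})$; it can never interchange the sign classes. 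Indeed, if such a symmetry existed the refined and non-refined classifications would coincide tautologically and Theorem \ref{thm: delpezzo_1} would be an immediate corollary of Proposition \ref{prop: delpezzo_1} --- whereas the entire point of the refinement is that the signed spheres are topologically distinguishable, so each refined scheme must be realized on the correct signed components by an actual construction. (Your first symmetry, swapping the two positive ovals among themselves, is harmless but also unnecessary: Definition \ref{defn: homeorefined} already allows homeomorphisms permuting spheres within each sign class.)

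There is also a logical circularity: you take Proposition \ref{prop: delpezzo_1} as the starting input, but in the paper its $k=4$ case is \emph{deduced from} Theorem \ref{thm: delpezzo_1}; there is no independent non-refined statement for $Y^4$ to upgrade. The paper's actual proof goes the other way: it directly realizes every refined scheme via Remark \ref{rem: DP1}, constructing pairs $(\tilde{S}, C)$ on $Q$ with $\mathbb{R}\tilde{S}$ equipped with a complex orientation, so that the position of $C$ relative to the \emph{signed} ovals is controlled throughout --- by moving hyperplane sections (Proposition \ref{prop: constr_small_pert}), Harnack-type small perturbations (Proposition \ref{prop: constr_harnack}), dessins d'enfants plus Viro patchworking for the nested schemes of Table \ref{tabellalastchapter} (Proposition \ref{prop: (2,2)} and Corollary \ref{cor: final_del_pezzo_deg_1}), and perturbations of $Z_1\cup F$ for class $3$ (Proposition \ref{prop: class_3_DP1}). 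Your closing sentence, proposing to ``directly choose $\tilde{S}$ with its signed ovals in the required mutual position,'' is in effect the paper's method, but in your write-up it is an unexecuted fallback: the mutual positions of positive and negative ovals on $\mathbb{R}Q$ are constrained, so which local configurations of $C$ can be placed at ovals of a prescribed sign is exactly the nontrivial content, and it is not supplied by your relabelling argument.
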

\begin{proof}
%All real schemes in the above list are those non-prohibited by Proposition \ref{prop: H-K_DP1} {\color{purple}e un'altra ostruzione}. 
The statement follows from the proofs of Propositions \ref{prop: class_3_DP1}, \ref{prop: constr_small_pert}, \ref{prop: constr_harnack} and Corollary \ref{cor: final_del_pezzo_deg_1}. 
\end{proof}

\begin{prop}
\label{prop: delpezzo_1}
Any real scheme in $\mathcal{S}_{DP1}(k,d)$ which is not prohibited by Proposition \ref{prop: pseudolines}, with $d \in \{1,2,3\}$ and $0 \leq k \leq 4$, is realizable in $Y^{k}$ and in class $d$.
\end{prop}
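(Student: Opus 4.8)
The plan is to reduce everything to the refined classification on $Y^{4}$ already established in Theorem \ref{thm: delpezzo_1}, supplemented by direct constructions on the cone $Q$ for the remaining values of $k$. First I would dispose of the case $k=4$. Every real scheme in $\mathcal{S}_{DP1}(4,d)$ arises from some refined real scheme by forgetting the labeling of the four spheres as positive or negative; this forgetting map is surjective, and the $\mathcal{J}$-obstruction of Proposition \ref{prop: pseudolines} constrains only the number of pseudo-lines on $Y_0 \simeq \mathbb{R}P^{2}$ and is therefore insensitive to the labeling. Hence a non-prohibited real scheme lifts to a non-prohibited refined real scheme, which by Theorem \ref{thm: delpezzo_1} is realizable in $Y$ and in class $d$; forgetting the labeling of the realizing curve gives the desired realization.

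For $0 \leq k \leq 3$ I would carry out the construction directly at the level of the double cover $\psi\colon Y \rightarrow Q$. Recall that $k$ is governed entirely by the real scheme realized by $(\mathbb{R}Q,\mathbb{R}\tilde{S})$ in Fig. \ref{fig: diverse_parti_reali_DP1}.k, and that a class $d = 2s+\varepsilon$ curve $B$ is the lift of a bidegree $(s,\varepsilon)$ curve $C \subset Q$, with the pair $(\mathbb{R}Y^{k},\mathbb{R}B)$ recovered from the triplet $(\mathbb{R}Q,\mathbb{R}\tilde{S},\mathbb{R}C)$. Thus for each non-prohibited scheme $\mathcal{S}\in\mathcal{S}_{DP1}(k,d)$ I would fix a real cubic section $\tilde{S}$ realizing Fig. \ref{fig: diverse_parti_reali_DP1}.k (so that the double cover is a $k$-sphere surface) together with a curve $C$ of bidegree $(s,\varepsilon)$ arranged with respect to $\tilde{S}$ so that the lift realizes $\mathcal{S}$. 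The existence of such pairs $(\tilde{S},C)$ for $d\in\{1,2,3\}$ is precisely what the construction tools underlying the $k=4$ case supply: the small-perturbation constructions of Proposition \ref{prop: constr_small_pert}, the Harnack-type constructions of Proposition \ref{prop: constr_harnack}, and the class $3$ constructions of Proposition \ref{prop: class_3_DP1} and Corollary \ref{cor: final_del_pezzo_deg_1}, now applied with a cubic section carrying only $k$ ovals on the cone.

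A more economical route for $k<4$ would be a degeneration argument. A non-prohibited scheme in $\mathcal{S}_{DP1}(k,d)$ may be viewed as a scheme in $\mathcal{S}_{DP1}(4,d)$ with $4-k$ empty sphere-entries; realizing it on $Y^{4}$ by the first step, with the curve avoiding those empty spheres, one then specializes $\tilde{S}$ so that the corresponding ovals contract to points, passing from $Y^{4}$ to $Y^{k}$ while the curve, being disjoint from the vanishing spheres, survives with unchanged topology.

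The hard part will not be any single construction but bookkeeping completeness: one must verify that across all of $d\in\{1,2,3\}$ and $0\leq k\leq 4$ every scheme left unprohibited by Proposition \ref{prop: pseudolines} is actually produced. For $d=1,2$ the bound $l\leq\frac{d(d-1)}{2}+2$ leaves only two, respectively three, connected components, so the enumeration is short; the genuine effort is concentrated in class $3$, where Proposition \ref{prop: class_3_DP1} must be checked to cover every admissible distribution of ovals among $Y_0$ and the $k$ spheres, and where one must confirm that the chosen cubic section (or the degeneration) neither forces a prohibited pseudo-line count nor collides with the Bézout-type constraints available in this setting.
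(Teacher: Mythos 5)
Your main argument coincides with the paper's proof: for $k=4$ the statement is deduced from Theorem \ref{thm: delpezzo_1} exactly as you describe (the $\mathcal{J}$-obstruction is insensitive to the positive/negative labeling, so every non-prohibited real scheme lifts to a non-prohibited refined one), and for $0 \leq k \leq 3$ the paper likewise cites Propositions \ref{prop: constr_small_pert}, \ref{prop: constr_harnack}, \ref{prop: class_3_DP1} and Corollary \ref{cor: final_del_pezzo_deg_1}, whose constructions are already carried out on the cone $Q$ with a cubic section realizing Fig. \ref{fig: diverse_parti_reali_DP1}.k for these $k$, following Remark \ref{rem: DP1}. The degeneration shortcut you sketch for $k<4$ is not in the paper and would require additional justification (controlling the lift of the curve as an oval of $\tilde{S}$ contracts through a solitary node), but since you offer it only as an optional alternative it does not affect the correctness of your proof.
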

\begin{proof}
If $k=4$, the statement follows from Theorem \ref{thm: delpezzo_1}. For $0 \leq k \leq 3$, it follows from the proofs of Propositions \ref{prop: class_3_DP1}, \ref{prop: constr_small_pert}, \ref{prop: constr_harnack} and Corollary \ref{cor: final_del_pezzo_deg_1}.
%All real schemes in the above list are those non-prohibited by Proposition \ref{prop: H-K_DP1}. We realize all of them in the proofs of 
\end{proof}

\subsection{Constructions}
\label{subsec: construction_DP1}
This section is organized as follows. 
The (refined) real schemes in class $2$ which are in Table \ref{tabellalastchapter}, are realized in the proofs of Proposition \ref{prop: constr_harnack} and Corollary \ref{cor: final_del_pezzo_deg_1}. All the remaining (refined) real schemes in class $2$ and those in class $1$ are constructed in the proof of Proposition \ref{prop: constr_small_pert}. Finally, the (refined) real schemes in class $3$ are realized in Proposition \ref{prop: class_3_DP1} and their construction follows from that of (refined) real schemes in class $2$.   
\begin{table}[h!]
\centering
\begin{tabular}{ |l| l | l |}
\hline
&Real scheme in class $2$& Refined real scheme in class $2$\\
\hline
$(1)$&$0|\alpha:\beta:\gamma:0,$ &$0|\alpha:\beta:\gamma:0,$ \\
& with $ 0 \leq \alpha+ \beta + \gamma \leq 3$ &$0|0:\alpha:\beta:\gamma,$  with $ 0 \leq \alpha+ \beta + \gamma \leq 3$\\
\hline
$(2)$&$0|\langle \langle 1 \rangle \rangle:0:0:0$ &  $0|\langle \langle 1 \rangle \rangle:0:0:0$\\
&&$0|0:0:\langle \langle 1 \rangle \rangle:0$\\
\hline
\end{tabular}
\caption{\label{tabellalastchapter} The (refined) real schemes in $\mathcal{S}_{DP1}(k,2)$ realized in Proposition \ref{prop: constr_harnack} and Corollary \ref{cor: final_del_pezzo_deg_1}.}
\end{table}
%fine table
\begin{defn}
\label{defn: curves_cp3}
The blow-up of $Q$ at the vertex $V$ is the second Hirzebruch surface $\Sigma_{2}$. 
%Let $Bl_V: \Sigma_2 \rightarrow Q$ be the blow-up of the quadratic cone $Q$ at the node $V$; see Section \ref{subsec: cha2_Hirzebruch_surf}. 
Let $k$ and $l$ be two non-negative integers. We say that an algebraic curve $C$ on $Q$ has bidegree $(k,l)$ if the strict transform of $C$ in $\Sigma_{2}$ %$Bl_V^{-1}(\tilde{C})$
has bidegree $(k,l)$; see Section \ref{subsec: cha2_Hirzebruch_surf}.
\end{defn}
\begin{rem}
\label{rem: DP1}
In order to prove Theorem \ref{thm: delpezzo_1} and Proposition \ref{prop: delpezzo_1}, we reduce the construction of real algebraic curves of class $d$ in $Y^{k}$, with prescribed topology, to the construction of pairs of real algebraic curves in $Q$ with prescribed topology as follows.\\
To realize any fixed (refined) real scheme $\mathcal{S}$ in $\mathcal{S}_{DP1}(k,d)$, it is sufficient:
\begin{itemize}
\item to prove the existence of a non-singular real curve $\tilde{S}$ of bidegree $(3,0)$ on $Q$, realizing the real scheme in Fig. \ref{fig: diverse_parti_reali_DP1}.k;
\item to realize a real scheme $\eta$ by a bidegree $(s, \varepsilon)$ real curve $C \subset Q$,  
\end{itemize}
such that the lifting via $\psi$ of $C$ realizes $\mathcal{S}$ on $Y^{k}$, where $\psi: Y^{k} \rightarrow Q$ is the double cover of $Q$ ramified along $\tilde{S}$ and $V$. Moreover, for $k=4$ and $\mathcal{S}$ refined, one has to equip $\mathbb{R}\tilde{S}$ with one of the two complex orientations. 
\end{rem}
\subsubsection{Moving hyperplanes}
\label{subsubsec: small_pert_DP1}
Let us prove a part of Theorem \ref{thm: delpezzo_1} and Proposition \ref{prop: delpezzo_1}.
\begin{prop}
\label{prop: constr_small_pert}
All (refined) real schemes in class $d=1,2$ which are neither prohibited by Proposition \ref{prop: pseudolines} nor in Table \ref{tabellalastchapter}, are realizable in $Y^{k}$ (resp. in $Y$) and in class $d$, with $0 \leq k \leq 3$.
\end{prop}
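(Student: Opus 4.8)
The plan is to use the reduction of Remark~\ref{rem: DP1}: for each target (refined) scheme $\mathcal{S}$ it suffices to produce, on the quadratic cone $Q$, a non-singular real cubic section $\tilde{S}$ of bidegree $(3,0)$ realizing the scheme of Fig.~\ref{fig: diverse_parti_reali_DP1}.$k$, together with a real curve $C$ of bidegree $(s,\varepsilon)$ (where $d=2s+\varepsilon$) whose position relative to $\mathbb{R}\tilde{S}\cup\{V\}$ forces the lift $\psi^{-1}(C)$ to realize $\mathcal{S}$. The branch curves $\tilde{S}$ are available for every $0\le k\le 4$: being of bidegree $(3,0)$ they are trigonal curves on $\Sigma_{2}$, and the real schemes of Fig.~\ref{fig: diverse_parti_reali_DP1}.$k$ can be produced through the dessins d'enfants method (Theorem~\ref{thm: existence_trigonal}); equivalently they are exactly the cubic sections cutting out the $k$-sphere real del Pezzo surfaces of degree $1$. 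I would fix one such $\tilde{S}$ for each $k$, and in the refined case $k=4$ equip $\mathbb{R}\tilde{S}$ with a complex orientation, so that the positive and negative spheres are pinned down as in Notation~\ref{note: coarse_real_scheme}. Throughout I use that $\mathbb{R}Y$ is the double of a region $Q_{i}\subset\mathbb{R}Q$ bounded by $\mathbb{R}\tilde{S}\cup\{V\}$: an arc of $\mathbb{R}C\cap Q_{i}$ with endpoints on the boundary doubles to a circle, while a component of $\mathbb{R}C$ interior to $Q_{i}$ doubles to a pair of ovals.

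First I would treat $d=1$, where $s=0$, $\varepsilon=1$ and $C$ is a single real generatrix, i.e.\ a bidegree $(0,1)$ line through the vertex $V$. Every such line passes through the branch point $V$, so its lift is a class~$1$ curve carrying the unique pseudo-line demanded by Proposition~\ref{prop: pseudolines}. Letting $C$ vary in the pencil of rulings, parametrised by the base $\mathbb{R}P^{1}$, I can either keep it clear of the ovals of $\mathbb{R}\tilde{S}$, so that its lift is the bare scheme $\mathcal{J}$, or push an arc of $C$ bounded by two of its intersection points with $\mathbb{R}\tilde{S}$ into a prescribed region of $\mathbb{R}Q\setminus\mathbb{R}\tilde{S}$; doubling that arc yields the single extra oval, placed on $Y_{0}$ or on a chosen sphere $Y_{j}$ according to the region. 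This realises every class~$1$ scheme left unprohibited by Proposition~\ref{prop: pseudolines}.

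Next I would treat $d=2$, where $s=1$, $\varepsilon=0$ and $C$ is a conic of bidegree $(1,0)$, cut on $Q$ by a hyperplane $H$ not through $V$. Here the moving-hyperplanes idea enters directly: as $H$ moves in a pencil, the real conic $H\cap\mathbb{R}Q$ sweeps across $\mathbb{R}Q$ and meets the ovals and the long component of $\mathbb{R}\tilde{S}$ monotonically. Choosing $H$ generic, or tangent to prescribed ovals of $\mathbb{R}\tilde{S}$ and then replacing the tangent member by a nearby transverse one (a small perturbation in the style of the proofs of Propositions~\ref{prop: constr_quadric_DP2}--\ref{prop: constr_quadric_DP2_3}), I can prescribe which arcs fall into which region of $Q_{i}$. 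Reading off $\psi^{-1}(C)$ component by component then produces the remaining class~$2$ schemes, namely those carrying at least one oval on the $\mathbb{R}P^{2}$-component $Y_{0}$ together with the nested configurations not listed in Table~\ref{tabellalastchapter}; the schemes of that table are instead obtained by Harnack's construction (Proposition~\ref{prop: constr_harnack}).

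The main obstacle I anticipate is organisational rather than a single sharp estimate. One must verify, by an intermediate-value argument on the moving member, that the pencil of generatrices (resp.\ of hyperplane sections) genuinely passes through each required configuration on $(\mathbb{R}Q,\mathbb{R}\tilde{S})$, and then correctly translate that configuration, after doubling, into the prescribed distribution of ovals among $Y_{0},\dots,Y_{k}$. The delicate points are the placement of the extra oval(s) on $Y_{0}$ versus on the spheres, the behaviour of an arc passing through the singular branch point $V$, and, for $k=4$, ensuring that the chosen complex orientation of $\mathbb{R}\tilde{S}$ makes the construction respect the positive/negative labelling, so that the refined scheme --- and not merely its coarsening --- is realised.
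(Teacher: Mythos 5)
Your proposal matches the paper's proof in all essentials: the reduction via Remark \ref{rem: DP1}, the construction of the branch curve $\tilde{S}$ of bidegree $(3,0)$ on $Q$ via dessins d'enfants (Theorem \ref{thm: existence_trigonal}) with a choice of complex orientation in the refined case, single real generatrices for $d=1$, and perturbed hyperplane sections of bidegree $(1,0)$ for $d=2$. The only cosmetic difference is the degenerate member you perturb from: the paper takes a hyperplane through the vertex $V$, so that $Q\cap H$ splits as two generatrices $F_1\cup F_2$ whose independently prescribable positions relative to $\mathbb{R}\tilde{S}$ enumerate all the needed configurations before moving $H$ slightly off $V$, which gives somewhat more direct control than your tangency-plus-intermediate-value argument but is the same moving-hyperplanes method.
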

\begin{proof}
Let us realize all refined real schemes in $\mathcal{S}_{DP1}(4,d)$ described in the statement, following the steps of Remark \ref{rem: DP1}.\\
There exists a real algebraic maximal curve $\tilde{S}$ of bidegree $(3,0)$ on $Q$ realizing the $\mathcal{L}$-scheme $\tilde{\eta}$ depicted in Fig. \ref{fig: cubiche_DP1}.1, respectively in Fig. \ref{fig: cubiche_DP1}.2 (the real graph associate to $\tilde{\eta}$ is completable in degree $2$. Therefore, Theorem \ref{thm: existence_trigonal} assures the existence of such $\tilde{S}$). Choose one of the two complex orientations on $\mathbb{R}\tilde{S}$.\\
\textbf{Case $d=1$}: Any real generatrix $F$ on $Q$ lifts to a real algebraic curve of class $1$ on a $4$-sphere real del Pezzo surface of degree $1$; in Fig. \ref{fig: cubiche_deg1_DP1} from the depicted real generatrices on $Q$, one recovers all refined real schemes in $\mathcal{S}_{DP1}(4,1)$.\\ 
\textbf{Case $d=2$}: For any two generatrices on $Q$, there exists a hyperplane section $H \subset \mathbb{C}P^{3}$ passing through $V$ such that $Q \cap H= F_{1} \cup F_{2}$. Moving slightly $H$, one construct a real algebraic curve $Z_{1}$ of bidegree $(1,0)$ on $Q$ such that $(\mathbb{R}Q \setminus \{V\}, \mathbb{R}\tilde{S}, \mathbb{R}(F_{1}\cup F_{2}))$ and $(\mathbb{R}Q \setminus \{V\}, \mathbb{R}\tilde{S}, \mathbb{R}Z_{1})$ realize the same topological type. Considering all the possible arrangements of $\tilde{S}$ and two generatrices, one can construct real algebraic curves $Z_{1}$ of bidegree $(1,0)$ which lifts to real algebraic curves of class $2$ on a $4$-sphere real del Pezzo surface of degree $1$ realizing all refined real schemes in $\mathcal{S}_{DP1}(4,2)$, but those in Table \ref{tabellalastchapter}.\\
One can realize all remaing real schemes in $\mathcal{S}_{DP1}(k,d)$, with $0 \leq k \leq 3$, applying analogous construction techniques.
\end{proof}
\begin{figure}[!h]
\begin{picture}(100,45)
\put(0,0){\includegraphics[width=1.00\textwidth]{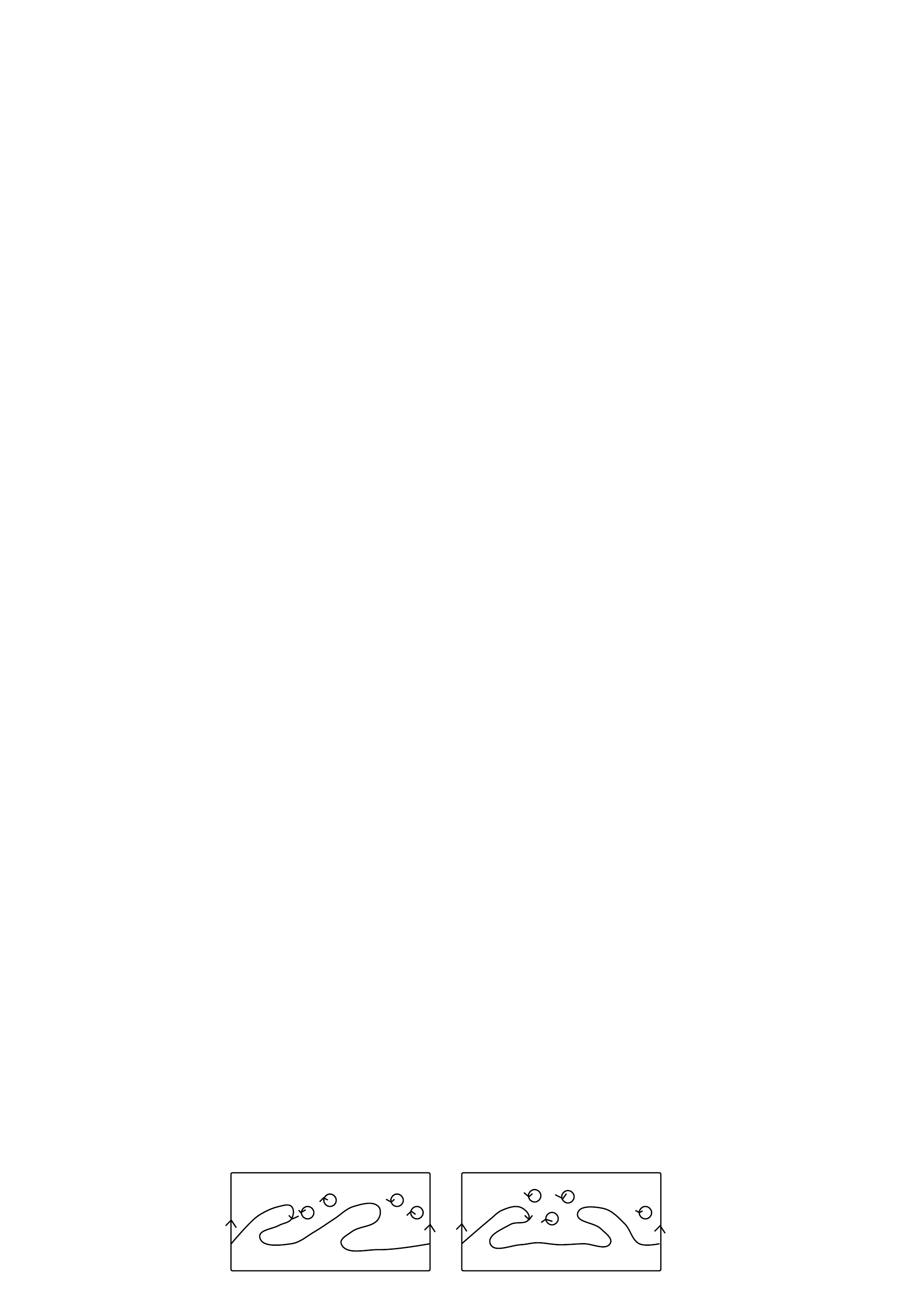}} 
\put(111,-7){Fig. \ref{fig: cubiche_DP1}.1}
\put(205,-7){Fig. \ref{fig: cubiche_DP1}.2}
\end{picture}
\caption{ }
\label{fig: cubiche_DP1}
\end{figure}
%dopo disegni
\begin{figure}[!h]
\begin{picture}(100,25)
\put(0,-37){\includegraphics[width=1.00\textwidth]{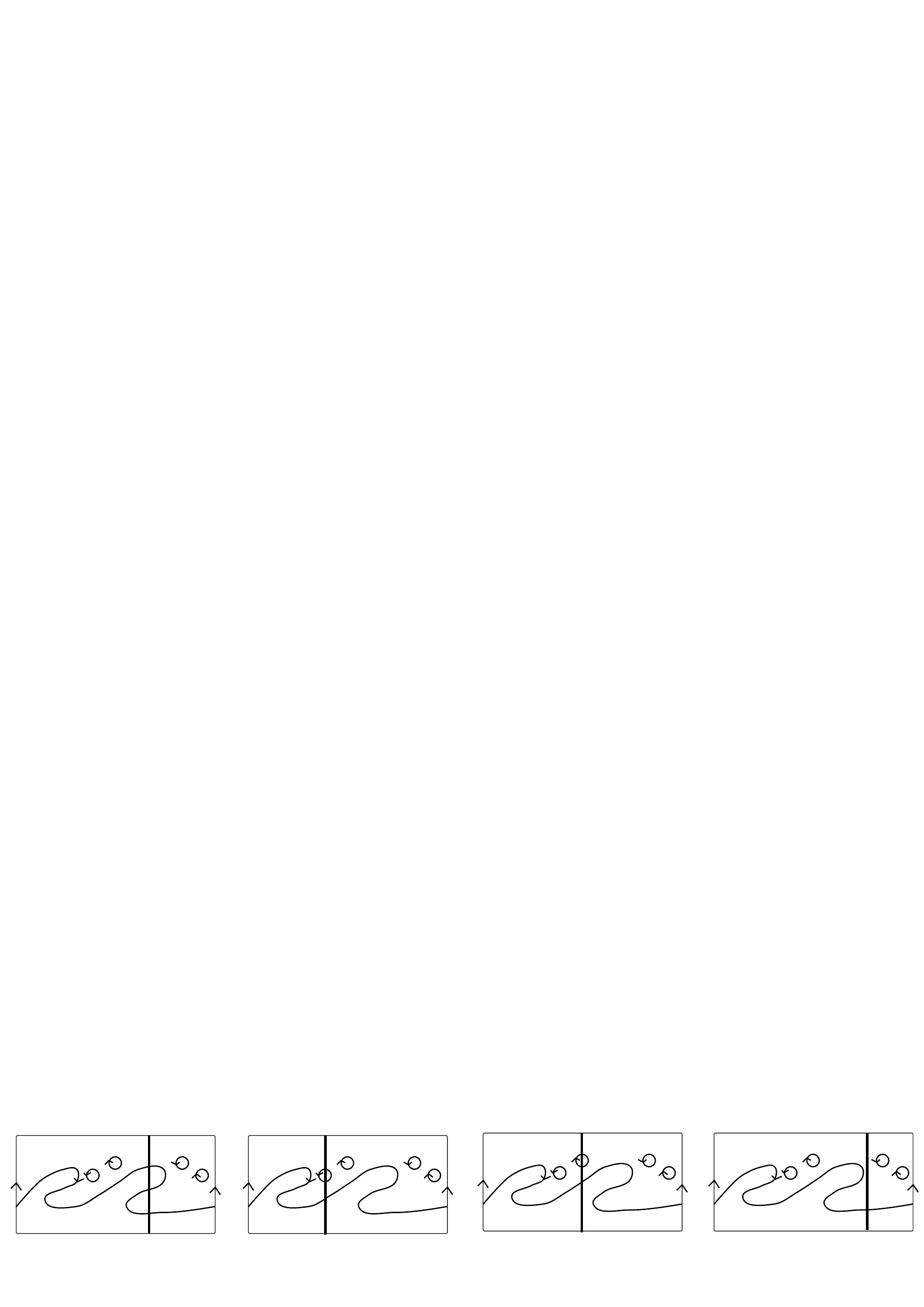}} 
%\put(126,-7){$a)$}
\put(47,34){\textcolor{black}{$\mathbb{R}F$}}
%\put(220,-7){$b)$}
\end{picture}
\caption{ }
\label{fig: cubiche_deg1_DP1}
\end{figure}
%dopo disegni
\begin{exa}[Application of the proof of Proposition \ref{prop: constr_small_pert}]
\label{exa: small_pert} 
%In Fig. \ref{fig: spm1} an application of the proof of Proposition \ref{prop: constr_small_pert}. 
In Fig. \ref{fig: spm1}.1 we fix an $\mathcal{L}$-scheme of $\tilde{S}$ and two generatrices $F_{1}$, $F_{2}$. In Fig. \ref{fig: spm1}.2, it is depicted the topological type realized by the triplet $(\mathbb{R}Q, \mathbb{R}\tilde{S},\mathbb{R}Z_{1})$. The existence of such a pair $(\tilde{S}, Z_{1})$ implies the realization of the refined real scheme $1 \sqcup \langle 1 \rangle|0:0:0:0$ in $\mathcal{S}_{DP1}(4,2)$.
\end{exa}
\begin{figure}[!h]
\begin{picture}(100,55)
\put(6,10){\includegraphics[width=1.0\textwidth]{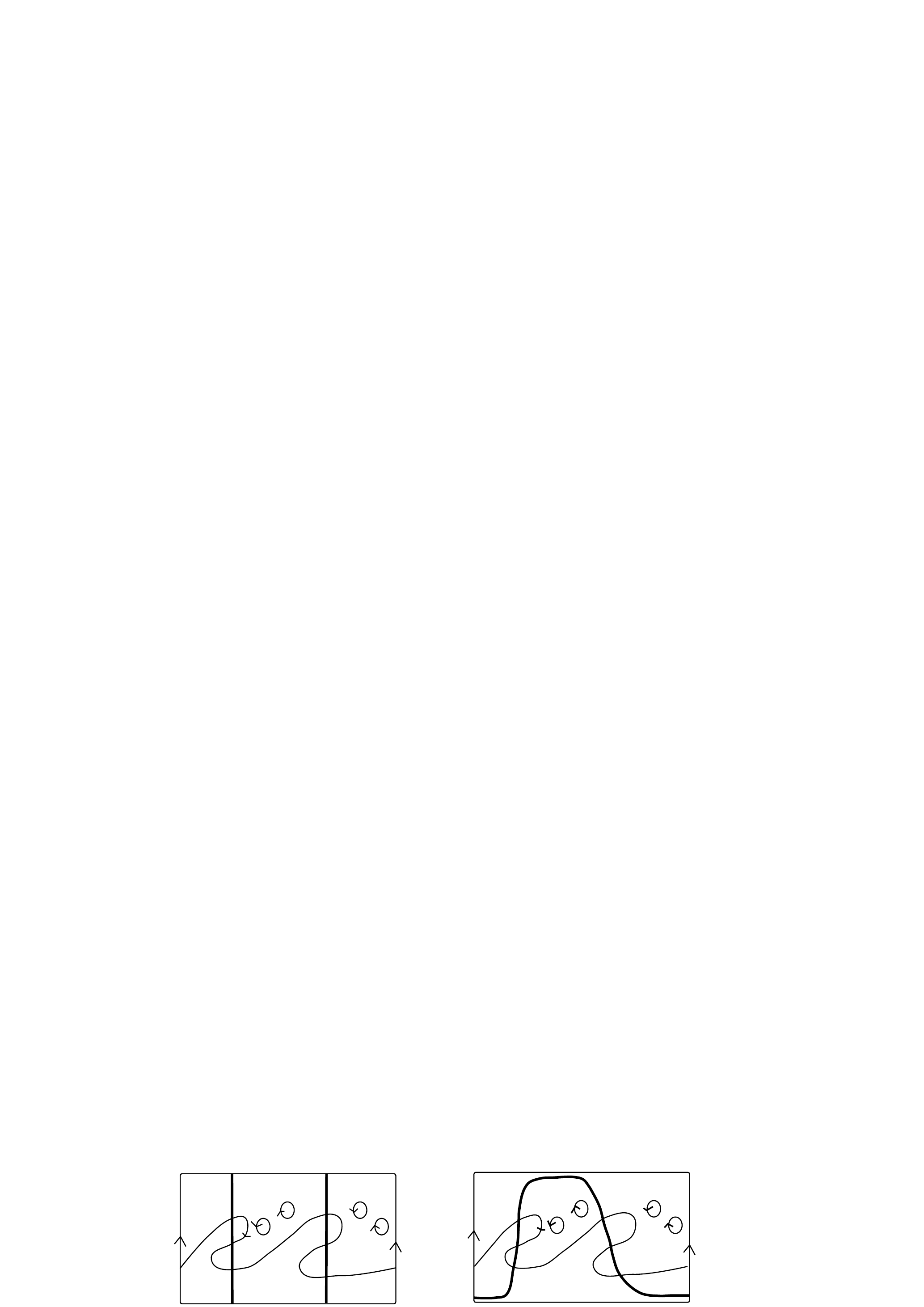} } 
\put(99,-5){Fig. \ref{fig: spm1}.1}
\put(220,-5){Fig. \ref{fig: spm1}.2}
\end{picture}
\caption{ }
\label{fig: spm1}
\end{figure}
%dopo disegni
\subsubsection{Harnack's construction method on Q}
\label{subsubsec: harnack_DP1}
In the proof of Propositions \ref{prop: constr_harnack}, we use a variant of Harnack's construction method. 
%to realize some (refined) real schemes in Table \ref{tabellalastchapter}.  
\begin{prop}
\label{prop: constr_harnack}
Any (refined) real scheme in class $2$ in $(1)$ of Table \ref{tabellalastchapter} is realizable in $Y^{k}$ (resp. $Y$) and in class $2$, with $0 \leq k \leq 3$.
\end{prop}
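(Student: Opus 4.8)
The plan is to follow the reduction of Remark \ref{rem: DP1}: to realize a real scheme $0|\alpha:\beta:\gamma:0$ with $0\le\alpha+\beta+\gamma\le 3$ in $Y^{k}$ it suffices to exhibit, on the quadratic cone $Q$, a non-singular real cubic section $\tilde{S}$ of bidegree $(3,0)$ realizing the configuration of Fig. \ref{fig: diverse_parti_reali_DP1}.k together with a real curve $C$ of bidegree $(s,\varepsilon)=(1,0)$ (here $d=2$ forces $s=1,\varepsilon=0$) whose lift by $\psi$ realizes the scheme. First I would fix $\tilde{S}$: a bidegree $(3,0)$ curve on $Q$ is trigonal, so its existence realizing Fig. \ref{fig: diverse_parti_reali_DP1}.k follows from Theorem \ref{thm: existence_trigonal} exactly as in the proof of Proposition \ref{prop: constr_small_pert}, and in the refined case $k=4$ one then equips $\mathbb{R}\tilde{S}$ with a complex orientation to fix the positive/negative labelling of the spheres.

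The heart of the argument is the construction of the conic $C$. Since $C\circ\tilde{S}=(1,0)\cdot(3,0)=6$ on $\Sigma_{2}$, a real conic meets $\tilde{S}$ in six points; the arcs into which these points divide $\mathbb{R}C$ alternate between the two sides of $\mathbb{R}\tilde{S}$, and under the ramified double cover $\psi$ an arc contained in the half of $\mathbb{R}Q\setminus(\mathbb{R}\tilde{S}\cup\{V\})$ over which $\mathbb{R}Y$ is the double lifts to exactly one oval, sitting on the sphere attached to the region it occupies. Thus realizing $0|\alpha:\beta:\gamma:0$ reduces to producing a smooth conic $C$ meeting $\tilde{S}$ in $2(\alpha+\beta+\gamma)$ real points (the remaining $6-2(\alpha+\beta+\gamma)$ being complex conjugate) so that exactly $\alpha$, $\beta$, $\gamma$ of the resulting arcs lying over $\mathbb{R}Y$ fall in the regions corresponding to $Y_{1},Y_{2},Y_{3}$.

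To obtain such a $C$ I would use the variant of Harnack's method: start from a degenerate bidegree $(1,0)$ configuration — a hyperplane section through the vertex, i.e. a pair of real generatrices, positioned so that its real locus sweeps across the ovals of $\mathbb{R}\tilde{S}$ bounding the desired regions — and perturb it to a smooth conic, choosing the sign of the perturbation at each intersection so that the newly created arcs are pushed into the prescribed regions rather than their complements, while the undesired intersection points are pushed off the real locus. Varying which ovals of $\mathbb{R}\tilde{S}$ the auxiliary configuration approaches, and with what multiplicities, yields every triple $(\alpha,\beta,\gamma)$ with $\alpha+\beta+\gamma\le 3$. Lifting by $\psi$ then gives $0|\alpha:\beta:\gamma:0$; in the refined case the complex orientation fixed on $\mathbb{R}\tilde{S}$ determines the sign of the spheres, and the mirror construction carried out in the negative regions produces $0|0:\alpha:\beta:\gamma$ as well. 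For $k<4$ the same perturbation is run against the cubic of Fig. \ref{fig: diverse_parti_reali_DP1}.k with fewer ovals.

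The main obstacle I anticipate is the bookkeeping in the Harnack step: guaranteeing that the perturbation signs can be chosen coherently so that all $2(\alpha+\beta+\gamma)$ surviving intersections are real, that each arc lands wholly inside the intended region (and does not straddle $\mathbb{R}\tilde{S}$ in a way that merges or cancels ovals after lifting), and that this works uniformly for every admissible triple. Verifying that the lifted picture is \emph{exactly} $0|\alpha:\beta:\gamma:0$ — with no spurious ovals appearing on $Y_{0}=\mathbb{R}P^{2}$ or on $Y_{4}$, and with the correct count on each sphere — is the point that will require the most care.
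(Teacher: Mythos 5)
Your overall reduction (construct $\tilde{S}$ on $Q$, construct a bidegree $(1,0)$ curve, lift via $\psi$ following Remark \ref{rem: DP1}) is the right frame, but the order in which you build the two curves is fatal. You fix the cubic section $\tilde{S}$ first and then try to obtain the class-$2$ curve as a small perturbation of a pair of real generatrices; this is really the moving-hyperplanes technique of Proposition \ref{prop: constr_small_pert}, and the schemes of $(1)$ in Table \ref{tabellalastchapter} are precisely the ones it cannot reach. On $\Sigma_2$ a generatrix $F$ satisfies $F\cdot \tilde{S}=3$, so each of your two generatrices meets $\mathbb{R}\tilde{S}$ in at most $3$ points; since crossing an oval costs an even number of points, a generatrix crosses at most one oval of $\mathbb{R}\tilde{S}$, and a small perturbation of $F_1\cup F_2$ therefore meets at most two distinct ovals, and any single oval in at most $4$ real points. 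This already rules out $0|1:1:1:0$, $0|2:1:0:0$ and $0|3:0:0:0$. Worse, there is a parity obstruction that defeats even the small distributions: each real generatrix joins the two ends of the cylinder $\mathbb{R}Q\setminus\{V\}$, hence crosses the long component of $\mathbb{R}\tilde{S}$ an odd number of times, and these transverse crossings persist under perturbation (a sign choice can only destroy pairs of crossings born from a tangency, not a transverse one). Consequently the smoothed conic always contains an arc in the region between the long component and $V$ — exactly the region whose double is $Y_0\cong\mathbb{R}P^2$ — bounded by two such crossings, so whenever it meets $\mathbb{R}\tilde{S}$ at all it acquires a parasitic oval on $Y_0$. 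This is the danger you flag in your last paragraph, but within your setup it is not a bookkeeping issue: it cannot be avoided.

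The paper's proof resolves this by reversing the roles: fix the bidegree $(1,0)$ curve $Z_1$ \emph{first}, and then Harnack-build the branch cubic around it. One perturbs $Z_1\cup L_1$ by a product of four generatrix equations to get a non-singular bidegree $(2,0)$ curve $L_2$ with $L_2\cap Z_1$ exactly at the four chosen points $F_i\cap Z_1$, and then perturbs $Z_1\cup L_2$ in the same way to get $\tilde{S}$ of bidegree $(3,0)$ crossing $Z_1$ at six prescribed points. Because the ovals of $\tilde{S}$ are created along $\mathbb{R}Z_1$, one dictates which arcs of $\mathbb{R}Z_1$ they enclose — three arcs in one oval, or one arc in each of three ovals, etc. — while the long component is kept disjoint from $\mathbb{R}Z_1$, so no component appears on $Y_0$ after lifting; the refined schemes then follow by choosing which (positive or negative) ovals participate. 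Your use of Theorem \ref{thm: existence_trigonal} to produce $\tilde{S}$ is correct but happens too early: for these schemes all the freedom must be invested in the branch curve, not in the conic.
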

\begin{proof}
Fix a non-singular real algebraic curve $Z_1$ of bidegree $(1,0)$ on $Q$. Pick any other real algebraic curve $L_1$ of bidegree $(1,0)$ on $Q$ such that $Z_1 \cap L_1$ consists of two distinct real points. Let $P_0(x,y)P_1(x,y)=0$ be a polynomial equation defining the union of $Z_1$ and $L_1$ in some local affine chart of $Q$. Choose $4$ real generatrices $F_i$ on $Q$, with $i=1,2,3,4$, intersecting transversely $Z_1 \cup L_1$. Replace the left side of the equation $P_0(x,y)P_1(x,y)=0$ with $P_0(x,y)P_1(x,y)+ \varepsilon f_1(x,y)f_2(x,y)f_3(x,y)f_4(x,y)$, where $f_i(x,y)=0$ is an affine equation $F_i$ and $\varepsilon \not = 0$ is a sufficient small real number. Up to a choice of the sign of $\varepsilon$, one constructs a small perturbation $L_2$ of $Z_1 \cup L_1$,
where $L_2$ is a non-singular real curve of bidegree $(2,0)$ such that $\bigsqcup_{i=1}^4F_i \cap Z_1= L_2 \cap Z_1$. \\
Analogously, by a small perturbation of $Z_1 \cup L_2$, one can construct a non-singular real algebraic curve $\tilde{S}$ of bidigree $(3,0)$ on $Q$. \\
%In Fig. \ref{fig: harnack_DP1}.2 and \ref{fig: harnack_DP1}.3 we have depicted an example of a such construction before and after a small perturbation. \\
Following Remark \ref{rem: DP1}, one can construct a pair of real algebraic curves $(\tilde{S}, Z_{1})$ as explained above in order to realize any (refined) real schemes in $(1)$ of Table \ref{tabellalastchapter}  in $Y^{k}$ (resp. $Y$) and in class $2$. 
\end{proof}
\begin{exa}
The construction of a pair $(\tilde{S}, Z_{1})$ described in the proof of Proposition \ref{prop: constr_harnack}, is divided into two steps. An example of each of such steps is depicted from right to left in Fig. \ref{fig: harnack_DP1}.1 - \ref{fig: harnack_DP1}.2 and in Fig. \ref{fig: harnack_DP1}.2 - \ref{fig: harnack_DP1}.3. Each pair of figures represent a still image of \textit{before} and \textit{after} a small perturbation; the dashed segments are the generatrices $F_i$'s. The existence of a pair $(\tilde{S}, Z_{1})$ with real scheme as depicted in Fig. \ref{fig: harnack_DP1}.3, implies the realization of the refined real scheme $0|1:1:1:0$ in $\mathcal{S}_{DP1}(4,2)$.
\end{exa}
\begin{figure}[!h]
\begin{picture}(100,40)
\put(8,0){\includegraphics[width=1.00\textwidth]{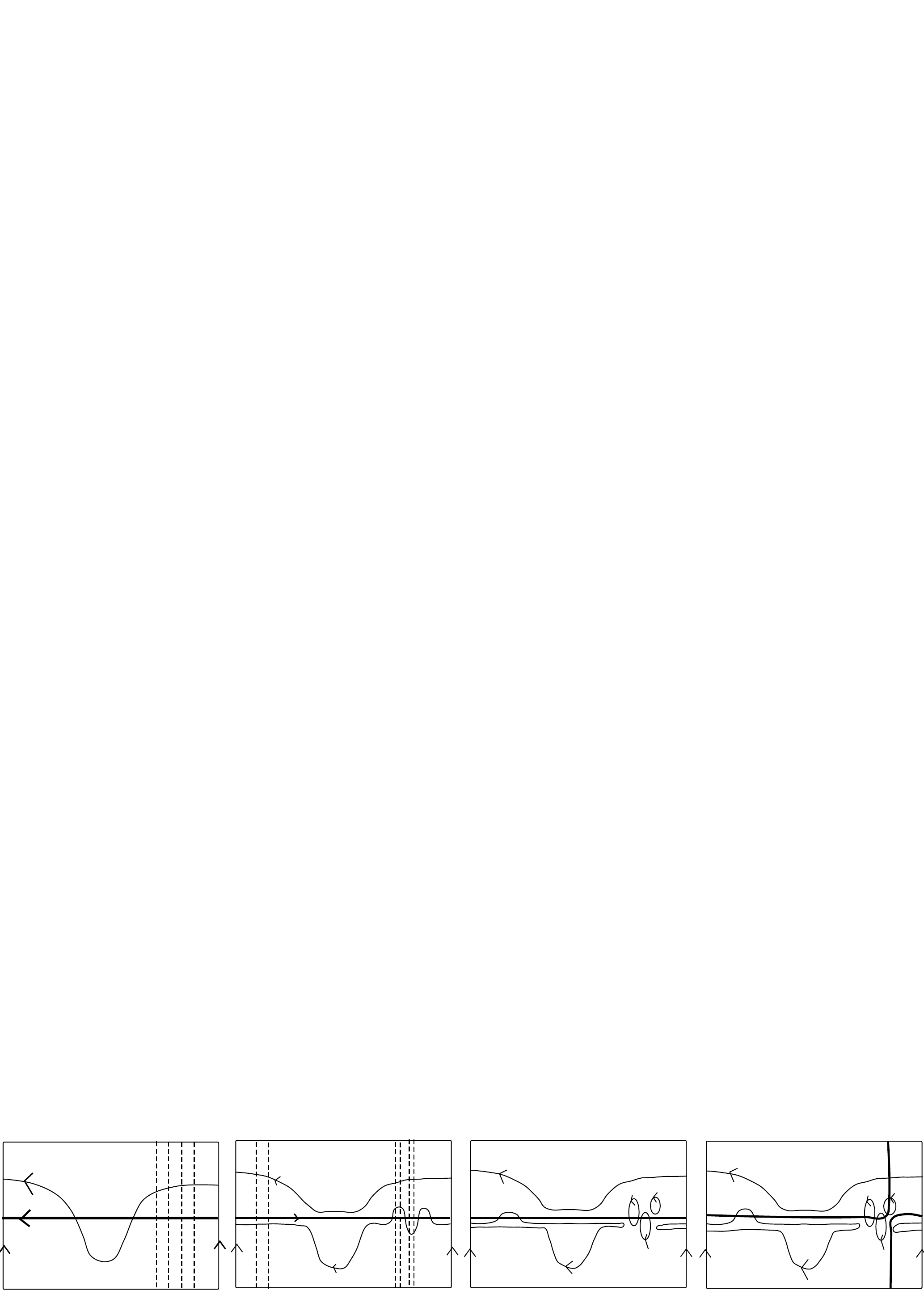}} 
\end{picture}
\put(-112,30){\textcolor{black}{$\mathbb{R}Z_1$}}
\put(-112,55){\textcolor{black}{$\mathbb{R}L_1$}}
\put(-37,72){\textcolor{black}{$\mathbb{R}F_i$'s}}
%\put(98,222){\textcolor{black}{$\mathbb{R}F_i^2$'s}} 
\put(-67,-5){Fig. \ref{fig: harnack_DP1}.1}
\put(248,72){\textcolor{black}{$\mathbb{R}Z_2$}}
\put(25,-5){Fig. \ref{fig: harnack_DP1}.2}
\put(120,-5){Fig. \ref{fig: harnack_DP1}.3}
\put(216,-5){Fig. \ref{fig: harnack_DP1}.4}
\caption{$1-3$: Example of Harnack's construction method. $4$: Application of the construction in Proposition \ref{prop: class_3_DP1}.}
\label{fig: harnack_DP1}
\end{figure}
%dopo disegni
\subsubsection{Five particular constructions and Class $3$}
\label{subsubsec: particular_bitchy_contruction_DP1}
Thanks to Corollary \ref{cor: final_del_pezzo_deg_1} and Proposition \ref{prop: class_3_DP1}, we end the proof of Theorem \ref{thm: delpezzo_1} and Proposition \ref{prop: delpezzo_1}. First of all, we give some intermediate constructions in Proposition \ref{prop: (2,2)} whose proofs rely on Viro's patchworking method (\cite{Viro84a}, \cite{Viro84b}, \cite{Viro89}, \cite{Viro83}) and constructions via \textit{dessins d'enfants} (Section \ref{subsec: cha2_dess_enf}). In Proposition \ref{prop: (2,2)}, we  talk about charts of real algebraic curves; this is a key notion of the Viro method and a definition can be found in \cite{Viro83}, \cite{IteShu03}.
\begin{prop}
\label{prop: (2,2)}
There exist real algebraic curves of bidegree $(3,0)$ on $Q$ with charts respectively as depicted in Fig. \ref{fig: chart_particular_2}.
\end{prop}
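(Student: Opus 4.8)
The plan is to realize each of the prescribed charts by exhibiting an explicit real algebraic curve of bidegree $(3,0)$ on $Q$, using the two construction engines announced at the start of the subsection: Orevkov's dessins d'enfants (Theorem \ref{thm: existence_trigonal}) and Viro's patchworking. The first step is to transport the problem onto a Hirzebruch surface. Since the blow-up of $Q$ at its vertex $V$ is $\Sigma_2$ (Definition \ref{defn: curves_cp3}), a curve of bidegree $(3,0)$ on $Q$ is the same datum as a curve of bidegree $(3,0)$ on $\Sigma_2$, that is, a trigonal curve; by the description of Newton polygons in Section \ref{subsec: cha2_Hirzebruch_surf} its chart is supported on the triangle with vertices $(0,0)$, $(0,3)$, $(6,0)$. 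Thus every chart in Fig. \ref{fig: chart_particular_2} lives on this triangle, and it suffices to produce, for each picture, a trigonal curve on $\Sigma_2$ with the prescribed chart.

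For those charts that correspond directly to a trigonal $\mathcal{L}$-scheme in $\mathbb{R}\Sigma_2$, I would proceed via dessins d'enfants. For each such chart I would read off the underlying $\mathcal{L}$-scheme $\eta$, encode it into its real graph $\overline{\Gamma}$ on $\mathbb{R}P^1$ following the algorithm of Definition \ref{defn: completable_graph} (placing a $\times$-, $\circ$- or $\bullet$-vertex according to how the fibers of $\mathcal{L}$ meet $\eta$), and then exhibit an explicit completion $\Gamma$ to a real trigonal graph of degree $2$. Verifying that $\Gamma$ satisfies the conditions of Definition \ref{defn: complete_graph} is the crux: in particular each of the three types of essential vertices must carry a total of $12\cdot 2=24$ incident edges, the valence conditions modulo $4$ and $6$ at $\circ$- and $\bullet$-vertices must hold, there must be no monochrome cycles, and the edge orientations must be globally compatible. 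Once $\overline{\Gamma}$ is shown to be completable in degree $2$, Theorem \ref{thm: existence_trigonal} produces a real algebraic trigonal curve realizing $\eta$, hence the desired chart.

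For the remaining charts — typically those with more ovals or a nesting pattern awkward to encode as a single $\mathcal{L}$-scheme — I would use Viro's patchworking. Here I would subdivide the Newton triangle into smaller polygons forming a coherent (convex) subdivision, assign to each piece the chart of an explicit lower-degree real curve (several of these pieces can themselves be supplied by the dessins d'enfants step above, or taken from Harnack's method as in Proposition \ref{prop: constr_harnack}), check that the charts agree along the shared edges, and glue them by the patchworking theorem to obtain a curve whose chart is the prescribed one.

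The main obstacle I anticipate is the combinatorial bookkeeping of the completions: for each prescribed $\mathcal{L}$-scheme one must genuinely find a degree-$2$ completion of $\overline{\Gamma}$, and existence is not automatic, since the valence and edge-count constraints of Definition \ref{defn: complete_graph} are rigid and sharply restrict which $\mathcal{L}$-schemes are realizable. Checking completability for every picture in Fig. \ref{fig: chart_particular_2} — or, in the patchworking route, producing a coherent subdivision all of whose pieces carry mutually compatible charts — is where the real work lies; the rest is a routine translation between charts, $\mathcal{L}$-schemes and graphs.
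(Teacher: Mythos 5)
Your overall toolkit (dessins d'enfants plus Viro patchworking on the Newton triangle with vertices $(0,0),(0,3),(6,0)$) is the right family of methods, but the route you present as primary would fail, and the patchworking route is missing its essential input. The failure first: a chart carries strictly more information than a trigonal $\mathcal{L}$-scheme, namely the arrangement of the curve with respect to the coordinate axis $\{y=0\}$, i.e.\ with respect to a fixed curve of bidegree $(1,0)$ in $|B_2|$ — and this extra datum is exactly what the proposition is for, since Corollary \ref{cor: final_del_pezzo_deg_1} extracts from Fig. \ref{fig: chart_particular_2} a \emph{pair} $(\tilde S, Z_1)$ of curves of bidegrees $(3,0)$ and $(1,0)$ with prescribed mutual topology. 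Theorem \ref{thm: existence_trigonal} only realizes a trigonal $\mathcal{L}$-scheme up to $\mathcal{L}$-isotopy and gives no control over the position relative to a chosen section of bidegree $(1,0)$; nor can you apply it to the union of the $(3,0)$ curve with that section, since the union has bidegree $(4,0)$ and is not trigonal. So ``reading off the $\mathcal{L}$-scheme and completing the real graph in degree $2$'' does not realize the charts, and indeed the paper never completes any graph in degree $2$ in this proof.

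Your patchworking route is closer to what the paper actually does: it subdivides the triangle into the trapeze $(0,0),(0,2),(2,2),(6,0)$ of a bidegree $(2,2)$ curve and a translate of the triangle of a bidegree $(1,0)$ curve, and glues the corresponding charts by Viro's method. But the real work is producing bidegree $(2,2)$ curves on $\Sigma_2$ with the charts of Fig. \ref{fig: chart_particular_3}, and these curves are \emph{not} trigonal, so dessins d'enfants cannot supply them directly — contrary to your suggestion that the pieces ``can themselves be supplied by the dessins d'enfants step above.'' The missing idea is the detour through $\Sigma_4$ (the same device as in Lemma \ref{lem: curves_sigma1_DP2}): one completes, in degree $4$ (Fig. \ref{fig: intermediate_construction_3}), the real graphs of the trigonal $\mathcal{L}$-schemes of Fig. \ref{fig: intermediate_construction_2}, obtaining trigonal curves $\tilde D_i$ on $\Sigma_4$ that are forced to be reducible (they have $8$ non-degenerate double points and normalizations with $5$ real connected components, hence split as a $(2,0)$ curve union a $(1,0)$ curve), and then applies the birational transformations $\beta^{-1}_{p_5}\beta^{-1}_{p_6}:\Sigma_4 \dashrightarrow \Sigma_2$ at two real nodes. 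The $(2,0)$ component becomes the desired $(2,2)$ curve $D_i$, while the $(1,0)$ component becomes the auxiliary curve playing the role of the axis, so the mutual arrangement — hence the chart — is controlled. Without this intermediate construction, neither of your two routes produces the charts of Fig. \ref{fig: chart_particular_2}.
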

\begin{proof}
\begin{figure}[!h]
\begin{picture}(100,125)
\put(47,63){Fig. \ref{fig: chart_particular_2}.1}
\put(164,63){Fig. \ref{fig: chart_particular_2}.2}
\put(284,63){Fig. \ref{fig: chart_particular_2}.3}
\put(108,-3){Fig. \ref{fig: chart_particular_2}.4}
\put(228,-3){Fig. \ref{fig: chart_particular_2}.5}
\put(-19,7){\includegraphics[width=1.1\textwidth]{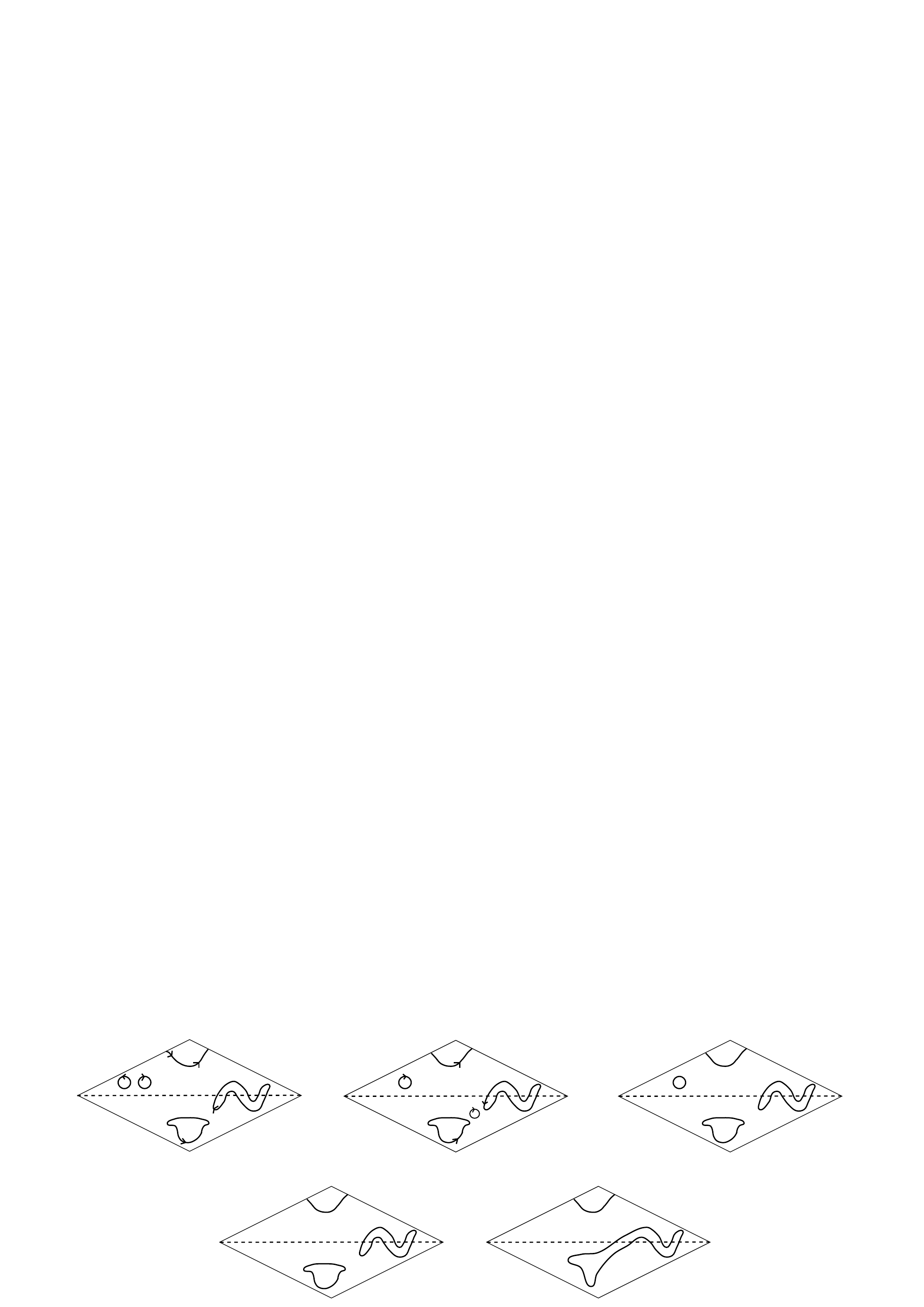}} 
\end{picture}
\caption{Charts of real algebraic curves.}
\label{fig: chart_particular_2}
\end{figure}
%dopo disegni
\begin{figure}[!h]
\begin{picture}(100,125)
\put(43,67){Fig. \ref{fig: chart_particular_3}.1}
\put(161,67){Fig. \ref{fig: chart_particular_3}.2}
\put(285,67){Fig. \ref{fig: chart_particular_3}.3}
\put(104,-3){Fig. \ref{fig: chart_particular_3}.4}
\put(222,-3){Fig. \ref{fig: chart_particular_3}.5}
\put(-15,7){\includegraphics[width=1.1\textwidth]{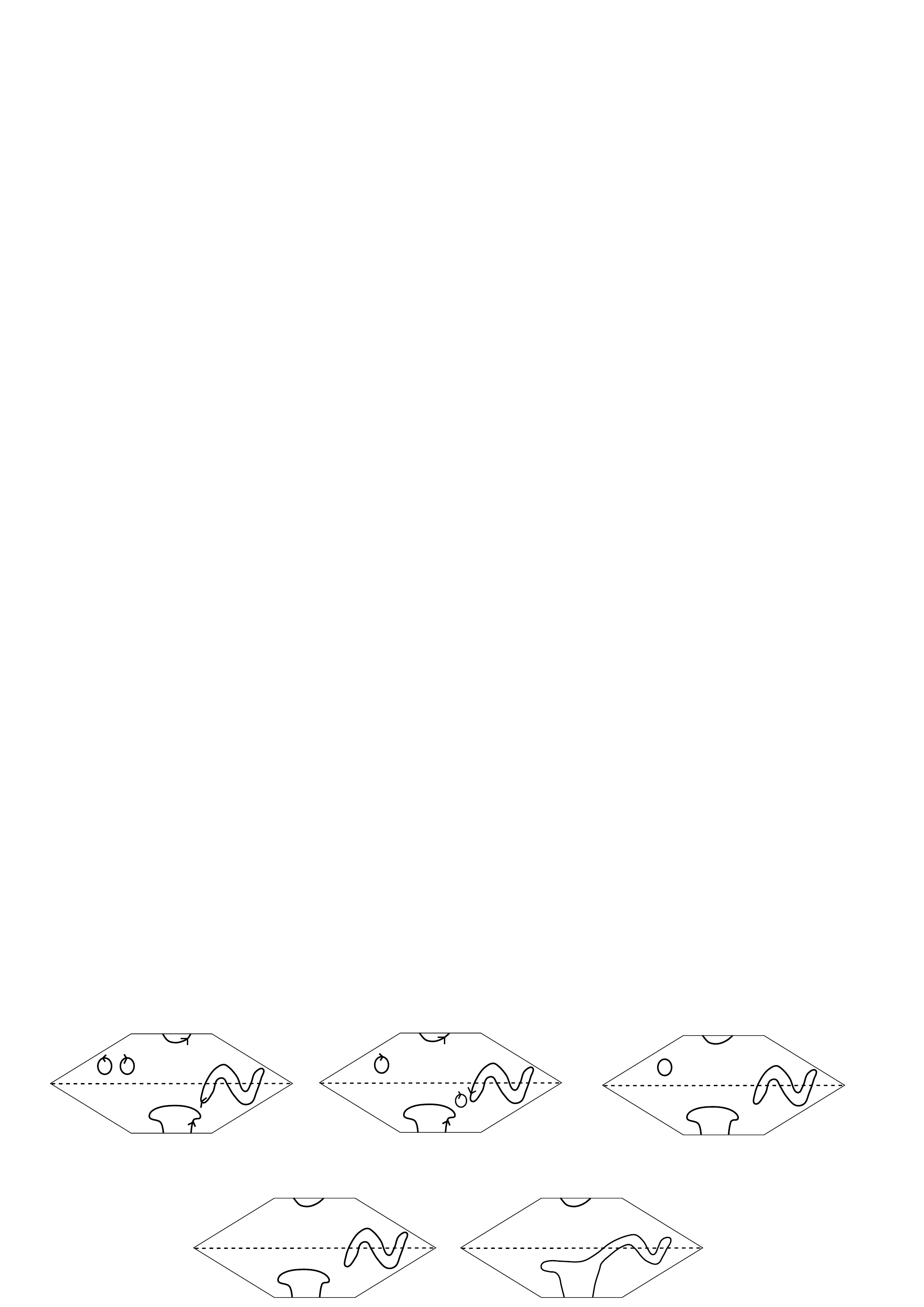}} 
\end{picture}
\caption{Charts of real algebraic curves.}
\label{fig: chart_particular_3}
\end{figure}
%dopo disegni

\begin{figure}[!h]
\begin{picture}(100,130)
\put(63,67){Fig. \ref{fig: intermediate_construction_2}.1}
\put(80,111){$p_6$}
\put(55,111){$p_5$}
\put(163,67){Fig. \ref{fig: intermediate_construction_2}.2}
\put(174,110){$p_6$}
\put(149,110){$p_5$}
\put(277,67){Fig. \ref{fig: intermediate_construction_2}.3}
\put(294,110){$p_6$}
\put(269,110){$p_5$}
\put(115,-3){Fig. \ref{fig: intermediate_construction_2}.4}
\put(135,42){$p_6$}
\put(110,42){$p_5$}
\put(224,-3){Fig. \ref{fig: intermediate_construction_2}.5}
\put(240,43){$p_6$}
\put(215,43){$p_5$}
\put(-15,3){\includegraphics[width=1.1\textwidth]{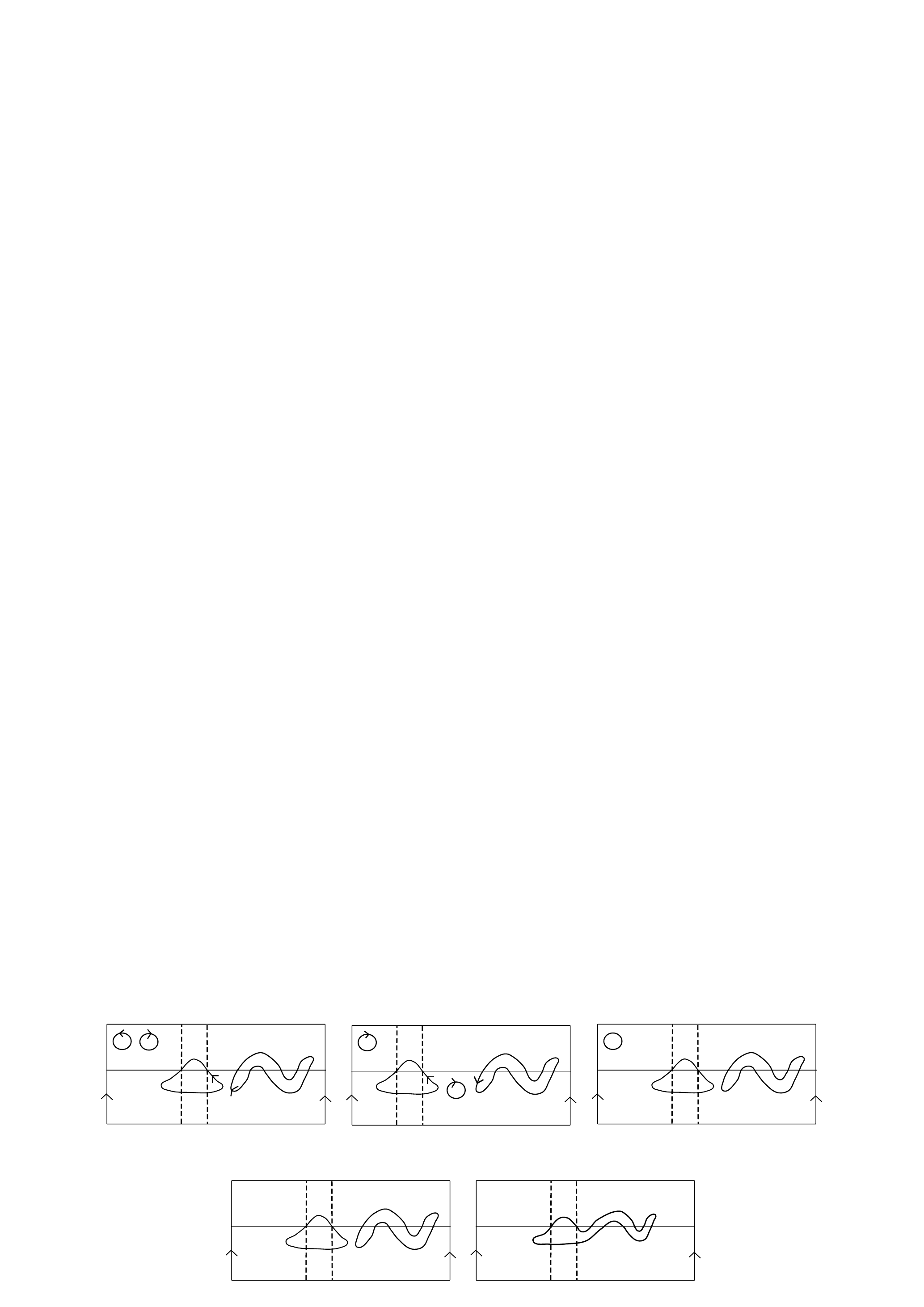}} 
\end{picture}
\caption{Intermediate constructions.}
\label{fig: intermediate_construction_2}
\end{figure}
%dopo disegni
\begin{figure}[!h]
\begin{picture}(100,230)
\put(25,112){Fig. \ref{fig: intermediate_construction_3}.1}
\put(174,112){Fig. \ref{fig: intermediate_construction_3}.2}
\put(317,112){Fig. \ref{fig: intermediate_construction_3}.3}
\put(95,-1){Fig. \ref{fig: intermediate_construction_3}.4}
\put(246,-1){Fig. \ref{fig: intermediate_construction_3}.5}
\put(-15,7){\includegraphics[width=1.1\textwidth]{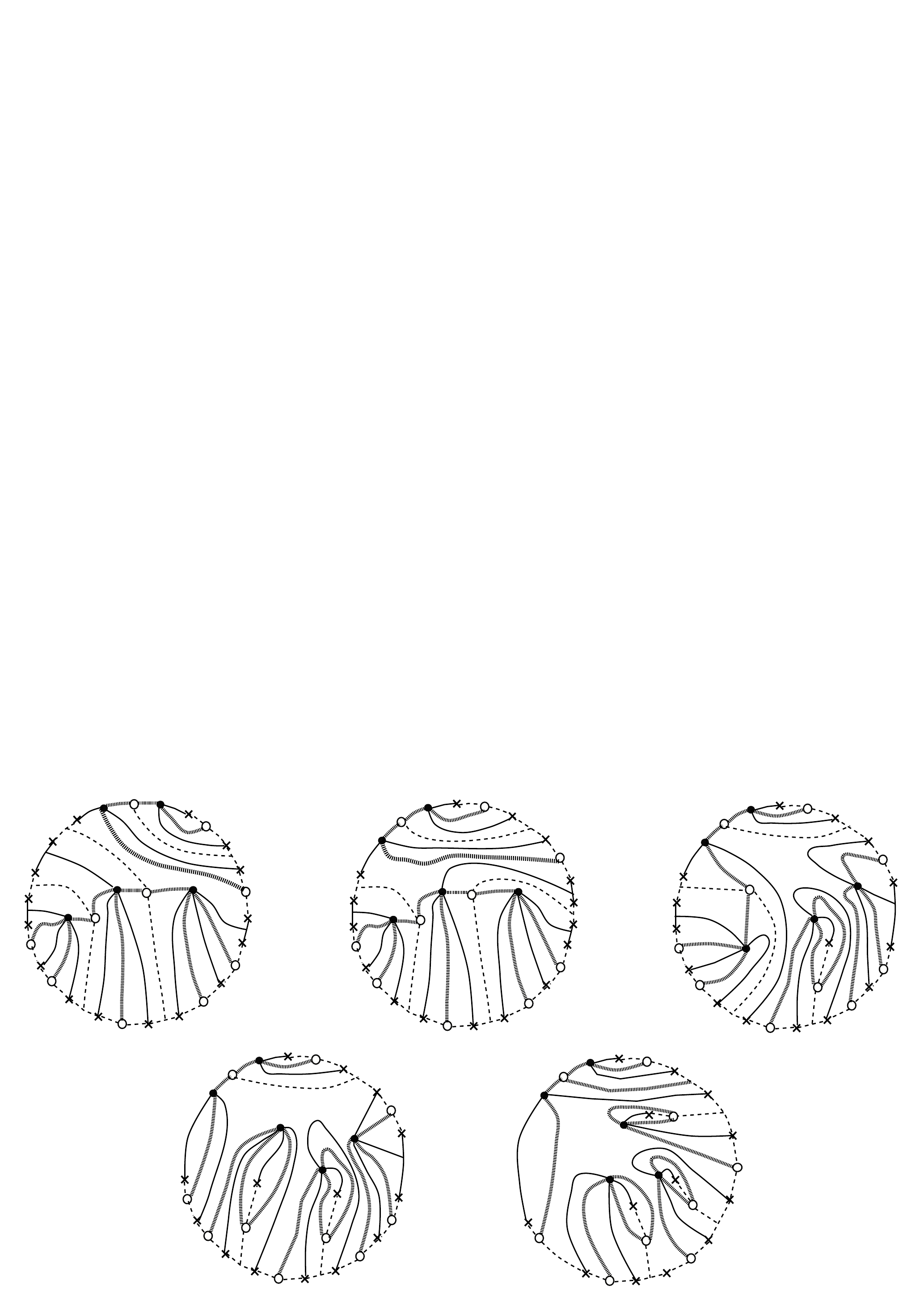}} 
\end{picture}
\caption{Intermediate constructions.}
\label{fig: intermediate_construction_3}
\end{figure}
%dopo disegni
As first step, let us construct real algebraic curves of bidegree $(2,2)$ in $\Sigma_2$ with charts respectively as in Fig. \ref{fig: chart_particular_3}. \\
Let $\tilde{\eta}_i$ 
%, $\tilde{\eta}_2$, $\tilde{\eta}_3$, $\tilde{\eta}_4$ and $\tilde{\eta}_5$ 
be a trigonal $\mathcal{L}$-scheme on $\mathbb{R}\Sigma_4$ as in Fig. \ref{fig: intermediate_construction_2}, respectively with $i=1,2,3,4,5$. By Theorem \ref{thm: existence_trigonal} 
the completions, depicted in Fig. \ref{fig: intermediate_construction_3}, of the real graphs associated to the $\tilde{\eta}_i$'s prove the existence of real trigonal curves $\tilde{D}_i$'s in $\Sigma_4$ realizing the $\eta_i$'s. Moreover, the $\tilde{D}_i$'s are reducible because they have $8$ non-degenerate double points and their normalizations have $5$ real connected components. In addition, the $\tilde{D}_i$'s have to be the union of a real curve of bidegree $(2,0)$ and a real curve of bidegree $(1,0)$.\\
Let us consider, as defined in Section \ref{subsec: cha2_Hirzebruch_surf}, the birational transformation $$\Xi:=\beta^{-1}_{p_{5}}\beta^{-1}_{p_{6}}:(\Sigma_4,
\tilde{D}_i)\dashrightarrow (\Sigma_2, D_i\cup A),$$ where the points $p_5,p_6$, are the real double points of $\tilde{D}_i$ as depicted in Fig. \ref{fig: intermediate_construction_2}, where the dashed real fibers are those intersecting $p_5$ and $p_6$. The image via $\Xi$ of the reducible real trigonal curve $\tilde{D}_i$ is a reducible curve, in particular the union of two non-singular real curves $D_i$ and $A$, which are respectively of bidegree $(2,2)$ and $(1,0)$ in $\Sigma_2$. Moreover, the charts of the $D_i$'s are respectively as in Fig. \ref{fig: chart_particular_3}.\\
Finally, we can apply Viro's patchworking method to the polynomials and charts of the $D_i$'s and a non-singular real algebraic curve of bidegree $(1,0)$ in $\Sigma_2$, and we construct bidegree $(3,0)$ real algebraic maximal curves on $\mathbb{C}P^3$, whose charts and arrangements with respect to the coordinates axis $\{y=0\}$ are as in Fig. \ref{fig: chart_particular_2}.
\end{proof}
Proposition \ref{prop: (2,2)} directly implies the following statement.
\begin{cor}
\label{cor: final_del_pezzo_deg_1}
Any (refined) real scheme in $(2)$ of Table \ref{tabellalastchapter} is realizable in $Y^{k}$ (resp. $Y$) and in class $2$, with $0 \leq k \leq 3$.
\end{cor}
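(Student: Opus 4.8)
The plan is to deduce the statement directly from Proposition \ref{prop: (2,2)} by means of the reduction recorded in Remark \ref{rem: DP1}. Since class $d=2$ corresponds to $s=1$ and $\varepsilon=0$, realizing a real scheme in class $2$ amounts to producing a non-singular real branch cubic $\tilde{S}$ of bidegree $(3,0)$ on $Q$ whose real part realizes the arrangement of Fig. \ref{fig: diverse_parti_reali_DP1}.k, together with a bidegree $(1,0)$ curve $C$ on $Q$, so that the lift $\psi^{-1}(C)$ realizes the prescribed scheme on $Y^{k}$; here $\psi$ is the double cover of $Q$ branched along $\tilde{S}$ and the vertex $V$. The existence of the cubic $\tilde{S}$ with the required real part is furnished by Theorem \ref{thm: existence_trigonal} as in the previous constructions, so the only new ingredient is the bidegree $(1,0)$ curve.

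First I would take the bidegree $(3,0)$ maximal curves built in Proposition \ref{prop: (2,2)} and read the coordinate axis $\{y=0\}$ appearing in Fig. \ref{fig: chart_particular_2} as the sought bidegree $(1,0)$ curve $C$: in the coordinates $c(x,y)=y^{3}+b_{2}(x)y+b_{3}(x)$ used for trigonal curves, $\{y=0\}$ is a section of $\Sigma_{2}$, hence of bidegree $(1,0)$ in the sense of Definition \ref{defn: curves_cp3}. Thus each chart of Fig. \ref{fig: chart_particular_2} encodes a triplet $(\mathbb{R}Q,\mathbb{R}\tilde{S},\mathbb{R}C)$, with $\tilde{S}$ the branch cubic and $C=\{y=0\}$ the curve to be lifted. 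Applying the gluing dictionary of Remark \ref{rem: DP1}, the preimage $\psi^{-1}(\mathbb{R}C)$ decomposes into ovals distributed over the connected components of $\mathbb{R}Y^{k}$ according to the region of $\mathbb{R}Q\setminus(\mathbb{R}\tilde{S}\cup\{V\})$ that each portion of $\mathbb{R}C$ occupies; reading off the isotopy type from the chart one checks that exactly one sphere carries a nest of depth $3$ and all remaining components carry no ovals. This yields the real scheme $0|\langle\langle 1\rangle\rangle:0:0:0$ in $\mathcal{S}_{DP1}(k,2)$ for every $0\le k\le 3$, the number of ovals of $\tilde{S}$ being dictated by Fig. \ref{fig: diverse_parti_reali_DP1}.k.

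For the refined statement ($k=4$) I would in addition equip $\mathbb{R}\tilde{S}$ with one of its two complex orientations, which by Section \ref{subsec: positive_negative_DP1} labels the four spheres of $\mathbb{R}Y$ as positive or negative independently of the chosen orientation. Selecting, among the configurations of Fig. \ref{fig: chart_particular_2}, the one in which the region responsible for the nest of depth $3$ is bounded by a positive oval of $\tilde{S}$, respectively by a negative oval, realizes the two refined real schemes $0|\langle\langle 1\rangle\rangle:0:0:0$ and $0|0:0:\langle\langle 1\rangle\rangle:0$ of row $(2)$ of Table \ref{tabellalastchapter}.

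The substantive content has already been discharged inside Proposition \ref{prop: (2,2)}, whose charts are obtained through \emph{dessins d'enfants} and Viro's patchworking; what is left for the corollary is purely the bookkeeping of the double cover. Consequently the only delicate point I anticipate is verifying that the arcs of $\mathbb{R}C$ that lift to a single sphere are genuinely nested there — that is, that the chart realizes $\langle\langle 1\rangle\rangle$ rather than a configuration with the injective pairs distributed over several spheres — and, in the refined case, that the nest-producing region is indeed attached to an oval of the intended sign. Both points are settled by a direct inspection of Fig. \ref{fig: chart_particular_2} against the lifting rule of Remark \ref{rem: DP1}, so no further construction is required.
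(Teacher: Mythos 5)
Your proposal is correct and follows essentially the same route as the paper: the paper's proof likewise invokes Proposition \ref{prop: (2,2)} to obtain the pairs $(\tilde{S}, Z_{1})$ of bidegrees $(3,0)$ and $(1,0)$ — with $Z_{1}$ being precisely the coordinate axis $\{y=0\}$ against which the charts of Fig. \ref{fig: chart_particular_2} are drawn — and then concludes via the lifting dictionary of Remark \ref{rem: DP1}, including the choice of complex orientation on $\mathbb{R}\tilde{S}$ for the refined case. Your added care about checking that the nest sits on a single sphere and on an oval of the intended sign is exactly the inspection the paper leaves implicit.
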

\begin{proof}
Proposition \ref{prop: (2,2)} implies the existence of pairs of non-singular real algebraic curves $\tilde{S},$ $Z_1 \subset Q$ respectively of bidegree $(3,0)$ and $(1,0)$, with topology prescribed by the charts in Fig. \ref{fig: chart_particular_2}. Therefore, thanks to Remark \ref{rem: DP1}, the (refined) real scheme in $(2)$ of Table \ref{tabellalastchapter} are realizable in $Y^{k}$ (resp. $Y$) and in class $2$, with $0 \leq k \leq 3$.
\end{proof}
\begin{prop}
\label{prop: class_3_DP1}
Any (refined) real scheme in class $3$ is realizable in $Y^{k}$ (resp. $Y$) and in class $3$, with $0 \leq k \leq 3$.
\end{prop}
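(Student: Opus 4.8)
The plan is to reduce, via Remark 3.52, the realization of every (refined) real scheme of class $3$ on $Y^k$ to the construction of a suitable pair $(\tilde S, C)$ on the cone $Q$, where $\tilde S$ is a bidegree $(3,0)$ curve realizing the appropriate real scheme of Fig.~\ref{fig: diverse_parti_reali_DP1}.k and $C$ is a bidegree $(1,1)$ curve whose lift via the double cover $\psi$ realizes the prescribed class $3$ scheme. Since $d=3=2\cdot 1+1$, a class $3$ curve is the lift of a bidegree $(1,1)$ curve on $Q$; the $\mathcal{J}$-obstruction (Proposition~\ref{prop: pseudolines}) guarantees exactly one pseudo-line, which is automatically produced by the lift of the generatrix-direction of $C$, so the only genuine work is to place the ovals correctly on the spheres $Y_1,\dots,Y_k$ and on $Y_0$.

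First I would organize the target schemes by how their ovals distribute among the connected components of $\mathbb{R}Y$. The key observation is that a bidegree $(1,1)$ curve $C$ on $Q$ can be obtained as a small perturbation of the union of a generatrix (bidegree $(1,0)$) and a hyperplane section not through the vertex (bidegree $(0,1)$), and more flexibly as a perturbation $P_0 P_1 + \varepsilon f_1 f_2 = 0$ of a reducible curve, exactly in the spirit of the Harnack-type constructions of Propositions~\ref{prop: constr_small_pert} and~\ref{prop: constr_harnack}. The freedom in choosing the auxiliary curves $f_i$, together with the freedom in the isotopy type of $\tilde S$ afforded by the \emph{dessins d'enfants} constructions of Proposition~\ref{prop: (2,2)}, should let me prescribe on each $Q_i$ (the piece of $\mathbb{R}Q\setminus(\mathbb{R}\tilde S\cup\{V\})$ that doubles to $Y_i$) the desired arrangement of arcs of $\mathbb{R}C$ relative to $\mathbb{R}\tilde S$.

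Concretely I would proceed as follows. For each class $3$ scheme $\mathcal{S}$, I first construct a class $2$ building block: by Propositions~\ref{prop: constr_small_pert}, \ref{prop: constr_harnack} and Corollary~\ref{cor: final_del_pezzo_deg_1} all class $2$ (refined) schemes not excluded by Proposition~\ref{prop: pseudolines} are realizable, and their construction on $Q$ uses a bidegree $(1,0)$ curve $Z_1$ together with $\tilde S$. To pass from class $2$ to class $3$ I perturb $Z_1\cup(\text{generatrix})$, or equivalently perturb the lift, adding one pseudo-line's worth of topology; this is the content of the sentence ``their construction follows from that of (refined) real schemes in class $2$.'' The arrangements of Fig.~\ref{fig: harnack_DP1}.4 and the charts of Fig.~\ref{fig: chart_particular_2} supply the nontrivial cases, in particular the nested configurations $\langle\langle 1\rangle\rangle$ on a single sphere and the schemes with a pseudo-line meeting an oval on $Y_0$. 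For the refined statement ($k=4$) I additionally fix one of the two complex orientations on $\mathbb{R}\tilde S$, which labels the spheres as $Y_1,Y_2$ (positive) and $Y_3,Y_4$ (negative) as in Section~\ref{subsec: positive_negative_DP1}, and I verify that my constructions realize both labelings by symmetry of the two positive (resp.\ negative) ovals.

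The main obstacle I anticipate is \emph{bookkeeping completeness}: class $3$ allows up to $\tfrac{3\cdot 2}{2}+2=5$ components, so I must enumerate all distributions of ovals-with-nesting across $Y_0,\dots,Y_k$ and, for $k=4$, across the two positivity types, and confirm each is realized. The genuinely delicate configurations are those forcing a nest of depth $2$ (the $\langle\langle 1\rangle\rangle$ schemes) and those requiring ovals simultaneously on several spheres, since these demand precise control of which $Q_i$ each perturbation arc lands in; this is exactly where I expect to invoke the specialized charts of Proposition~\ref{prop: (2,2)} rather than the elementary moving-hyperplane argument. Once every case is matched to one of these building blocks, the proposition follows, and this is the reduction asserted in the final line of the section.
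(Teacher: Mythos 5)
Your proposal follows essentially the same route as the paper: reduce via Remark \ref{rem: DP1} to constructing pairs $(\tilde{S}, Z_2)$ on $Q$ with $Z_2$ of bidegree $(1,1)$, obtain $Z_2$ by small perturbation of the union of a generatrix with the bidegree $(1,0)$ curves $Z_1$ from the class $2$ constructions (Propositions \ref{prop: constr_small_pert}, \ref{prop: constr_harnack}, Corollary \ref{cor: final_del_pezzo_deg_1}), invoke the \emph{dessins d'enfants} charts of Proposition \ref{prop: (2,2)} for the nested cases, and fix a complex orientation on $\mathbb{R}\tilde{S}$ for the refined $k=4$ statement. The only slip is notational: in the paper's conventions (Definition \ref{defn: curves_cp3}) a generatrix has bidegree $(0,1)$ and a hyperplane section avoiding the vertex has bidegree $(1,0)$, the reverse of your labels, which does not affect the argument.
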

\begin{proof}
Following the steps of Remark \ref{rem: DP1}, one realizes all (refined) real schemes in $\mathcal{S}_{DP1}(k,3)$ constructing pairs $(\tilde{S}, Z_{2})$ of real algebraic curves of bidegree respectively $(3,0)$ and $(1,1)$ using the pairs $(\tilde{S}, Z_{1})$ previously constructed in the proofs of Propositions \ref{prop: constr_small_pert}, \ref{prop: constr_harnack} and Corollary \ref{cor: final_del_pezzo_deg_1}, where $Z_{1}$ has bidegree $(1,0)$. We present the proof for a particular real scheme. The remaining constructions are similar to that performed.\\
\begin{figure}[!h]
\begin{picture}(100,85)
\put(-30,10){\includegraphics[width=1.19\textwidth]{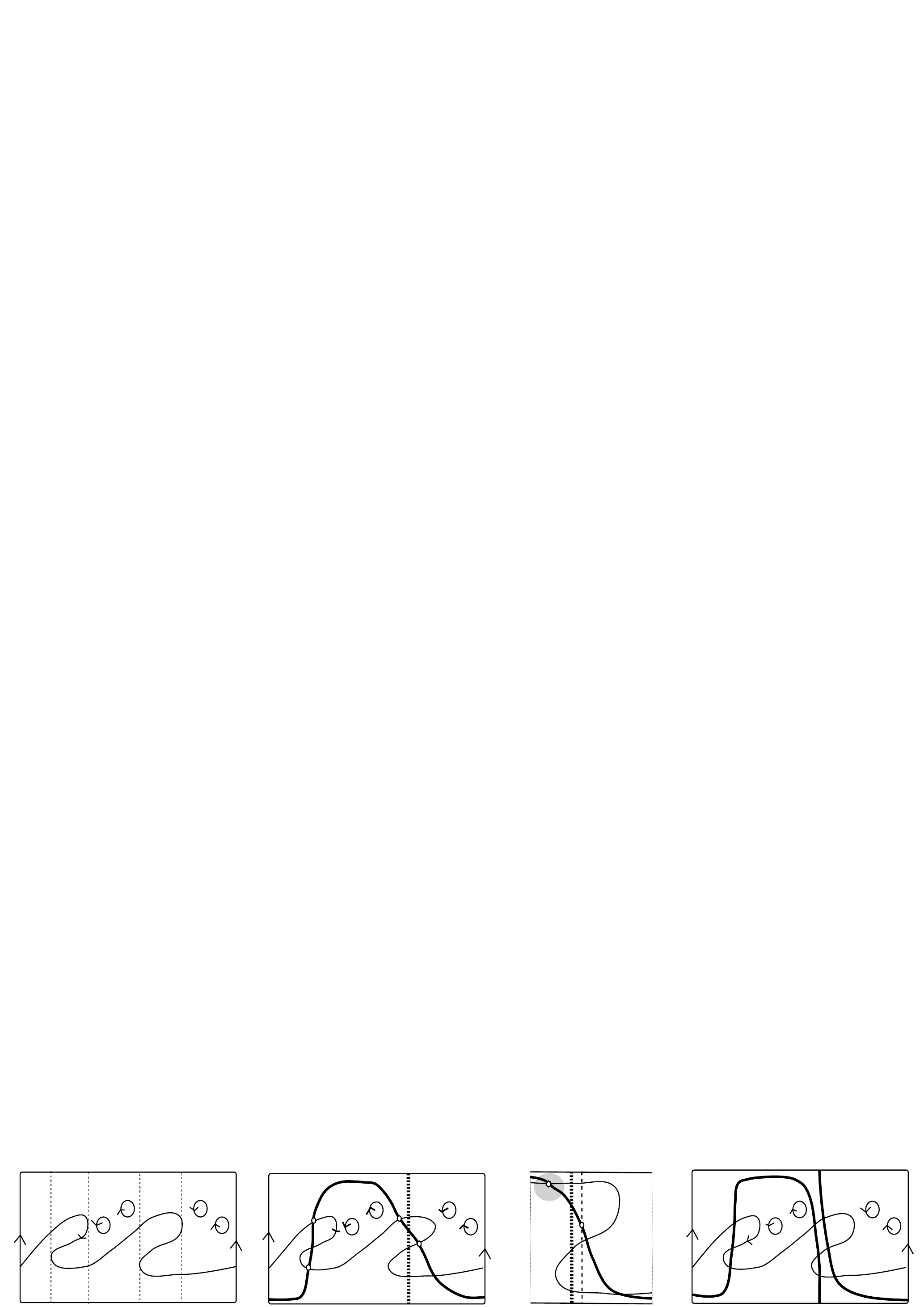} } 
%\put(47,65){$a)$}
\put(-5,75){$\overbrace{ }^{I_{1}}$}
\put(38,75){$\overbrace{ }^{I_{2}}$}
\put(10,-5){Fig. \ref{fig: spm2}.1: }
%$\bigcup_{i=1}^{4}\mathbb{R}F_{i} \cup \mathbb{R} \tilde{S}$
\put(127,-5){Fig. \ref{fig: spm2}.2}
\put(233,-5){Fig. \ref{fig: spm2}.3:}
\put(220,75){$\overbrace{\quad \quad \quad \quad \quad \quad}^{I_{2}}$}
\put(328,-5){Fig. \ref{fig: spm2}.4:}
\end{picture}
\caption{Intersection points pictured as $\circ$ and $\mathbb{D}(s_{2}, \varepsilon)$ in gray.}
\label{fig: spm2}
\end{figure}
%dopo disegni
Let us realize the (refined) real scheme $\mathcal{S}:=\mathcal{J}\sqcup \langle 1 \rangle\sqcup \langle 1 \rangle|0:0:0:0$ in $\mathcal{S}_{DP1}(4,3)$. \\
\textbf{Construction of $\tilde{S}$ and some notation:} Let $\tilde{S} \subset Q$ be a real algebraic maximal curve of bidegree $(3,0)$ with $\mathcal{L}$-scheme $\tilde{\eta}$ as depicted in Fig. \ref{fig: spm2}.1. Let $F_{1},F_{2},F_{3},F_{4}$ 
be the four generatrices of $Q$ in dashed in Fig. \ref{fig: spm2}.1. The $F_{i}$'s intersect $\mathbb{R}\tilde{S}$ in two real points, one of which with multiplicity $2$. Choose a point of $F_{i}$ with coordinates $(\varepsilon_{i}, 0)$ in some affine chart of $Q$, and take the interval
%\begin{itemize}
%\item 
$$I_{i}=\{(x,y): y \in \mathbb{R}, x \in (\varepsilon_{i},\varepsilon_{i+1}) \subset \mathbb{R}\}, \text{ with } i=1,2.$$
Denote with $\mathcal{L}_{I_{i}}$ the set of the generatrices of $Q$ containing the points of $I_{i}$.\\
\textbf{Realization of $\mathcal{S}$}: Let $\eta$ be the real scheme in $\mathbb{R}Q$ depicted in thick black in Fig. \ref{fig: spm2}.4. As explained in Remark \ref{rem: DP1}, the choice of $\eta$ is determined by $\mathcal{S}$ and $\tilde{\eta}$. The realization of $\eta$ by a bidegree $(1,1)$ real curve, is consequent to the following observations and an application of small perturbation method:
%in the proof of Proposition \ref{prop: constr_small_pert}: 
\begin{enumerate}[label=(\roman*)]
\item There exists a real curve $Z_{1}$ of bidegree $(1,0)$ constructed as in proof of Proposition \ref{prop: constr_small_pert} (Example \ref{exa: small_pert}) such that:
\begin{itemize}
\item the triplet $(\mathbb{R}Q, \mathbb{R} \tilde{S}, \mathbb{R}Z_{1})$ is arranged as depicted in Fig. \ref{fig: spm2}.2;
\item $(\mathbb{R}\tilde{S} \cap \mathbb{R}Z_{1}) \subset (I_{1}\cup I_{2}).$
\end{itemize}
%\item since $Z_{1}$ has bidegree $(1,0)$, such real curve is determined by three points;
\item There exists a generatrix $F \subset \mathcal{L}_{I_{2}}$ (in bold in Fig. \ref{fig: spm2}.2) and two points $p,q$ of $\eta$ such that the triplets $(\mathbb{R}Q  \setminus \{p,q\}, \mathbb{R}\tilde{S}, \eta)$ and $(\mathbb{R}Q  \setminus ( \mathbb{R}(Z_{1}\cap F)), \mathbb{R}\tilde{S},\mathbb{R}(F \cup Z_{1}))$ realize the same topological type.
\end{enumerate}
Apply a small perturbation to $Z_{1}\cup F$ as follows.\\
Let $s_{i},w_{i}$ be two distinct points in $(Z_{1} \cap \tilde{S} \cap I_{i})$, belonging to two different connected components of $\tilde{S} \cap I_i$, as depicted in Fig. \ref{fig: spm2}.2, with $i=1,2$.\\
Let $\mathbb{D}(s_{2},\varepsilon)$ be a disk centered in $s_{2}$ with radius $\varepsilon \not =0$, where $\varepsilon$ can be chosen small enough so that $\forall s_{t} \in \mathbb{D}(s_{2},\varepsilon) \setminus \{s_{2} \}$ there exists a real curve $Z_{t}$ of bidegree $(1,0)$ passing through $s_{1}, w_{1}$ and $s_t$ and such that the pairs $(I_{i},\mathbb{R}Z_{t} \cap \mathbb{R}\tilde{S})$ and $(I_{i},\mathbb{R}Z_{1} \cap \mathbb{R}\tilde{S})$ realize the same $\mathcal{L}$-scheme, for $i=1,2$.\\
Let $\tilde{F} \subset \mathcal{L}_{I_{2}}$ be the generatrix in dashed in Fig. \ref{fig: spm2}.3, where it is pictured the topological type realized by the triplet$(I_{2}, \mathbb{R}\tilde{S}, F \cup \tilde{F} \cup Z_{1})$. \\
%Remark that $\tilde{F}$ passes through a point belonging to the connected component of $\mathbb{R}Z_{1} \cap I_{2}$ containing $w_{2}$ and also the intersection point of $Z_{1}$ with $F$. \\
Let $p_1(x,y)f(x,y)=0$ be a polynomial equation defining the union of $Z_1$ and $F$ in some local affine chart of $Q$. Replace the left side of the equation $p_1(x,y)f(x,y)=0$ with $p_1(x,y)f(x,y)+ \delta p_t(x,y) \tilde{f}(x,y)$, where $p_t(x,y) \tilde{f}(x,y)=0$ defines the union of $Z_{t}$ with $\tilde{F}$ and $\delta \not = 0$ is a sufficient small real number. Up to a choice of $t$ and the sign of $\delta$, one constructs the wanted $Z_{2}$ as a small perturbation of $Z_1 \cup F$. The realization of $\eta$ by $Z_{2}$ implies the realizability of $\mathcal{S}$ in $Y$ and in class $3$.
\end{proof}
\section*{Acknowledgments}
I am very grateful to Erwan Brugallé for his constant support, many fruitful discussions and helpful insights on algebraic geometry and enumerative geometry. I want to thank Eugenii Shustin and Jean-Yves Welschinger for their time, very useful remarks and questions for future directions. I express my appreciation to Stepan Orevkov for interesting discussions. Special thanks to the referees whose comments and corrections have greatly improved the presentation and the content of the paper.
This research is supported by the TMS project ''Algebraic and topological cycles in complex and tropical geometry''.
\bibliographystyle{alpha}
\bibliography{biblio.bib}
Matilde Manzaroli, \textsc{University of Oslo, UiO, Norway}\par\nopagebreak
 \textit{E-mail address}: \texttt{ manzarom'at'math.uio.no}
\newpage
In Table \ref{tabella=realized3}, it is presented a list of all realized real schemes in class $3$ on $k$-sphere real del Pezzo surface of degree $2$, with $1 \leq k \leq 4$; the given labelling refers to the propositions where such real schemes are realized.\\
\begin{table}[h!]
 \begin{threeparttable}
 \caption{ \label{tabella=realized3} \textbf{(Symmetric) real schemes realized in $X^{k}$ and in class $3$}}
\begin{tabular}{ | l | l | l |}
\hline
\textbf{$k=1,2,3,4$}&\textbf{$k=2,3,4$} & \textbf{$k=3,4$} \\
\hline
$8:0:0:0   \quad \circ  \dagger^{*}$&$7:1 :0:0 \quad \dagger  \dagger^{*}$&$6:1:1 :0 \quad \dagger \dagger^{*}$\\
\hline
$1  \sqcup \langle6\rangle :0:0:0 \quad \circ    \circ^{*}$&$1    \sqcup    \langle5\rangle:1 :0:0 \quad \dagger  \dagger^{*}$&$1    \sqcup    \langle4\rangle:1:1 :0 \quad \dagger \dagger^{*}$\\
\hline
$2    \sqcup    \langle5\rangle :0:0:0  \quad \circ  \dagger^{*}$&$2    \sqcup    \langle4\rangle:1 :0:0 \quad \circ    \circ^{*}$&$5:2:1 :0 \quad \dagger \dagger^{*}$\\
\hline
$3    \sqcup    \langle4\rangle  :0:0:0 \quad \circ  \dagger^{*}$&$\langle1\rangle    \sqcup    \langle4\rangle:1 :0:0 \quad \dagger  \dagger^{*}$&$ 1    \sqcup    \langle3\rangle:2:1:0 \quad \dagger \dagger^{*}$\\
\hline
$\langle1\rangle    \sqcup    \langle5\rangle :0:0:0  \quad \circ    \circ^{*}$&$ 3    \sqcup   \langle1\rangle    \sqcup    \langle1\rangle:1 :0:0 \quad \circ    \circ^{*}$&$ 4:3:1 :0\quad \dagger \dagger^{*}$\\
\hline
$\langle2\rangle    \sqcup    \langle4\rangle  :0:0:0  \quad \circ$&$ 6:2 :0:0 \quad \circ   \circ^{*}$&$  1    \sqcup    \langle2\rangle:3:1 :0\quad \dagger  \dagger^{*}$\\
\hline
$1    \sqcup     \langle1\rangle    \sqcup    \langle4\rangle  :0:0:0 \quad \circ  \dagger^{*}$&$1    \sqcup    \langle4\rangle:2 :0:0 \quad \circ  \circ^{*}$&$4:2:2 :0\quad \dagger \dagger^{*} $\\
\hline
$1    \sqcup     \langle2\rangle    \sqcup    \langle3\rangle  :0:0:0 \quad \dagger^{*} $&$ 2    \sqcup    \langle3\rangle:2 :0:0 \quad \circ  \dagger^{*}$&$ 1    \sqcup    \langle2\rangle:2:2:0 \quad \dagger \dagger^{*}$\\
\hline
$2    \sqcup   \langle1\rangle    \sqcup    \langle3\rangle :0:0:0  \quad \dagger^{*}$&$\langle1\rangle    \sqcup    \langle3\rangle:2 :0:0 \quad \circ  \dagger^{*} $&$ 4:\langle\langle1\rangle\rangle:1:0 \quad \dagger \dagger^{*}$\\
\hline
$3    \sqcup   \langle1\rangle    \sqcup    \langle2\rangle :0:0:0  \quad \circ  \dagger^{*}$&$2    \sqcup   \langle1\rangle    \sqcup    \langle1\rangle:2 :0:0 \quad \circ  \dagger^{*} $&$3:3:2:0 \quad \dagger \dagger^{*} $\\
\hline
$4    \sqcup   \langle1\rangle    \sqcup    \langle1\rangle  :0:0:0 \quad \circ  \dagger^{*}$&$ 5:3 :0:0   \quad \dagger    \dagger^{*}$&$\langle\langle1\rangle\rangle:3:2:0 \quad \dagger \dagger^{*}$\\
\hline
$ 1    \sqcup   \langle1\rangle    \sqcup   \langle1\rangle    \sqcup    \langle2\rangle :0:0:0 \quad \dagger^{*} $&$1    \sqcup    \langle3\rangle:3 :0:0 \quad \dagger  \dagger^{*} $&$ $\\ 
\hline
$\langle1\rangle    \sqcup    \langle\langle4\rangle\rangle  :0:0:0 \quad \circ  \dagger^{*}$&$2    \sqcup    \langle2\rangle:3 :0:0 \quad \dagger  \dagger^{*} $& \textbf{$k=4$}\\
\hline
$1    \sqcup   \langle1\rangle    \sqcup    \langle\langle3\rangle\rangle  :0:0:0 \quad \dagger^{*}$&$\langle1\rangle    \sqcup    \langle2\rangle:3 :0:0 \quad \dagger  \dagger^{*}$&$5:1:1:1\quad \dagger  $\\
\hline
$1    \sqcup   \langle3\rangle    \sqcup    \langle\langle1\rangle\rangle  :0:0:0 \quad \dagger^{*} $&$1    \sqcup   \langle1\rangle    \sqcup    \langle1\rangle:3  :0:0 \quad \dagger  \dagger^{*}$&$ 4:2:1:1\quad \dagger$\\
\hline
$ 2    \sqcup   \langle2\rangle    \sqcup    \langle\langle1\rangle\rangle :0:0:0 \quad \dagger^{*} $& $5:\langle\langle1\rangle\rangle :0:0 \quad \dagger  \dagger^{*}$&$ 3:3:1:1\quad \dagger$\\
\hline
$3    \sqcup   \langle1\rangle    \sqcup    \langle\langle1\rangle\rangle  :0:0:0 \quad \circ  \dagger^{*} $&$1    \sqcup    \langle3\rangle:\langle\langle1\rangle\rangle :0:0 \quad \dagger  \dagger^{*}$&$  3:2:2:1\quad \dagger$\\

\hline
$ 1    \sqcup   \langle1\rangle    \sqcup  \langle2   \sqcup  \langle1\rangle\rangle  :0:0:0 \quad \dagger^{*}  $&$4:4 :0:0 \quad \dagger  \dagger^{*}$&$  2:2:2:2\quad \circ$\\
\hline
$ $&$1    \sqcup    \langle2\rangle:4 :0:0   \quad \dagger  \dagger^{*}$&$ $\\
\hline
$ $&$\langle\langle\langle1\rangle\rangle\rangle:4 :0:0 \quad \dagger  \dagger^{*}$&$ $\\
\hline
$ $& $1    \sqcup    \langle2\rangle:1    \sqcup    \langle2\rangle :0:0 \quad \dagger  \dagger^{*}$&$ $\\
\hline
\end{tabular}
        \begin{tablenotes}
        \item 
     \item \textbf{Legend of symbols}:\\
            \item[$\circ$] Symmetric real schemes realized  in $X^{4}$ and in class $3$; Proposition \ref{prop: circ_label_DP2}.            
            \item[$\circ^{*}$] Symmetric real schemes realized  in $X^{k}$ and in class $3$, with $1 \leq k \leq 3$; Proposition \ref{prop: circ_label_DP2}.
            \item[$\dagger$] Real schemes realized  in $X^{4}$ and in class $3$; Proposition \ref{prop: star_label_DP2}.
            \item[$\dagger^{*}$] Real schemes realized in $X^{k}$ and in class $3$, with $1 \leq k \leq 3$; Proposition \ref{prop: star_label_DP2}.

         \end{tablenotes}
     \end{threeparttable}
\end{table}
%fine table
\par 
For $k \not = 4$, the obstructions in Section \ref{subsec: obstr_DP2} are less restrictive on real curves on $k$-sphere real del Pezzo surface $X^{k}$ of degree $2$. Therefore, in Table \ref{tabellaknot4}, it is given a raw list of real schemes in class $3$ which are still unrealized on the $X^{k}$'s; but, which can not be realized by class $3$ real curves on $4$-sphere real del Pezzo surface of degree $2$.
\begin{table}[h!]
\centering
\caption{\label{tabellaknot4} \textbf{Real schemes in class $3$ in $X^{k}$ for $k \not = 4.$}}
\begin{tabular}{|l|l|}
\hline
$k=3,2,1$&$k=3$\\
\hline
$ \langle 1 \rangle   \sqcup   \langle 1 \rangle   \sqcup   \langle1\rangle   \sqcup   \langle1\rangle :0:0$ & $\langle 1 \rangle   \sqcup   \langle1\rangle :\langle \langle 1  \rangle \rangle :1$\\
\hline
$k=3,2$&$\langle1\rangle   \sqcup   \langle1\rangle :2:2$\\
\hline
$\langle1\rangle   \sqcup   \langle1\rangle : \langle1\rangle   \sqcup   \langle1\rangle :0$&$\langle\langle1\rangle  \rangle :\langle \langle 1 \rangle \rangle :2$\\
\hline
$1   \sqcup   \langle2\rangle : \langle1\rangle   \sqcup   \langle1\rangle :0$&$1   \sqcup   \langle 2 \rangle : \langle\langle 1  \rangle \rangle :1$\\
\hline
$1   \sqcup   \langle1\rangle   \sqcup   \langle1\rangle : \langle \langle1\rangle \rangle :0$&$\langle1\rangle   \sqcup   \langle1\rangle :3:1$\\
\hline
$\langle1\rangle   \sqcup   \langle2\rangle : \langle \langle1\rangle \rangle :0$&$ $\\
\hline
\end{tabular}
\end{table}
%fine table
\end{document}